\patchcmd{\subsection}{-.5em}{.5em}{}{}
\newcommand{\mysubsubsection}[1]{\subsubsection*{\bfseries #1}}
\renewcommand{\tocsection}[3]{
  \indentlabel{\@ifnotempty{#2}{\ignorespaces#1 #2\quad}}\bfseries#3}
\renewcommand{\tocsubsection}[3]{
  \indentlabel{\@ifnotempty{#2}{\ignorespaces#1 #2\quad}}#3}
\newcommand\@dotsep{4.5}
\def\@tocline#1#2#3#4#5#6#7{\relax
  \ifnum #1>\c@tocdepth
  \else
    \par \addpenalty\@secpenalty\addvspace{#2}
    \begingroup \hyphenpenalty\@M
    \@ifempty{#4}{
      \@tempdima\csname r@tocindent\number#1\endcsname\relax
    }{
      \@tempdima#4\relax
    }
    \parindent\z@ \leftskip#3\relax \advance\leftskip\@tempdima\relax
    \rightskip\@pnumwidth plus1em \parfillskip-\@pnumwidth
    #5\leavevmode\hskip-\@tempdima{#6}\nobreak
    \leaders\hbox{$\m@th\mkern \@dotsep mu\hbox{.}\mkern \@dotsep mu$}\hfill
    \nobreak
    \hbox to\@pnumwidth{\@tocpagenum{\ifnum#1=1\bfseries\fi#7}}\par
    \nobreak
    \endgroup
  \fi}
\renewcommand\csname r@tocindent0\endcsname{0pt}
\def\l@subsection{\@tocline{2}{0pt}{2.5pc}{5pc}{}}
\newcounter{results}[section]
\theoremstyle{plain}
\newtheorem{theorem}[results]{Theorem}
\newtheorem{lemma}[results]{Lemma}
\newtheorem{proposition}[results]{Proposition}
\newtheorem{corollary}[results]{Corollary}
\theoremstyle{remark}
\newtheorem{remark}[results]{Remark}
\newtheorem{example}[results]{Example}
\theoremstyle{definition}
\newtheorem{definition}[results]{Definition}
\numberwithin{equation}{section}
\newcommand{\R}{\ensuremath{\mathbb R}} 
\newcommand{\N}{\ensuremath{\mathbb N}} 
\newcommand{\eps}{\ensuremath{\varepsilon}} 
\newcommand{\floor}[1]{\left\lfloor #1 \right\rfloor} 
\newcommand{\ceil}[1]{\left\lceil #1 \right\rceil} 
\DeclareMathOperator*{\argmin}{arg\,min} 
\newcommand{\dom}{\ensuremath{\mathrm{D}}}
\newcommand{\upds}{{\frac{\d}{\d s}}^{\kern-3pt +}} 
\newcommand{\updt}{{\frac{\d}{\d t}}^{\kern-3pt +}} 
\newcommand{\lodt}{{\frac{\d}{\d t}}_{\kern-1pt +}} 
\newcommand{\la}{\langle} 
\newcommand{\ra}{\rangle} 
\newcommand{\de}{\ensuremath{\,\mathrm d}} 
\renewcommand{\d}{\mathrm d}  
\newcommand{\intt}[1]{\operatorname{int}\left(#1\right)} 
\renewcommand{\exp}{\operatorname{\mathsf{exp}}} 
\newcommand{\pclo}[1]{{\overline{#1}}} 
\newcommand{\bb}{\boldsymbol b} 
\newcommand{\ggamma}{\boldsymbol\gamma} 
\newcommand{\ii}{\boldsymbol i} 
\newcommand{\mmu}{\boldsymbol\mu} 
\newcommand{\nnu}{\boldsymbol\nu} 
\newcommand{\Ppsi}{\boldsymbol\Psi} 
\newcommand{\rr}{\boldsymbol r} 
\newcommand{\ssigma}{\boldsymbol\sigma} 
\newcommand{\ttau}{\boldsymbol \tau} 
\newcommand{\ttheta}{\boldsymbol\vartheta} 
\newcommand{\Ttheta}{\boldsymbol\Theta} 
\newcommand{\vv}{\boldsymbol v} 
\newcommand{\ww}{\boldsymbol w} 
\newcommand{\xx}{{\boldsymbol x}} 
\newcommand{\yy}{{\boldsymbol y}} 
\newcommand{\zzeta}{\boldsymbol\zeta} 
\newcommand{\m}{\mathsf m} 
\newcommand{\sfv}{\mathsf v} 
\newcommand{\sfx}{\mathsf x} 
\newcommand{\X}{\mathsf X} 
\newcommand{\Y}{\mathsf Y} 
\newcommand{\func}{\ensuremath{\mathcal{F}}}
\newcommand{\lfunc}{\mathcal{G}} 
\newcommand{\interval}{\mathcal I} 
\newcommand{\iso}{\mathcal{J}} 
\newcommand{\EVI}{{\rm EVI}\xspace} 
\newcommand{\wEVI}{{\rm EVI}\xspace} 
\newcommand{\MPVF}{{\rm MPVF}\xspace} 
\newcommand{\lebd}{{\mathcal{L}_B}} 
\newcommand{\relcP}[3]{\prob_{#1}(#3|#2)} 
\newcommand{\prob}{\ensuremath{\mathcal{P}}} 
\DeclareMathOperator{\supp}{supp} 
\newcommand{\CondGammao}[4]{\Gamma_o^{#4}({#2},{#3}|#1)} 
\DeclareMathOperator{\Tan}{Tan} 
\newcommand{\sqm}[1]{\mathsf m_2^2(#1)} 
\newcommand{\rsqm}[1]{\mathsf m_2(#1)} 
\newcommand{\TRd}{\mathsf{T}\R^d} 
\newcommand{\TX}{\mathsf {T\kern-1.5pt X}} 
\newcommand{\TY}{\mathsf {T\kern-1.5pt Y}} 
\newcommand{\rmC}{\mathrm C} 
\DeclareMathOperator{\Cyl}{Cyl} 
\newcommand{\Lip}{\mathrm {Lip}} 
\newcommand{\testsw}[1]{\rmC^{sw}_{2}(#1)} 
\newcommand{\scalprod}[2]{\ensuremath{\langle #1, #2\rangle}} 
\newcommand{\bram}[2]{\ensuremath{\left [ #1, #2\right ]_{r}}} 
\newcommand{\brap}[2]{\ensuremath{\left [ #1, #2\right ]_{l}}} 
\newcommand{\directionalm}[3]{[#1,#2]_{r,#3}} 
\newcommand{\directionalp}[3]{[#1,#2]_{l,#3}} 
\newcommand{\ebrab}[3]{\ensuremath{\left [ #1, #2\right ]_{b,#3}}} 
\newcommand{\directional}[3]{[#1,#2]_{#3}} 
\newcommand{\frF}{{\boldsymbol{\mathrm F}}}
\newcommand{\fF}{{\boldsymbol F}}
\newcommand{\clo}[1]{\operatorname{cl}(#1)} 
\newcommand{\cloco}[1]{\overline{\operatorname{co}}(#1)} 
\DeclareMathOperator{\Adm}{Adm} 
\newcommand{\bry}[1]{\boldsymbol b_{#1}} 
\newcommand{\proj}{\ensuremath{\mathbf{pr}}} 
\newcommand{\Rotn}{{\boldsymbol r}} 
\newcommand{\conv}[1]{\operatorname{co}(#1)} 
\newcommand{\rmS}{\mathrm S} 
\newcommand{\GX}{X} 
\newcommand{\PX}{\X\times \Y} 
\newcommand{\rI}[2]{\mathrm I(#1|#2)} 
\newcommand{\rB}[2]{\mathrm B(#1,#2)} 
\newcommand{\finalstep}[2]{{\mathrm N(#1,#2)}} 
\newcommand{\expcyl}[2]{\mathrm {exp}^{#1#2}} 
\title[Dissipative PVFs and generation of evolution semigroups in Wasserstein spaces]{Dissipative probability vector fields and generation of evolution semigroups in Wasserstein spaces}
\author{Giulia Cavagnari}
\address{Giulia Cavagnari: Politecnico di Milano, Dipartimento di Matematica, Piazza Leonardo Da Vinci 32, 20133 Milano (Italy)}
\email{giulia.cavagnari@polimi.it}
\author{Giuseppe Savar\'e}
\address{Giuseppe Savar\'e: Bocconi University,
  Department of Decision Sciences and BIDSA, Via Roentgen 1, 20136 Milano (Italy)}
\email{giuseppe.savare@unibocconi.it}
\author{Giacomo Enrico Sodini}
\address{Giacomo Enrico Sodini: TUM Fakult\"at f\"ur Mathematik, Boltzmannstrasse 3, 85748 Garching bei M\"unchen (Germany)}
\email{sodini@ma.tum.de}
\subjclass{Primary: 34A06, 34A45; Secondary: 34A12, 34A34, 34A60, 28A50}
 \keywords{Measure differential equations/inclusions in Wasserstein spaces,  probability vector fields, dissipative operators, evolution variational inequality, explicit Euler scheme.}
\begin{document}

\begin{abstract}
  We introduce and investigate a notion of multivalued $\lambda$-dissipative probability vector field (MPVF)
  in the Wasserstein space $\prob_2(\X)$ of Borel probability measures
  on a Hilbert space $\X$.
  Taking inspiration from the theory of dissipative operators in
  Hilbert spaces
  and of Wasserstein gradient flows of geodesically convex functionals,
  we study local and global well posedness of evolution equations
  driven by dissipative MPVFs. Our approach is based
  on a measure-theoretic version of the Explicit Euler scheme,
  for which we prove novel convergence results with optimal
  error estimates under an abstract CFL stability condition,
  which do not rely on compactness arguments and
  also hold when $\X$ has infinite dimension.

  We characterize the limit solutions by a suitable Evolution
  Variational Inequality (EVI), inspired by the B\'enilan notion of integral
  solutions to dissipative evolutions in Banach spaces.
  Existence, uniqueness and stability of EVI solutions
  are then obtained under quite general assumptions, leading to
  the generation of a semigroup
  of nonlinear contractions. 
\end{abstract}

\maketitle
\tableofcontents
\thispagestyle{empty}

\section{Introduction}
The aim of this paper is to study the local and global well posedness
of 
evolution equations for Borel probability measures
driven by a suitable notion of probability vector fields
in an Eulerian framework.

For the sake of simplicity, let us consider here
a finite dimensional Euclidean space $\X$ with scalar product
$\langle\cdot,\cdot\rangle$ and norm $|\cdot|$
(our analysis however will not be confined to finite dimension and
will be carried out in a separable Hilbert space)
and the space $\prob(\X)$ (resp.~$\prob_b(\X)$) of 
Borel probability measures in $\X$ (resp.~with bounded support).

\mysubsubsection{A Cauchy-Lipschitz approach, via vector fields}

A first notion of vector field
can be described by maps 
$\bb:\prob_b(\X)\to \mathrm C(\X;\X)$,
typically taking values in some subset of continuous vector fields in
$\X$ (as
the locally Lipschitz ones of $\Lip_{loc}(\X;\X)$),
and satisfying suitable growth-continuity conditions.
In this respect, the evolution driven by $\bb$ can be described
by a continuous curve $t\mapsto \mu_t\in \prob_b(\X)$,
$t\in [0,T]$,
starting from an initial measure $\mu_0\in \prob_b(\X)$ and
satisfying the continuity equation
\begin{subequations}
  \label{eq:167}
  \begin{align}
    \label{eq:167a}
    \partial_t \mu_t+\nabla\cdot(\vv_t\mu_t)&=0&&\text{in
                                              }(0,T)\times \X,\\
    \label{eq:167b}
    \vv_t&=\bb[\mu_t]&&\text{$\mu_t$-a.e.~for every $t\in (0,T)$},
  \end{align}
\end{subequations}
in the distributional sense, i.e.
\begin{equation}
  \label{eq:166}
  \int_0^T\int_\X \Big(\partial_t \zeta+\langle
  \nabla\zeta,\vv_t\rangle\Big)\,\d\mu_t\,\d t=0,\quad
  \vv_t=\bb[\mu_t],
  \quad\text{for every }
  \zeta\in \rmC^1_c((0,T)\times \X).
\end{equation}
If $\bb$ is sufficiently smooth, solutions
to (\ref{eq:167}a,b) can be obtained by many techniques.
Recent contributions in this direction are given by the papers \cite{Piccoli_2019,Piccoli_MDI,bonnet2020mean,CLOS}, we also mention \cite{RosPic,RosPicMDE} for the analysis in presence of sources.
In particular, in \cite{bonnet2020mean} the aim of the authors is to
develop a suitable Cauchy-Lipschitz theory in Wasserstein spaces for
differential inclusions which generalizes \eqref{eq:167b} to
multivalued maps 
$\bb:\prob_b(\X)\rightrightarrows \Lip_{loc}(\X;\X)$
and requires \eqref{eq:167b}, \eqref{eq:166} to hold for a suitable
measurable selection of $\bb$.
As it occurs in the classical finite-dimensional case, the differential-inclusion approach is suitable to describe the dynamics of control systems, when the velocity vector field involved in the continuity equation depends on a control parameter.

\mysubsubsection{The Explicit Euler method}
A natural approach, that is suitable for a great generalization,
is to approximate (\ref{eq:167}a,b) by
a measure-theoretic version of the Explicit Euler scheme.
Choosing a step size $\tau>0$ and
a partition $\{0,\tau,\cdots,n\tau,\cdots, N\tau\}$
of the interval $[0,T]$, $N=\finalstep T\tau=\ceil{T/\tau}$, we
construct a sequence $M^n_\tau\in \prob_b(\X)$, $n=0,\cdots, N,$ by
the algorithm
\begin{equation}
  \label{eq:168}
  M^0_\tau:=\mu_0,\quad
  M^{n+1}_\tau:=(\ii_\X+\tau \bb^n_\tau)_\sharp M^n_\tau,\quad
  \bb^n_\tau\in \bb[M^n_\tau],
\end{equation}
where $\ii_\X(x):=x$ is the identity map
and $\rr_\sharp\mu$ denotes the push forward of $\mu\in \prob(\X)$
induced by a Borel map $\rr:\X\to \X$ and 
defined by $\rr_\sharp\mu(B):=\mu(\rr^{-1}(B))$
for every Borel set $B\subset \X$.
If $\bar M_\tau$ is the piecewise constant interpolation
of the discrete values $(M^n_\tau)_{n=0}^N$, one can then
study the convergence of $\bar M_\tau$ as $\tau\downarrow0$,
hoping to obtain a solution to (\ref{eq:167}a,b) in the limit.

It is then natural to investigate a few relevant questions:
\begin{enumerate}[label=\rm $\langle$E.\arabic*$\rangle$]
\item
  \label{Q1}
  what is the most general framework where the Explicit Euler
  scheme can be implemented,
\item
  what are the structural conditions
  ensuring its convergence,
  \label{Q2}
\item
  how to characterize the limit solutions
  and their properties.
  \label{Q3}
\end{enumerate}
Concerning the first question \ref{Q1}, one immediately realizes that each iteration
of \eqref{eq:168} actually depends on the probability distribution
on the tangent bundle $\TX$ (which we may identify with $\X\times \X$,
where the second component plays the role of velocity)
\begin{equation*}
  \Phi^n_\tau:=(\ii_\X,\bb^n_\tau)_\sharp M^n_\tau,
  \in\prob(\TX)
\end{equation*}
whose first marginal is $M^n_\tau$.
If we denote by $\sfx,\sfv:\TX\to \X$ the projections $\sfx(x,v)=x,\
\sfv(x,v)=v$, and by 
$  \exp^\tau: \TX \to \X$ the exponential map in the flat space $\X$ $\exp^\tau(x,v):=x+\tau v
$, we recover $M^{n+1}_\tau$ by a single step of ``free motion'' driven
by $\Phi^n_\tau$ and given by
\begin{equation*}
  M^{n+1}_\tau= \exp^\tau_\sharp \Phi^n_\tau=(\sfx+\tau\sfv)_\sharp \Phi^n_\tau.
\end{equation*}
This operation does not depend on the fact that $\Phi^n_\tau$ is
concentrated
on the graph of a map (in this case $\bb^n_\tau\in \bb[M^n_\tau]$): one can
more generally assign a multivalued map $\frF:\prob_b(\X)\rightrightarrows \prob_b(\TX)$
such that for every $\mu\in \prob_b(\X)$
every measure $\Phi\in \frF[\mu]\in \prob_b(\TX)$ has first marginal 
$\mu=\sfx_\sharp \Phi$. 
We call $\frF$ a \emph{multivalued probability vector field} (\MPVF in
the following), which is in good analogy with
a Riemannian interpretation of $\prob_b(\TX)$.
The disintegration $\Phi_x\in \prob_b(\X)$ of $\Phi$ with respect to
$\mu$ provides a (unique up to $\mu$-negligible sets)
Borel family of probability measures on
the space of velocities such that $\Phi=\int_\X \Phi_x\,\d\mu(x)$.
$\Phi$ is induced by 
a vector field $\bb$ only if $\Phi_x=\delta_{\bb(x)}$ is a
Dirac mass for $\mu$-a.e.$x$.
\eqref{eq:168} now reads as
\begin{equation}
  \label{eq:168b}
    M^0_\tau:=\mu_0,\quad
  M^{n+1}_\tau:= \exp^\tau_\sharp \Phi^n_\tau=
(\sfx+\tau \sfv)_\sharp \Phi^n_\tau,\quad
  \Phi^n_\tau\in \frF[M^n_\tau].  
\end{equation}
Besides greater generality, this point of view has other 
advantages: working with the joint distribution $\frF[\mu]$ instead
of the disintegrated vector field $\bb[\mu]$
potentially allows for the weakening of the
continuity assumption with respect to $\mu$.
This relaxation corresponds to the introduction of Young's measures
to study the limit behaviour of weakly converging maps \cite{CdFV}.
Adopting this viewpoint, the classical discontinuous example in $\R$
(see \cite{Filippov}), where $\bb(x)=-\mathrm{sign} (x)$,
admits a natural closed realization as MPVF given by
\[\Phi\in \frF[\mu]
  \quad\Leftrightarrow\quad
  \Phi_x=\begin{cases}\delta_{\bb(x)}&\text{if }x\neq0\\
    (1-\theta)\delta_{-1}+\theta \delta_1&\text{if }x=0
  \end{cases}
\quad\text{for some }\theta\in [0,1].\]
In particular,
$\frF[\delta_0]=\big\{\delta_0\otimes\left((1-\theta)\delta_{-1}+
  \theta\delta_{1}\right)\mid\theta\in [0,1]\big\}$ (see also \cite[Example
  6.2]{Camilli_MDE}).

  The study of measure-driven differential equations/inclusions is not new in the literature \cites{DalMasoRampazzo,SilvaVinter}. However, these studies, devoted to the description of impulsive control systems \cite{Bressan} and mainly motivated by applications in rational mechanics and engineering, have been used to describe evolutions in $\R^d$ rather than in the space of measures.

A second advantage in considering a \MPVF is the consistency with the
theory of Wasserstein gradient flows generated by geodesically convex
functionals
introduced in \cite{ags}
(Wasserstein subdifferentials are particular examples of {\MPVF}s)
and with the multivalued version of the notion of probability vector
fields introduced in 
\cite{Piccoli_2019,Piccoli_MDI}, whose 
originating idea was indeed to describe the uncertainty affecting not
only the state of the system,
but possibly also the distribution of the vector field itself.

A third advantage is to allow for a more intrinsic geometric view,
inspired by Otto's non-smooth Riemannian interpretation of the
Wasserstein space: probability vector
fields provide an appropriate description of infinitesimal deformations of
probability measures, which should be measured by, e.g., the
$L^2$-Kantorovich-Rubinstein-Wasserstein distance
\begin{equation}
  \label{eq:172}
  W_2^2(\mu,\nu):=\min\Big\{\int_{\X\times \X}|x-y|^2\,\d\ggamma(x,y):
  \ggamma\in \Gamma(\mu,\nu)\Big\},
\end{equation}
where
$\Gamma(\mu,\nu)$
 is the set of
  couplings with marginals $\mu$ and $\nu$ respectively.
  It is well known \cite{ags,Villani,santambrogio}
  that
  if $\mu,\nu$ belong to the space 
  $\prob_2(\X)$ of Borel probability measures with finite second moment
\begin{equation*}
  \sqm\mu:=\int_\X |x|^2\,\d\mu(x)<\infty,
\end{equation*}
then the minimum in \eqref{eq:172} is attained in a compact convex set
$\Gamma_o(\mu,\nu)$
and $(\prob_2(\X),W_2)$ is a complete and separable metric space.
Adopting this viewpoint and proceeding by analogy with the theory of
dissipative operators in Hilbert spaces,
a natural class of MPVFs for evolutionary problems should at least
satisfy a $\lambda$-dissipativity condition, $\lambda\in \R$, as
\begin{equation}
  \label{eq:174}
  \forall\, \Phi\in\frF[\mu],\ \Psi\in\frF[\nu],\ \mu\neq\nu :\quad
  W_2(\exp^\tau_\sharp\Phi,\exp^\tau_\sharp\Psi)
  \le (1+\lambda\tau) W_2(\mu,\nu)+o(\tau)\quad\text{as
  }\tau\downarrow0.
\end{equation}
\vspace{-12pt}
\mysubsubsection{Metric dissipativity}
Condition \eqref{eq:174} in the simple case $\lambda=0$ has a clear interpretation in terms of one
step of the Explicit Euler method: it is an asymptotic contraction
as the time step goes to $0$. By using the properties of the
Wasserstein distance, we will
first compute the right derivative of the (squared) Wasserstein
distance
along the deformation $\exp^\tau$
\begin{equation}
  \label{eq:164}
  \begin{aligned}
    \bram\Phi\Psi&:=
    \frac12\frac \d{\d \tau}
    W_2^2(\exp^\tau_\sharp\Phi,\exp^\tau_\sharp\Psi)\Big|_{\tau=0+}
    \\={}&
    \min \Big\{\int_{\TX\times \TX}\langle
    w-v,y-x\rangle\,\d\Ttheta(x,v;y,w): \Ttheta\in \Gamma(\Phi,\Psi),\
    (\mathsf x,\mathsf y)_\sharp\Ttheta\in \Gamma_o(\mu,\nu)\Big\}
  \end{aligned}
\end{equation}
and we will 
show that \eqref{eq:174}
admits the equivalent characterization
\begin{equation}
  \label{eq:176}
  \bram\Phi\Psi
  \le \lambda W_2^2(\mu,\nu)\quad
  \text{for every $\Phi\in \frF[\mu],\ \Psi\in \frF[\nu]$}.
\end{equation}
If we interpret the left hand side of \eqref{eq:176}
as a sort of Wasserstein pseudo-scalar product
of $\Phi$ and $\Psi$ along the direction of
an optimal coupling between $\mu$ and $\nu$,
\eqref{eq:176}
is in perfect analogy with
the canonical definition of $\lambda$-dissipativity (also called one-sided
Lipschitz condition)
for a multivalued map $\fF:\X\rightrightarrows \X$:
\begin{equation}
  \label{eq:77}
  \langle w-v,y-x\rangle\le \lambda |x-y|^2 \quad
  \text{for every $v\in \fF[x],\ w\in \fF[y]$}.
\end{equation}
It turns out that the (opposite of the) Wasserstein subdifferential $\boldsymbol\partial
\mathcal F$ \cite[Section 10.3]{ags} of a geodesically $(-\lambda)$-convex functional $\mathcal
F:\prob_2(\X)\to (-\infty,+\infty]$ is a \MPVF and
satisfies a condition equivalent to \eqref{eq:174} and \eqref{eq:176}.
We also notice that \eqref{eq:176} reduces to \eqref{eq:77}
in the particular case when $\Phi=\delta_{(x,v)},\Psi=\delta_{(y,w)}$
are Dirac masses in $\TX$.

\mysubsubsection{Conditional convergence of the Explicit Euler method}
Differently from the Implicit Euler method, however,
even if a \MPVF satisfies \eqref{eq:176}, every step of the Explicit
Euler scheme \eqref{eq:168b} 
affects the distance by a further quadratic correction
according to the formula
\begin{equation*}
  W_2^2(\exp^\tau_\sharp\Phi,\exp^\tau_\sharp\Psi)
  \le W_2^2(\mu,\nu)+2\tau \bram\Phi\Psi+\tau^2\Big(
  |\Phi|_2^2+|\Psi|_2^2\Big),\quad
  |\Phi|_2^2:=\int_\TX |v|^2\,\d\Phi(x,v),
\end{equation*}
which depends on the order of magnitude of $\Phi$ and $\Psi$, and thus
of $\frF$, at $\mu$ and $\nu$.

Our first main result (Theorems \ref{prop:rate},\ref{theo:strong-solution}), which provides an answer to 
question \ref{Q2},
states that if $\frF$ is a $\lambda$-dissipative \MPVF
according to \eqref{eq:176} then
every family of discrete solutions $(\bar M_\tau)_{\tau>0}$
of \eqref{eq:168b} in an interval $[0,T]$ 
satisfying the abstract CFL condition 
\begin{equation}
  \label{eq:175}
  |\Phi^n_\tau|_2\le L\quad \text{if }0\le n\le N=\finalstep T\tau,
\end{equation}
is uniformly converging to a limit curve $\mu\in
\Lip([0,T];\prob_2(\X))$
starting from $\mu_0$, with a uniform error estimate
\begin{equation}
  \label{eq:177}
  W_2(\mu_t,\bar M_\tau(t))\le CL\sqrt{\tau(t+\tau)}\mathrm
  e^{\lambda_+ t}\quad\text{for every }t\in [0,T]
\end{equation}
and a universal constant $C\le 14$. Apart from the precise value of $C$, the estimate \eqref{eq:177}
is sharp
\cite{Rulla} and reproduces in the measure-theoretic framework the
celebrated Crandall-Liggett
\cite{Crandall-Liggett}
estimate for the generation of dissipative semigroups in Banach
spaces. We derive it by adapting
to the metric-Wasserstein setting
the relaxation and doubling variable techniques
of \cite{NochettoSavare}, strongly inspired by
the ideas of Kru\v{z}kov \cite{Kruzkov} and
Crandall-Evans \cite{Crandall-Evans}.

This crucial result does not require any bound on the support of
the measures 
neither local compactness of the underlying space $\X$, so that we
will prove it in a general Hilbert space, possibly with infinite
dimension. Moreover, if $\mu,\nu$ are two limit solutions
starting from $\mu_0,\nu_0$ we show that
\begin{equation*}
  W_2(\mu_t,\nu_t)\le W_2(\mu_0,\nu_0)\mathrm e^{\lambda t}\quad
  t\in [0,T],
\end{equation*}
as it happens in the case of gradient flows of $(-\lambda)$-convex functions.
Once one has these building blocks, it is not too difficult to
construct a local and global existence theory,
mimicking the standard arguments for ODEs.

\mysubsubsection{Metric characterization of the limit solution}
As we stated in question \ref{Q3},
a further important point is to get an effective characterization
of the solution $\mu$ obtained as limit of the approximation scheme.

As a first property, considered
in \cite{Piccoli_2019,Piccoli_MDI} in the case of a single-valued PVF,
one could hope that $\mu$ satisfies
the continuity equation \eqref{eq:167a}
coupled with the barycentric condition replacing \eqref{eq:167b}
\begin{equation}
  \label{eq:179}
  \vv_t(x)=\int_\TX v\,\d\Phi_t(x,v),\quad
  \Phi_t=\frF[\mu_t].
\end{equation}
This is in fact true, as shown in \cite{Piccoli_2019,Piccoli_MDI} in
the finite dimensional case,
if $\frF$ is single valued and satisfies
a stronger Lipschitz dependence w.r.t.~$\mu$
(see \ref{Pgrowth} in Appendix \ref{sec:cfrPic}).

In the framework of dissipative {\MPVF}s,
we will replace \eqref{eq:179} with its relaxation \emph{\`a la
  Filippov} (see e.g.~\cite[Chapter 2]{Vinter} and \cite[Chapter 10]{AuF})
\begin{equation*}
  \vv_t(x)=\int_\TX v\,\d\Phi_t(x,v)\quad
  \text{for some}\quad
  \Phi_t\in\cloco{\clo\frF[\mu_t]},
\end{equation*}
where $\clo\frF$ is the sequential closure
of the graph of $\frF$ in the strong-weak topology of
$\prob_2^{sw}(\TX)$ (see \cite{NaldiSavare} and Section \ref{subsec:sw} for more
details; in fact, a more restrictive ``directional'' closure could be
considered, see \eqref{eq:deform}) and $\cloco{\clo\frF[\mu]}$ denotes
the closed convex hull of the given section $\clo\frF[\mu]$.

However, even in the case of a single valued map,
\eqref{eq:179} is not enough to characterize the limit solution, as
it has been shown by an interesting example in
\cite{Piccoli_2019,Camilli_MDE} (see also the gradient flow of Example \ref{ex:bary}).

Here we follow the metric viewpoint adopted in \cite{ags} for gradient
flows and we will characterize the limit solutions by a suitable
Evolution Variational Inequality
satisfied by the squared distance function from given test measures.
This approach is also strongly influenced by the
B\'enilan
notion of integral solutions to dissipative evolutions in
Banach spaces \cite{Benilan}. The main idea is that any differentiable solution to
$\dot x(t)\in\fF[x(t)]$ driven by a $\lambda$-dissipative operator in a Hilbert
space as in \eqref{eq:77}
satisfies
\begin{equation*}
  \begin{aligned}
    \frac 12\frac\d{\d t}|x(t)-y|^2
    &= \langle \dot x(t),x(t)-y\rangle=
    \langle \dot x(t)-w,x(t)-y\rangle+\langle w,x(t)-y\rangle
    \\&\le
    \lambda |x(t)-y|^2-\langle w,y-x(t)\rangle\quad\text{for every
    }w\in \frF[y].
  \end{aligned}
\end{equation*}
In the framework of $\prob_2(\X)$, we replace
$w\in \fF[y]$ with $\Psi\in \frF[\nu]$ and
the scalar product
$\langle w,y-x(t)\rangle$ with 
\begin{equation*}
  \bram\Psi{\mu_t}:=\min \Big\{\int_{\TX\times \X}\langle
  w,y-x\rangle\,\d\Ttheta(y,w;x): \Ttheta\in \Gamma(\Psi,\mu_t),\
  (\mathsf y,\mathsf x)_\sharp\Ttheta\in \Gamma_o(\nu,\mu_t)\Big\},
\end{equation*}
as in  \eqref{eq:164}.
According to this formal heuristic, we obtain the $\lambda$-\EVI
characterization of a limit curve $\mu$ as
\begin{equation}
  \tag{$\lambda$-\EVI}
  \label{eq:183}
  \frac 12\frac\d{\d t}W_2^2(\mu_t,\nu)\le \lambda W_2^2(\mu_t,\nu)-
  \bram\Psi{\mu_t}\quad\text{for every }\Psi\in \frF[\nu].    
\end{equation}
As for B\'enilan integral solutions, we can
considerably relax the apriori smoothness assumptions on $\mu$,
just imposing that $\mu$ is continuous and \eqref{eq:183}
holds in the sense of distributions in $(0,T)$.
In this way, we obtain a robust characterization, which is stable
under uniform convergence and also allows for 
solutions taking values in the closure of the domain of $\frF$.
This is particularly important when $\frF$ involves drift terms
with superlinear growth (see Example \ref{ex:rulla}).

The crucial point of this approach relies on a general error estimate,
which extends the validity of \eqref{eq:177} to
a general $\lambda$-\EVI solution $\mu$ and therefore
guarantees its uniqueness, whenever the Explicit Euler method is
solvable, at least locally in time.

Combining local in time existence with suitable global
confinement conditions (see e.g.~Theorem \ref{thm:global-bound})
we can eventually obtain a robust theory for the generation of
a $\lambda$-flow, i.e.~a semigroup $(\rmS_t)_{t\ge0}$ in
a suitable subset $D$ of $\prob_2(\X)$
such that $\rmS_t[\mu_0]$ is the unique $\lambda$-\EVI solution
starting from $\mu_0$ and for every $\mu_0,\mu_1\in D$
\begin{equation*}
  W_2(\rmS_t[\mu_0],\rmS_t[\mu_1])\le W_2(\mu_0,\mu_1)\mathrm
  e^{\lambda t},\quad t\ge0,
\end{equation*}
as in the case of Wasserstein gradient flows of geodesically
$(-\lambda)$-convex functionals.

\mysubsubsection{Explicit vs Implicit Euler method}
In the framework of contraction semigroups generated by
$\lambda$-dissipative
operators in Hilbert or Banach spaces, a crucial role is played by
the Implicit Euler scheme, which has the advantage to be
unconditionally stable, and thus avoids any apriori restriction
on the local bound of the operator, as we did in \eqref{eq:175}.
In Hilbert spaces, it is well known that the solvability of the Implicit Euler scheme is
equivalent
to the maximality of the graph of the operator.

In the case of a Wasserstein gradient flow of a geodesically convex  $\func:\prob_2(\X)\to(-\infty,+\infty]$, every step of the Implicit Euler
method 
(also called JKO/Minimizing Movement scheme \cite{JKO,ags}) can be solved
by a variational approach: $M^{n+1}_\tau$ has to be selected among the
solutions of 
\begin{equation}
  \label{eq:185}
  \min_{M\in \prob_2(\X)}  \frac1{2\tau}W_2^2(M,M^n_\tau)+\mathcal F(M).
\end{equation}
Notice, however, that in this case the \MPVF $\boldsymbol\partial\mathcal F$ is
defined implicitely 
in terms of $\mathcal F$ and
each step of \eqref{eq:185} provides a suitable variational selection
in $\boldsymbol\partial\mathcal F$, leading in the limit to the minimal
selection principle.

In the case of more general dissipative evolutions, it is not at all
clear how to solve the Implicit Euler scheme, in particular
when $\frF[\mu]$ is not concentrated on a map, and to
characterize the
maximal extension of $\frF$ (in the Hilbertian case
the maximal extension of a dissipative operator $\fF$ 
is explicitly computable
at least when the domain of $\fF$ has not empty interior,
see the Theorems of Robert and B\'enilan in \cite{Qi}).
Indeed, the analogy with
the Hilbertian theory does not extend to some properties
which play a crucial role. In particular, 
a dissipative MPVF $\frF$ in $\prob_2(\X)$
is not locally bounded in the interior of its domain (see Example
\ref{ex:unboundF}) and maximality may fail also for single-valued
continuous PVFs
(see Example \ref{ex:contnotmax}).
Even more
remarkably, in the Hilbertian case
a crucial equivalent characterization of dissipativity reads as
\begin{equation*}
  v\in \fF[x],\ w\in \fF[y]\quad\Rightarrow\quad
  |x-y|\le |(x-\tau v)-(y-\tau w)|
\end{equation*}
which implies that the resolvent operators $(I-\tau \fF)^{-1}$
(and every single step of the Implicit Euler scheme) are contractions in $\X$.
On the contrary, if we assume the forward characterizations
\eqref{eq:174} and \eqref{eq:176}
of dissipativity in $\prob_2(\X)$ (with $\lambda=0$)
we \emph{cannot} conclude in general that
\begin{equation}
  \label{eq:187}
  \Phi\in \frF[\mu],\ \Psi\in \frF[\nu]\quad\Rightarrow
  \quad
  W_2(\mu,\nu)\le W_2(\exp^{-\tau}_\sharp \Phi, \exp^{-\tau}_\sharp\Psi),
\end{equation}
since the squared distance map
$f(t):=W^2_2(\exp^{t}_\sharp \Phi, \exp^{t}_\sharp\Psi)$, $t\in \R$,
is not convex in general (see e.g.~\cite[Example 9.1.5]{ags})
and the fact that its right derivative at $t=0$
(corresponding to $ \bram\Phi\Psi$) is $\le 0$ according to \eqref{eq:176}
does not imply
that $f(0)\le f(t)$ for $t<0$ (corresponding to \eqref{eq:187}
for $t=-\tau$).

For these reasons, we decided to approach the investigation of dissipative
evolutions
in $\prob_2(\X)$ by the Explicit Euler method, and we defer
the study of the implicit one to a forthcoming paper.

\mysubsubsection{Plan of the paper}
As we already mentioned, our theory works in a general separable
Hilbert space $\X$: we collect some preliminary material concerning the
Wasserstein distance in Hilbert spaces
and the properties of strong-weak topology for $\prob_2(\TX)$ in Section
\ref{sec:preliminaries}.

In Section \ref{sec:tangent-bundle}, we will study the
semi-concavity properties of $W_2$ along
general deformations induced by the exponential map $\exp^\tau$ and 
we introduce and study the pairings $\bram\cdot\cdot$, $\brap\cdot\cdot$.
We will apply such tools to derive the
precise expressions of the left and right derivatives of $W_2$ along
absolutely continuous curves in $\prob_2(\X)$ in Section
\ref{subsec:left-right-derW}.

In Section \ref{sec:dissipative}, we will introduce and study
the notion of $\lambda$-dissipative \MPVF, in particular
its behaviour along geodesics (Section
\ref{subsec:dissipative-geodesic})
and its extension properties (Section \ref{subsec:extension}).
A few examples are collected in Section \ref{sec:examples}.

The last two sections contain the core of our results.
Section \ref{sec:MDE} is devoted to the notion of
$\lambda$-\EVI solutions and to their properties:
local uniqueness, stability and regularity in Section \ref{sec:locstabeuniq}, global
existence in Section \ref{sec:globexist} and barycentric characterizations in Section \ref{sec:bar}.
Section \ref{sec:EulerScheme}
contains the main estimates for the Explicit Euler scheme: the Cauchy estimates
between two discrete solutions corresponding to different step sizes
in Section \ref{subsec:exist} and the uniform error estimates
between a discrete and a $\lambda$-\EVI solution in Section \ref{subsec:error2}.

\mysubsubsection{Acknowledgments.}
G.S.~and G.E.S.~gratefully acknowledge the support of the Institute of Advanced
Study of the Technical University of Munich.
The authors thank the Department of Mathematics of the University of Pavia where this project was partially carried out. G.S. also thanks IMATI-CNR, Pavia.
G.C.~and G.S. have been supported by the MIUR-PRIN 2017
project \emph{Gradient flows, Optimal Transport and Metric Measure Structures}.  G.C. also acknowledges the partial support of the funds FARB 2016 Politecnico di Milano Prog. TDG6ATEN04.

\section{Preliminaries}
\label{sec:preliminaries}
In this section, we introduce the main concepts and results of Optimal Transport theory that will be extensively used in the rest of the paper. 
We start by listing the adopted notation.
{\small\begin{longtable}{ll}
$\bry{\Phi}$&the barycenter of $\Phi\in\prob(\TX)$ as in Definition \ref{def:bary};\\
$\mathrm B_X(x,r)$&the open ball with radius $r>0$ centered at $x\in X$;\\
$\rmC(X;Y)$&the set of continuous functions from $X$ to $Y$;\\
$\rmC_b(X)$&the set of bounded continuous real valued functions defined in
$X$;\\
$\rmC_c(X)$&the set of continuous real valued functions with compact support;\\
$\Cyl(\X)$& the space of cylindrical functions on $\X$, see Definition \ref{def:Cyl};\\
$\clo{\frF},\conv\frF[\mu]$& the sequential closure and convexification of $\frF$, see Section \ref{subsec:extension};\\
$\cloco\frF[\mu],\hat{\frF}$& sequential closure of convexification and extension of $\frF$, see Section \ref{subsec:extension};\\
$\updt \zeta, \lodt \zeta$&the right upper/lower Dini derivatives of $\zeta$, see \eqref{eq:RupDerW2};\\
$\dom(\frF)$&the proper domain of a set-valued function as in Definition \ref{def:MPVF}\\
$f_\sharp\nu$&the push-forward of $\nu\in\prob(X)$ through the map $f:X\to Y$;\\
$\Gamma(\mu,\nu)$&the set of admissible couplings between $\mu,\nu$, see \eqref{def:admplans};\\
$\Gamma_o(\mu,\nu)$&the set of optimal couplings between $\mu,\nu$,
see Definition \ref{def:wassmom};\\
$ \CondGammao\frF{\mu_0}{\mu_1}{i},\,i=0,1$&the set of optimal
couplings conditioned to $\frF$, see Definition
\ref{def:plangeodomV};\\
$\interval$&an interval of $\R$;\\
$\ii_X(\cdot)$&the identity function on a set $X$;\\
$\rI\mmu\frF$&the set of time instants $t$ s.t. $\sfx^t_\sharp\mmu$ belongs to $\dom(\frF)$, see Definition \ref{def:plangeodomV};\\
$\Lambda,\Lambda_o$&the sets of couplings as in Definition \ref{def:lambda} and Theorem \ref{thm:characterization};\\
$\rsqm{\nu}$&the $2$-nd moment of $\nu\in\prob(X)$ as in Definition \ref{def:wassmom};\\
$|\Phi|_2$&the $2$-nd moment of $\Phi\in\prob(\TX)$ as in \eqref{eq:defsqmPhi};\\
$|\frF|_2(\mu)$&the $2$-nd moment of $\frF$ at $\mu$ as in \eqref{eq:59};\\
$|\dot{\mu}|(t)$&the metric derivative of a locally absolutely continuous curve $\mu$;\\
$\prob(X)$&the set of Borel probability measures on the topological
space $X$;\\
$\prob_b(X)$&the set of Borel probability measures with bounded
support;\\
$\prob_2(X)$&the subset of measures in $\prob(X)$ with finite
quadratic moments;\\
$\prob_2^{sw}(\PX)$&the space $\prob_2(\PX)$ endowed with a weaker topology as in Definition \ref{def:swprobTX};\\
$\relcP{}{\mu}{\TX}$&the subset of $\prob_2(\TX)$ with fixed first marginal $\mu$ as in \eqref{condTanTX};\\
$\bram{\cdot}{\cdot}$, $\brap{\cdot}{\cdot}$&the pseudo scalar products as in Definition \ref{def:scalarprodop};\\
$\directionalm{\Phi}{\ttheta}t$, $\directionalp{\Phi}{\ttheta}t$&the duality pairings as in Definition \ref{def:last};\\
$\directionalm \frF\mmu t$, $\directionalp \frF\mmu t$&the duality pairings as in Definition \ref{def:frfl};\\
$\directional \frF\mmu {0+},\directional \frF\mmu {1-}$&the limiting duality pairings as in Definition \ref{def:directiona};\\
$\supp(\nu)$&the support of $\nu\in\prob(X)$;\\
$\Tan_{\mu}\prob_2(X)$&the tangent space defined in Theorem \ref{thm:tangentv};\\
$W_2(\mu,\nu)$&the $L^2$-Wasserstein distance between $\mu$ and $\nu$, see Definition \ref{def:wassmom};\\
$\X$&a separable Hilbert space;\\
$\TX$&the tangent bundle to $\X$, usually endowed with the strong-weak
topology;\\
$\sfx,\sfv,\exp^t$&the projection and exponential maps defined in \eqref{eq:projandexp};\\
$\sfx^t$&the evaluation map defined in \eqref{eq:mapxt}.\\
\end{longtable}}
In the present paper we will mostly deal with
Borel probability measures defined in (subsets of) some
separable Hilbert space endowed with the strong or a weaker topology.
The convenient setting is therefore provided by Polish/Lusin and
completely regular
topological spaces.

Recall that a topological space $X$
is Polish (resp.~Lusin) if its
topology is induced by a complete and separable metric
(resp.~is coarser than a Polish topology).
We will denote by $\prob(X)$
the set of Borel probability measures on $X$.
If $X$ is Lusin, every measure $\mu\in \prob(X)$
is also a Radon measure, i.e.~it satisfies
\begin{equation*}
  \forall \, B\subset X\text{ Borel, }\forall \, \eps >0 \quad \exists \, K\subset B \text{ compact  s.t. } \mu (B \setminus K) < \eps.
\end{equation*}
$X$ is \emph{completely regular}
if it is Hausdorff and for every closed set $C$ and point $x\in
X\setminus C$ there exists a continuous function $f: X \to [0,1]$
s.t.~$f(x)=0$ and $f(C)=\{1\}$.

Given $X$ and $Y$ Lusin spaces, $\mu \in \prob(X)$ and a Borel function $f: X \to Y$, there is a canonical way to transfer the measure $\mu$ from $X$ to $Y$ through $f$. This is called the \emph{push forward} of $\mu$ through $f$, denoted by $f_{\sharp}\mu$ and defined by $(f_{\sharp}\mu)(B) := \mu(f^{-1}(B))$ for every Borel set $B$ in $Y$, or equivalently
\[ \int_{Y} \varphi \de (f_{\sharp}\mu) = \int_X \varphi \circ f \de \mu \]
for every $\varphi$ bounded (or nonnegative) real valued Borel function
on $Y$.
A particular case occurs if $X=X_1 \times X_2$, $Y=X_i$ and $f=\pi^i$
is the projection on the $i$-th component, $i=1,2$.
In this case, $f$ is usually denoted with $\pi^i$ or $\pi^{X_i}$, and $\pi^{X_i}_{\sharp}\mu$ is called the $i$-th marginal of $\mu$.\\
This notation is particularly useful when dealing with transport plans: given $X_1$ and $X_2$ completely regular spaces and $\mu \in \prob(X_1)$, $\nu \in \prob(X_2)$, we define
\begin{equation}\label{def:admplans}
 \Gamma(\mu, \nu) := \left  \{ \ggamma \in \prob(X_1 \times X_2) \mid \pi^{1}_{\sharp} \ggamma = \mu \, , \, \pi^{2}_{\sharp} \ggamma = \nu \right \},
\end{equation}
i.e.~the set of probability measures on the product space having $\mu$ and $\nu$ as marginals.\\
On $\prob(X)$ we consider the so called \textit{narrow topology} which is the coarsest topology on $\prob(X)$ s.t. the maps $\mu \mapsto \int_X \varphi \de \mu$ are continuous for every $\varphi \in \rm\rmC_b(X)$, the space of real valued and bounded continuous functions on $X$. In this way a net $(\mu_\alpha)_{\alpha\in \mathbb A} \subset \prob(X)$ indexed by a directed set $\mathbb A$ is said to converge narrowly to $\mu \in \prob(X)$, and we write $\mu_\alpha \to \mu$ in $\prob(X)$, if
\begin{equation*} \lim_{\alpha} \int_X \varphi \de \mu_\alpha = \int_X \varphi \de \mu\quad\forall\varphi\in\rm\rmC_b(X). \end{equation*}
We recall the well known Prokhorov's theorem in the context of completely regular topological spaces (see \cite[Appendix]{Schwartz}).
\begin{theorem}[Prokhorov]\label{thm:Prok}
 Let $X$ be a completely regular topological space and let $\mathcal{F} \subset \prob(X)$ be a tight subset i.e.
\[ \text{for every } \eps >0 \text{ there exists } K_{\eps} \subset X \text{ compact s.t. }  \mu(X \setminus K_{\eps}) < \eps\,\;\forall \, \mu\in\mathcal{F}. \]
Then $\mathcal{F}$ is relatively compact in $\prob(X)$ w.r.t.~the narrow topology.
\end{theorem}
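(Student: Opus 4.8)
The plan is to prove the slightly stronger statement that every net $(\mu_\alpha)_{\alpha\in\mathbb A}\subset\mathcal F$ has a subnet converging narrowly to some $\mu\in\prob(X)$; by the net characterization of relative compactness this is exactly what is needed. The first move is to exploit tightness to confine the whole family to a $\sigma$-compact ``core'': for each $n\in\N$ I would pick a compact $\tilde K_n\subset X$ with $\mu(X\setminus\tilde K_n)<1/n$ for every $\mu\in\mathcal F$ and set $K_n:=\tilde K_1\cup\cdots\cup\tilde K_n$, obtaining an increasing sequence of compact sets (each compact Hausdorff, $X$ being Hausdorff) with $\sup_{\mu\in\mathcal F}\mu(X\setminus K_n)\le 1/n$.

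Next I would perform a diagonal extraction at the level of restrictions. Let $\nu_{\alpha,n}$ be the restriction of $\mu_\alpha$ to $K_n$, regarded as the positive linear functional $f\mapsto\int_{K_n}f\,\d\mu_\alpha$ on $\rmC(K_n)$; it has norm $\le 1$, so it lies in the set $\mathcal M_n$ of positive functionals of norm $\le 1$ on $\rmC(K_n)$, which is weak$^*$-compact by the Banach--Alaoglu theorem and, via Riesz representation, is identified with the set of Radon measures on $K_n$ of total mass $\le 1$. By Tychonoff's theorem $\prod_n\mathcal M_n$ is compact, so the net $\alpha\mapsto(\nu_{\alpha,n})_n$ has a subnet $(\mu_{\alpha_\gamma})_\gamma$ with $\nu_{\alpha_\gamma,n}\to\lambda_n$ weak$^*$ for every $n$, for suitable $\lambda_n\in\mathcal M_n$. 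For $\varphi\in\rmC_b(X)$ one has $\bigl|\int_X\varphi\,\d\mu_{\alpha_\gamma}-\int_{K_n}\varphi\,\d\mu_{\alpha_\gamma}\bigr|\le\|\varphi\|_\infty/n$ uniformly in $\gamma$, while $\int_{K_n}\varphi\,\d\mu_{\alpha_\gamma}\to\int_{K_n}\varphi\,\d\lambda_n$; hence the net $\bigl(\int_X\varphi\,\d\mu_{\alpha_\gamma}\bigr)_\gamma$ is Cauchy in $\R$ and converges to some $L(\varphi)$. The functional $L$ is positive and linear on $\rmC_b(X)$ with $L(1)=1$ (since $1-1/n\le\mu_{\alpha_\gamma}(K_n)\le 1$), and it is \emph{tight}: for every $\eps>0$, taking $K=K_n$ with $n>1/\eps$, we get $L(\varphi)\le\eps$ whenever $\varphi\in\rmC_b(X)$ satisfies $0\le\varphi\le 1$ and vanishes on $K$.

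The final, decisive step is to represent $L$ by a measure: by the Riesz--Daniell representation theorem for tight positive linear functionals on $\rmC_b(X)$ over a completely regular space, there is a Radon probability measure $\mu\in\prob(X)$ with $\int_X\varphi\,\d\mu=L(\varphi)$ for every $\varphi\in\rmC_b(X)$. Then $\int_X\varphi\,\d\mu_{\alpha_\gamma}\to\int_X\varphi\,\d\mu$ for all $\varphi\in\rmC_b(X)$, i.e.\ $\mu_{\alpha_\gamma}\to\mu$ narrowly, completing the proof.

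I expect the genuine obstacle to be this last representation step. The diagonal extraction and the $\eps/n$ estimates are soft; the substantive content is that a tight positive functional on $\rmC_b(X)$ must come from an honest ($\tau$-additive, Radon) probability measure on a space $X$ that need not be metrizable. This is where complete regularity is used — it ensures $\rmC_b(X)$ separates points from closed sets and thus ``sees'' enough of $X$ — together with the tightness of $L$, which is exactly what upgrades a priori merely finitely additive / Baire-measure information to genuine Borel (= Radon) information. In particular one should not expect to bypass this step by gluing the $\lambda_n$ directly into $\mu$, since weak$^*$ limits do not commute with restriction to the non-open sets $K_n$ and exact consistency (namely $\lambda_{n+1}$ restricted to $K_n$ equalling $\lambda_n$) fails in general.
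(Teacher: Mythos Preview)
The paper does not prove this theorem: it is stated as a known result and simply referenced to \cite[Appendix]{Schwartz}, so there is no proof in the paper to compare against. Your argument is a standard and essentially correct route to Prokhorov's theorem in the completely regular setting: the diagonal extraction via Banach--Alaoglu/Tychonoff on the compact exhaustion, the construction of the tight functional $L$ on $\rmC_b(X)$, and the identification of the Riesz--Daniell representation as the substantive step are all sound, and your caveat about the failure of exact consistency of the $\lambda_n$ is well taken.
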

It is then relevant to know when a given $\mathcal{F} \subset
\prob(X)$ is tight. If $X$ is a Lusin completely regular topological
space,
then the set $\mathcal{F} = \{ \mu \}\subset\prob(X)$ is tight. Another trivial criterion for tightness is the following: if $\mathcal{F} \subset \prob(X_1 \times X_2)$ is s.t. $\mathcal{F}_i := \{ \pi_{\sharp}^i \ggamma \mid \ggamma \in \mathcal{F} \} \subset \prob(X_i)$ are tight for $i=1,2$, then also $\mathcal{F}$ is tight. We also recall the following useful proposition (see \cite[Remark 5.1.5]{ags}).
\begin{proposition}  Let $X$ be a Lusin
  completely regular topological space and let $\mathcal{F} \subset
  \prob(X)$. Then $\mathcal{F}$ is tight if and only if there exists
  $\varphi : X \to [0, + \infty]$
  with compact sublevels s.t.
\[ \sup_{\mu \in \mathcal{F}} \int_{X} \varphi \de \mu < + \infty.\]
\end{proposition}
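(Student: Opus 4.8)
The plan is to prove the two implications separately. The ``if'' part is an immediate consequence of Markov's inequality, while the ``only if'' part requires building a coercive function out of an exhausting family of compacts.

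For the ``if'' direction, suppose $\varphi:X\to[0,+\infty]$ has compact sublevels and $C:=\sup_{\mu\in\mathcal F}\int_X\varphi\de\mu<+\infty$; note that having closed (indeed compact) sublevels makes $\varphi$ lower semicontinuous, hence Borel, so the integrals are well defined. Given $\eps>0$ I would choose $R>0$ with $C/R\le\eps$ (possible since $C<+\infty$; any $R>0$ works if $C=0$) and set $K_\eps:=\{\varphi\le R\}$, which is compact by hypothesis. Markov's inequality then yields
\[
\mu(X\setminus K_\eps)=\mu\big(\{\varphi>R\}\big)\le\frac1R\int_X\varphi\de\mu\le\frac CR\le\eps\qquad\text{for every }\mu\in\mathcal F,
\]
which is exactly tightness.

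For the ``only if'' direction, assume $\mathcal F$ is tight. For each $k\in\N$ I would pick a compact $\tilde K_k\subset X$ with $\sup_{\mu\in\mathcal F}\mu(X\setminus\tilde K_k)\le 2^{-k}$, and then replace it by the increasing family $K_k:=\bigcup_{j=1}^k\tilde K_j$, still compact and still satisfying $\sup_{\mu\in\mathcal F}\mu(X\setminus K_k)\le 2^{-k}$. Since $X$ is Hausdorff each $K_k$ is closed, so $\varphi:=\sum_{k=1}^\infty \mathbf 1_{X\setminus K_k}$ is a well-defined Borel function $X\to[0,+\infty]$. Because the $K_k$ are increasing, the set $\{k:x\notin K_k\}$ is a down-set, and one checks that $\{\varphi\le t\}=K_{\floor{t}+1}$ for every $t\ge0$ (and $\{\varphi\le t\}=\emptyset$ for $t<0$), so $\varphi$ has compact sublevels. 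Finally, by the monotone convergence theorem,
\[
\int_X\varphi\de\mu=\sum_{k=1}^\infty\mu(X\setminus K_k)\le\sum_{k=1}^\infty 2^{-k}=1\qquad\text{for every }\mu\in\mathcal F,
\]
hence $\sup_{\mu\in\mathcal F}\int_X\varphi\de\mu\le1<+\infty$, as required.

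I do not expect a genuine obstacle here: the only two points that need a little care are making the exhausting family of compacts monotone, so that the sublevels of $\varphi$ are exactly the $K_k$ and therefore compact, and choosing a summable control (here $2^{-k}$) on the tails $\mu(X\setminus K_k)$ so that the defining series of $\varphi$ has uniformly bounded integral. The Lusin and complete‑regularity assumptions on $X$ are used only through the fact that compact subsets are closed; the statement and its proof extend verbatim to an arbitrary Hausdorff space.
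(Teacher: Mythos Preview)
Your proof is correct. The paper does not actually give a proof of this proposition; it merely recalls it with a reference to \cite[Remark 5.1.5]{ags}. Your argument is the standard one (and essentially what one finds in that reference): Markov's inequality for one direction, and building $\varphi$ as a sum of indicators of complements of an increasing exhaustion by compacts for the other.
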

We recall the so-called \emph{disintegration theorem} (see e.g. \cite[Theorem 5.3.1]{ags}).
\begin{theorem}\label{thm:disintegr}
Let $\mathbb X, X$ be Lusin completely regular topological spaces, $\mmu\in\prob(\mathbb{X})$ and $r:\mathbb{X}\to X$ a Borel map. Denote with $\mu=r_{\sharp}\mmu\in\prob(X)$. Then there exists a $\mu$-a.e. uniquely determined Borel family of probability measures $\{\mmu_x\}_{x\in X}\subset\prob(\mathbb{X})$ such that $\mmu_x(\mathbb{X}\setminus r^{-1}(x))=0$ for $\mu$-a.e. $x\in X$, and
\[\int_{\mathbb{X}}\varphi(\boldsymbol{x})\de\mmu(\boldsymbol{x})=\int_X\left(\int_{r^{-1}(x)}\varphi(\boldsymbol{x})\de\mmu_x(\boldsymbol{x})\right)\de\mu(x)\]
for every bounded Borel map $\varphi:\mathbb{X}\to\R$.
\end{theorem}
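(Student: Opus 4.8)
The plan is to obtain the family $\{\mmu_x\}$ as a measurable selection of Riesz representations of a conditional--expectation operator, following the classical scheme for regular conditional probabilities. First I would reduce to a metric setting: since $\mathbb X$ and $X$ are Lusin, by standard properties of Lusin spaces each is Borel isomorphic to a Borel subset of a compact metric space, and only the Borel structures of $\mathbb X$ and $X$ enter the statement; transporting $\mmu$, $\mu$ and $r$ through such isomorphisms I may therefore assume that $\mathbb X\subseteq\hat{\mathbb X}$ and $X\subseteq\hat X$ are Borel subsets of compact metric spaces $\hat{\mathbb X},\hat X$, with $\mmu\in\prob(\hat{\mathbb X})$ concentrated on $\mathbb X$, $\mu\in\prob(\hat X)$ concentrated on $X$, and $r\colon\hat{\mathbb X}\to\hat X$ Borel with $r(\hat{\mathbb X})\subseteq X$ and $\mu=r_\sharp\mmu$.

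Next I would build the conditional--expectation operator. For every bounded Borel $\varphi\colon\hat{\mathbb X}\to\R$ the finite signed measure $B\mapsto\int_{r^{-1}(B)}\varphi\de\mmu$ on $\hat X$ is absolutely continuous with respect to $\mu$ (because $\mu=r_\sharp\mmu$), so Radon--Nikodym provides a density $E\varphi\in L^1(\hat X,\mu)$ characterised by $\int_{r^{-1}(B)}\varphi\de\mmu=\int_B E\varphi\de\mu$ for every Borel $B\subseteq\hat X$; the operator $E$ is linear and monotone, $E\mathbf 1=1$ and $|E\varphi|\le\|\varphi\|_\infty$ $\mu$-a.e. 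Then I would fix a countable $\mathbb Q$-subalgebra $\mathcal L\subseteq\rmC(\hat{\mathbb X})$ containing $\mathbf 1$ and uniformly dense, choose a Borel representative of $E\varphi$ for each $\varphi\in\mathcal L$, and discard a $\mu$-null set so that on the remaining Borel set $X_0$ all the countably many a.e.\ relations ($\mathbb Q$-linearity, positivity on $\mathcal L$, normalisation, and the resulting bound by the sup-norm) hold simultaneously. For $x\in X_0$ the functional $\varphi\mapsto E\varphi(x)$ on $\mathcal L$ is positive, normalised and $\mathbb Q$-linear, hence extends uniquely to a positive normalised linear functional on $\rmC(\hat{\mathbb X})$, and the Riesz representation theorem yields $\mmu_x\in\prob(\hat{\mathbb X})$ with $\int\varphi\de\mmu_x=E\varphi(x)$ for all $\varphi\in\mathcal L$, hence for all $\varphi\in\rmC(\hat{\mathbb X})$; for $x\notin X_0$ I set $\mmu_x$ equal to a fixed reference measure. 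Borel measurability of $x\mapsto\mmu_x$ then follows from measurability of $x\mapsto\int\varphi\de\mmu_x$ for continuous $\varphi$, extended to bounded Borel $\varphi$ by a monotone class argument.

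To conclude I would prove the disintegration identity together with the concentration property. For $\varphi\in\mathcal L$ the relation $\int_{r^{-1}(B)}\varphi\de\mmu=\int_B\bigl(\int\varphi\de\mmu_x\bigr)\de\mu(x)$ holds for every Borel $B$ by construction; since both sides are, for fixed $\varphi$, finite measures in $B$, and depend continuously on $\varphi$ in the uniform norm, the identity passes to all $\varphi\in\rmC(\hat{\mathbb X})$ and then, for fixed $B$, to all bounded Borel $\varphi$ by a monotone class argument. Choosing $\varphi=\mathbf 1_{\hat{\mathbb X}\setminus\mathbb X}$ and $B=\hat X$ shows $\mmu_x\in\prob(\mathbb X)$ for $\mu$-a.e.\ $x$; picking a countable base $\{U_k\}$ of $X$ and applying the identity with $\varphi=\mathbf 1_{r^{-1}(U_k)}$, $B=X\setminus U_k$ gives $\mmu_x(r^{-1}(U_k))=0$ for $\mu$-a.e.\ $x\notin U_k$, and since $\mathbb X\setminus r^{-1}(x)=\bigcup_{k\,:\,x\notin U_k}r^{-1}(U_k)$ a countable intersection of conull sets yields $\mmu_x(\mathbb X\setminus r^{-1}(x))=0$ for $\mu$-a.e.\ $x$. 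Finally, taking $B=X$ and inserting a bounded Borel $\varphi\colon\mathbb X\to\R$ (extended by $0$ off $\mathbb X$, which is harmless by the two concentration facts) gives the stated formula; uniqueness follows by testing the identity against the countable family $\mathcal L$, which forces $\mmu_x=\mmu'_x$ off a $\mu$-null set for any other admissible family $\{\mmu'_x\}$.

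The step I expect to be the main obstacle is the concentration property $\mmu_x(\mathbb X\setminus r^{-1}(x))=0$: this is the only point where the Borel measurability of $r$ and the second countable metrizable structure of $X$ are genuinely needed, and it is what forces the preliminary reduction of $X$ to a second countable space; the remaining ingredients (Radon--Nikodym densities, a countable dense subalgebra, the Riesz representation theorem, and monotone class arguments) are routine.
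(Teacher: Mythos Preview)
The paper does not prove this theorem: it is stated as a known result with a reference to \cite[Theorem 5.3.1]{ags}, so there is no in-paper proof to compare against. Your argument is the standard construction of regular conditional probabilities (reduce to a compact metric ambient via Lusin/Borel isomorphism, build conditional expectations by Radon--Nikodym, represent them pointwise on a countable dense subalgebra via Riesz, then upgrade by monotone class and extract the concentration property from a countable base of $X$), and it is correct. One small point to make precise: when you transport the setup to $\hat{\mathbb X}$ you need to extend $r$ from $\mathbb X$ to all of $\hat{\mathbb X}$; any Borel extension (e.g.\ a constant off $\mathbb X$) works since $\mmu$ is concentrated on $\mathbb X$, but this should be said explicitly rather than asserting that $r$ is already defined on $\hat{\mathbb X}$.
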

\begin{remark}\label{rmk:disintegr}
When $\mathbb{X}=X_1\times X_2$ and $r=\pi^1$, we can canonically
identify the disintegration $\{\mmu_x\}_{x\in X_1} \subset
\prob(\mathbb{X})$ of $\mmu\in\prob(X_1\times X_2)$ w.r.t.~$\mu=\pi^1_\sharp\mmu$
with a family of probability measures $ \{\mu_{x_1}\}_{x_1\in X_1} \subset
\prob(X_2)$. We write
$\mmu= \displaystyle\int_{X_1}\mu_{x_1}\,\d \mu(x_1)$.
\end{remark}

\subsection{Wasserstein distance in Hilbert spaces}
Let $X$ be a separable (possibly infinite dimensional)
Hilbert space.
We will denote by $\GX^s$ (respt.~$\GX^w$) the Hilbert space endowed with its strong (resp.~weak) topology. Notice that $\GX^w$ is
a Lusin completely regular space.
$\GX^s$ and $\GX^w$
share the same class of Borel sets
and therefore of Borel probability measures, which we will simply
denote by $\prob(\GX)$,
using
$\prob(\GX^s)$ and $\prob(\GX^w)$
only when we will refer to the correspondent topology.
Finally, if $\GX$ has finite dimension then the two topologies coincide.

We now list some properties of Wasserstein spaces and we refer to \cite[\S 7]{ags} for a complete account of this matter.
\begin{definition}\label{def:wassmom} 
  Given $\mu \in \prob(\GX)$
  we define
  \[ \sqm \mu := \int_\GX |x|^2 \de \mu(x), \qquad
    \prob_2(\GX) := \{ \mu \in \prob(\GX) \mid \rsqm\mu < + \infty \}.
  \]
  The $L^2$-Wasserstein distance between $\mu, \mu' \in \prob_2(\GX)$
  is defined as
\begin{align} \label{eq:w21} W_2^2(\mu, \mu') &:= \inf \left \{ \int_{\GX \times \GX} |x-y|^2 \de \ggamma(x,y) \mid \ggamma \in \Gamma(\mu, \mu') \right \}.
\end{align}
\end{definition}
The set of elements of $\Gamma(\mu, \mu')$ realizing the infimum in
\eqref{eq:w21} is denoted with $\Gamma_o(\mu, \mu')$.
We say that a measure $\ggamma\in \prob_2(\GX\times\GX)$ is optimal if
$\ggamma\in
\Gamma_o(\pi^1_\sharp\ggamma,\pi^2_\sharp\ggamma)$.

We will denote by $\rB\mu\varrho$
the open ball centered at $\mu$ with radius $\varrho$ in $\prob_2(X)$.
The metric space $(\prob_2(\GX), W_2)$ enjoys many interesting properties: here we only recall that it is a complete and separable metric space and that $W_2$-convergence (sometimes denoted with $\overset{W_2}{\longrightarrow}$) is stronger than the narrow convergence. In particular, given $(\mu_n)_{n\in\N}\subset\prob_2(\GX)$ and $\mu\in\prob_2(\GX)$, we have \cite[Remark 7.1.11]{ags} that
\begin{equation}
  \label{eq:important}
  \mu_n\overset{W_2}{\to}\mu,\text{ as }n\to+\infty \quad\Longleftrightarrow\quad\begin{cases}\mu_n\to\mu \text{ in }\prob(\GX^s),\\
    \rsqm{\mu_n}\to\rsqm\mu,
  \end{cases}
  \text{ as }n\to+\infty.\\
\end{equation}
Finally, we recall that sequences converging in $(\prob_2(\GX), W_2)$
are tight.
More precisely we have the following characterization of compactness
in $\prob_2(\GX)$.
\begin{lemma}[Relative compactness in $\prob_2(\GX)$]
  \label{le:compactnessP2}
  A subset $\mathcal K\subset\prob_2(\GX)$ is relatively compact 
  w.r.t.~the $W_2$-topology if and only if
  \begin{enumerate}
  \item $\mathcal K$ is tight w.r.t.~$\GX^s$,
  \item $\mathcal K$ is uniformly $2$-integrable, i.e.~
    \begin{equation}
      \label{eq:10}
      \lim_{k\to\infty}\sup_{\mu\in \mathcal K}\int_{|x|\ge k}|x|^2\,\d\mu=0.
    \end{equation}
  \end{enumerate}
\end{lemma}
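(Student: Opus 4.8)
The plan is to establish the two directions separately, using the known characterization \eqref{eq:important} of $W_2$-convergence together with Prokhorov's theorem \ref{thm:Prok} and a uniform-integrability argument. For the necessity direction, suppose $\mathcal K$ is relatively compact in the $W_2$-topology. Tightness with respect to $\GX^s$ follows because $W_2$-convergence implies narrow convergence (hence $W_2$-relative compactness implies narrow relative compactness), and in a Polish/Lusin completely regular space narrow relative compactness is equivalent to tightness by Prokhorov \ref{thm:Prok}; alternatively one can directly extract from any sequence a $W_2$-convergent subsequence and invoke \eqref{eq:important}. For uniform $2$-integrability \eqref{eq:10}: if it failed, there would be $\eps>0$, a sequence $\mu_n\in\mathcal K$ and $k_n\to\infty$ with $\int_{|x|\ge k_n}|x|^2\,\d\mu_n\ge\eps$; passing to a $W_2$-convergent subsequence $\mu_n\overset{W_2}\to\mu$, the convergence $\rsqm{\mu_n}\to\rsqm\mu$ together with narrow convergence of $\mu_n$ and of the measures $|x|^2\mu_n$ (which have bounded mass) forces $\int_{|x|\ge k}|x|^2\,\d\mu_n\to 0$ uniformly in $n$ once $k$ is large, a contradiction. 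The clean way to phrase this is: $W_2$-convergence of $\mu_n$ to $\mu$ is equivalent to $\mu_n\to\mu$ narrowly plus convergence of second moments, and this is well known to be equivalent to narrow convergence plus uniform $2$-integrability of $\{\mu_n\}$; a relatively compact set all of whose convergent sequences are uniformly $2$-integrable is itself uniformly $2$-integrable (again by a subsequence-extraction contradiction argument).

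For the sufficiency direction, assume (1) and (2). By Prokhorov's theorem \ref{thm:Prok}, tightness of $\mathcal K$ in $\GX^s$ gives that $\mathcal K$ is relatively compact in $\prob(\GX^s)$ with respect to the narrow topology. Given any sequence $(\mu_n)\subset\mathcal K$, extract a narrowly convergent subsequence $\mu_{n_j}\to\mu$ in $\prob(\GX^s)$. It remains to show $\mu\in\prob_2(\GX)$ and $\mu_{n_j}\overset{W_2}\to\mu$; by \eqref{eq:important} it suffices to prove $\rsqm{\mu_{n_j}}\to\rsqm\mu$. Here one uses uniform $2$-integrability \eqref{eq:10}: for fixed $k$, the truncated functional $\mu\mapsto\int (|x|^2\wedge k^2)\,\d\mu$ is narrowly continuous (bounded continuous integrand), so $\int(|x|^2\wedge k^2)\,\d\mu_{n_j}\to\int(|x|^2\wedge k^2)\,\d\mu$; by lower semicontinuity of $\mu\mapsto\rsqm\mu$ under narrow convergence, $\rsqm\mu\le\liminf_j\rsqm{\mu_{n_j}}<\infty$, so $\mu\in\prob_2(\GX)$; and \eqref{eq:10} lets one pass $k\to\infty$ uniformly, giving $\rsqm{\mu_{n_j}}\to\rsqm\mu$. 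This yields $\mu_{n_j}\overset{W_2}\to\mu$, proving relative compactness in $(\prob_2(\GX),W_2)$.

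I expect the only mild subtlety — the ``main obstacle'' such as it is — to be the careful bookkeeping in the sufficiency direction showing that narrow convergence plus the uniform-integrability bound \eqref{eq:10} upgrades to convergence of second moments: one must split $\rsqm{\mu_{n_j}}$ as $\int(|x|^2\wedge k^2)\,\d\mu_{n_j}+\int(|x|^2-k^2)_+\,\d\mu_{n_j}$, control the tail uniformly by \eqref{eq:10}, and only then let $j\to\infty$ and $k\to\infty$; a symmetric estimate is needed to control the tail of the limit $\mu$, which follows from lower semicontinuity applied to truncations. Everything else is a direct application of Prokhorov \ref{thm:Prok}, the equivalence \eqref{eq:important}, and standard subsequence arguments, so no genuinely hard step arises. (One could equivalently cite \cite[Proposition 7.1.5]{ags} or the analogous statement there, but the self-contained argument above is short enough to include.)
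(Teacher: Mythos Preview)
Your proposal is correct and follows the same overall strategy as the paper: Prokhorov's theorem \ref{thm:Prok} plus the characterization \eqref{eq:important}. The sufficiency direction is essentially identical---the paper simply cites \cite[Lemma 5.1.7]{ags} for the step where you spell out the truncation $\int(|x|^2\wedge k^2)\,\d\mu$ plus uniform tail control.

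The one noteworthy difference is in the necessity of \eqref{eq:10}. You argue by contradiction via subsequence extraction, which works but requires invoking (or re-proving) that $W_2$-convergence of a sequence implies uniform $2$-integrability of that sequence. The paper instead observes that the functionals $F_k(\mu):=\int_{|x|\ge k}|x|^2\,\d\mu$ are upper semicontinuous on $\prob_2(\GX)$, decreasing in $k$, and converge pointwise to $0$; since $\overline{\mathcal K}$ is compact, Dini's theorem gives uniform convergence to $0$ directly. This is a cleaner one-line argument that avoids the subsequence bookkeeping and the ``well known'' equivalence you invoke, so it may be worth adopting.
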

\begin{proof}
  Tightness is clearly a necessary condition; concerning \eqref{eq:10}
  let us notice that the maps
  $F_k:\prob_2(\GX)\to[0,\infty)$, $F_k(\mu):=\int_{|x|\ge
    k}|x|^2\,\d\mu$ are upper semicontinuous, are decreasing
  w.r.t.~$k$,
  and converge pointwise to $0$ for every $\mu\in \prob_2(\GX)$.
  Therefore, if $\mathcal K$ is relatively compact, they converge
  uniformly to $0$ thanks to Dini's Theorem.
  
  In order to prove that (1) and (2) are also sufficient for relative
  compactness, it is sufficient to check that every sequence
  $(\mu_n)_{n\in \N}$ in
  $\mathcal K$ has a convergent subsequence.
  Applying Prokhorov Theorem \ref{thm:Prok} we can find $\mu\in \prob(\GX)$
  and a convergent subsequence
  $k\mapsto \mu_{n(k)}$ such that $\mu_{n(k)}\to \mu$ in
  $\prob(\GX^s)$.
  Since $\rsqm{\mu_n}$ is uniformly bounded, then $\mu\in \prob_2(\GX)$.
  Applying \cite[Lemma 5.1.7]{ags}, we also get
  $\lim_{k\to\infty}\rsqm{\mu_{n(k)}}=\rsqm{\mu}$
  so that $\lim_{k\to\infty}W_2(\mu_{n(k)},\mu)=0$ by \eqref{eq:important}.  
\end{proof}

\begin{definition}[Geodesics]\label{def:W2geodesic}
A curve $(\mu_t)_{t\in [0,1]}\subset \prob_2(\GX)$ is said to be a \emph{(constant speed) geodesic} if for all $0\leq s\leq t\leq 1$
we have
\[W_2(\mu_s,\mu_t)=(t-s)W_2(\mu_0,\mu_1).\]
We also say that $(\mu_t)_{t\in [0,1]}$ is a \emph{geodesic from $\mu_0$ to $\mu_1$}. We say that $A\subset\prob_2(\GX)$ is a \emph{geodesically convex} set if for any pair $\mu_0,\mu_1\in A$ there exists a geodesic $(\mu_t)_{t\in [0,1]}$ from $\mu_0$ to $\mu_1$ such that $(\mu_t)_{t\in [0,1]}\subset A$.
\end{definition}

We recall also the following useful properties of geodesics (see \cite[Theorem 7.2.1, Theorem 7.2.2]{ags}).
\begin{theorem}[Properties of geodesics] \label{theo:chargeo}
Let $\mu_0,\mu_1\in \prob_2(\GX)$ and $\mmu\in \Gamma_o(\mu_0,\mu_1)$.
Then $(\mu_t)_{t\in [0,1]}$ defined by
\begin{equation}\label{eq:charact_geodesic}
\mu_t := (\sfx^t)_{\sharp} \mmu,\quad t\in[0,1],
\end{equation}
is a (constant speed) geodesic from $\mu_0$ to $\mu_1$, where $\sfx^t:X^2\to X$ is given by, $\sfx^t(x_0,x_1):=(1-t)x_0+tx_1$. Conversely, any (constant speed) geodesic  $(\mu_t)_{t\in [0,1]}$ from $\mu_0$ to $\mu_1$ admits the representation~\eqref{eq:charact_geodesic} for a suitable plan $\mmu\in \Gamma_o(\mu_0,\mu_1)$.\\
Finally, if $(\mu_t)_{t\in [0,1]}$ is a geodesic connecting $\mu_0$ to $\mu_1$, then for every $t \in (0,1)$ there exists a unique optimal plan between $\mu_0$ and $\mu_t$ (resp.~between $\mu_t$ and $\mu_1$) and it is concentrated on a map.
\end{theorem}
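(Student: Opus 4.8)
My plan is to prove the three assertions in turn, deriving the two ``converse'' statements from a single gluing lemma. \emph{Step 1 (curves $(\sfx^t)_\sharp\mmu$ are geodesics).} Given $\mmu\in\Gamma_o(\mu_0,\mu_1)$ and $\mu_t:=(\sfx^t)_\sharp\mmu$, I would first note that $(\sfx^s,\sfx^t)_\sharp\mmu\in\Gamma(\mu_s,\mu_t)$ and $|\sfx^t(x_0,x_1)-\sfx^s(x_0,x_1)|=(t-s)|x_0-x_1|$, so by optimality of $\mmu$
\[
 W_2(\mu_s,\mu_t)\le\Big(\int_{\GX\times\GX}(t-s)^2|x_0-x_1|^2\de\mmu\Big)^{1/2}=(t-s)\,W_2(\mu_0,\mu_1)
\]
for $0\le s\le t\le1$. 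Summing this over the pairs $(0,s),(s,t),(t,1)$ and comparing with the triangle inequality forces all these estimates to be equalities; in particular $W_2(\mu_s,\mu_t)=(t-s)W_2(\mu_0,\mu_1)$, so $(\mu_t)$ is a constant speed geodesic.

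\emph{Step 2 (a gluing lemma and the representation).} For the converse, fix a constant speed geodesic $(\mu_t)_{t\in[0,1]}$ and put $L:=W_2(\mu_0,\mu_1)$ (the case $L=0$ being trivial). The key lemma I would prove is: if $0\le t_1<t_2<t_3\le1$ and $\boldsymbol\eta\in\prob(\GX^3)$, with coordinate projections $\pi^1,\pi^2,\pi^3$, satisfies $(\pi^1,\pi^2)_\sharp\boldsymbol\eta\in\Gamma_o(\mu_{t_1},\mu_{t_2})$ and $(\pi^2,\pi^3)_\sharp\boldsymbol\eta\in\Gamma_o(\mu_{t_2},\mu_{t_3})$, then the $\boldsymbol\eta$-a.e.\ identity $x_2=\sfx^\theta(x_1,x_3)$ holds with $\theta:=(t_2-t_1)/(t_3-t_1)$, and $(\pi^1,\pi^3)_\sharp\boldsymbol\eta\in\Gamma_o(\mu_{t_1},\mu_{t_3})$. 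The proof is short: viewing $x_1-x_2$ and $x_2-x_3$ in $L^2(\boldsymbol\eta;\GX)$ one has $\|x_1-x_2\|=(t_2-t_1)L$ and $\|x_2-x_3\|=(t_3-t_2)L$ from optimality and the geodesic identity, while $\|x_1-x_3\|\ge W_2(\mu_{t_1},\mu_{t_3})=(t_3-t_1)L=\|x_1-x_2\|+\|x_2-x_3\|\ge\|x_1-x_3\|$, forcing equality; the equality case of the Cauchy--Schwarz inequality in $L^2(\boldsymbol\eta;\GX)$ then gives $x_2-x_3=\tfrac{t_3-t_2}{t_2-t_1}(x_1-x_2)$, i.e.\ the claimed interpolation, and $\|x_1-x_3\|=(t_3-t_1)L$ gives optimality. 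Such $\boldsymbol\eta$ exist by gluing optimal plans along $\mu_{t_2}$ (Theorem \ref{thm:disintegr}). To obtain the representation I would take dyadic nodes $t_k=k2^{-n}$, glue optimal plans $\ggamma^{(k)}\in\Gamma_o(\mu_{t_k},\mu_{t_{k+1}})$ iteratively into $\boldsymbol\eta_n\in\prob(\GX^{2^n+1})$; for each $k$ the marginal of $\boldsymbol\eta_n$ on the coordinates at times $0,t_k,1$ has consecutive bivariate costs $\le(t_kL)^2$ and $\le((1-t_k)L)^2$ by the triangle inequality in $L^2$ and the geodesic identity, hence is made of optimal plans, so the lemma yields that the $t_k$-coordinate equals $\sfx^{t_k}$ of the $0$- and $1$-coordinates $\boldsymbol\eta_n$-a.e., and that $\mmu_n$, the marginal on the $0$- and $1$-coordinates, lies in $\Gamma_o(\mu_0,\mu_1)$. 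Thus $\mu_{t_k}=(\sfx^{t_k})_\sharp\mmu_n$ for dyadic $t_k$ of level $\le n$; by compactness of $\Gamma_o(\mu_0,\mu_1)$ in $\prob_2(\GX\times\GX)$ (Lemma \ref{le:compactnessP2}) a subsequence $\mmu_{n_j}\to\mmu\in\Gamma_o(\mu_0,\mu_1)$, and since $(\sfx^t)_\sharp$ is $W_2$-continuous ($\sfx^t$ being Lipschitz) one gets $\mu_t=(\sfx^t)_\sharp\mmu$ for dyadic $t$, hence for all $t\in[0,1]$ by continuity of both sides.

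\emph{Step 3 (uniqueness and concentration on a map).} Fix $t\in(0,1)$ and an arbitrary $\ggamma\in\Gamma_o(\mu_0,\mu_t)$; glue it with some $\ggamma_{t1}\in\Gamma_o(\mu_t,\mu_1)$ along $\mu_t$ into $\boldsymbol\eta\in\prob(\GX^3)$ with coordinates at times $0,t,1$. By the lemma, $\boldsymbol\eta$ is concentrated on $\{x_2=(1-t)x_1+tx_3\}$ and $\mmu:=(\pi^1,\pi^3)_\sharp\boldsymbol\eta\in\Gamma_o(\mu_0,\mu_1)$. Since $\mmu$ is concentrated on a $2$-cyclically monotone Borel set, on a $\boldsymbol\eta$-full set one has $\langle x_1-x_1',x_3-x_3'\rangle\ge0$ for any two points, so
\[
 |x_2-x_2'|^2=|(1-t)(x_1-x_1')+t(x_3-x_3')|^2\ge(1-t)^2|x_1-x_1'|^2 ,
\]
i.e.\ $x_2$ determines $x_1$; a measurable selection then produces a Borel map $S:\GX\to\GX$ with $S_\sharp\mu_t=\mu_0$ and $\ggamma=(\pi^1,\pi^2)_\sharp\boldsymbol\eta=(S,\ii_{\GX})_\sharp\mu_t$. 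Hence every element of $\Gamma_o(\mu_0,\mu_t)$ is induced by a map; applying this to the midpoint of two such plans (the set $\Gamma_o(\mu_0,\mu_t)$ being convex) and using that each of them is absolutely continuous with respect to that midpoint shows that $\ggamma$ and $\ggamma'$ are both concentrated on the graph of the same Borel map and share the marginal $\mu_t$, hence coincide. The optimal plan between $\mu_t$ and $\mu_1$ is treated symmetrically, replacing the displayed inequality by $|x_2-x_2'|^2\ge t^2|x_3-x_3'|^2$.

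\emph{Main obstacle.} The genuinely non-elementary points are: (a) that optimal plans in a separable (possibly infinite-dimensional) Hilbert space are concentrated on $2$-cyclically monotone Borel sets, and (b) the measurable-selection step in Step 3 producing the Borel map $S$ from a Borel set with single-valued sections (Lusin--Souslin/Jankov--von Neumann type). Both are classical and valid in the present Lusin setting; everything else reduces to the equality case of Cauchy--Schwarz, the disintegration/gluing lemma, and compactness of $\Gamma_o$.
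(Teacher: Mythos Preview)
The paper does not prove this theorem; it simply quotes \cite[Theorems 7.2.1 and 7.2.2]{ags}. Your argument is correct and is essentially the standard proof given there: Step~1 is immediate; the ``gluing lemma'' in Step~2 (the equality case of the triangle inequality in $L^2(\boldsymbol\eta;\X)$, forcing the middle coordinate to be the linear interpolation) together with the dyadic construction and compactness of $\Gamma_o(\mu_0,\mu_1)$ recovers the representation; and Step~3 is exactly the cyclical-monotonicity injectivity computation. In fact the present paper reproduces that very computation in the proof of Lemma~\ref{lem:starting}, so your Step~3 is fully aligned with the paper's later use of the result, including the correct direction of the map (from $\mu_t$, not from $\mu_0$). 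The two nontrivial external facts you flag---cyclical monotonicity of the support of an optimal plan, and Lusin--Souslin for the Borel inverse of an injective Borel map---are indeed available in the separable Hilbert setting.
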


\medskip

We define moreover the analogous of $\rmC^{\infty}_c(\R^d)$ when we have $\GX$ in place of $\R^d$.
\begin{definition}[$\Cyl(\GX)$]\label{def:Cyl}
  We denote by $\Pi_d(\GX)$ the space of linear maps $\pi:\GX\to
  \R^d$ of the form $\pi(x)=(\la x,e_1\ra,\cdots,\la x,e_d\ra )$
  for an orthonormal set $\{e_1,\cdots,e_d\}$ of $\GX$.
A function $\varphi: \GX \to \R$ belongs to the space of cylindrical functions on $\GX$, $\Cyl(\GX)$, if it is of the form
\[ \varphi = \psi \circ \pi\]
where $\pi\in \Pi_d(\GX)$ and $\psi \in \mathrm C^\infty_c(\R^d)$.
\end{definition}
We recall the following result (see \cite[Theorem 8.3.1, Proposition
8.4.5 and Proposition 8.4.6]{ags}) characterizing
locally absolutely continuous curves in $\prob_2(\GX)$ defined
in a
(bounded or unbounded) open interval $\interval\subset \R$.
We use the
equivalent notation $\mu(t)\equiv\mu_t$ for the evaluation at time
$t\in \interval$ of a map $\mu:\interval\to \prob_2(\GX)$.

\begin{theorem}[Wasserstein velocity field]
  \label{thm:tangentv}
Let $\mu:\interval \to\prob_2(\GX)$ be a locally absolutely
continuous curve defined in an open interval $\interval\subset \R$.
There exists a Borel vector field
$\vv:\interval\times \GX\to \GX$
and
a set $A(\mu) \subset \interval$ with
$\mathcal{L}(\interval \setminus A(\mu))=0$ such that
for every $t\in A(\mu)$ 
\begin{displaymath}
  \begin{gathered}
  \vv_t\in\Tan_{\mu_t}\prob_2(\GX) :={} \overline{\{ \nabla \varphi \mid
    \varphi \in \Cyl(X) \}}^{L^2_{\mu_t}(X;X)},\\
  \int_{\GX} |\vv_t|^2\,\d\mu_t=|\dot \mu_t|^2=\lim_{h\to
    0}\frac{W_2^2(\mu_{t+h},\mu_t)}{h^2},
\end{gathered}
\end{displaymath}
and the continuity equation
\[\partial_t\mu_t+\nabla\cdot(\vv_t\mu_t)=0\]
holds in the sense of distributions in $\interval\times \GX$.
Moreover, $\vv_t$ is uniquely determined
in $L^2_{\mu_t}(\GX;\GX)$ for $t\in A(\mu)$ and 
\begin{equation}
 \lim_{h \to 0} \frac{W_2((\ii_X+h\vv_t)_{\sharp}\mu_t,
   \mu_{t+h})}{|h|} =0 \quad \text{for every }t \in A(\mu).\label{eq:74}
\end{equation}
\end{theorem}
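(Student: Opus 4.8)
The statement bundles three facts — existence of a velocity field solving the continuity equation along an absolutely continuous curve, identification of its minimal‑norm representative with an element of $\Tan_{\mu_t}\prob_2(\GX)$, and the first‑order expansion \eqref{eq:74} — and I would prove it along the classical lines of \cite[Thm.~8.3.1, Props.~8.4.5--8.4.6]{ags}, in four steps.

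\textbf{Step 1 (construction of $\vv$).} After reparametrising time I may assume $\mu$ is $L$‑Lipschitz — every conclusion is pointwise a.e.\ and invariant under an absolutely continuous time change, so this is free and makes $|\dot\mu|(t)=\lim_{h\to 0}W_2(\mu_{t+h},\mu_t)/|h|\le L$ (which exists a.e.) and all error terms below uniform. For $h\ne 0$ I would pick, by measurable selection (the multimap $t\mapsto\Gamma_o(\mu_t,\mu_{t+h})$ is Borel with narrowly compact values), Borel plans $\ggamma^h_t\in\Gamma_o(\mu_t,\mu_{t+h})$ and set $\vv^h_t(x):=\frac1h\int_\GX(y-x)\de(\ggamma^h_t)_x(y)$, where $(\ggamma^h_t)_x$ disintegrates $\ggamma^h_t$ w.r.t.\ $\mu_t$. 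By Jensen, $\int_\GX|\vv^h_t|^2\de\mu_t\le W_2^2(\mu_t,\mu_{t+h})/h^2\le L^2$, so $(\vv^h)$ is bounded in $L^2(\interval\times\GX,\mu_t\,\de t;\GX)$ on compact subintervals; I extract a weak limit $\vv^{h_n}\rightharpoonup\vv$. A second‑order Taylor expansion of $\varphi\in\Cyl(\GX)$ along $\ggamma^h_t$,
\[\frac1h\Big(\int\varphi\de\mu_{t+h}-\int\varphi\de\mu_t\Big)=\int_\GX\langle\nabla\varphi,\vv^h_t\rangle\de\mu_t+O\big(\|\nabla^2\varphi\|_\infty W_2^2(\mu_t,\mu_{t+h})/h\big),\]
tested against $\eta(t)\varphi(x)$ with $\eta\in\rmC^1_c(\interval)$, in the limit $n\to\infty$ (remainder $O(h)$ uniformly, $\eta\otimes\nabla\varphi\in L^2(\mu_t\,\de t;\GX)$) gives the continuity equation for $(\mu_t,\vv_t)$ in the distributional sense, and norm lower semicontinuity along the weak limit gives $\|\vv_t\|_{L^2_{\mu_t}}\le|\dot\mu|(t)$ for a.e.\ $t$.

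\textbf{Step 2 (sharpness, tangency, uniqueness).} For the reverse inequality I would show that any Borel pair solving the continuity equation with $\int_K\|\vv_t\|^2_{L^2_{\mu_t}}\de t<\infty$ satisfies $W_2(\mu_s,\mu_t)\le\int_s^t\|\vv_r\|_{L^2_{\mu_r}}\de r$, whence $|\dot\mu|(t)\le\|\vv_t\|_{L^2_{\mu_t}}$ and, with Step 1, equality a.e. In $\R^d$ this rests on mollifying the velocity: $\vv^\eps_t:=((\vv_t\mu_t)*\rho_\eps)/(\mu_t*\rho_\eps)$ is smooth in $x$, has $\|\vv^\eps_t\|_{L^2_{\mu^\eps_t}}\le\|\vv_t\|_{L^2_{\mu_t}}$ by Jensen and a well‑defined flow $\boldsymbol X^\eps$, and $\mu^\eps_t:=\mu_t*\rho_\eps=(\boldsymbol X^\eps_t)_\sharp\mu^\eps_s$ yields $W_2(\mu^\eps_s,\mu^\eps_t)\le\|\boldsymbol X^\eps_t-\ii_X\|_{L^2_{\mu^\eps_s}}\le\int_s^t\|\vv^\eps_r\|_{L^2_{\mu^\eps_r}}\de r$; then $\eps\to0$. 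This converse estimate is \emph{the analytically delicate point}, and in the present Hilbert‑space setting it must be replaced by a cylindrical reduction: push $\mu_t,\vv_t$ forward through $\pi\in\Pi_d(\GX)$, run the $\R^d$ argument for $(\pi_\sharp\mu_t,\vv^\pi_t)$ with $\|\vv^\pi_t\|_{L^2}\le\|\vv_t\|_{L^2_{\mu_t}}$ (conditional expectation contracts), and let $d\to\infty$ using $W_2(\mu,\nu)=\sup_\pi W_2(\pi_\sharp\mu,\pi_\sharp\nu)$. Having equality, I would then replace $\vv_t$ by its $L^2_{\mu_t}$‑projection onto $\Tan_{\mu_t}\prob_2(\GX)$: using the decomposition $L^2_{\mu}(\GX;\GX)=\Tan_\mu\prob_2(\GX)\oplus\{\ww:\int\langle\ww,\nabla\varphi\rangle\de\mu=0\ \forall\varphi\in\Cyl(\GX)\}$, the continuity equation is preserved and the norm does not increase (so equality persists), while a measurable‑selection argument keeps $t\mapsto\vv_t$ Borel. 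Uniqueness is then immediate: if $\vv,\tilde\vv$ both satisfy the conclusions, their difference pairs to zero with every $\nabla\varphi$ for a.e.\ $t$ (separating the distributional identity via a countable dense family of $\varphi$), hence lies in $\Tan_{\mu_t}^\perp$ while also belonging to $\Tan_{\mu_t}\prob_2(\GX)$, so it vanishes in $L^2_{\mu_t}$. I let $A(\mu)$ be the full‑measure set of $t$ at which $|\dot\mu|(t)$ exists, $\|\vv_t\|_{L^2_{\mu_t}}=|\dot\mu|(t)$, $\vv_t\in\Tan_{\mu_t}\prob_2(\GX)$, and the Lebesgue‑point conditions of Step 3 hold.

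\textbf{Step 3 (the expansion \eqref{eq:74}).} Fix $t\in A(\mu)$ and, for $h\to0$, $\ggamma^h\in\Gamma_o(\mu_t,\mu_{t+h})$ with barycentric velocity $\bar\vv^h$. Pushing $\ggamma^h$ forward by $(x,y)\mapsto(x+h\vv_t(x),y)$ gives a coupling of $(\ii_X+h\vv_t)_\sharp\mu_t$ and $\mu_{t+h}$, so
\[\frac{W_2^2\big((\ii_X+h\vv_t)_\sharp\mu_t,\mu_{t+h}\big)}{h^2}\le\|\vv_t\|_{L^2_{\mu_t}}^2-2\langle\vv_t,\bar\vv^h\rangle_{L^2_{\mu_t}}+\frac{W_2^2(\mu_t,\mu_{t+h})}{h^2}.\]
The last term tends to $|\dot\mu|^2(t)=\|\vv_t\|_{L^2_{\mu_t}}^2$. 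The sequence $\bar\vv^h$ is bounded in $L^2_{\mu_t}$ ($\|\bar\vv^h\|_{L^2_{\mu_t}}\le W_2(\mu_t,\mu_{t+h})/|h|$), and any weak subsequential limit $\ww$ satisfies $\langle\nabla\varphi,\ww\rangle_{L^2_{\mu_t}}=\langle\nabla\varphi,\vv_t\rangle_{L^2_{\mu_t}}$ for all $\varphi\in\Cyl(\GX)$, because at Lebesgue points of $s\mapsto\int_\GX\langle\nabla\varphi,\vv_s\rangle\de\mu_s$ (part of the definition of $A(\mu)$) both sides equal the derivative at $t$ of $s\mapsto\int\varphi\de\mu_s$, via the continuity equation and the Taylor expansion. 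Hence $\ww-\vv_t\perp\Tan_{\mu_t}\prob_2(\GX)$, and since $\vv_t\in\Tan_{\mu_t}\prob_2(\GX)$ we get $\langle\vv_t,\ww\rangle_{L^2_{\mu_t}}=\|\vv_t\|_{L^2_{\mu_t}}^2$; this value being independent of the subsequence, $\langle\vv_t,\bar\vv^h\rangle_{L^2_{\mu_t}}\to\|\vv_t\|_{L^2_{\mu_t}}^2$, so the right‑hand side above tends to $0$ — which is exactly \eqref{eq:74}.

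\textbf{Main obstacle.} Steps 1 and 3, and the tangency/uniqueness part of Step 2, are soft (Hilbert‑space weak compactness, orthogonal projections, Lebesgue differentiation). The real work is the converse estimate $|\dot\mu|\le\|\vv\|$: the mollification‑and‑flow argument in $\R^d$, and — crucially here, since $\GX$ may be infinite‑dimensional — its replacement by a finite‑dimensional cylindrical reduction, together with the identity $W_2(\mu,\nu)=\sup_{\pi\in\Pi_d(\GX)}W_2(\pi_\sharp\mu,\pi_\sharp\nu)$ that lets one pass to the limit in the dimension.
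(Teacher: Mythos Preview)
The paper does not prove this theorem at all: it is stated as a recalled result, with an explicit citation to \cite[Theorem 8.3.1, Proposition 8.4.5 and Proposition 8.4.6]{ags} in the sentence immediately preceding the statement. Your sketch follows precisely those references (existence of a velocity from the continuity equation, minimality and tangency via projection, and the first-order expansion), so it is fully aligned with the source the paper relies on; there is nothing to compare.
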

We conclude this section with a useful property concerning the
upper derivative of the Wasserstein distance, which in fact holds in
every metric space.
\begin{lemma}\label{lem:aeb}
  Let $\mu: \interval \to \prob_2(\X)$, $\nu \in \prob_2(\X)$,
   $t
   \in \interval$, $\ssigma \in \Gamma_o(\mu_t, \nu)$,
   and consider the
   constant speed geodesic $(\nu_{t,s})_{s \in [0,1]}$
   defined by $\nu_{t, s}: =
   (\sfx^s)_{\sharp}\ssigma$ for every $s \in [0,1]$.
   The upper right and left Dini derivatives 
   $b^{\pm}:(0,1] \to \mathbb{R}$ defined by
   \begin{equation*}
     \begin{aligned}
       b^+(s):={}& \frac{1}{2s} \limsup_{h \downarrow 0}
       \frac{W_2^2(\mu_{t+h}, \nu_{t,s}) - W_2^2(\mu_t,
         \nu_{t,s})}{h},\\
        b^-(s):={}& \frac{1}{2s} \limsup_{h
         \downarrow 0} \frac{W_2^2(\mu_{t}, \nu_{t,s}) -
         W_2^2(\mu_{t-h}, \nu_{t,s})}{h}
     \end{aligned}
  \end{equation*}
  are respectively decreasing and increasing in $(0,1]$.
\end{lemma}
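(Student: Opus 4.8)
The plan is to reduce the whole statement to a single pointwise inequality for the squared Wasserstein distance from a fixed measure to the points of the geodesic $(\nu_{t,s})_s$, and then to derive that inequality from the triangle inequality alone — which is precisely what makes the statement valid in every metric space. Set $L:=W_2(\mu_t,\nu)$, so that $W_2(\nu_{t,s},\nu_{t,s'})=|s-s'|L$ and in particular $W_2^2(\mu_t,\nu_{t,s})=s^2L^2$. The key claim is that for every $a\in\prob_2(\X)$ and every $0<s_1\le s_2\le1$,
\begin{equation}
  \frac{W_2^2(a,\nu_{t,s_1})-W_2^2(\mu_t,\nu_{t,s_1})}{s_1}\ \ge\ \frac{W_2^2(a,\nu_{t,s_2})-W_2^2(\mu_t,\nu_{t,s_2})}{s_2}. \tag{$\heartsuit$}
\end{equation}
Granting $(\heartsuit)$, the lemma is immediate: choosing $a=\mu_{t+h}$, dividing by $2h>0$ (for $h$ small enough that $t+h\in\interval$) and passing to $\limsup_{h\downarrow0}$, which preserves inequalities, gives $b^+(s_1)\ge b^+(s_2)$; choosing $a=\mu_{t-h}$, multiplying $(\heartsuit)$ by $-1$, dividing by $2h>0$ and passing to $\limsup_{h\downarrow0}$ gives $b^-(s_1)\le b^-(s_2)$. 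No continuity of $\mu$ is needed, since $h$ never enters a product required to converge.

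To prove $(\heartsuit)$ I would first establish the following inequality, valid for any constant-speed geodesic $\gamma:[0,1]\to\prob_2(\X)$, any $a\in\prob_2(\X)$ and any $r\in(0,1)$:
\begin{equation}
  W_2^2(a,\gamma_r)\ \ge\ r\,W_2^2(a,\gamma_1)-r(1-r)\,W_2^2(\gamma_0,\gamma_1). \tag{$\dagger$}
\end{equation}
Then $(\heartsuit)$ follows by applying $(\dagger)$ to the reparametrized restriction $u\mapsto\nu_{t,us_2}$, $u\in[0,1]$, which is a constant-speed geodesic from $\mu_t$ to $\nu_{t,s_2}$ with endpoints at distance $s_2L$, taking $r=s_1/s_2$; one simplifies the correction term to $\frac{s_1}{s_2}\bigl(1-\frac{s_1}{s_2}\bigr)(s_2L)^2=s_1(s_2-s_1)L^2$, divides by $s_1$, and rearranges using $W_2^2(\mu_t,\nu_{t,s_i})=s_i^2L^2$.

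It remains to prove $(\dagger)$, and here only the triangle inequality enters. Write $\ell:=W_2(\gamma_0,\gamma_1)$, so $W_2(\gamma_r,\gamma_1)=(1-r)\ell$, and set $x:=W_2(a,\gamma_1)\ge0$. If $x\ge(1-r)\ell$, then $W_2(a,\gamma_r)\ge x-(1-r)\ell\ge0$, hence $W_2^2(a,\gamma_r)\ge(x-(1-r)\ell)^2$, and the elementary identity $(x-(1-r)\ell)^2=rx^2-r(1-r)\ell^2+(1-r)(x-\ell)^2$ shows the right-hand side is $\ge rx^2-r(1-r)\ell^2$, i.e.\ $(\dagger)$. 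If instead $x<(1-r)\ell$, then $x^2<(1-r)\ell^2$, using $1-r\le\sqrt{1-r}$ on $[0,1]$, so $rx^2-r(1-r)\ell^2<0\le W_2^2(a,\gamma_r)$ and $(\dagger)$ holds trivially.

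\textbf{Main obstacle.} There is no genuine analytic difficulty; the one point to recognize is that one needs only the \emph{weak, one-sided} form $(\dagger)$ of the ``nonnegative curvature'' inequality — dropping the term $(1-r)W_2^2(a,\gamma_0)$ that appears in the true Alexandrov estimate reduces it to a bare consequence of the triangle inequality, hence valid in any metric space. The remaining work is the bookkeeping of the reduction: reparametrizing the restricted geodesic, keeping track of the quadratic correction $s_1(s_2-s_1)L^2$, and invoking monotonicity of $\limsup$.
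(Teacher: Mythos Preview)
Your proof is correct and takes a genuinely different route from the paper's. The paper works with the \emph{unsquared} distance: from the geodesic identity $W_2(\mu_t,\nu_{t,s})=W_2(\mu_t,\nu_{t,s'})+W_2(\nu_{t,s'},\nu_{t,s})$ and the triangle inequality it obtains directly $W_2(\mu_{t+h},\nu_{t,s})-W_2(\mu_t,\nu_{t,s})\le W_2(\mu_{t+h},\nu_{t,s'})-W_2(\mu_t,\nu_{t,s'})$, so that $a^+(s):=\limsup_{h\downarrow0}\frac{W_2(\mu_{t+h},\nu_{t,s})-W_2(\mu_t,\nu_{t,s})}{h}$ is decreasing, and then concludes via the factorization $b^+(s)=a^+(s)\cdot W_2(\mu_t,\nu)$. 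This is much shorter than your argument, but the last step tacitly uses continuity of $\mu$ at $t$ to convert the limsup of the squared difference quotient into $2W_2(\mu_t,\nu_{t,s})$ times the limsup of the linear one. Your approach works entirely with squared distances via the one-sided inequality $(\dagger)$ (a weakened comparison inequality that, as you note, follows from the triangle inequality alone), yielding the pointwise estimate $(\heartsuit)$ for arbitrary $a$; this costs more algebra but, as you observe, dispenses with any continuity assumption on $\mu$. Both arguments are purely metric.
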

\begin{proof}
Take $0\le s'<s\le 1$. Since $(\nu_{t,s})_{s \in [0,1]}$ is a constant speed geodesic from $\mu_t$ to $\nu$, we have
\[ W_2(\mu_t, \nu_{t,s}) = W_2(\mu_t, \nu_{t,s'}) + W_2(\nu_{t,s'}, \nu_{t,s}),\]
then, by triangular inequality
\begin{align*}
W_2(\mu_{t+h}, \nu_{t,s}) - W_2(\mu_t, \nu_{t,s}) &\le W_2(\mu_{t+h}, \nu_{t,s'}) + W_2(\nu_{t,s'}, \nu_{t,s})  - W_2(\mu_t, \nu_{t,s}) \\
&= W_2(\mu_{t+h}, \nu_{t,s'}) - W_2(\mu_t, \nu_{t,s'}).
\end{align*}
Dividing by $h>0$ and passing to the limit as $h\downarrow0$
we obtain that
the function $a:[0,1] \to \mathbb{R}$ defined by
\[ a^+(s):= \limsup_{h \downarrow 0} \frac{W_2(\mu_{t+h}, \nu_{t,s}) - W_2(\mu_t, \nu_{t,s})}{h} \]
is decreasing.
It is then sufficient to observe that for $s>0$ 
\[b^+(s) = a^+(s) \frac{W_2(\mu_t, \nu_{t,s})}{s}= a^+(s) W_2(\mu_t, \nu).\]
The monotonicity property of $b^-$ follows by the same argument.
\end{proof}

\subsection{A strong-weak topology on measures in product spaces}
\label{subsec:sw}
Let us consider the case when $\GX=\X\times \Y$ where $\X,\Y$ are
separable Hilbert spaces.
$\GX$ is naturally endowed with the product Hilbert norm and
$\prob_2(\GX)$ with the corresponding topology induced by the
$L^2$-Wasserstein distance.
However, it will be extremely useful to endow $\prob_2(\GX)$ with a weaker topology
which is related to the strong-weak topology on $\GX$, i.e.~the
product topology of $\X^s\times \Y^w$.
We follow the approach of \cite{NaldiSavare}, to which we 
refer for the proofs of the results presented in this section.

In order to define the topology, we consider the space $\testsw \PX$ of test functions
$  \zeta:\PX\to \R$ such that
\begin{gather*}
  \label{eq:36}
  \zeta\text{ is sequentially continuous in $\X^s\times \Y^w$,}\\
  \forall\,\eps>0\ \exists\,A_\eps\ge 0: |\zeta(x,y)|\le
  A_\eps(1+|x|_\X^2)+\eps |y|_\Y^2\quad\text{for every }(x,y)\in \PX.
\end{gather*}
Notice in particular that functions in $\testsw \PX$ have quadratic
growth. We endow $\testsw X$ with the norm
\begin{equation*}
  \|\zeta\|_{\testsw X}:=\sup_{(x,y)\in X}\frac{|\zeta(x,y)|}{1+|x|_\X^2+|y|_\Y^2}.
\end{equation*}
\begin{remark}
  When $\Y$ is finite dimensional, \eqref{eq:36} is equivalent to the
  continuity of $\zeta$.
\end{remark}

\begin{lemma}
  $(\testsw \PX,\|\cdot\|_{\testsw
  \PX})$ is a Banach space.
\end{lemma}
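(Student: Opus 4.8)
The plan is to check that $\|\cdot\|_{\testsw\PX}$ is a norm (which is routine) and then to prove completeness. Homogeneity and the triangle inequality are immediate from the definition, $\|\zeta\|_{\testsw\PX}=0$ forces $\zeta\equiv 0$ because the weight $1+|x|_\X^2+|y|_\Y^2$ is everywhere positive, and finiteness of $\|\zeta\|_{\testsw\PX}$ follows by applying the growth condition with $\eps=1$, which gives $|\zeta(x,y)|\le (A_1+1)(1+|x|_\X^2+|y|_\Y^2)$, hence $\|\zeta\|_{\testsw\PX}\le A_1+1$. The substance is completeness, which I would establish in a few steps.

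For completeness I would start from a Cauchy sequence $(\zeta_n)_n$ and set $\eps_n:=\sup_{m\ge n}\|\zeta_n-\zeta_m\|_{\testsw\PX}$, so $\eps_n\to 0$. For each fixed $(x,y)\in\PX$ the bound $|\zeta_n(x,y)-\zeta_m(x,y)|\le\|\zeta_n-\zeta_m\|_{\testsw\PX}\,(1+|x|_\X^2+|y|_\Y^2)$ shows that $(\zeta_n(x,y))_n$ is Cauchy in $\R$; let $\zeta(x,y)$ be its limit. Passing to the limit $m\to\infty$ in the same inequality yields
\[
  |\zeta_n(x,y)-\zeta(x,y)|\le\eps_n\,(1+|x|_\X^2+|y|_\Y^2)\qquad\text{for all }(x,y)\in\PX,
\]
so $\zeta-\zeta_n$ has finite norm (hence so does $\zeta$) and $\|\zeta-\zeta_n\|_{\testsw\PX}\le\eps_n\to 0$, i.e.\ $\zeta_n\to\zeta$ in norm. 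It then only remains to check $\zeta\in\testsw\PX$. For the growth condition, fix $\eps>0$, choose $n$ with $\eps_n\le\eps/2$ and let $A^n_{\eps/2}$ be the constant for $\zeta_n$ at level $\eps/2$; then $|\zeta(x,y)|\le|\zeta_n(x,y)|+\tfrac{\eps}{2}(1+|x|_\X^2+|y|_\Y^2)\le(A^n_{\eps/2}+\tfrac{\eps}{2})(1+|x|_\X^2)+\eps|y|_\Y^2$, which is the required bound with $A_\eps:=A^n_{\eps/2}+\tfrac{\eps}{2}$.

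The only genuinely delicate point is sequential continuity of $\zeta$ in $\X^s\times\Y^w$, and I would argue as follows. Let $(x_k,y_k)\to(x,y)$ in $\X^s\times\Y^w$. Since strongly convergent sequences are bounded and weakly convergent sequences are bounded (uniform boundedness principle), $C:=\sup_k(1+|x_k|_\X^2+|y_k|_\Y^2)<\infty$, so the displayed estimate gives $|\zeta_n(x_k,y_k)-\zeta(x_k,y_k)|\le\eps_n C$ uniformly in $k$; that is, $\zeta_n\to\zeta$ uniformly on $\{(x_k,y_k):k\in\N\}\cup\{(x,y)\}$. Combining this uniform convergence with the sequential continuity of each $\zeta_n$ in $\X^s\times\Y^w$ through a standard $3\eps$ argument yields $\lim_k\zeta(x_k,y_k)=\zeta(x,y)$, so $\zeta\in\testsw\PX$ and $(\testsw\PX,\|\cdot\|_{\testsw\PX})$ is complete. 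I expect this last step to be the main obstacle: norm convergence in $\testsw\PX$ controls $\zeta_n-\zeta$ only up to the unbounded weight $1+|x|_\X^2+|y|_\Y^2$, so transferring continuity to the limit genuinely relies on the a priori boundedness of (strongly and) weakly convergent sequences to upgrade the convergence to a uniform one along the given sequence.
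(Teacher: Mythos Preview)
Your argument is correct and complete. The paper itself does not give a proof of this lemma: at the beginning of Section~\ref{subsec:sw} it explicitly refers the reader to \cite{NaldiSavare} for the proofs of all results in that subsection, and simply states the lemma without proof. Your write-up is exactly the standard verification one would expect---pointwise limit via the weighted Cauchy estimate, passage to the limit to recover both norm convergence and the growth condition, and the $3\eps$ argument for sequential continuity in $\X^s\times\Y^w$ using that weakly convergent sequences are norm-bounded---so there is nothing to compare against in the paper itself.
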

  \renewcommand{\mmu}{\boldsymbol \mu}
  \begin{definition}[Topology of $\prob_2^{sw}(\PX)$, \cite{NaldiSavare}]\label{def:swprobTX}
    We denote by $\prob_2^{sw}(\PX)$ the space $\prob_2(\PX)$ endowed with the coarsest topology which makes the following functions continuous
    \begin{equation*}
      \mmu\mapsto \int\zeta(x,y)\,\d\mmu(x,y),\quad \zeta \in \testsw \PX.
    \end{equation*}    
  \end{definition}
  It is obvious that the topology of $\prob_2(\PX)$ is finer than the
  topology of $\prob_2^{sw}(\PX)$ and the latter is finer than the
  topology of $\prob(\X^{s}\times \Y^w)$.
  It is worth noticing that 
  \begin{equation*}
    \text{any bounded bilinear form $B:\PX\to \R$ belongs to }\testsw \PX,
  \end{equation*}
  so that for every net $(\mmu_\alpha)_{\alpha\in \mathbb A} \subset \prob(\PX)$
  indexed by a directed set $\mathbb A$, we have
  \begin{equation}
    \label{eq:43}
    \lim_{\alpha\in \mathbb A}\mmu_\alpha=\mmu\quad
    \text{in }\prob_2^{sw}(\PX)\quad
    \Rightarrow\quad
    \lim_{\alpha\in \mathbb A}\int B \,\d\mmu_\alpha=
    \int B \,\d\mmu.
  \end{equation}
  The following proposition justifies the
  interest in the $\prob_2^{sw}(\PX)$-topology.
  \begin{proposition}
    \label{prop:finalmente}
    \
    \begin{enumerate}
    \item If $(\mmu_\alpha)_{\alpha\in\mathbb A}\subset \prob_2(\PX)$ is
      a net indexed by the directed set $\mathbb A$ and $\mmu\in
      \prob_2(\PX)$ satisfies
      \begin{enumerate}
      \item $\mmu_\alpha\to\mmu$ in $\prob(\X^s\times \Y^w)$,
      \item $\displaystyle \lim_{\alpha\in \mathbb A}\int |x|_\X^2\,\d\mmu_\alpha(x,y)=\int |x|_\X^2\,\d\mmu(x,y)$,
      \item $\displaystyle \sup_{\alpha\in\mathbb A} \int|y|_\Y^2\,\d\mmu_\alpha(x,y)<\infty$,
      \end{enumerate}
      then $\mmu_\alpha\to\mmu$ in $\prob_2^{sw}(\PX)$.
      The converse property holds for sequences: if $\mathbb A=\N$
      and $\mmu_n\to\mmu$ in $\prob_2^{sw}(\PX)$ as $n\to\infty$ then
      properties (a), (b), (c) hold.
    \item For every compact set $\mathcal K\subset \prob_2(\X^s)$ and
      every constant $c<\infty$ the sets
      \begin{equation*}
      \mathcal K_c:=\Big\{\mmu\in \prob_2(\PX):\pi^{\X}_\sharp\mmu\in
      \mathcal K,\quad
      \int |y|_\Y^2\,\d\mmu(x,y)\le c\Big\}
    \end{equation*}
    are compact and metrizable in
    $\prob_2^{sw}(\PX)$ (in particular they are sequentially compact).
    \end{enumerate}
  \end{proposition}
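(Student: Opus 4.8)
The plan is to treat the two parts separately, using throughout that every $\zeta\in\testsw{\PX}$ is sequentially continuous on $\X^s\times\Y^w$ and satisfies the $\eps$‑superlinear growth $|\zeta(x,y)|\le A_\eps(1+|x|_\X^2)+\eps|y|_\Y^2$.

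\emph{Sufficiency in (1).} We must show $\int\zeta\,\de\mmu_\alpha\to\int\zeta\,\de\mmu$ for every $\zeta\in\testsw{\PX}$; fix such a $\zeta$ and an $\eps>0$ with the corresponding $A_\eps$. First I would cut off the $x$‑variable: with $\chi_R(x):=\theta(|x|_\X/R)$ for a continuous $\theta:[0,\infty)\to[0,1]$ equal to $1$ on $[0,1]$ and to $0$ on $[2,\infty)$, the product $\zeta\chi_R$ is still sequentially continuous on $\X^s\times\Y^w$ (the cutoff is strongly continuous in $x$), while $|\zeta(1-\chi_R)|\le A_\eps(1+|x|_\X^2)\mathbf 1_{\{|x|_\X>R\}}+\eps|y|_\Y^2$. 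From (a) and (b), i.e.\ $\pi^\X_\sharp\mmu_\alpha\to\pi^\X_\sharp\mmu$ narrowly in $\prob(\X^s)$ together with $\sqm{\pi^\X_\sharp\mmu_\alpha}\to\sqm{\pi^\X_\sharp\mmu}$, a Portmanteau argument on the closed sets $\{|x|_\X\ge R\}$ gives $\limsup_\alpha\int_{\{|x|_\X>R\}}(1+|x|_\X^2)\,\de\pi^\X_\sharp\mmu_\alpha\le\eta(R)$ with $\eta(R)\to0$ as $R\to\infty$; combined with (c) this yields $\limsup_\alpha\int|\zeta(1-\chi_R)|\,\de\mmu_\alpha\le A_\eps\eta(R)+\eps c$, and likewise for $\mmu$. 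So it remains to pass to the limit on $\zeta_R:=\zeta\chi_R$ and then send $R\to\infty$ and $\eps\to0$.

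\emph{Sufficiency in (1), continued.} To handle $\zeta_R$ — bounded in $x$ on its support but still of growth $\eps|y|_\Y^2$ — fix an orthonormal basis $(e_j)_j$ of $\Y$, let $P_k$ be the orthogonal projection onto $\mathrm{span}\{e_1,\dots,e_k\}$, and set $\zeta_{R,k,T}(x,y):=\zeta_R(x,P_ky)\,\rho_T(|P_ky|_\Y)$ with $\rho_T:=\theta(\cdot/T)$. Since $y\mapsto P_ky$ and $y\mapsto|P_ky|_\Y$ are weakly continuous with values in a finite‑dimensional space, and $\zeta_R$ restricted to the metrizable subspace $\X^s\times P_k\Y$ is continuous, $\zeta_{R,k,T}$ is genuinely \emph{continuous} and bounded on $\X^s\times\Y^w$, so (a) gives $\int\zeta_{R,k,T}\,\de\mmu_\alpha\to\int\zeta_{R,k,T}\,\de\mmu$ for each fixed $R,k,T$. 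The two remaining errors $\int|\zeta_R(x,y)|\,(1-\rho_T(|P_ky|_\Y))\,\de\mmu_\bullet$ and $\int|\zeta_R(x,y)-\zeta_R(x,P_ky)|\,\rho_T(|P_ky|_\Y)\,\de\mmu_\bullet$ (for $\bullet\in\{\alpha\}\cup\{\mmu\}$) are estimated by splitting $\{|y|_\Y\le S\}$ from $\{|y|_\Y>S\}$: on $\{|y|_\Y>S\}$ one uses the growth bound, (c) and Chebyshev's inequality to get a bound $O(\eps)+O_{R,T}(S^{-2})$ uniform in $\bullet$, while on $\{|y|_\Y\le S\}$ one uses $P_ky\to y$ weakly and the sequential continuity of $\zeta_R$ to make the (now bounded) integrands vanish as $k\to\infty$. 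Sending $k\to\infty$, then $S,T,R\to\infty$, then $\eps\to0$ closes the argument. The delicate point — and the place where the precise definition of $\testsw{\PX}$ is essential, as in \cite{NaldiSavare} — is exactly this last uniform‑in‑$\alpha$ control of the projection error on $\{|y|_\Y\le S\}$.

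\emph{Necessity in (1) for sequences.} Here (a) is immediate because the topology of $\prob_2^{sw}(\PX)$ is finer than that of $\prob(\X^s\times\Y^w)$; (b) follows by testing against $(x,y)\mapsto|x|_\X^2\in\testsw{\PX}$ and invoking \eqref{eq:important} for the $\X$‑marginals; and for (c) I would use the uniform boundedness principle: the functionals $L_n:\zeta\mapsto\int\zeta\,\de\mmu_n$ are continuous on the Banach space $\testsw{\PX}$ and pointwise bounded (being convergent), so $C:=\sup_n\|L_n\|_{(\testsw{\PX})^*}<\infty$; since each $\min(|P_ky|_\Y^2,S)$ lies in the unit ball of $\testsw{\PX}$ and $\min(|P_ky|_\Y^2,S)\uparrow|y|_\Y^2$ as $k,S\to\infty$, monotone convergence gives $\int|y|_\Y^2\,\de\mmu_n=\sup_{k,S}L_n\big(\min(|P_ky|_\Y^2,S)\big)\le C$ for all $n$.

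\emph{Part (2).} A compact $\mathcal K\subset\prob_2(\X^s)$ is tight in $\X^s$, and since closed balls of $\Y$ are weakly compact the bound $\int|y|_\Y^2\,\de\mmu\le c$ makes the $\Y$‑marginals of $\mathcal K_c$ tight in $\Y^w$; hence $\mathcal K_c$ is tight in $\X^s\times\Y^w$ and, by Prokhorov's Theorem \ref{thm:Prok}, relatively compact in $\prob(\X^s\times\Y^w)$. Given a net in $\mathcal K_c$, pass to a subnet narrowly convergent to some $\mmu$, and then (using compactness of $\mathcal K$ in $(\prob_2(\X^s),W_2)$) to a further subnet along which the $\X$‑marginals converge in $\prob_2(\X^s)$; now (a),(b),(c) hold along the subnet, so Part (1) upgrades the convergence to $\prob_2^{sw}(\PX)$, and the limit $\mmu$ lies in $\mathcal K_c$ — its $\X$‑marginal belongs to the closed set $\mathcal K$, and $\int|y|_\Y^2\,\de\mmu\le c$ by weak lower semicontinuity of $|\cdot|_\Y^2$. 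Thus $\mathcal K_c$ is compact. Finally, $\prob_2^{sw}(\PX)$ is Hausdorff because the bounded cylindrical functions $(x,y)\mapsto\psi(\langle x,e'_1\rangle,\dots,\langle x,e'_p\rangle,\langle y,e_1\rangle,\dots,\langle y,e_q\rangle)$, $\psi\in\rmC^\infty_c$, lie in $\testsw{\PX}$ and form a measure‑determining class; a countable such subfamily still separates points, and a compact Hausdorff space admitting a countable separating family of continuous functions is metrizable — hence $\mathcal K_c$ is also sequentially compact.
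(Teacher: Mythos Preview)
The paper does not prove this proposition; it defers all proofs in Section~\ref{subsec:sw} to \cite{NaldiSavare}. So there is no in-paper argument to compare against, and I can only assess your proof on its own merits.

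Your treatment of the necessity direction in (1) is correct and the uniform boundedness argument for (c) is elegant. Part~(2) is also correct, modulo the sufficiency direction of (1) on which it relies.

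The sufficiency direction in (1), however, has a gap that you yourself flag but do not close: the ``uniform-in-$\alpha$ control of the projection error on $\{|y|_\Y\le S\}$'', i.e.\ showing that
\[
\limsup_{\alpha}\int_{\{|y|_\Y\le S\}}\big|\zeta_R(x,y)-\zeta_R(x,P_ky)\big|\,\d\mmu_\alpha
\]
can be made small by taking $k$ large. Pointwise convergence $\zeta_R(x,P_ky)\to\zeta_R(x,y)$ together with a dominating bound yields the vanishing for each \emph{fixed} measure via dominated convergence, but gives nothing uniform over the net. You write that this is ``delicate'' and point to \cite{NaldiSavare}; that is an acknowledgment of the gap, not a proof. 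Note also that one cannot rescue the step by reducing to a compact $K\times B_S\subset\X^s\times\Y^w$ and using uniform continuity of $\zeta_R$ there: conditions (a)+(b) do \emph{not} force eventual tightness of the $\X$-marginals for general nets. For instance, in $\X=\ell^2$ with $\Y=\{0\}$, the net $\mmu_{(m,n)}=(1-1/m)\delta_0+(1/m)\delta_{e_n}$ on $\N\times\N$ ordered only by the first coordinate satisfies (a) and (b) but its $\X$-marginals are not eventually tight in $\X^s$. (The conclusion of the proposition does hold in this example, so this is an obstruction to your \emph{method}, not to the statement.)

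As written, your proof of the sufficiency direction in (1) is therefore incomplete; the missing ingredient is exactly the one you defer to \cite{NaldiSavare}.
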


  It is worth noticing that the topology $\prob_2^{ws}(\X\times \Y)$ is
  strictly weaker than $\prob_2(\X\times \Y)$ even when $\Y$ is finite
  dimensional. In fact, $\testsw\PX$ does not contain
  the quadratic function $(x,y)\mapsto |y|_\Y^2$, so that convergence of
  the quadratic moment w.r.t.~$y$ is not guaranteed.

\section[Directional derivatives of the Wasserstein distance]{Directional derivatives and probability measures on the tangent bundle}
\label{sec:tangent-bundle}
From now on, we will denote by $\X$ a separable Hilbert space with
norm $|\cdot|$ and scalar product $\la\cdot,\cdot\ra$.
We denote by 
$\TX$ the tangent bundle to $\X$, which is identified with the set $\X\times\X$
with the induced norm $|(x,v)|:=\big(|x|^2+|v|^2\big)^{1/2}$ and the 
strong-weak topology of $\X^s \times \X^w$(i.e.~the product of the strong topology on the
first component and the weak topology on the second one).
We will denote by $\sfx,\sfv:\TX\to\X$ the projection maps and by
$\exp^t:\TX\to\X$ the exponential map defined by
\begin{equation}\label{eq:projandexp}
  \sfx(x,v):=x,\quad \sfv(x,v)=v,\quad
  \exp^t(x,v):=x+tv.
\end{equation}
The set $\prob(\TX)$ is defined thanks to the identification of
$\TX$ with $\X\times \X$ and it is endowed with
the narrow topology induced by the strong-weak topology in $\TX$. For
$\Phi \in \prob(\TX)$ we define
\begin{equation}\label{eq:defsqmPhi}
 |\Phi|_2^2:= \int_{\TX} |v|^2 \de \Phi(x,v).
\end{equation}
We denote by $\prob_2(\TX)$ the subset of $\prob(\TX)$ of
measures for which $\int \big(|x|^2+|v|^2\big)\,\d\Phi<\infty$
endowed with the topology of $\prob_2^{sw}(\TX)$ as in Section
\ref{subsec:sw}. If $\mu \in \prob(\X)$ we will also
consider 
\begin{equation}\label{condTanTX}
 \relcP{}\mu\TX := \Big\{ \Phi \in \prob(\TX) \mid \sfx_{\sharp}\Phi
  =
  \mu \Big \},\quad
\relcP 2{\mu}\TX := \Big\{\Phi\in \relcP{}\mu\TX: |\Phi|_2<\infty\Big\}.
\end{equation}
We will also deal with the product space $\X^2$: we will use the
notation
\begin{equation}\label{eq:mapxt}
  \sfx^t:\X^2\to \X,\quad
  \sfx^t(x_0,x_1):=(1-t)x_0+tx_1,\quad t\in[0,1].
\end{equation}
If $\vv\in L^2_\mu(\X;\X)$
we can consider the probability
\begin{equation}
  \label{eq:24}
  \Phi_{\mu,\vv}:=(\ii_\X,\vv)_\sharp\mu\in \relcP 2\mu\TX.
\end{equation}
In this case we will say that $\Phi$ is concentrated on the graph of
the map $\vv$.
More generally, given a Borel family
of probability measures $(\Phi_x)_{x\in \X}\subset \prob_2(\X)$
satisfying
\begin{equation}
  \label{eq:25}
  \int \Big(\int |v|^2\,\d\Phi_x(v)\Big)\,\d\mu(x)<\infty
\end{equation}
we can consider the probability
\begin{equation}
  \label{eq:26}
  \Phi=
  \int_{\X}\Phi_x\,\d\mu(x)
  \in \relcP 2\mu\TX.
\end{equation}
Conversely, every $\Phi\in \relcP 2\mu\TX$ can be disintegrated
by a Borel family $(\Phi_x)_{x\in \X}\subset \prob_2(\X)$
satisfying \eqref{eq:25} and \eqref{eq:26}. $\Phi$ can be associated
to
a vector field $\vv\in L^2_\mu(\X;\X)$ 
if and only if
for $\mu$-a.e.~$x\in \X$ $\Phi_x=\delta_{\vv(x)}$.
Recalling the disintegration Theorem \ref{thm:disintegr} and Remark \ref{rmk:disintegr}, we give the following definition.
\begin{definition}\label{def:bary}
Given $\Phi\in\relcP 2\mu\TX$, the \emph{barycenter of $\Phi$} is the
function
$\bry{\Phi}\in L^2_\mu(\X;\X)$ defined by
\[\bry{\Phi}(x):=\int_\X v\de\Phi_x(v) \quad\text{for }\mu\text{-a.e. }x\in \X,\]
where $\{\Phi_x\}_{x\in \X}\subset\prob_2(\X)$ is the disintegration of $\Phi$ w.r.t.~$\mu$.\end{definition}
\begin{remark}\label{rmk:barycenter}
Notice that, by the linearity of the scalar product, we get the following identity which will be useful later
\begin{equation}
 \int_{\X} \scalprod{\zzeta(x)}{\bry\Phi(x)} \de \mu(x) =
  \int_{\TX} \scalprod{\zzeta(x)}{v} \de \Phi(x,v) \quad \forall \,
  \zzeta \in L^2_\mu(\X;\X).
  \label{eq:30}
\end{equation}
\end{remark}
\subsection{Directional derivatives of the Wasserstein distance and
  duality pairings}
Our starting point is a relevant semi-concavity property of the
function
\begin{equation}
 f(s,t):= \frac{1}{2} W_2^2(\exp^s_{\sharp} \Phi_0,
 \exp^t_\sharp\Phi_1),\quad s,t\in \R,\label{eq:11}
\end{equation}
with
$\Phi_0,\Phi_1\in\prob_2(\TX)$.
We first state an auxiliary result,
whose proof is based on \cite[Proposition 7.3.1]{ags}.
\begin{lemma}
  \label{prop:731v2} Let $\Phi_0, \Phi_1 \in \prob_2(\TX)$, $s,t \in \R$,
  and let $\ttheta^{s,t} \in \Gamma(\exp^s_{\sharp}\Phi_0,
  \exp^t_{\sharp} \Phi_1)$. Then there exists $\Ttheta^{s,t} \in
  \Gamma(\Phi_0, \Phi_1)$
  such that $(\exp^s, \exp^t)_{\sharp}\Ttheta^{s,t}= \ttheta^{s,t}$.
\end{lemma}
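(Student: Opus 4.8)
The claim is a disintegration/gluing statement, and I would reduce it to the fibres of the exponential maps. Write $\mu:=\exp^s_\sharp\Phi_0$ and $\nu:=\exp^t_\sharp\Phi_1$, so that by assumption $\ttheta^{s,t}\in\Gamma(\mu,\nu)$. Applying the disintegration Theorem~\ref{thm:disintegr} to $\mmu=\Phi_0$ and the Borel map $r=\exp^s:\TX\to\X$ (whose image measure is $\mu$), we obtain a $\mu$-a.e.\ uniquely determined Borel family $\{\Phi_{0,x}\}_{x\in\X}\subset\prob(\TX)$ with $\Phi_{0,x}$ concentrated on the fibre $(\exp^s)^{-1}(x)$ and $\Phi_0=\int_\X\Phi_{0,x}\,\de\mu(x)$. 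In the same way, disintegrating $\Phi_1$ along $\exp^t$ yields a Borel family $\{\Phi_{1,y}\}_{y\in\X}$ with $\Phi_{1,y}$ concentrated on $(\exp^t)^{-1}(y)$ and $\Phi_1=\int_\X\Phi_{1,y}\,\de\nu(y)$.

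With these families at hand, I would set
\[
  \Ttheta^{s,t}:=\int_{\X\times\X}\big(\Phi_{0,x}\otimes\Phi_{1,y}\big)\,\de\ttheta^{s,t}(x,y),
\]
i.e.\ the unique measure on $\TX\times\TX$ such that $\int\varphi\,\de\Ttheta^{s,t}=\int_{\X\times\X}\big(\int_{\TX\times\TX}\varphi(z,z')\de\Phi_{0,x}(z)\de\Phi_{1,y}(z')\big)\de\ttheta^{s,t}(x,y)$ for every bounded Borel $\varphi$; this makes sense because $(x,y)\mapsto\Phi_{0,x}\otimes\Phi_{1,y}$ is a Borel map into $\prob(\TX\times\TX)$. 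Testing with $\varphi(z,z')=\psi(z)$ and using that the first marginal of $\ttheta^{s,t}$ is $\mu$, one gets $\pi^1_\sharp\Ttheta^{s,t}=\int_\X\Phi_{0,x}\,\de\mu(x)=\Phi_0$, and symmetrically $\pi^2_\sharp\Ttheta^{s,t}=\Phi_1$; thus $\Ttheta^{s,t}\in\Gamma(\Phi_0,\Phi_1)$. Next, for bounded Borel $\eta:\X\times\X\to\R$ choose $\varphi=\eta\circ(\exp^s,\exp^t)$: since $\Phi_{0,x}$ lives on $(\exp^s)^{-1}(x)$ and $\Phi_{1,y}$ on $(\exp^t)^{-1}(y)$, the inner integral collapses to $\eta(x,y)$, so $\int\eta\circ(\exp^s,\exp^t)\,\de\Ttheta^{s,t}=\int\eta\,\de\ttheta^{s,t}$, that is $(\exp^s,\exp^t)_\sharp\Ttheta^{s,t}=\ttheta^{s,t}$. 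This is exactly the gluing construction behind \cite[Proposition~7.3.1]{ags}.

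The only genuinely non-formal points, which I would isolate as a short preliminary remark, are measure-theoretic: that Theorem~\ref{thm:disintegr} applies, and that the barycentric integral above is meaningful. For the first, $\X$ and $\TX=\X^s\times\X^w$ are Lusin and completely regular (finite products preserve both properties, and $\X^w$ enjoys them), and $\exp^s,\exp^t:\TX\to\X$ are Borel: they are continuous for the strong topology on the factors, and since $\mathcal B(\X^s)=\mathcal B(\X^w)$ one has $\mathcal B(\X^s\times\X^w)=\mathcal B(\X^s\times\X^s)$, so strong continuity already forces Borel measurability on $\TX$. For the second, one uses that each disintegrated family is Borel and that the product operation $(\alpha,\beta)\mapsto\alpha\otimes\beta$ is narrowly continuous, hence $(x,y)\mapsto\Phi_{0,x}\otimes\Phi_{1,y}$ is Borel and can be integrated against $\ttheta^{s,t}$. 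I do not expect any real obstacle beyond this bookkeeping; the substance of the lemma is entirely in the (standard) gluing idea, and everything else is routine manipulation of push-forwards.
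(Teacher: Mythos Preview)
Your argument is correct. You build $\Ttheta^{s,t}$ directly by disintegrating $\Phi_0,\Phi_1$ along the fibres of $\exp^s,\exp^t$ and then averaging the fibre products against $\ttheta^{s,t}$; the marginal and push-forward checks go through exactly as you write, and the measurability bookkeeping you flag is routine.

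The paper takes a slightly different, slicker route: it observes that $\Sigma^r:(x,v)\mapsto(x+rv,v)$ is a Borel automorphism of $\TX$ with inverse $\Sigma^{-r}$, so that $(\Sigma^s)_\sharp\Phi_0$ and $(\Sigma^t)_\sharp\Phi_1$ have $\sfx$-marginals $\exp^s_\sharp\Phi_0$ and $\exp^t_\sharp\Phi_1$ respectively. This places the problem in the exact setting of the standard gluing lemma (invoked as \cite[Proposition~7.3.1]{ags}), which is applied as a black box to produce $\Ppsi^{s,t}$; pulling back by $(\Sigma^{-s},\Sigma^{-t})$ gives the desired $\Ttheta^{s,t}$. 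What you do is essentially reprove the gluing lemma in this particular situation, which makes the construction fully explicit and self-contained; the paper's change-of-variables trick is shorter and hides the disintegration inside the cited reference. The two approaches are morally the same and produce the same kind of object.
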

\begin{proof} Define, for every $r,s,t \in \R$, 
  \[ \Sigma^r : \TX \to \TX,\quad \Sigma^r(x, v) = (\exp^r(x, v), v);
    \quad \Lambda^{s,t}: \TX \times \TX  \to \TX \times \TX,\quad
  \Lambda^{s,t}:=(\Sigma^s,\Sigma^t).\]
Consider the probabilities $(\Sigma^s)_{\sharp}\Phi_0,
(\Sigma^t)_{\sharp}\Phi_1$
and $\ttheta^{s,t}$. They are constructed in such a way that there
exists
$\Ppsi^{s,t} \in \prob(\TX \times \TX)$ s.t. 
\[ (\sfx^0,\sfv^0)_{\sharp} \Ppsi^{s,t} = (\Sigma^s)_{\sharp}\Phi_0,
  \quad (\sfx^1,\sfv^1)_{\sharp} \Ppsi^{s,t} =
  (\Sigma^t)_{\sharp}\Phi_1,
  \quad (\sfx^0,\sfx^1)_{\sharp} \Ppsi^{s,t}= \ttheta^{s,t},\]
where we adopted the notation $\sfx^i(x_0,v_0,x_1,v_1):=x_i$ and
$\sfv^i(x_0,v_0,x_1,v_1):=v_i$, $i=0,1$.
We conclude by taking $\Ttheta^{s,t}:= (\Lambda^{-s,-t})_{\sharp} \Ppsi^{s,t}$. 
\end{proof}
\begin{proposition} \label{prop:concavity} Let $\Phi_0,\Phi_1 \in
  \prob_2(\TX)$ with $\mu_1=\sfx_\sharp\Phi_1$ and
  $\varphi^2:=|\Phi_0|_2^2+|\Phi_1|_2^2$,
  let $f:\R^2\to\R$ be the function defined by
  \eqref{eq:11}
  and let 
   $h,g:\R\to\R$ be defined by 
  \begin{equation}
    \label{eq:115}
    h(s):=f(s,s)= \frac 12 W_2^2(\exp^s_{\sharp}\Phi_0,\exp^s_\sharp\Phi_1),\quad
    g(s):=f(s,0)=  \frac{1}{2} W_2^2(\exp^s_{\sharp} \Phi_0,
    \mu_1),\quad
    s\in \R.
  \end{equation}
  \begin{enumerate}
  \item
    The function $(s,t)\mapsto f(s,t)-\frac12\varphi^2( s^2+t^2)$ is
    concave, i.e.~it holds
    \begin{equation}
    \begin{aligned}
      f((1-\alpha)s_0 + \alpha s_1, (1-\alpha)t_0 + \alpha t_1) &\ge
      (1-\alpha)f(s_0,t_0) + \alpha f(s_1,t_1)
      \\&\quad-
      \frac 12\alpha(1-\alpha)\Big[(s_1-s_0)^2 +(t_1-t_0)^2 \Big]\varphi^2
    \end{aligned}\label{eq:12}
  \end{equation}
  for every $s_0, s_1 ,t_0,t_1\in \R$ and every $ \alpha \in [0,1]$.
  \item
    The function $s\mapsto h(s)-\varphi^2 s^2$ is concave.
  \item the function $s\mapsto g(s)-\frac 12s^2|\Phi_0|_2^2$ 
is concave.
\end{enumerate}
\end{proposition}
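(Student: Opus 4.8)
The plan is to reduce all three statements to a single variational formula for $f$. By Lemma \ref{prop:731v2}, together with the obvious fact that $(\exp^s,\exp^t)_\sharp\Ttheta\in\Gamma(\exp^s_\sharp\Phi_0,\exp^t_\sharp\Phi_1)$ whenever $\Ttheta\in\Gamma(\Phi_0,\Phi_1)$, one obtains the identity
\begin{equation*}
  2f(s,t)=\inf\Big\{\int_{\TX\times\TX}\big|(x+sv)-(y+tw)\big|^2\de\Ttheta(x,v;y,w):\ \Ttheta\in\Gamma(\Phi_0,\Phi_1)\Big\},\quad s,t\in\R.
\end{equation*}
The integrand is finite and $\Ttheta$-integrable because $\Phi_0,\Phi_1\in\prob_2(\TX)$, so $f$ is everywhere real valued. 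Setting $G_\Ttheta(s,t):=\tfrac12\int|(x+sv)-(y+tw)|^2\de\Ttheta$ and using that $\Ttheta\in\Gamma(\Phi_0,\Phi_1)$ forces $\int|v|^2\de\Ttheta=|\Phi_0|_2^2$ and $\int|w|^2\de\Ttheta=|\Phi_1|_2^2$ (these integrands depend only on the first, resp.\ the second, $\TX$-factor), the quadratic part of $G_\Ttheta$ in $(s,t)$ is $\tfrac12\big(s^2|\Phi_0|_2^2-2st\int\langle v,w\rangle\de\Ttheta+t^2|\Phi_1|_2^2\big)$; this is what drives all three claims.

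For (1), fix $s_0,s_1,t_0,t_1\in\R$, $\alpha\in[0,1]$, put $s_\alpha:=(1-\alpha)s_0+\alpha s_1$ and $t_\alpha:=(1-\alpha)t_0+\alpha t_1$, choose an optimal plan $\ttheta_\alpha\in\Gamma_o(\exp^{s_\alpha}_\sharp\Phi_0,\exp^{t_\alpha}_\sharp\Phi_1)$ and lift it through Lemma \ref{prop:731v2} to $\Ttheta_\alpha\in\Gamma(\Phi_0,\Phi_1)$ with $(\exp^{s_\alpha},\exp^{t_\alpha})_\sharp\Ttheta_\alpha=\ttheta_\alpha$, so that $2f(s_\alpha,t_\alpha)=\int|(x+s_\alpha v)-(y+t_\alpha w)|^2\de\Ttheta_\alpha$. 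Since $(\exp^{s_i},\exp^{t_i})_\sharp\Ttheta_\alpha$ is admissible for the pair $(\exp^{s_i}_\sharp\Phi_0,\exp^{t_i}_\sharp\Phi_1)$, we also get $2f(s_i,t_i)\le\int|(x+s_iv)-(y+t_iw)|^2\de\Ttheta_\alpha$ for $i=0,1$. Writing $p_i:=(x+s_iv)-(y+t_iw)$, the Hilbert identity $(1-\alpha)|p_0|^2+\alpha|p_1|^2=|(1-\alpha)p_0+\alpha p_1|^2+\alpha(1-\alpha)|p_0-p_1|^2$ together with $(1-\alpha)p_0+\alpha p_1=(x+s_\alpha v)-(y+t_\alpha w)$ and $p_0-p_1=(s_0-s_1)v-(t_0-t_1)w$ gives, after integration against $\Ttheta_\alpha$,
\begin{equation*}
  2(1-\alpha)f(s_0,t_0)+2\alpha f(s_1,t_1)\le 2f(s_\alpha,t_\alpha)+\alpha(1-\alpha)\int|p_0-p_1|^2\de\Ttheta_\alpha.
\end{equation*}
Finally, viewing $v,w$ as elements of $L^2(\Ttheta_\alpha;\X)$ of norms $|\Phi_0|_2$ and $|\Phi_1|_2$, the triangle inequality in $L^2(\Ttheta_\alpha)$ and Cauchy--Schwarz yield $\int|p_0-p_1|^2\de\Ttheta_\alpha\le\big(|s_1-s_0|\,|\Phi_0|_2+|t_1-t_0|\,|\Phi_1|_2\big)^2\le\big[(s_1-s_0)^2+(t_1-t_0)^2\big]\varphi^2$, and dividing by $2$ we obtain exactly \eqref{eq:12}.

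Claim (2) is then immediate: by (1) the function $\tilde f(s,t):=f(s,t)-\tfrac12\varphi^2(s^2+t^2)$ is concave on $\R^2$, hence so is its restriction to the diagonal, $s\mapsto\tilde f(s,s)=h(s)-\varphi^2 s^2$. For (3) the constant coming from (1) is not sharp, so I argue directly on $G_\Ttheta$: for fixed $\Ttheta\in\Gamma(\Phi_0,\Phi_1)$,
\begin{equation*}
  G_\Ttheta(s,0)=\tfrac12\int|x-y|^2\de\Ttheta+s\int\langle x-y,v\rangle\de\Ttheta+\tfrac{s^2}{2}\,|\Phi_0|_2^2,
\end{equation*}
so $s\mapsto G_\Ttheta(s,0)-\tfrac{s^2}{2}|\Phi_0|_2^2$ is affine; taking the infimum over $\Ttheta\in\Gamma(\Phi_0,\Phi_1)$, which equals $g(s)-\tfrac{s^2}{2}|\Phi_0|_2^2$, we get an infimum of affine functions, hence a concave function. (Equivalently, one repeats the proof of (1) with $t_0=t_1=0$, in which case $\int|p_0-p_1|^2\de\Ttheta_\alpha=(s_0-s_1)^2|\Phi_0|_2^2$.)

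The argument is essentially elementary once the reformulation over $\Gamma(\Phi_0,\Phi_1)$ is available; the only delicate points are pinning down the constant $\varphi^2=|\Phi_0|_2^2+|\Phi_1|_2^2$ in \eqref{eq:12} — this is precisely the Cauchy--Schwarz step that bundles the $s$- and $t$-increments with the two second moments, rather than keeping them separate — and realizing that (3) is \emph{not} just the $t=0$ slice of (1) but requires the exact $s^2$-coefficient of $G_\Ttheta(\cdot,0)$, which is transparent in this picture.
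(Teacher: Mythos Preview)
Your proof is correct and follows essentially the same approach as the paper: lift an optimal coupling at the intermediate point via Lemma~\ref{prop:731v2}, apply the Hilbertian identity $(1-\alpha)|a|^2+\alpha|b|^2=|(1-\alpha)a+\alpha b|^2+\alpha(1-\alpha)|a-b|^2$, and bound the remainder by Cauchy--Schwarz. The paper derives (2) and (3) as ``particular cases'' of the computation in (1) (for (3) taking $t_0=t_1=0$, so the remainder is exactly $(s_1-s_0)^2|\Phi_0|_2^2$ before any Cauchy--Schwarz estimate); your alternative presentation of (3) as an infimum of affine functions of $s$ is a nice equivalent way to see the same thing.
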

\begin{proof}
  Let us first prove \eqref{eq:12}.
  We set $s:= (1-\alpha)s_0 + \alpha s_1$,
  $t:=(1-\alpha)t_0+\alpha t_1$ and we apply Lemma \ref{prop:731v2}
  to find $\Ttheta \in \Gamma(\Phi_0,\Phi_1)$ such that
  $(\exp^s,\exp^t)_\sharp \Ttheta\in \Gamma_o(\exp^s_{\sharp}\Phi_0,
  \exp^t_\sharp\Phi_1)$.
Then, recalling the Hilbertian identity
\begin{equation*}
|(1-\alpha)a + \alpha b|^2 = (1-\alpha) |a|^2+\alpha |b|^2-\alpha(1-\alpha) |a-b|^2, \quad a,b\in \X,
\end{equation*}
we have
\begin{align*}
W_2^2&( \exp^{s}_{\sharp} \Phi_0, \exp^t_\sharp \Phi_1) = \int
       |x_0+sv_0-(x_1+tv_1)|^2
       \de \Ttheta =
  \\&=
  \int
  |(1-\alpha)(x_0+s_0v_0)+\alpha
  (x_0+s_1v_0)-
  (1-\alpha)(x_1+t_0v_1)-\alpha
  (x_1+t_1v_1)|^2
  \de \Ttheta 
  \\&=(1-\alpha)\int|x_0+s_0v_0-(x_1+t_0v_1)|^2\de\Ttheta 
  +\alpha\int|x_0+s_1v_0-(x_1+t_1v_1)|^2\de\Ttheta 
  \\&\quad-\alpha(1-\alpha) \int |(s_1-s_0)v_0+(t_1-t_0)v_1|^2 \de \Ttheta  \\
     & \ge (1-\alpha) W_2^2(\exp_{\sharp}^{s_0}
       \Phi_0,\exp^{t_0}_\sharp \Phi_1) +
       \alpha
       W_2^2(\exp_{\sharp}^{s_1}
       \Phi_0,
       \exp^{t_1}_\sharp\Phi_1)
       \\&\quad-
  \alpha(1-\alpha)\Big((s_1-s_0)^2+(t_1-t_0)^2\Big)
  \Big(\int        |v_0|^2        \de        \Phi_0+
       \int |v_1|^2\,\d\Phi_1\Big).
\end{align*}
which is the thesis.
Claims (2) and (3) follow as particular cases when $t=s$ or
$t=0$.
\end{proof}
Semi-concavity is a useful tool to guarantee the existence of 
one-sided partial derivatives at $(0,0)$: for every $\alpha,\beta\in \R$ we have (see e.g.~\cite[Ch.~VI, Prop.~1.1.2]{HUL}) that
\begin{align*}
  f_r'(\alpha,\beta)&=\lim_{\varrho\downarrow0}\frac{f(\alpha
                      \varrho,\beta \varrho)-f(0,0)}\varrho=
                      \sup_{\varrho>0}\frac{f(\alpha\varrho,\beta \varrho)-f(0,0)}\varrho-\frac{\varrho\varphi^2}2(\alpha^2+\beta^2),\\
  f_l'(\alpha,\beta)&=\lim_{\varrho\downarrow0}\frac{f(0,0)-f(-\alpha \varrho,-\beta \varrho)}\varrho=
                      \inf_{\varrho>0}\frac{f(0,0)-f(-\alpha \varrho,-\beta \varrho)}\varrho+\frac{\varrho\varphi^2}2(\alpha^2+\beta^2).
\end{align*}
$f_r'$ (resp.~$f'_l$) is a concave (resp.~convex)
and positively $1$-homogeneous function, i.e.~a superlinear
(resp.~sublinear) function. They satisfy
\begin{align}
  \label{eq:14}
    f_r'(-\alpha,-\beta)&=-f'_l(\alpha,\beta),\quad
    f_l'(\alpha,\beta)\ge f_r'(\alpha,\beta)\quad\text{for every
    }\alpha,\beta\in \R,\\
    \label{eq:15}
    f_r'(\alpha,\beta)&\ge \alpha f_r'(1,0)+\beta f_r'(0,1)\quad
    \text{for every }\alpha,\beta\ge0,\\ \notag
    f(s,t)&\le f(0,0)+f'_r(s,t)-\frac{\varphi^2}2(s^2+t^2)\quad
    \text{for every }s,t\in \R.
  \end{align}
  Notice moreover that
  \begin{equation*}
    f_r'(1,0)=g'_r(0)=\lim_{\varrho\downarrow0}\frac{g(\varrho)-g(0)}\varrho
  \end{equation*}
  where $g$ is the function defined in \eqref{eq:115}; a similar
  representation holds for $f_l'(1,0)$.
  We introduce the following notation for $f'_r$, $f'_l$, $g'_r$ and $g'_l$.
  \begin{definition}
    \label{def:scalarprodop}
  Let $\mu_0,\mu_1 \in \prob_2(\X)$, $\Phi_0 \in \relcP2{\mu_0}\TX$ and
  $\Phi_1\in \relcP2{\mu_1}\TX$. Recalling the definitions
  of $f$ and $g$ given by \eqref{eq:11} and \eqref{eq:115}, we define
\begin{align*}
  \bram{\Phi_0}{\mu_1}
  &:= g'_r(0)=f_r'(1,0)=
    \lim_{s \downarrow 0} \frac{W_2^2(\exp^s_{\sharp}\Phi_0, \mu_1) -
    W_2^2(\mu_0, \mu_1)}{2s}, \\
  \brap{\Phi_0}{\mu_1}
  &:=
    g'_l(0)=f_l'(1,0)=
    \lim_{s \downarrow 0} \frac{W_2^2(\mu_0, \mu_1)-
    W_2^2(\exp^{-s}_{\sharp}\Phi_0, \mu_1)}{2s},
\intertext{and analogously }
  \bram{\Phi_0}{\Phi_1}
  &:=f'_r(1,1)=
    \lim_{t \downarrow 0} \frac{W_2^2(\exp^t_{\sharp}\Phi_0,
    \exp^t_{\sharp}\Phi_1) -    W_2^2(\mu_0, \mu_1)}{2t},\\
  \brap{\Phi_0}{\Phi_1}
  &:= f_l'(1,1)=\lim_{t \downarrow 0}
    \frac{W_2^2(\mu_0, \mu_1)-W_2^2(\exp^{-t}_{\sharp}\Phi_0, \exp^{-t}_{\sharp}\Phi_1)}{2t}. 
\end{align*}
\end{definition}
\begin{remark} \label{rem:reduction}
  Notice that
  $\bram{\Phi_0}{\mu_1} = \bram{\Phi_0}{\Phi_{\mu_1}}$ and $\brap{\Phi_0}{\mu_1} = \brap{\Phi_0}{\Phi_{\mu_1}}$,
  where
  \[\Phi_{\mu_1} = (\ii_\X ,0)_{\sharp}\mu_1 \in \prob_2(\TX).\]
  Moreover, using the notation
  \begin{equation}
    \label{eq:20}
    -\Phi:=J_\sharp \Phi,\quad
    \Phi\in \prob(\TX),\ J(x,v):=(x,-v),
  \end{equation}
  we have
\[\bram{-\Phi_0}{-\Phi_1}=-\brap{\Phi_0}{\Phi_1},\quad\text{
    and }
  \quad\bram{-\Phi_0}{\mu_1}=-\brap{\Phi_0}{\mu_1}.\]
In particular, the properties of $\brap{\cdot}{\cdot}$ (in $\prob_2(\TX) \times \prob_2(\TX) $ or $\prob_2(\TX) \times \prob_2(\X)$) and the ones of $\bram{\cdot}{\cdot}$ in $\prob_2(\TX) \times \prob_2(\X)$
can be easily derived by the corresponding
ones of $\bram{\cdot}{\cdot}$ in $\prob_2(\TX) \times \prob_2(\TX) $.
\end{remark}
Recalling \eqref{eq:15} and \eqref{eq:14} we obtain the following result.
\begin{corollary}
  \label{lem:control} For every $\mu_0, \mu_1 \in \prob_2(\X)$ and for every $\Phi_0\in\relcP2{\mu_0}\TX$, $\Phi_1\in\relcP2{\mu_1}\TX$, it holds
\[ \bram{\Phi_0}{\mu_1} + \bram{\Phi_1}{\mu_0} \le \bram{\Phi_0}{\Phi_1} \quad\text{ and }\quad\brap{\Phi_0}{\mu_1} + \brap{\Phi_1}{\mu_0} \ge \brap{\Phi_0}{\Phi_1}. \]
\end{corollary}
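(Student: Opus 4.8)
The plan is to derive both inequalities from the superadditivity/subadditivity of the one-sided directional derivatives $f'_r$ and $f'_l$ at the origin, combined with the identifications recorded in Definition \ref{def:scalarprodop} and Remark \ref{rem:reduction}. The key point is that, by Proposition \ref{prop:concavity}(1), the function $(s,t)\mapsto f(s,t)-\tfrac12\varphi^2(s^2+t^2)$ is concave, hence its one-sided directional derivative $f'_r$ at $(0,0)$ is concave and positively $1$-homogeneous, so it is superlinear; this is exactly the content of \eqref{eq:15}, namely
\begin{equation*}
  f'_r(\alpha,\beta)\ge \alpha f'_r(1,0)+\beta f'_r(0,1)\qquad\text{for every }\alpha,\beta\ge0.
\end{equation*}
Applying this with $\alpha=\beta=1$ gives $f'_r(1,1)\ge f'_r(1,0)+f'_r(0,1)$. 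Now $f'_r(1,1)=\bram{\Phi_0}{\Phi_1}$ by definition, and $f'_r(1,0)=\bram{\Phi_0}{\mu_1}$ again by definition. It remains to recognise $f'_r(0,1)$: by symmetry of the construction in \eqref{eq:11} — swapping the roles of $\Phi_0$ and $\Phi_1$ turns $f(s,t)$ into $f(t,s)$ — the partial derivative in the second variable $f'_r(0,1)$ for the pair $(\Phi_0,\Phi_1)$ equals $f'_r(1,0)$ for the pair $(\Phi_1,\Phi_0)$, which is $\bram{\Phi_1}{\mu_0}$. This yields the first inequality
\begin{equation*}
  \bram{\Phi_0}{\mu_1}+\bram{\Phi_1}{\mu_0}\le \bram{\Phi_0}{\Phi_1}.
\end{equation*}

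For the second inequality, the cleanest route is to invoke Remark \ref{rem:reduction}: applying the already-proved first inequality to the reflected measures $-\Phi_0,-\Phi_1$ (defined via $J$ in \eqref{eq:20}) and using the identities $\bram{-\Phi_0}{-\Phi_1}=-\brap{\Phi_0}{\Phi_1}$ and $\bram{-\Phi_0}{\mu_1}=-\brap{\Phi_0}{\mu_1}$ (and symmetrically for the other term), one gets
\begin{equation*}
  -\brap{\Phi_0}{\mu_1}-\brap{\Phi_1}{\mu_0}\le -\brap{\Phi_0}{\Phi_1},
\end{equation*}
which is the second inequality after multiplying by $-1$. Alternatively one can argue directly from \eqref{eq:14}: $f'_l$ is sublinear, so $f'_l(\alpha,\beta)\le \alpha f'_l(1,0)+\beta f'_l(0,1)$ for $\alpha,\beta\ge0$, and the same identification of $f'_l(1,1)$, $f'_l(1,0)$, $f'_l(0,1)$ with $\brap{\cdot}{\cdot}$-pairings closes the argument.

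The only genuinely substantive point — and the one I expect to need the most care — is justifying that $f'_r(0,1)$ (the right derivative of $f$ along the direction $(0,1)$) really does coincide with the pairing $\bram{\Phi_1}{\mu_0}$, i.e. that freezing the first argument at $\mu_0=\sfx_\sharp\Phi_0$ and deforming only the second by $\exp^t_\sharp\Phi_1$ produces $\lim_{t\downarrow 0}\tfrac1{2t}\bigl(W_2^2(\mu_0,\exp^t_\sharp\Phi_1)-W_2^2(\mu_0,\mu_1)\bigr)$. This is immediate from the symmetry of $W_2$ in its two arguments together with the definition of $g$ in \eqref{eq:115} applied to the swapped pair, but one should state it explicitly. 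Everything else is a direct substitution of the definitions into the homogeneity/superadditivity inequalities \eqref{eq:15} and \eqref{eq:14}, so the proof is short.
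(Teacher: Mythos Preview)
Your proposal is correct and follows exactly the approach the paper intends: the paper's entire proof is the single sentence ``Recalling \eqref{eq:15} and \eqref{eq:14} we obtain the following result,'' and your expansion of this---applying the superadditivity \eqref{eq:15} at $\alpha=\beta=1$, identifying $f'_r(0,1)=\bram{\Phi_1}{\mu_0}$ via the symmetry of $W_2$, and then deducing the $\brap{\cdot}{\cdot}$ inequality either from \eqref{eq:14} or from Remark~\ref{rem:reduction}---fills in precisely what the authors leave implicit. The point you flag as needing care (that $f'_r(0,1)=\bram{\Phi_1}{\mu_0}$) is indeed the only thing not literally written in the definitions, and your justification via $W_2(\mu_0,\exp^t_\sharp\Phi_1)=W_2(\exp^t_\sharp\Phi_1,\mu_0)$ is the right one.
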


  Let us now show an important equivalent characterization of the 
  quantities we have just introduced.
  As usual we will denote by $\sfx^0,\sfv^0,\sfx^1:\TX\times \X\to\X$ the projection maps
  of a point $(x_0,v_0,x_1) $ in $\TX\times \X$
  (and similarly for $\TX\times \TX$ with $\sfx^0,\sfv^0,\sfx^1, \sfv^1$).
  
  First of all we introduce the following sets.
\begin{definition}\label{def:lambda}
  For every $\Phi_0  \in \prob(\TX)$ with $\mu_0=\sfx_\sharp\Phi_0$ and 
  $\mu_1 \in \prob_2(\X)$
  we set
  \begin{equation*}
  \Lambda(\Phi_0, \mu_1):= \left \{ \ssigma \in \Gamma(\Phi_0, \mu_1)
    \mid
    (\sfx^0,\sfx^1)_{\sharp}\ssigma \in  \Gamma_o(\mu_0, \mu_1) \right
      \}.
  \end{equation*}
Analogously, for every $\Phi_0, \Phi_1 \in \prob(\TX)$ with
$\mu_0=\sfx_\sharp\Phi_0$ and 
$\mu_1=\sfx_\sharp\Phi_1$ in $\prob_2(\X)$
we set
\begin{equation*}
  \Lambda(\Phi_0, \Phi_1):= \left \{ \Ttheta \in \Gamma(\Phi_0, \Phi_1)
                         \mid (\sfx^0,\sfx^1)_{\sharp} \Ttheta\in
                          \Gamma_o(\mu_0, \mu_1)
    \right \}.
  \end{equation*}
\end{definition}
In the following proposition and subsequent corollary, we provide a
useful characterization of the pairings $\bram{\cdot}{\cdot}$ and $\brap{\cdot}{\cdot}$. 
\begin{theorem}
  \label{thm:characterization} For every
  $\Phi_0, \Phi_1 \in \prob_2(\TX)$ and $\mu_1 \in \prob_2(\X)$ we have
\begin{align}
\label{eq:21}\bram{\Phi_0}{\mu_1} & = \min \left \{ \int_{\TX \times
                                   \X} \scalprod{x_0-x_1}{v_0} \de \ssigma
                                   \mid \ssigma \in \Lambda(\Phi_0,
                                   \mu_1) \right \}, \\
  \label{eq:22}
\bram{\Phi_0}{\Phi_1} & = \min \left \{ \int_{\TX \times \TX} \scalprod{x_0-x_1}{v_0-v_1} \de \Ttheta \mid \Ttheta \in \Lambda(\Phi_0, \Phi_1) \right \}.
\end{align}
We denote by 
$\Lambda_o(\Phi_0, \mu_1)$ (resp.~$\Lambda_o(\Phi_0, \Phi_1)$)
the subset of $\Lambda (\Phi_0,\mu_1)$ (resp.~$\Lambda(\Phi_0,\Phi_1)$)
where the minimum in \eqref{eq:21} (resp.~\eqref{eq:22}) is attained.
\end{theorem}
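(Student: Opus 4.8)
The plan is to prove each identity by the standard two-sided scheme: an easy \emph{upper bound}, obtained by inserting explicit competitor couplings, and a matching \emph{lower bound with attainment}, obtained by a compactness argument as the deformation parameter $t\downarrow 0$. It is enough to establish \eqref{eq:22}: taking $\Phi_1=\Phi_{\mu_1}:=(\ii_\X,0)_\sharp\mu_1\in\prob_2(\TX)$, every $\Ttheta\in\Gamma(\Phi_0,\Phi_{\mu_1})$ is concentrated on $\{v_1=0\}$ and is thus identified, by forgetting the last coordinate, with a $\ssigma\in\Gamma(\Phi_0,\mu_1)$; since $\exp^t_\sharp\Phi_{\mu_1}\equiv\mu_1$ and $\langle x_0-x_1,v_0-v_1\rangle=\langle x_0-x_1,v_0\rangle$ on $\{v_1=0\}$, identity \eqref{eq:21} (and the description of $\Lambda_o(\Phi_0,\mu_1)$) follows from \eqref{eq:22} together with Remark \ref{rem:reduction}.

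\emph{Upper bound.} Given $\Ttheta\in\Lambda(\Phi_0,\Phi_1)$ and $t>0$, the push-forward of $\Ttheta$ through the joint exponential $(x_0,v_0,x_1,v_1)\mapsto(x_0+tv_0,x_1+tv_1)$ is a coupling of $\exp^t_\sharp\Phi_0$ and $\exp^t_\sharp\Phi_1$, so $W_2^2(\exp^t_\sharp\Phi_0,\exp^t_\sharp\Phi_1)\le\int|x_0+tv_0-x_1-tv_1|^2\de\Ttheta$. Expanding the square and using $\int|x_0-x_1|^2\de\Ttheta=W_2^2(\mu_0,\mu_1)$ (because $(\sfx^0,\sfx^1)_\sharp\Ttheta\in\Gamma_o(\mu_0,\mu_1)$) gives
\[
\frac{W_2^2(\exp^t_\sharp\Phi_0,\exp^t_\sharp\Phi_1)-W_2^2(\mu_0,\mu_1)}{2t}\le\int\langle x_0-x_1,v_0-v_1\rangle\de\Ttheta+\frac{t}{2}\int|v_0-v_1|^2\de\Ttheta;
\]
letting $t\downarrow0$ yields $\bram{\Phi_0}{\Phi_1}\le\int\langle x_0-x_1,v_0-v_1\rangle\de\Ttheta$, hence $\bram{\Phi_0}{\Phi_1}$ is a lower bound for the infimum in \eqref{eq:22}.

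\emph{Lower bound and attainment.} Choose $t_n\downarrow0$, pick $\ttheta_n\in\Gamma_o(\exp^{t_n}_\sharp\Phi_0,\exp^{t_n}_\sharp\Phi_1)$ and lift it, via Lemma \ref{prop:731v2}, to $\Ttheta_n\in\Gamma(\Phi_0,\Phi_1)$ with $(\exp^{t_n},\exp^{t_n})_\sharp\Ttheta_n=\ttheta_n$. Uniformly in $n$ one has $\int(|v_0|^2+|v_1|^2)\de\Ttheta_n=|\Phi_0|_2^2+|\Phi_1|_2^2$ and $(\sfx^0,\sfx^1)_\sharp\Ttheta_n\in\Gamma(\mu_0,\mu_1)$, a compact subset of $\prob_2(\X\times\X)$ by Lemma \ref{le:compactnessP2}; so, by Proposition \ref{prop:finalmente}(2) applied with the coordinate splitting into positions (strong) and velocities (weak), the $\Ttheta_n$ lie in a sequentially compact subset of $\prob_2^{sw}(\TX\times\TX)$, and a subsequence converges, $\Ttheta_n\to\Ttheta_\infty$. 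Continuity of the coordinate projections for these topologies (and uniqueness of the $\prob_2^{sw}$-limit) gives $\Ttheta_\infty\in\Gamma(\Phi_0,\Phi_1)$. Setting $\delta_n:=\tfrac1{2t_n}\big(W_2^2(\exp^{t_n}_\sharp\Phi_0,\exp^{t_n}_\sharp\Phi_1)-W_2^2(\mu_0,\mu_1)\big)\to\bram{\Phi_0}{\Phi_1}$ (by definition of the pairing) and expanding $\int|x_0+t_nv_0-x_1-t_nv_1|^2\de\Ttheta_n=W_2^2(\exp^{t_n}_\sharp\Phi_0,\exp^{t_n}_\sharp\Phi_1)$, we obtain
\[
0\le\int|x_0-x_1|^2\de\Ttheta_n-W_2^2(\mu_0,\mu_1)=2t_n\Big(\delta_n-\int\langle x_0-x_1,v_0-v_1\rangle\de\Ttheta_n\Big)-t_n^2\int|v_0-v_1|^2\de\Ttheta_n,
\]
and a Cauchy–Schwarz bound on the pairing turns this into a quadratic inequality in $\big(\int|x_0-x_1|^2\de\Ttheta_n\big)^{1/2}$ forcing $\int|x_0-x_1|^2\de\Ttheta_n\to W_2^2(\mu_0,\mu_1)$; lower semicontinuity of $\ggamma\mapsto\int|x_0-x_1|^2\de\ggamma$ then gives $\int|x_0-x_1|^2\de\Ttheta_\infty=W_2^2(\mu_0,\mu_1)$, i.e.\ $\Ttheta_\infty\in\Lambda(\Phi_0,\Phi_1)$. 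Since $\langle x_0-x_1,v_0-v_1\rangle$ is a bounded bilinear form in the positions/velocities splitting, \eqref{eq:43} gives $\int\langle x_0-x_1,v_0-v_1\rangle\de\Ttheta_n\to\int\langle x_0-x_1,v_0-v_1\rangle\de\Ttheta_\infty$; combined with the displayed identity (which shows $\delta_n\ge\int\langle x_0-x_1,v_0-v_1\rangle\de\Ttheta_n$) this yields $\int\langle x_0-x_1,v_0-v_1\rangle\de\Ttheta_\infty\le\bram{\Phi_0}{\Phi_1}$. Together with the upper bound applied to $\Ttheta_\infty$, this proves \eqref{eq:22} and that the minimum is attained, at $\Ttheta_\infty\in\Lambda_o(\Phi_0,\Phi_1)$.

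I expect the crux to be the lower bound: one must secure convergence of the lifted plans $\Ttheta_n$ \emph{in} $\prob_2^{sw}(\TX\times\TX)$ — the topology in which the quadratic pairing is continuous via \eqref{eq:43} — and then check that the limit still belongs to $\Lambda$, the delicate point being that the position marginal of $\Ttheta_\infty$ remains \emph{optimal}, which is exactly what the quantitative convergence $\int|x_0-x_1|^2\de\Ttheta_n\to W_2^2(\mu_0,\mu_1)$ supplies. Note that here the uniform velocity bound $|\Phi_0|_2^2+|\Phi_1|_2^2$ is available with no CFL-type hypothesis, precisely because $\Phi_0,\Phi_1$ are fixed, and it is what feeds the compactness in Proposition \ref{prop:finalmente}(2).
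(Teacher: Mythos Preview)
Your argument is correct and follows the same two-step skeleton as the paper (competitor bound from above, compactness of lifted optimal plans from below), but you take a more elaborate route for the compactness step than is needed.

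The paper observes that since the marginals $\Phi_0,\Phi_1\in\prob_2(\TX)$ are \emph{fixed}, the set $\Gamma(\Phi_0,\Phi_1)$ is compact in the full Wasserstein topology $\prob_2(\TX\times\TX)$ (tightness plus fixed second moment, via Lemma~\ref{le:compactnessP2} and \eqref{eq:important}). From $\prob_2$-convergence of a subsequence $\Ttheta_{t(k)}\to\Ttheta$ everything falls out immediately: the integral $\int\langle x_0-x_1,v_0-v_1\rangle\,\d\Ttheta_{t(k)}$ converges since the integrand has quadratic growth, and optimality of $(\sfx^0,\sfx^1)_\sharp\Ttheta$ follows from the narrow convergence $(\exp^{t(k)},\exp^{t(k)})_\sharp\Ttheta_{t(k)}\to(\sfx^0,\sfx^1)_\sharp\Ttheta$ together with the standard stability of optimal plans. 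No $\prob_2^{sw}$ machinery, no bilinear-form argument via \eqref{eq:43}, and no quadratic inequality for $\big(\int|x_0-x_1|^2\,\d\Ttheta_n\big)^{1/2}$ are needed. Your strong--weak compactness would be the right tool if the velocity marginals were only uniformly $L^2$-bounded rather than fixed; here it is overkill, though not wrong.
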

\begin{proof} 
  First, we recall that the minima in the right hand side are attained
  since $\Lambda(\Phi_0,\mu_1)$ and
  $\Lambda(\Phi_0,\Phi_1)$ are compact subsets
  of $\prob_2(\TX\times \X)$ and $\prob_2(\TX\times \TX)$ respectively
  by Lemma \ref{le:compactnessP2} and the integrands are
  continuous functions with quadratic growth. 
Thanks to Remark \ref{rem:reduction}, we only need to prove the second
equality. For every $\Ttheta \in \Lambda(\Phi_0, \Phi_1)$ and setting
$\mu_0=\sfx_\sharp\Phi_0$, $\mu_1=\sfx_\sharp\Phi_1$, we have
\begin{align*}
  W_2^2 &(\exp^t_{\sharp}(\Phi_0), \exp^t_{\sharp}(\Phi_1))
  \le \int_{\TX \times \TX} | (x_0-x_1) +t(v_0-v_1)|^2 \de \Ttheta\\
  &= \int_{\X^2} |x_0-x_1|^2 \de (\sfx^0, \sfx^1)_{\sharp}\Ttheta+
    2t \int_{\TX \times \TX} \scalprod{x_0-x_1}{v_0-v_1} \de \Ttheta+
        t^2 \int_{\X^2} |v_0-v_1|^2 \de \Ttheta \\
&= W_2^2(\mu_0,\mu_1) + 2t \int_{\TX \times \TX} \scalprod{x_0-x_1}{v_0-v_1} \de \Ttheta
    + t^2 \int_{\X^2} |v_0-v_1|^2 \de \Ttheta.
\end{align*}
and this immediately implies 
\begin{align*}
  \bram{\Phi_0}{\Phi_1}\le
\min \left \{ \int_{\TX \times \TX} \scalprod{x_0-x_1}{v_0-v_1} \de \Ttheta \mid \Ttheta \in \Lambda(\Phi_0, \Phi_1) \right \}. 
\end{align*} 
In order to prove the converse inequality,
thanks to Lemma \ref{prop:731v2}, for every $t>0$ we can find $\Ttheta_t \in \Gamma(\Phi_0, \Phi_1)$ s.t. 
\[ (\exp^t, \exp^t)_{\sharp}\Ttheta_t \in \Gamma_o(\exp^t_{\sharp}\Phi_0, \exp^t_{\sharp}\Phi_1). \] 
Then
\begin{equation}\label{proofminbra}
\begin{split}
\frac{W_2^2(\exp^t_{\sharp}\Phi_0, \exp^t_{\sharp}\Phi_1) - W_2^2(\mu_0,\mu_1)}{2t} &\ge \frac{1}{2t}\int_{\TX \times \TX } |(x_0-x_1)+t(v_0-v_1)|^2 \de \Ttheta_t \\
&\quad-\frac{1}{2t} \int_{\TX \times \TX} |x_0-x_1|^2 \de \Ttheta_t \\
& \ge \int_{\TX \times \TX} \scalprod{x_0-x_1}{v_0-v_1} \de \Ttheta_t.
\end{split}
\end{equation}
Since $\Gamma(\Phi_0, \Phi_1)$ is compact in $\prob_2(\TX\times \TX)$,
there exists a vanishing sequence $k\mapsto t(k)$
and $\Ttheta \in \Gamma(\Phi_0, \Phi_1)$ s.t. $\Ttheta_{t(k)} \to \Ttheta$
in
$\prob_2(\TX\times \TX)$.
Moreover it holds $(\exp^{t(k)}, \exp^{t(k)})_{\sharp}\Ttheta_{t(k)} \to
(\sfx^0,\sfx^1)_{\sharp}\Ttheta$ in $\prob(\TX \times \TX)$
so that $(\sfx^0,\sfx^1)_{\sharp}\Ttheta \in  \Gamma_o(\mu_0, \mu_1)$,
and therefore $\Ttheta \in \Lambda(\Phi_0, \Phi_1)$.
The convergence in $\prob_2(\TX\times\TX)$ yields 
\[\lim_k \int_{\TX\times \TX}\scalprod{x_0-x_1}{v_0-v_1} \de \Ttheta_{t(k)} = \int_{\TX\times \TX}\scalprod{x_0-x_1}{v_0-v_1} \de \Ttheta,\]
so that, passing to the limit in \eqref{proofminbra} along the
sequence $t(k)$, we obtain
\begin{align*}
  \bram{\Phi_0}{\Phi_1}\ge 
      \int_{\TX \times \TX} \scalprod{x_0-x_1}{v_0-v_1} \de \Ttheta
      \quad\text{for some }\Ttheta\in \Lambda(\Phi_0,\Phi_1).\qedhere
\end{align*}
\end{proof}
\begin{corollary} \label{cor:charactplus} Let $\Phi_0, \Phi_1 \in \prob_2(\TX)$ and $\mu_1\in \prob_2(\X)$, then
  \begin{align}
    \label{eq:27}
\brap{\Phi}{\mu_1} & = \max \left \{ \int_{\TX \times \X}
                   \scalprod{x_0-x_1}{v_0} \de \ssigma \mid \ssigma \in
                   \Lambda(\Phi_0, \mu_1) \right \},\\ \notag
\brap{\Phi_0}{\Phi_1} & = \max \left \{ \int_{\TX \times \TX} \scalprod{x_0-x_1}{v_0-v_1} \de \Ttheta \mid \Ttheta \in \Lambda(\Phi_0, \Phi_1) \right \}.
\end{align}
\end{corollary}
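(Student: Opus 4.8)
The plan is to deduce this corollary directly from Theorem \ref{thm:characterization} by exploiting the sign-flip symmetry recorded in Remark \ref{rem:reduction}. Recall that $-\Phi := J_\sharp\Phi$ with $J(x,v):=(x,-v)$, that $\sfx_\sharp(-\Phi)=\sfx_\sharp\Phi$, and that $\brap{\Phi_0}{\Phi_1} = -\bram{-\Phi_0}{-\Phi_1}$ and $\brap{\Phi_0}{\mu_1} = -\bram{-\Phi_0}{\mu_1}$. So it suffices to rewrite the minimum formulas \eqref{eq:21} and \eqref{eq:22}, applied to $-\Phi_0$ (and $-\Phi_1$), in terms of couplings attached to $\Phi_0$ (and $\Phi_1$).

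First I would introduce the linear involution $(J,J)$ of $\TX\times\TX$, namely $(x_0,v_0,x_1,v_1)\mapsto(x_0,-v_0,x_1,-v_1)$, together with its analogue $(J,\ii_\X)$ on $\TX\times\X$. Push-forward by $(J,J)$ is a bijection of $\Gamma(\Phi_0,\Phi_1)$ onto $\Gamma(-\Phi_0,-\Phi_1)$, and similarly in the mixed case. Since $J$ leaves the spatial variables fixed, the $\X^2$-marginal $(\sfx^0,\sfx^1)_\sharp$ is unchanged by this push-forward; hence the bijection restricts to one between $\Lambda(\Phi_0,\Phi_1)$ and $\Lambda(-\Phi_0,-\Phi_1)$, and between $\Lambda(\Phi_0,\mu_1)$ and $\Lambda(-\Phi_0,\mu_1)$.

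Then I would track the integrand. Under the substitution $v_i\mapsto-v_i$ the bilinear form $\scalprod{x_0-x_1}{v_0-v_1}$ (resp.~$\scalprod{x_0-x_1}{v_0}$) changes sign, so that $\int\scalprod{x_0-x_1}{v_0-v_1}\de(J,J)_\sharp\Ttheta = -\int\scalprod{x_0-x_1}{v_0-v_1}\de\Ttheta$ for every $\Ttheta\in\Lambda(\Phi_0,\Phi_1)$, and likewise in the mixed case. Combining this sign reversal with the outer minus sign coming from $\brap{\cdot}{\cdot}=-\bram{-\cdot}{-\cdot}$, the minimum in Theorem \ref{thm:characterization} over $\Lambda(-\Phi_0,-\Phi_1)$ turns into a maximum over $\Lambda(\Phi_0,\Phi_1)$, which is precisely \eqref{eq:27} and its companion identity; attainment of the maximum is automatic, as it corresponds, under the bijection above, to the minimum attained in Theorem \ref{thm:characterization}.

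I do not expect any genuine obstacle here: the only points needing a line of justification are that push-forward by $(J,J)$ maps $\Gamma(\Phi_0,\Phi_1)$ bijectively onto $\Gamma(-\Phi_0,-\Phi_1)$ (immediate, $J$ being a linear homeomorphism of $\TX$) and that it preserves the optimality constraint on the $\X^2$-marginal (immediate, since $J$ fixes $\sfx$). The rest is bookkeeping of signs; alternatively, if one prefers to avoid invoking Remark \ref{rem:reduction}, one can simply repeat verbatim the proof of Theorem \ref{thm:characterization} with $\exp^{-t}$ in place of $\exp^{t}$, the reversal of the time direction producing the passage from $\min$ to $\max$.
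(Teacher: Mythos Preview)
Your proposal is correct and follows essentially the same approach as the paper: the corollary is stated without proof immediately after Theorem \ref{thm:characterization}, the intended derivation being precisely the sign-flip argument via Remark \ref{rem:reduction} that you spell out. The bijection $\Lambda(\Phi_0,\Phi_1)\to\Lambda(-\Phi_0,-\Phi_1)$ induced by $(J,J)_\sharp$ and the resulting conversion of the minimum into a maximum is exactly what the paper has in mind.
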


\subsection{Right and left derivatives of the Wasserstein distance
  along a.c.~curves}
\label{subsec:left-right-derW} 
Let us now discuss the differentiability of the map $\interval \ni t
\mapsto \frac{1}{2}W_2^2(\mu(t), \nu)$ along a locally absolutely
continuous curve $\mu:
\interval \to \prob_2(\X)$, with $\interval$ an open interval of $\R$
and $\nu \in \prob_2(\X)$.
\begin{theorem} \label{thm:refdiff} Let $\mu:\interval \to
  \prob_2(\X)$ be a locally absolutely continuous curve and let
  $\vv:\interval\times \X\to \X$ and $A(\mu)$ be as in Theorem
  \ref{thm:tangentv}. Then, for every $\nu \in \prob_2(\X)$ and every
  $t \in A(\mu)$, it holds
     
    \begin{align}  \label{eq:refdiffa}
\lim_{h \downarrow 0} \frac{W_2^2(\mu_{t+h}, \nu)-W_2^2(\mu_t,
  \nu)}{2h}
      &= \bram{(\ii_\X , \vv_t)_{\sharp}\mu_t}{\nu},\\ \notag
\lim_{h \uparrow 0} \frac{W_2^2(\mu_{t+h}, \nu)-W_2^2(\mu_t, \nu)}{2h}
                                                                         &=
                                                                           \brap{(\ii_\X , \vv_t)_{\sharp}\mu_t}{\nu},
\end{align}
so that the map $s \mapsto W_2^2(\mu_s, \nu)$ is left and right differentiable at every $t \in A(\mu)$.
In particular,

\begin{enumerate}
\item if $t \in A(\mu)$ and $\nu \in \prob_2(\X)$ are s.t. there
  exists a unique optimal transport plan between $\mu_t$ and $\nu$,
  then the map $s \mapsto W_2^2(\mu_s, \nu)$ is
  differentiable at $t$;
\item there exists a subset $A({\mu,\nu})\subset A(\mu)$ of full
  Lebesgue measure such that $s\mapsto W_2^2(\mu_s,\nu)$ is
  differentiable in $A({\mu,\nu})$ and
  \begin{equation*}
    \begin{aligned}
      \frac 12\frac\d{\d t}W_2^2(\mu_t,\nu)
      &= \bram{(\ii_\X ,
        \vv_t)_{\sharp}\mu_t}{\nu}= \brap{(\ii_\X ,
        \vv_t)_{\sharp}\mu_t}{\nu}
      \\&=\int \la \vv_t(x_1),x_1-x_2\ra\,\d\mmu(x_1.x_2)
      \quad\text{for every }\mmu\in \Gamma_o(\mu_t,\nu),\ t\in A({\mu,\nu}).
    \end{aligned}
  \end{equation*}
\end{enumerate}

\end{theorem}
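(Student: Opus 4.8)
The plan is to reduce the whole statement, at a fixed point $t\in A(\mu)$, to the one-sided derivatives at $0$ of $g(s):=\tfrac12 W_2^2(\exp^s_\sharp\Phi,\nu)$, where $\Phi:=(\ii_\X,\vv_t)_\sharp\mu_t$; these already carry the names $g'_r(0)=\bram{\Phi}{\nu}$ and $g'_l(0)=\brap{\Phi}{\nu}$ in Definition \ref{def:scalarprodop}, and they exist by the semi-concavity in Proposition \ref{prop:concavity}(3). First I would note $\Phi\in\relcP2{\mu_t}\TX$, since $\int_\X|\vv_t|^2\,\d\mu_t=|\dot\mu_t|^2<\infty$ by Theorem \ref{thm:tangentv}, and that $\exp^h_\sharp\Phi=(\ii_\X+h\vv_t)_\sharp\mu_t$. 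The essential input is \eqref{eq:74}: $W_2\big((\ii_\X+h\vv_t)_\sharp\mu_t,\mu_{t+h}\big)=o(|h|)$ as $h\to0$. Since $h\mapsto W_2(\mu_{t+h},\nu)$ and $h\mapsto W_2(\exp^h_\sharp\Phi,\nu)$ are bounded near $h=0$, the triangle inequality together with the identity $a^2-b^2=(a-b)(a+b)$ yields $W_2^2(\mu_{t+h},\nu)=W_2^2(\exp^h_\sharp\Phi,\nu)+o(|h|)$.

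With this approximation the right derivative is immediate: dividing by $2h>0$ and letting $h\downarrow0$,
\[
\lim_{h\downarrow0}\frac{W_2^2(\mu_{t+h},\nu)-W_2^2(\mu_t,\nu)}{2h}
=\lim_{h\downarrow0}\frac{W_2^2(\exp^h_\sharp\Phi,\nu)-W_2^2(\mu_t,\nu)}{2h}
=g'_r(0)=\bram{\Phi}{\nu}.
\]
For the left derivative one sets $h=-s$ with $s\downarrow0$, applies the same approximation at $\mu_{t-s}$ and $\exp^{-s}_\sharp\Phi$, and recognizes, after rearranging signs, the limit as $g'_l(0)=\brap{\Phi}{\nu}$. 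This proves \eqref{eq:refdiffa} and hence left/right differentiability of $s\mapsto W_2^2(\mu_s,\nu)$ at every $t\in A(\mu)$.

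For claims (1) and (2) I would exploit that $\Phi$ is concentrated on the graph of $\vv_t$, so its disintegration w.r.t.\ $\mu_t$ is $\delta_{\vv_t(x_0)}$; consequently every $\ssigma\in\Lambda(\Phi,\nu)$ is the image of its marginal $\gamma:=(\sfx^0,\sfx^1)_\sharp\ssigma\in\Gamma_o(\mu_t,\nu)$ under $(x_0,x_1)\mapsto(x_0,\vv_t(x_0),x_1)$, and conversely every $\gamma\in\Gamma_o(\mu_t,\nu)$ arises this way, with $\int\langle x_0-x_1,v_0\rangle\,\d\ssigma=\int\langle\vv_t(x_0),x_0-x_1\rangle\,\d\gamma$. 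Thus Theorem \ref{thm:characterization} and Corollary \ref{cor:charactplus} give
\[
\bram{\Phi}{\nu}=\min_{\gamma\in\Gamma_o(\mu_t,\nu)}\int\langle\vv_t(x_0),x_0-x_1\rangle\,\d\gamma,\qquad
\brap{\Phi}{\nu}=\max_{\gamma\in\Gamma_o(\mu_t,\nu)}\int\langle\vv_t(x_0),x_0-x_1\rangle\,\d\gamma .
\]
If the optimal plan between $\mu_t$ and $\nu$ is unique, these two values coincide, proving differentiability at $t$, i.e.\ (1). For (2), note that $s\mapsto W_2(\mu_s,\nu)$ is locally absolutely continuous (it is $1$-Lipschitz w.r.t.\ $W_2$ composed with the locally a.c.\ curve $\mu$) and locally bounded, hence $s\mapsto W_2^2(\mu_s,\nu)$ is locally absolutely continuous and therefore differentiable $\mathcal L$-a.e.; set $A(\mu,\nu)$ to be the intersection of this full-measure set with $A(\mu)$. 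At each $t\in A(\mu,\nu)$ the left and right derivatives agree, so $\bram{\Phi}{\nu}=\brap{\Phi}{\nu}$, and the displayed min/max identities force $\gamma\mapsto\int\langle\vv_t(x_0),x_0-x_1\rangle\,\d\gamma$ to be constant on $\Gamma_o(\mu_t,\nu)$ and equal to $\tfrac12\frac{\d}{\d t}W_2^2(\mu_t,\nu)$, which is the asserted formula.

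\textbf{Main obstacle.} There is no single hard inequality here: the whole argument rests on \eqref{eq:74} and on the semi-concavity/characterization results already established. The delicate points are purely bookkeeping, namely (a) justifying $W_2^2(\mu_{t+h},\nu)=W_2^2(\exp^h_\sharp\Phi,\nu)+o(|h|)$ uniformly as $h\to0^{\pm}$ and tracking the signs when passing from $h\downarrow0$ to the left derivative, and (b) the identification of $\Lambda(\Phi,\nu)$ with $\Gamma_o(\mu_t,\nu)$ for a graph-concentrated $\Phi$, which is what turns the abstract pairings $\bram{\cdot}{\cdot}$, $\brap{\cdot}{\cdot}$ into the explicit integral formula in (2).
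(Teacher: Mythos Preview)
Your proposal is correct and follows essentially the same approach as the paper: reduce to the one-sided derivatives of $g(s)=\tfrac12 W_2^2(\exp^s_\sharp\Phi,\nu)$ via the first-order approximation \eqref{eq:74}, then deduce (1) and (2) from the min/max characterizations in Theorem~\ref{thm:characterization} and Corollary~\ref{cor:charactplus}. Your write-up is in fact more detailed than the paper's, which compresses the $o(|h|)$ comparison and the identification of $\Lambda(\Phi,\nu)$ with $\Gamma_o(\mu_t,\nu)$ into single citations; the bijection you spell out in (b) is exactly the content of the subsequent Remark after the theorem.
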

\begin{proof} Let $\nu\in\prob_2(\X)$ and for every $t\in \interval$ we set $\Phi_t := (\ii_\X , \vv_t)_{\sharp}\mu_t\in\prob_2(\TX)$. By Theorem \ref{thm:characterization}, we have
\begin{align*}
\lim_{h \downarrow 0} \frac{W_2^2(\exp_{\sharp}^h\Phi_t, \nu)-W_2^2(\mu_t, \nu)}{2h} &= \bram{(\ii_\X , \vv_t)_{\sharp}\mu_t}{\nu},\\
\lim_{h \uparrow 0} \frac{W_2^2(\exp_{\sharp}^h\Phi_t, \nu)-W_2^2(\mu_t, \nu)}{2h} &= \brap{(\ii_\X , \vv_t)_{\sharp}\mu_t}{\nu}.
\end{align*}
Since $\exp_{\sharp}^h \Phi_t = (\ii_\X +h \vv_t)_{\sharp}\mu_t$, then thanks to Theorem \ref{thm:tangentv} we have that the above limits coincide respectively with the limits in the statement, for all $t \in A(\mu)$.

Claim (1) comes by the characterizations given in Theorem
\ref{thm:characterization} and Corollary
\ref{cor:charactplus}. Indeed, if there exists a unique optimal
transport plan between $\mu_t$ and $\nu$, then $\bram{(\ii_\X ,
  \vv_t)_{\sharp}\mu_t}{\nu}=\brap{(\ii_\X ,
  \vv_t)_{\sharp}\mu_t}{\nu}$.

Claim (2) is a simple consequence of the fact that $s\mapsto
W_2^2(\mu_s,\nu)$ is differentiable a.e.~in $\interval$.     
\end{proof}

\begin{remark}
Thanks to \cite[Proposition 8.5.4]{ags}, in Theorem \ref{thm:refdiff} we can actually replace $\vv$ with any Borel velocity field $\ww$ solving the continuity equation for $\mu$ and s.t.  $\|\ww_t\|_{L^2_{\mu_t}} \in L^1_{loc}(\interval)$. Indeed, we notice that by \cite[Lemma 5.3.2]{ags},
\[\Lambda((\ii_\X , \vv_t)_{\sharp}\mu_t,\nu)=\{(\ii_\X , \vv_t,\ii_\X )_{\sharp}\ggamma\mid\ggamma\in\Gamma_o(\mu_t,\nu)\}.\]
\end{remark}
See Appendix \ref{app:B} for a further discussion about Theorem \ref{thm:refdiff}.

\begin{theorem} \label{thm:refdiff2} Let $\mu^1,\mu^2:\interval
  \to \prob_2(\X)$ be locally absolutely continuous curves and let
  $\vv^1,\vv^2: \interval \times \X\to \X$ be the corresponding
  Wasserstein velocity fields satisfying \eqref{eq:74} in
  $A({\mu^1})$ and $A({\mu^2})$ respectively.
  Then, for every $t \in A({\mu^1})\cap A({\mu^2})$, it holds
  \begin{align*}
\lim_{h \downarrow 0} \frac{W_2^2(\mu^1_{t+h},
  \mu^2_{t+h})-W_2^2(\mu^1_t, \mu^2_t)}{2h} &=
                                              \bram{(\ii_\X , \vv^1_t)_{\sharp}\mu^1_t}{(\ii_\X , \vv^2_t)_{\sharp}\mu^2_t},\\
\lim_{h \uparrow 0} \frac{W_2^2(\mu^1_{t+h},
  \mu^2_{t+h})-W_2^2(\mu^1_t, \mu^2_t)}{2h}
                                                                         &=
                                                                            \brap{(\ii_\X , \vv^1_t)_{\sharp}\mu^1_t}{(\ii_\X , \vv^2_t)_{\sharp}\mu^2_t}.
\end{align*}
In particular, there exists a subset $A\subset A({\mu^1})\cap A({\mu^2})$ of full Lebesgue
measure
such that $s \mapsto W_2^2(\mu^1_s, \mu^2_s)$ is
differentiable in $A$ and
\begin{equation}
  \label{eq:78}
  \begin{aligned}
    \frac12\frac{\d}{\d t}W_2^2(\mu^1_t,\mu^2_t)
    &= \bram{(\ii_\X ,
      \vv^1_t)_{\sharp}\mu^1_t}{(\ii_\X , \vv^2_t)_{\sharp}\mu^2_t}=
    \brap{(\ii_\X , \vv^1_t)_{\sharp}\mu^1_t}{(\ii_\X ,
      \vv^2_t)_{\sharp}\mu^2_t}
    \\&=
    \int \la \vv^1_t-\vv^2_t,x_1-x_2\ra\,\d\mmu(x_1,x_2)
    \quad\text{for every }\mmu\in \Gamma_o(\mu^1_t,\mu^2_t),\ t\in A.
  \end{aligned}
\end{equation}
\end{theorem}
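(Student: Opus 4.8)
The plan is to reduce the two-curve increment to the ``free-flow'' increment $W_2^2(\exp^h_\sharp\Phi^1_t,\exp^h_\sharp\Phi^2_t)$ already analysed in Theorem~\ref{thm:characterization}, following verbatim the scheme used in the proof of Theorem~\ref{thm:refdiff}. Fix $t\in A({\mu^1})\cap A({\mu^2})$ and set $\Phi^i_t:=(\ii_\X,\vv^i_t)_\sharp\mu^i_t$, $i=1,2$; these belong to $\prob_2(\TX)$ since $|\Phi^i_t|_2^2=\int|\vv^i_t|^2\de\mu^i_t=|\dot\mu^i_t|^2<\infty$. By Definition~\ref{def:scalarprodop} together with the semi-concavity in Proposition~\ref{prop:concavity} (applied to the function $h\mapsto\tfrac12W_2^2(\exp^h_\sharp\Phi^1_t,\exp^h_\sharp\Phi^2_t)$), the one-sided limits
\[
\lim_{h\downarrow0}\frac{W_2^2(\exp^h_\sharp\Phi^1_t,\exp^h_\sharp\Phi^2_t)-W_2^2(\mu^1_t,\mu^2_t)}{2h}=\bram{\Phi^1_t}{\Phi^2_t},\qquad \lim_{h\uparrow0}\frac{W_2^2(\exp^h_\sharp\Phi^1_t,\exp^h_\sharp\Phi^2_t)-W_2^2(\mu^1_t,\mu^2_t)}{2h}=\brap{\Phi^1_t}{\Phi^2_t}
\]
exist.

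It then remains to replace $\exp^h_\sharp\Phi^i_t=(\ii_\X+h\vv^i_t)_\sharp\mu^i_t$ by $\mu^i_{t+h}$ without changing these limits. By \eqref{eq:74}, $W_2(\exp^h_\sharp\Phi^i_t,\mu^i_{t+h})=o(|h|)$ as $h\to0$ for $i=1,2$, so the triangle inequality gives $|W_2(\mu^1_{t+h},\mu^2_{t+h})-W_2(\exp^h_\sharp\Phi^1_t,\exp^h_\sharp\Phi^2_t)|=o(|h|)$; since $s\mapsto\mu^i_s$ is $W_2$-continuous, both $W_2(\mu^1_{t+h},\mu^2_{t+h})$ and $W_2(\exp^h_\sharp\Phi^1_t,\exp^h_\sharp\Phi^2_t)$ stay bounded near $h=0$, and multiplying the previous estimate by their (bounded) sum yields $W_2^2(\mu^1_{t+h},\mu^2_{t+h})=W_2^2(\exp^h_\sharp\Phi^1_t,\exp^h_\sharp\Phi^2_t)+o(|h|)$. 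Dividing by $2h$ and letting $h\downarrow0$ (resp.~$h\uparrow0$) proves the two displayed formulas for the right and left derivatives.

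For the ``in particular'' part, $s\mapsto W_2^2(\mu^1_s,\mu^2_s)$ is locally absolutely continuous on $\interval$ — its increments are bounded by $\int(|\dot\mu^1|+|\dot\mu^2|)$ via the triangle inequality — hence differentiable $\mathcal L$-a.e.; let $A$ be the full-measure subset of differentiability points contained in $A({\mu^1})\cap A({\mu^2})$. At $t\in A$ the two one-sided derivatives coincide, so $\bram{\Phi^1_t}{\Phi^2_t}=\brap{\Phi^1_t}{\Phi^2_t}$, and by Theorem~\ref{thm:characterization} and Corollary~\ref{cor:charactplus} the affine functional $\Ttheta\mapsto\int\scalprod{x_0-x_1}{v_0-v_1}\de\Ttheta$ is constant on $\Lambda(\Phi^1_t,\Phi^2_t)$. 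Finally, since $\Phi^i_t$ is concentrated on the graph of $\vv^i_t$, \cite[Lemma 5.3.2]{ags} (exactly as in the remark following Theorem~\ref{thm:refdiff}) gives $\Lambda(\Phi^1_t,\Phi^2_t)=\{(\ii_\X,\vv^1_t,\ii_\X,\vv^2_t)_\sharp\mmu\mid\mmu\in\Gamma_o(\mu^1_t,\mu^2_t)\}$, so that common value equals $\int\scalprod{\vv^1_t-\vv^2_t}{x_1-x_2}\de\mmu$ for every $\mmu\in\Gamma_o(\mu^1_t,\mu^2_t)$, which is \eqref{eq:78}. The one genuinely non-routine step is this last identification of $\Lambda(\Phi^1_t,\Phi^2_t)$ with $\Gamma_o(\mu^1_t,\mu^2_t)$, which is what upgrades an equality valid for a single optimal plan to one valid for all of them; the rest is the same $o(|h|)$ bookkeeping as in the one-curve case, the only mild care being the boundedness needed to pass from the $W_2$- to the $W_2^2$-estimate.
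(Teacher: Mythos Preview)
Your proof is correct and follows exactly the approach the paper indicates (the paper simply says the proof ``follows by the same argument of the proof of Theorem~\ref{thm:refdiff}''): set $\Phi^i_t=(\ii_\X,\vv^i_t)_\sharp\mu^i_t$, use the definitions of $\bram{\cdot}{\cdot}$ and $\brap{\cdot}{\cdot}$ together with \eqref{eq:74} to swap $\mu^i_{t+h}$ for $\exp^h_\sharp\Phi^i_t$ up to $o(|h|)$, and conclude via a.e.\ differentiability. Your explicit identification of $\Lambda(\Phi^1_t,\Phi^2_t)$ with $\Gamma_o(\mu^1_t,\mu^2_t)$ via the graph structure of both $\Phi^i_t$ is the right way to justify the final integral formula in \eqref{eq:78} for \emph{every} optimal plan.
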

The proof of Theorem \ref{thm:refdiff2} follows by the same argument
of the proof of Theorem \ref{thm:refdiff}.

\subsection{Convexity and semicontinuity of duality parings}
We want now to 
investigate the
semicontinuity and convexity properties of the functionals 
$\bram{\cdot}{\cdot}$ and $\brap{\cdot}{\cdot}$.
\begin{lemma} \label{lem:lsc}
  Let $(\Phi_n )_{n\in\N} \subset \prob_2(\TX)$ be converging to $\Phi$
  in $\prob_2^{sw}(\TX) $,
  and let
  $(\nu_n )_{n\in\N} \subset \prob_2(\X)$ be converging to $\nu$ in $
  \prob_2(\X) $.
  Then
  \begin{equation}
 \liminf_n \bram{\Phi_n}{\nu_n} \ge \bram{\Phi}{\nu}\quad\text{ and
 }\quad\limsup_n \brap{\Phi_n}{\nu_n} \le
 \brap{\Phi}{\nu}.\label{eq:62}
\end{equation}

  Finally, if $(\Phi_n^i)_{n\in \N}$, $i=0,1$, are sequences
  converging to $\Phi^i$ in $\prob_2^{sw}(\TX)$ then
  \begin{equation}\label{eq:152}
    \liminf_{n\to\infty}\bram{\Phi^0_n}{\Phi^1_n}\ge
    \bram{\Phi^0}{\Phi^1},\qquad
    \limsup_{n\to\infty}\brap{\Phi^0_n}{\Phi^1_n}\ge
    \brap{\Phi^0}{\Phi^1}.
  \end{equation}
\end{lemma}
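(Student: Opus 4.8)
The plan is to deduce all the inequalities in the Lemma from a single one: the sequential lower semicontinuity of $\bram{\cdot}{\cdot}$ on $\prob_2^{sw}(\TX)\times\prob_2^{sw}(\TX)$, i.e.\ the first inequality in \eqref{eq:152}. The mixed statements in \eqref{eq:62} follow from Remark~\ref{rem:reduction}, by writing $\bram{\Phi_0}{\nu}=\bram{\Phi_0}{(\ii_\X,0)_\sharp\nu}$ and observing that $\nu_n\to\nu$ in $\prob_2(\X)$ forces $(\ii_\X,0)_\sharp\nu_n\to(\ii_\X,0)_\sharp\nu$ in $\prob_2(\TX)$, hence in $\prob_2^{sw}(\TX)$. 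The statements for $\brap{\cdot}{\cdot}$ follow by applying the $\bram$-inequalities to $-\Phi_n:=J_\sharp\Phi_n$, using that $J(x,v)=(x,-v)$ induces a homeomorphism of $\prob_2^{sw}(\TX)$ and that $\brap{\Phi_0}{\Phi_1}=-\bram{-\Phi_0}{-\Phi_1}$.

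For the core statement, by Theorem~\ref{thm:characterization} pick $\Ttheta_n\in\Lambda_o(\Phi^0_n,\Phi^1_n)$ with $\bram{\Phi^0_n}{\Phi^1_n}=\int_{\TX\times\TX}\langle x_0-x_1,v_0-v_1\rangle\de\Ttheta_n$, and pass to a subsequence (not relabelled) realizing $\liminf_n\bram{\Phi^0_n}{\Phi^1_n}$. The crucial point is the relative sequential compactness of $(\Ttheta_n)_n$ in $\prob_2^{sw}(\TX\times\TX)$, where $\TX\times\TX$ is regarded as $(\X\times\X)^s\times(\X\times\X)^w$ with strong variables $(x_0,x_1)$ and weak variables $(v_0,v_1)$. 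Indeed, from $\Phi^i_n\to\Phi^i$ in $\prob_2^{sw}(\TX)$, Proposition~\ref{prop:finalmente}(1) gives $\sfx_\sharp\Phi^i_n\to\mu^i:=\sfx_\sharp\Phi^i$ narrowly with $\sqm{\sfx_\sharp\Phi^i_n}\to\sqm{\mu^i}$, hence $\mu^i_n:=\sfx_\sharp\Phi^i_n\to\mu^i$ in $\prob_2(\X)$ by~\eqref{eq:important}; consequently $\overline{\bigcup_n\Gamma_o(\mu^0_n,\mu^1_n)}$ is $W_2$-compact in $\prob_2(\X^2)$ by Lemma~\ref{le:compactnessP2} (tightness and uniform $2$-integrability pass from the convergent marginals to their couplings). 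Moreover $\int(|v_0|^2+|v_1|^2)\de\Ttheta_n=|\Phi^0_n|_2^2+|\Phi^1_n|_2^2$ stays bounded, again by Proposition~\ref{prop:finalmente}(1)(c). Hence Proposition~\ref{prop:finalmente}(2) applies and a subsequence converges to some $\Ttheta\in\prob_2^{sw}(\TX\times\TX)$.

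Two facts then remain. First, $\Ttheta\in\Lambda(\Phi^0,\Phi^1)$: for $\zeta\in\testsw{\TX}$ the functions $\zeta\circ(\sfx^0,\sfv^0)$ and $\zeta\circ(\sfx^1,\sfv^1)$ lie in $\testsw{\TX\times\TX}$, so $(\sfx^i,\sfv^i)_\sharp\Ttheta_n\to(\sfx^i,\sfv^i)_\sharp\Ttheta$ in $\prob_2^{sw}(\TX)$ and therefore $(\sfx^i,\sfv^i)_\sharp\Ttheta=\Phi^i$; since $|x_0-x_1|^2\in\testsw{\TX\times\TX}$ we also get $\int|x_0-x_1|^2\de\Ttheta=\lim_n W_2^2(\mu^0_n,\mu^1_n)=W_2^2(\mu^0,\mu^1)$, so the admissible coupling $(\sfx^0,\sfx^1)_\sharp\Ttheta\in\Gamma(\mu^0,\mu^1)$ is in fact optimal. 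Second — and this is the step I expect to be the main obstacle — one must check that the integrand $(x_0,v_0,x_1,v_1)\mapsto\langle x_0-x_1,v_0-v_1\rangle$ itself belongs to $\testsw{\TX\times\TX}$: it is sequentially continuous for the strong--weak topology (strong convergence of $x_0^n-x_1^n$ tested against weak convergence of $v_0^n-v_1^n$), and by Young's inequality $|\langle x_0-x_1,v_0-v_1\rangle|\le C_\eps(|x_0|^2+|x_1|^2)+\eps(|v_0|^2+|v_1|^2)$, which is exactly the growth permitted for test functions in $\testsw{\TX\times\TX}$. Hence $\int\langle x_0-x_1,v_0-v_1\rangle\de\Ttheta_n\to\int\langle x_0-x_1,v_0-v_1\rangle\de\Ttheta$, and since $\Ttheta\in\Lambda(\Phi^0,\Phi^1)$, Theorem~\ref{thm:characterization} yields
\[
\liminf_n\bram{\Phi^0_n}{\Phi^1_n}=\int_{\TX\times\TX}\langle x_0-x_1,v_0-v_1\rangle\de\Ttheta\ \ge\ \bram{\Phi^0}{\Phi^1}.
\]

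The $\brap$-version is obtained either by repeating this argument with $\limsup$ in place of $\liminf$ and the subset of $\Lambda(\Phi^0_n,\Phi^1_n)$ where the maximum in Corollary~\ref{cor:charactplus} is attained, or, more quickly, by the sign change $J$ explained in the first paragraph. The genuinely delicate ingredient throughout is the balance between the merely quadratic integrability of the velocity marginals of the competitor plans and the bilinear structure of the integrand, which is precisely what makes $\langle x_0-x_1,v_0-v_1\rangle$ an admissible $\prob_2^{sw}$ test function; once this is granted, the remainder is compactness and transport-stability bookkeeping built on Lemma~\ref{le:compactnessP2} and Proposition~\ref{prop:finalmente}.
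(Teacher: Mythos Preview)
Your proof is correct and follows essentially the same approach as the paper's: pick optimal couplings $\Ttheta_n\in\Lambda_o$, extract a $\prob_2^{sw}$-convergent subsequence via Proposition~\ref{prop:finalmente}(2), verify the limit lies in $\Lambda$, and pass to the limit in the integral using that the bilinear integrand is an admissible test function (the paper invokes \eqref{eq:43} directly, you spell out the membership in $\testsw{\TX\times\TX}$). The only organizational difference is that you prove the general statement \eqref{eq:152} first and derive \eqref{eq:62} via Remark~\ref{rem:reduction}, whereas the paper proves \eqref{eq:62} directly in $\prob_2^{sw}(\TX\times\X)$ and declares \eqref{eq:152} analogous.
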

\begin{proof}
  We just consider the proof of the first inequality \eqref{eq:62}; the
  other
  statements follow by similar arguments and by Remark
  \ref{rem:reduction}.
  
We can extract a subsequence of $(\Phi_n)_{n \in\N}$ (not relabeled)
s.t. the $\liminf$ is achieved as a limit. We have to prove that
\begin{equation}
 \lim_n \bram{\Phi_n}{\nu_n} \ge \bram{\Phi}{\nu}.\label{eq:23}
\end{equation}

For every $n\in\N$ take $\ssigma_n \in \Lambda_o(\Phi_n, \nu_n)$
with $\bar\ttheta_n=(\sfx^0,\sfx^1)_\sharp \ssigma_n$, and
observe that the family $(\bar\ttheta_n)_{n\in\N}$ is relatively compact in
$\prob_2(\X^2)$ (since the marginals of $\bar\ttheta_n$ are
converging w.r.t.~$W_2$) so that
$(\ssigma_n)_{n\in\N}$ is relatively compact in $\prob_2^{sws}(\TX\times
\X)$ by Proposition \ref{prop:finalmente}
since the moments $\int |v_0|^2\,\d\ssigma_n(x_0,v_0,x_1)=|\Phi_n|^2_2$ are uniformly bounded
by assumption.
Thus, possibly passing to a further subsequence, we have that
$(\ssigma_n)_{n\in\N}$ converges to some $\ssigma$ in
$\prob_2^{sws}(\TX\times
\X)$. In particular
$\ssigma \in \Lambda(\Phi, \nu)$ since optimality of the $\X^2$
marginals is preserved by narrow convergence.

\eqref{eq:43} then yields
\[  \lim_{n\to\infty} \bram{\Phi_n}{\nu_n} = \lim_{n\to\infty} \int \scalprod{v_0}{x_0-x_1} \de \ssigma_n = \int \scalprod{v_0}{x_0-x_1}\de\ssigma \]
which yields \eqref{eq:23} since the RHS is larger than
$\bram{\Phi}{\nu}$ by Theorem \ref{thm:characterization}.
\end{proof}

\begin{remark} Notice that in the special case in which $\Lambda(\Phi, \nu)$ is a singleton, then the limit exists and it holds
  \[ \lim_{n\to\infty} \bram{\Phi_n}{\nu_n} = \bram{\Phi}{\nu},\qquad \lim_{n\to\infty} \brap{\Phi_n}{\nu_n} = \brap{\Phi}{\nu}.\]
\end{remark}
\begin{lemma}
  \label{le:convexity-pairing}
  For every $\mu,\nu\in \prob_2(\X)$ the maps
  $\Phi\mapsto \bram\Phi\nu$
  and
  $(\Phi,\Psi)\mapsto \bram\Phi\Psi$
  (resp.~$\Phi\mapsto\brap\Phi\nu$
  and $(\Phi,\Psi)\mapsto\brap\Phi\Psi$)
  are convex (resp.~concave) in
  $\relcP 2{\mu}\TX$ and
  $\relcP 2{\mu}\TX\times \relcP 2{\nu}\TX$.
\end{lemma}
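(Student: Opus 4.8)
The plan is to read both statements off the variational formulas of Theorem~\ref{thm:characterization} and Corollary~\ref{cor:charactplus}, using that the feasible sets $\Lambda(\cdot,\cdot)$ behave linearly under convex combinations of plans. First I would record two elementary reductions. Since the first marginal and the second moment $|\cdot|_2^2$ are affine along linear interpolations of measures, both $\relcP 2\mu\TX$ and $\relcP 2\mu\TX\times\relcP 2\nu\TX$ are convex, so the assertion is meaningful; and by Remark~\ref{rem:reduction} it suffices to treat the bilinear pairing, because $\bram\Phi\nu=\bram\Phi{\Phi_\nu}$ with $\Phi_\nu=(\ii_\X,0)_\sharp\nu$, while $\brap{\Phi_0}{\Phi_1}=-\bram{-\Phi_0}{-\Phi_1}$ and $\brap\Phi\nu=-\bram{-\Phi}\nu$ with $-\Phi=J_\sharp\Phi$ affine in the measure and mapping $\relcP 2\mu\TX$ into itself; so the $\brap{\cdot}{\cdot}$ statements are $-1$ times the composition of an affine map with the convex functions $\bram{\cdot}{\cdot}$.

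The core point is that $\Gamma_o(\mu_0,\mu_1)$ is convex, being the set of minimizers of the linear functional $\ggamma\mapsto\int|x_0-x_1|^2\,\d\ggamma$ over the convex set $\Gamma(\mu_0,\mu_1)$. Given $\Phi_i^a,\Phi_i^b\in\relcP 2{\mu_i}\TX$ ($i=0,1$), $\lambda\in[0,1]$, and $\Phi_i:=(1-\lambda)\Phi_i^a+\lambda\Phi_i^b$, I would choose conditioned optimal plans $\Ttheta_a\in\Lambda_o(\Phi_0^a,\Phi_1^a)$, $\Ttheta_b\in\Lambda_o(\Phi_0^b,\Phi_1^b)$, which exist by Theorem~\ref{thm:characterization}, and check that $\Ttheta:=(1-\lambda)\Ttheta_a+\lambda\Ttheta_b$ lies in $\Lambda(\Phi_0,\Phi_1)$: by linearity of the push-forward its $\TX$-marginals are $\Phi_0$ and $\Phi_1$, and $(\sfx^0,\sfx^1)_\sharp\Ttheta=(1-\lambda)(\sfx^0,\sfx^1)_\sharp\Ttheta_a+\lambda(\sfx^0,\sfx^1)_\sharp\Ttheta_b$ is a convex combination of elements of $\Gamma_o(\mu_0,\mu_1)$, hence again in $\Gamma_o(\mu_0,\mu_1)$. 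Since the integrand $\scalprod{x_0-x_1}{v_0-v_1}$ in \eqref{eq:22} is linear in the measure, Theorem~\ref{thm:characterization} then gives
\[
  \bram{\Phi_0}{\Phi_1}\le
  \int_{\TX\times\TX}\scalprod{x_0-x_1}{v_0-v_1}\,\d\Ttheta
  =(1-\lambda)\bram{\Phi_0^a}{\Phi_1^a}+\lambda\bram{\Phi_0^b}{\Phi_1^b},
\]
which is the claimed convexity; specializing $\Phi_1^a=\Phi_1^b=\Phi_\nu$ and using the first reduction yields convexity of $\Phi\mapsto\bram\Phi\nu$, and the second reduction yields concavity of the $\brap{\cdot}{\cdot}$ pairings. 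Equivalently, one can rerun the gluing with the \emph{maximum} formula of Corollary~\ref{cor:charactplus}, which reverses the inequality.

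I do not expect a genuine obstacle: everything hinges on the convexity of $\Gamma_o(\mu_0,\mu_1)$ and the one-line check that a convex combination of conditioned optimal plans is again a conditioned optimal plan, together with linearity in the measure of the relevant integrands. A fully parallel alternative, if one prefers not to manipulate plans, is to start from the supremum representation $\bram{\Phi_0}{\Phi_1}=\sup_{s>0}\big(\tfrac1s\big(h(s)-h(0)\big)-s\,\varphi^2\big)$ furnished by Proposition~\ref{prop:concavity}(2), and combine it with the joint convexity of $(\mu,\nu)\mapsto W_2^2(\mu,\nu)$ along linear interpolations (proved by gluing the optimal couplings) and with the linearity of $\Phi\mapsto\exp^s_\sharp\Phi$ and $\Phi\mapsto|\Phi|_2^2$: a supremum of convex functions is convex.
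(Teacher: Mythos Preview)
Your proposal is correct and follows essentially the same route as the paper: both argue via the minimum representation \eqref{eq:22}, pick optimal couplings $\Ttheta_k\in\Lambda_o(\Phi_0^k,\Phi_1^k)$ for each summand, observe that their convex combination lies in $\Lambda(\Phi_0,\Phi_1)$ (since $\Gamma_o(\mu_0,\mu_1)$ is convex), and conclude by linearity of the integrand. You spell out the reductions (via Remark~\ref{rem:reduction}) and the check that the combined plan is admissible more carefully than the paper, which simply says ``it is not difficult to check''; your alternative via the concavity in Proposition~\ref{prop:concavity}(2) and the joint convexity of $W_2^2$ is a legitimate second proof the paper does not mention.
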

\begin{proof}
  We prove the convexity of
  $(\Phi,\Psi)\mapsto\bram\Phi\Psi$ in
  $\relcP 2{\mu}\TX\times \relcP 2{\nu}\TX$;
  the argument of the proofs of
  the other statements are completely analogous.
  
  Let $\Phi_k\in \relcP 2{\mu}\TX$,
  $\Psi_k\in \relcP 2{\nu}\TX$,
  and let $\beta_k\ge0$, 
  with $\sum_k\beta_k=1$, $k=1,\cdots,K$. 
  We set  $\Phi=\sum_{k=1}^K\beta_k\Phi_k$,
  $\Psi=\sum_{k=1}^K\beta_k\Psi_k$,
  For every $k$ let us select
  $\Ttheta_{k}\in \Lambda(\Phi_k,\Psi_k)$ such that
  \begin{displaymath}
    \bram{\Phi_k}{\Psi_k}=
    \int \langle v_1-v_0,x_1-x_0\rangle\,\d\Ttheta_{k}.
  \end{displaymath}
  It is not difficult to check that
  $\Ttheta:=\sum_{k}\beta_{k}\Ttheta_{k}\in
  \Lambda(\Phi,\Psi)$
  so that
  \begin{displaymath}
    \bram{\Phi}{\Psi}
    \le
    \int \langle v_1-v_0,x_1-x_0\rangle\,\d\Ttheta=
    \sum_{k}\beta_{k}\int \langle v_1-v_0,x_1-x_0\rangle\,
    \d\Ttheta_{k}=\sum_k \beta_{k}\bram{\Phi_k}{\Psi_k}.\qedhere
  \end{displaymath}
\end{proof}
\subsection{Behaviour of duality pairings along geodesics}
We have seen that
the duality pairings $\bram\cdot\cdot$ and $\brap \cdot\cdot$
may differ when the collection of optimal plans
$\Gamma_o(\mu_0,\mu_1)$ contains more than one element.
It is natural to expect a simpler behaviour along geodesics.
We will introduce the following
definition, where we use the notation
\begin{equation*}
  \sfx^t(x_0,x_1):=(1-t)x_0+tx_1,\quad
  \sfv^0(x_0,v_0,x_1):=v_0\quad
  \text{for every }(x_0,v_0,x_1)\in \TX\times \X,\,t\in[0,1].
\end{equation*}
\begin{definition}
  \label{def:last}
  For $\ttheta\in \prob_2(\X \times \X)$, $t\in[0,1]$,
  $\vartheta_t=\sfx^t_\sharp\ttheta$ and
  $\Phi\in \relcP2{\vartheta_t}{\TX}$,
  we set
  \begin{equation*}
    \Gamma_t(\Phi,\ttheta):=
    \left \{ \ssigma \in \prob_2(\TX\times \X)
    \mid
    (\sfx^0,\sfx^1)_{\sharp}\ssigma =\ttheta,\quad
    (\sfx^t \circ(\sfx^0,\sfx^1),\sfv^0)_\sharp \ssigma=\Phi\right \},
  \end{equation*}
  which is not empty since $\vartheta_t=\sfx^t_\sharp\ttheta=\sfx_\sharp\Phi$.
 We set
  \begin{align*}
    \ebrab{\Phi}{\ttheta}t
    & := \int
      \scalprod{x_0-x_1}{\bry{\Phi}(\sfx^t (x_0,x_1))} \de \ttheta(x_0,x_1),\\
    \directionalm{\Phi}{\ttheta}t
    & := \min \left \{ \int
      \scalprod{x_0-x_1}{v_0} \de \ssigma(x_0,v_0,x_1)
      \mid \ssigma \in \Gamma_t(\Phi,\ttheta)
      \right \}, \\
    \directionalp{\Phi}{\ttheta}t
    & := \max \left \{ \int
      \scalprod{x_0-x_1}{v_0} \de \ssigma(x_0,v_0,x_1)
      \mid \ssigma \in \Gamma_t(\Phi,\ttheta)
      \right \}.
      \intertext{If moreover $\Phi_0\in\prob_2(\TX|\vartheta_0)$,
      $\Phi_1\in \prob_2(\TX|\vartheta_1)$, $\ttheta\in\Gamma(\vartheta_0,\vartheta_1)$, we define}
    \directionalm{\Phi_0}{\Phi_1}\ttheta
    & :=
      \directionalm{\Phi_0}{\ttheta}0-
      \directionalp{\Phi_1}{\ttheta}1,\\
      \directionalp{\Phi_0}{\Phi_1}\ttheta
    &:=
      \directionalp{\Phi_0}{\ttheta}0-
      \directionalm{\Phi_1}{\ttheta}1.
  \end{align*}
\end{definition}
Notice that, if $\Phi_x$ is the disintegration of $\Phi$ with respect to
$\vartheta_t=\sfx_\sharp\Phi$, we can consider
the barycentric coupling $\ssigma_t:= \int_{\X\times\X}\Phi_{\sfx^t}\,\d \ttheta \in \Gamma_t(\Phi,\ttheta)$, i.e.
\begin{displaymath}
  \int \psi(x_0,v_0,x_1)\,\d\ssigma_t=
  \int\Big[\int \psi(x_0,v_0,x_1)\,\d\Phi_{(1-t)x_0+tx_1}(v_0)\Big]\,\d\ttheta(x_0,x_1)
\end{displaymath}
so that
$\ebrab\Phi\ttheta t=\int \langle{v_0},x_0-x_1\rangle\,\d\ssigma_t$ and
\begin{equation*}
  \directionalm\Phi\ttheta t\le \ebrab\Phi\ttheta t\le \directionalp\Phi\ttheta t.
\end{equation*}

If we define by $\mathsf s:\X^2\to\X^2$ the map $\mathsf s
(x_0,x_1):=(x_1,x_0)$
(with a similar definition  for $\TX\times \X$: $\mathsf
s(x_0,v_0,x_1):=(x_1,v_0,x_0)$) it is easy to check that
\begin{equation*}
  \ssigma\in \Gamma_t(\Phi,\ttheta)\quad\Leftrightarrow
  \quad
  \mathsf s_{\sharp}\ssigma\in \Gamma_{1-t}(\Phi,\mathsf s_\sharp\ttheta)
\end{equation*}
so that
\begin{equation}
  \label{eq:68}
  \directionalm{\Phi}{\ttheta}t=- \directionalp{\Phi}{\mathsf
    s_\sharp\ttheta}{1-t},\quad
  \directionalp{\Phi}{\ttheta}t=- \directionalm{\Phi}{\mathsf s_\sharp\ttheta}{1-t}.
\end{equation}
\eqref{eq:21} and \eqref{eq:27} have simpler versions in two
particular cases, which will be explained in the next remark.
\begin{remark}[Particular cases]
  \label{rem:particular}
  Suppose that $\ttheta\in \prob_2(\X^2)$, $t\in[0,1]$, $\vartheta_t=\sfx^t_\sharp\ttheta$,
  $\Phi\in \relcP2{\vartheta_t}\TX$ and $\sfx^t:\X^2\to\X$ is
  $\ttheta$-essentially injective so that
  $\ttheta$ is concentrated on a Borel map $(X_0,X_1):\X\to
  \X\times \X$,
  i.e.~$\ttheta=(X_0,X_1)_\sharp\vartheta_t$.
  In this case $\Gamma_t(\Phi,\ttheta)$ 
  contains a unique element
  given by
  $(X_0\circ\sfx,\sfv,X_1\circ\sfx)_\sharp\Phi$
  and 
  \begin{equation}
    \label{eq:29}
    \directionalm{\Phi}{\ttheta}t=
    \directionalp{\Phi}{\ttheta}t=
    \ebrab{\Phi}{\ttheta}t=
    \int \la v,X_0(x)-X_1(x)\ra\,\d\Phi(x,v)=
    \int \la \bry\Phi,X_0-X_1\ra\,\d\vartheta_t,
  \end{equation}
  where in the last formula we have applied 
  the barycentric reduction \eqref{eq:30}.
  When $t=0$ and $\ttheta$ is the unique element of
  $\Gamma_o(\vartheta_0,\vartheta_1)$ then
  $X_0(x)=x$ and 
  we obtain
    \begin{equation*}
    \bram{\Phi}{\vartheta_1}=
    \brap{\Phi}{\vartheta_1}=\directionalm{\Phi}{\ttheta}0=\directionalp{\Phi}{\ttheta}0=\int \la v,x-X_1(x)\ra\,\d\Phi(x,v)=
    \int \la \bry\Phi,x-X_1(x)\ra\,\d\vartheta_0(x).
  \end{equation*}
  Another simple case is when $\Phi=\Phi_{\vartheta_t,\ww}$ for some
  vector field $\ww\in L^2_{\vartheta_t}(\X;\X)$ as in \eqref{eq:24}
  (i.e.~its disintegration $\Phi_x$ w.r.t.~$\vartheta_t$ takes the form
  $\delta_{\ww(x)}$ and $\ww=\bry\Phi.$).
  We have
  \begin{equation*}
    \directionalm{\Phi}{\ttheta}t=
    \directionalp{\Phi}{\ttheta}t=
    \int \la \ww((1-t)x_0+tx_1),x_0-x_1\ra\,\d\ttheta(x_0,x_1).
  \end{equation*}
  In particular we get
    \begin{equation*}
    \bram{\Phi}{\vartheta_1}=
    \min\Big\{\int \la \ww(x),x_0-x_1\ra\,\d\ttheta(x_0,x_1)\mid
    \ttheta\in \Gamma_o(\vartheta_0,\vartheta_1)\Big\}.
  \end{equation*}
\end{remark}
An important case in which
the previous Remark \ref{rem:particular} applies
is that of geodesics in $\prob_2(\X)$.
\begin{lemma} \label{lem:starting}
  Let $\mu_0, \mu_1 \in \prob_2(\X)$, $(\mu_t)_{t \in [0,1]}$ be a
  constant speed geodesic induced by an optimal plan $\mmu \in
  \Gamma_o(\mu_0, \mu_1)$ by the relation
  \begin{equation*}
 \mu_t = \sfx^t_{\sharp} \mmu,\quad t\in[0,1],\quad
\sfx^t(x_0,x_1)=(1-t)x_0+tx_1.
\end{equation*}
If $t \in (0,1)$, $\Phi_t\in\prob_2(\TX|\mu_t)$, $\hat\mmu=\mathsf s_\sharp
\mmu\in \Gamma_o(\mu_1,\mu_0)$, then 
\begin{equation}
  \begin{aligned}
    &\frac{1}{1-t} \bram{\Phi_t}{\mu_1} &=& \frac{1}{1-t}
    \brap{\Phi_t}{\mu_1} &=&
    \directionalm{\Phi_t}{\mmu}t&=&\directionalp{\Phi_t}{\mmu}t\\
    =& -\frac{1}{t} \bram{\Phi_t}{\mu_0} &=&
    -\frac{1}{t} \brap{\Phi_t}{\mu_0}&=&
   - \directionalm{\Phi_t}{\hat \mmu}{1-t}&=&-\directionalp{\Phi_t}{\hat\mmu}{1-t}.
  \end{aligned}
\label{eq:64}
\end{equation}
\end{lemma}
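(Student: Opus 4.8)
The plan is to reduce the entire chain \eqref{eq:64} to the single-map formula \eqref{eq:29} of Remark \ref{rem:particular}, together with the reflection identity \eqref{eq:68}. The one substantive point is that, at an interior time $t\in(0,1)$, the interpolation map $\sfx^t$ is essentially injective on the optimal plan $\mmu$, so that $\mmu$ is carried by a graph over $\mu_t$; once this is in hand, everything becomes bookkeeping.

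So I would first record: there are Borel maps $X_0,X_1\colon\X\to\X$ with $\mmu=(X_0,X_1)_\sharp\mu_t$ and $x=(1-t)X_0(x)+tX_1(x)$ for $\mu_t$-a.e.\ $x$. Indeed $\sfx^t$ is injective on $\supp(\mmu)$: if two points $(x_0,x_1),(x_0',x_1')\in\supp(\mmu)$ have the same $\sfx^t$-image, then, setting $u:=x_0-x_0'$ and $w:=x_1-x_1'$, one has $(1-t)u=-tw$, while comparing this pair with its transposition in the cyclical monotonicity of $\supp(\mmu)$ (for the cost $|x-y|^2$) gives $\langle u,w\rangle\ge0$; since $t\in(0,1)$ these two facts force $u=w=0$. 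Hence $\sfx^t|_{\supp(\mmu)}$ has a Borel inverse, which defines $(X_0,X_1)$, and $x=\sfx^t(X_0(x),X_1(x))$ gives the interpolation identity. Since $\mmu\in\Gamma_o(\mu_0,\mu_1)$ we have $\int_{\X}|X_0-X_1|^2\,\de\mu_t=W_2^2(\mu_0,\mu_1)$, so $(\ii_\X,X_1)_\sharp\mu_t$ and $(\ii_\X,X_0)_\sharp\mu_t$ have costs $(1-t)^2W_2^2(\mu_0,\mu_1)=W_2^2(\mu_t,\mu_1)$ and $t^2W_2^2(\mu_0,\mu_1)=W_2^2(\mu_t,\mu_0)$ by the constant-speed property; they are therefore optimal, hence \emph{the} unique optimal plans from $\mu_t$ to $\mu_1$ and to $\mu_0$, by Theorem \ref{theo:chargeo}.

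Now the computation is immediate. Applying the first case of Remark \ref{rem:particular} (formula \eqref{eq:29}) with $\ttheta=\mmu$, $\vartheta_t=\mu_t$, $\Phi=\Phi_t$ gives
\[
  \directionalm{\Phi_t}{\mmu}t=\directionalp{\Phi_t}{\mmu}t=\int_{\TX}\langle v,X_0(x)-X_1(x)\rangle\,\de\Phi_t(x,v)=:I .
\]
Applying the $t=0$ sub-case of Remark \ref{rem:particular} to $(\vartheta_0,\vartheta_1)=(\mu_t,\mu_1)$ with its unique optimal plan $(\ii_\X,X_1)_\sharp\mu_t$ gives $\bram{\Phi_t}{\mu_1}=\brap{\Phi_t}{\mu_1}=\int_{\TX}\langle v,x-X_1(x)\rangle\,\de\Phi_t(x,v)$, and $x-X_1(x)=(1-t)\bigl(X_0(x)-X_1(x)\bigr)$ shows this equals $(1-t)I$. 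Likewise, applied to $(\vartheta_0,\vartheta_1)=(\mu_t,\mu_0)$ with $(\ii_\X,X_0)_\sharp\mu_t$, it gives $\bram{\Phi_t}{\mu_0}=\brap{\Phi_t}{\mu_0}=\int_{\TX}\langle v,x-X_0(x)\rangle\,\de\Phi_t(x,v)=-tI$, since $x-X_0(x)=-t\bigl(X_0(x)-X_1(x)\bigr)$. This yields the first six equalities of \eqref{eq:64}, all of the quantities being equal to $I$.

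Finally, since $\sfx^{1-t}_\sharp\hat\mmu=\sfx^t_\sharp\mmu=\mu_t$ (so that $\directionalm{\Phi_t}{\hat\mmu}{1-t}$ and $\directionalp{\Phi_t}{\hat\mmu}{1-t}$ make sense), \eqref{eq:68} with $\ttheta=\mmu$ gives $\directionalm{\Phi_t}{\mmu}t=-\directionalp{\Phi_t}{\hat\mmu}{1-t}$ and $\directionalp{\Phi_t}{\mmu}t=-\directionalm{\Phi_t}{\hat\mmu}{1-t}$; as both left-hand sides equal $I$ we obtain $\directionalm{\Phi_t}{\hat\mmu}{1-t}=\directionalp{\Phi_t}{\hat\mmu}{1-t}=-I$, which closes \eqref{eq:64}. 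The main obstacle is thus the injectivity of $\sfx^t$ on $\supp(\mmu)$ used in the second paragraph — equivalently, that $\mmu$ is concentrated on a graph over the interior marginal $\mu_t$; everything else follows from the two scalar identities $x-X_1(x)=(1-t)(X_0(x)-X_1(x))$ and $x-X_0(x)=-t(X_0(x)-X_1(x))$ together with Remark \ref{rem:particular} and \eqref{eq:68}.
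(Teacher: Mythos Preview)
Your proof is correct and follows essentially the same line as the paper's: both establish injectivity of $\sfx^t$ on $\supp(\mmu)$ via cyclical monotonicity, deduce that $\mmu$ is concentrated on the graph of $(X_0,X_1)$ over $\mu_t$, invoke the uniqueness of optimal plans from $\mu_t$ (Theorem~\ref{theo:chargeo}) to identify $\Lambda(\Phi_t,\mu_i)$ as singletons, and then finish with Remark~\ref{rem:particular} and~\eqref{eq:68}. The only cosmetic difference is that the paper expands $|\sfx^t(x_0,x_1)-\sfx^t(x_0',x_1')|^2$ directly to get injectivity, whereas you phrase it via $(1-t)u=-tw$ combined with $\langle u,w\rangle\ge0$; these are the same computation.
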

\begin{proof} The crucial fact is that $\sfx^t : \X^2 \to \X$
  is
  injective on $\supp(\mmu)$ and thus a bijection on its image $\supp(\mu_t)$. Indeed, take $(x_0, x_1), (x_0', x_1') \in \supp(\mmu)$, then
\begin{align*}
 \left | \sfx^t(x_0, x_1) - \sfx^t(x_0', x_1') \right |^2 &= (1-t)^2|x_0-x_0'|^2 + t^2|x_1-x_1'|^2 + 2t(1-t)\scalprod{x_0-x_0'}{x_1-x_1'} \\
 &\ge  (1-t)^2|x_0-x_0'|^2 + t^2|x_1-x_1'|^2
 \end{align*}
 thanks to the cyclical monotonicity of $\supp(\mmu)$
 (see \cite[Remark 7.1.2]{ags}).

 Then, for every $x \in \supp(\mu_t)$, there exists a unique couple
 $(x_0, x_1)
 =(X_0(x),X_1(x))\in \supp(\mmu)$ s.t. $x=(1-t)x_0 +
 tx_1$, where we refer to Remark \ref{rem:particular} for the definitions of $X_0, X_1$
 (cf. also \cite[Theorem 5.29]{santambrogio}).
 Hence, in the following diagram all maps are bijections:
 \begin{figure}[h]
  \centering
  \begin{tikzcd}[row sep=large, column sep = 5em]
\supp(\mmu_{t0}) &\arrow[l, "(\sfx^t \text{,}\, \sfx^0)", line width=0.3mm, swap]\supp(\mmu) \arrow[r, "(\sfx^t  \text{,}\, \sfx^1)", line width=0.3mm] \arrow[d, "\sfx^t", line width=0.3mm]&\supp(\mmu_{t1}) \\
\quad & \supp(\mu_t) \arrow[ur, "(\ii_\X \text{,}\, X_1)", line width=0.3mm, swap] \arrow[ul, "(\ii_\X  \text{,}\, X_0)", line width=0.3mm] & \quad 
\end{tikzcd}
\end{figure}
\quad \\
where $ \mmu_{t1}  =(\sfx^t,\sfx^1)_\sharp\mmu
=(\ii_\X, X_1)_{\sharp}\mu_t$ is the unique
element of $ \Gamma_o(\mu_t, \mu_1)$ and
$ \mmu_{t0}=(\sfx^t,\sfx^0)_\sharp\mmu=(\ii_\X , X_0)_{\sharp}\mu_t=
(\sfx^{1-t},\sfx^1)_\sharp\hat\mmu$ is the unique element of
$\Gamma_o(\mu_t, \mu_0)$
(see Theorem \ref{theo:chargeo}).
Since 
\[ \frac{x-X_1(x)}{1-t} = \frac{x-x_1}{1-t}= x_0-x_1 = -\frac{x-x_0}{t} = -\frac{x-X_0(x)}{t},\]
and $\Lambda(\Phi_t, \mu_1) = \{ (\ii_{\TX} , X_1
\circ \sfx)_{\sharp} \Phi_t \}$
thanks to Theorem \ref{theo:chargeo}, by Theorem
\ref{thm:characterization} and Corollary \ref{cor:charactplus} we have
\begin{equation*}
\bram{\Phi_t}{\mu_1} = \brap{\Phi_t}{\mu_1}  = \int_{\TX} \scalprod{v}{x-X_1(x)} \de \Phi_t(x,v).
\end{equation*}
Analogously, $\Lambda(\Phi_t, \mu_0) = \{ (\ii_{\TX} , X_0 \circ \sfx)_{\sharp} \Phi_t \}$. Hence
\begin{equation*}
\bram{\Phi_t}{\mu_0} = \brap{\Phi_t}{\mu_0} = \int_{\TX} \scalprod{v}{x-X_0(x)} \de \Phi_t(x,v).
\end{equation*}
Also recalling \eqref{eq:68} and \eqref{eq:29} we conclude.
\end{proof}

\section{Dissipative probability vector fields: the metric viewpoint}
\label{sec:dissipative}

\subsection{Multivalued Probability Vector Fields and \texorpdfstring{$\lambda$}{l}-dissipativity}
\begin{definition}[Multivalued Probability Vector Field - MPVF] \label{def:MPVF}
  A \emph{multivalued probability vector field} $\frF$ is a nonempty subset of
  $\prob_2(\TX)$ with domain $\dom(\frF) := \sfx_\sharp(\frF)=
  \{ \sfx_\sharp\Phi:\Phi\in \frF \}$.
  Given $\mu \in \prob_2(\X)$, we define the \emph{section} $\frF[\mu]$
  of $\frF$ as 
  \[ \frF[\mu] := (\sfx_\sharp)^{-1}(\mu)\cap \frF= \left \{ \Phi \in \frF
      \mid \sfx_{\sharp}\Phi = \mu \right \}. \]
A \emph{selection} $\frF'$ of $\frF$ is a subset of $\frF$ such that $\dom(\frF')=\dom(\frF)$.
  We call $\frF$ a \emph{probability vector field} (PVF) if $\sfx_\sharp $ is injective in $\frF$, i.e.~$\frF[\mu]$ contains a
  unique element for every $\mu\in \dom(\frF)$.
  $\frF$ is a vector field if for 
  every $\mu\in \dom(\frF)$
  $\frF[\mu]$ contains
  a unique element $\Phi$ concentrated on a map,
  i.e.~$\Phi=(\ii_\X, \bry\Phi)_\sharp\mu$.
\end{definition}
\begin{remark}\label{Vmultifunc}
We can equivalently formulate Definition
\ref{def:MPVF}
by considering $\frF$ as a multifunction,
as in the case, e.g., of the Wasserstein subdifferential
$\boldsymbol\partial\mathcal F$ of a function $\mathcal F:\prob_2(\X)\to(-\infty,+\infty]$, see
\cite[Ch.~10]{ags} and the next Section \ref{sec:Exsubdiff}.
According to this viewpoint, a \MPVF is a set-valued map $\frF:\prob_2(\X)\supset \dom(\frF) \rightrightarrows\prob_2(\TX)$ such that $\sfx_{\sharp}\Phi=\mu$ for all $\Phi\in \frF[\mu]$. 
In this way, each section $\frF[\mu]$ is nothing but the image of $\mu\in\dom(\frF)$ through $\frF$.
In this case, \emph{probability vector fields} correspond to single
valued maps: this notion has been used in
\cite{Piccoli_2019}
with the aim of describing a sort of velocity field on $\prob(\X)$, and later in \cite{Piccoli_MDI} dealing with Multivalued Probability Vector Fields (called Probability Multifunctions).
\end{remark}

\begin{definition}[Metrically $\lambda$-dissipative \MPVF] \label{def:dissipative}
  A \MPVF $\frF \subset \prob_2(\TX)$ is (metrically) \emph{$\lambda$-dissipative},
  $\lambda\in \R$, if
    \begin{equation}
      \bram{\Phi_0}{\Phi_1} \le \lambda W_2^2(\mu_0,\mu_1)
      \quad\text{for every }\Phi_0,\Phi_1\in \frF,\ \mu_i=\sfx_\sharp \Phi_i.
  \label{eq:33}
\end{equation}
We say that $\frF$ is (metrically) \emph{$\lambda$-accretive}, if $-\frF=\{-\Phi:\Phi\in \frF\}$
(recall \eqref{eq:20}) is $-\lambda$-dissipative, i.e.
  \begin{displaymath}
    \brap{\Phi_0}{\Phi_1} \ge \lambda W_2^2(\mu_0,\mu_1)
      \quad\text{for every }\Phi_0,\Phi_1\in \frF,\ \mu_i=\sfx_\sharp
      \Phi_i.
  \end{displaymath}
\end{definition}
\begin{remark}
  Notice that \eqref{eq:33} is equivalent to ask for the existence of
  a coupling $\Ttheta\in \Lambda(\Phi_0,\Phi_1)$
  (thus $(\sfx^0,\sfx^1)_\sharp\Ttheta$ is optimal
  between $\mu_0=\sfx_\sharp \Phi_0$ and $\mu_1=\sfx_\sharp\Phi_1$)
  such that
  \begin{equation*}
    \int \la v_1-v_0,x_1-x_0\ra\,\d\Ttheta\le
    \lambda W_2^2(\mu_0,\mu_1)=
    \lambda\int |x_1-x_0|^2\,\d\Ttheta.
  \end{equation*}
  Recalling the discussion of the previous section,
  $\lambda$-dissipativity has a natural metric interpretation:
  for every $\Phi_0,\Phi_1\in \frF$ with $\mu_0=\sfx_\sharp\Phi_0$,
  $\mu_1=\sfx_\sharp\Phi_1$
  we have
  the asymptotic expansion
\begin{equation*}
  W_2^2(\exp^t\Phi_0,\exp^t\Phi_1)\le (1+2\lambda
  t)W_2^2(\mu_0,\mu_1)+o(t)\quad\text{as }t\downarrow0.
\end{equation*}
\end{remark}
\begin{remark}\label{rmk:equivdiss}
  Thanks to  Corollary
  \ref{lem:control}, \eqref{eq:33} implies the weaker condition
  \begin{equation}\label{Hdiss}
    \bram{\Phi_0}{\mu_1}+\bram{\Phi_1}{\mu_0} \le \lambda W_2^2(\mu_0,
    \mu_1),\quad \forall\, \Phi_0,\Phi_1 \in \frF,\ \mu_0=\sfx_\sharp \Phi_0,\ 
    \mu_1=\sfx_\sharp \Phi_1.
  \end{equation}
  It is clear that the inequality of \eqref{Hdiss} implies
  the inequality of \eqref{eq:33} whenever $\Gamma_o(\mu_0,\mu_1)$
  contains only one element. More generally, 
  we will
  see in Corollary \ref{cor:dissipativity} that \eqref{Hdiss} is in fact equivalent to \eqref{eq:33}
  when $\dom(\frF)$ is geodesically
convex (according to Definition \ref{def:W2geodesic}).
\end{remark}

As in the standard Hilbert case, $\lambda$-dissipativity can be
reduced to dissipativity (meaning $0$-dissipativity) by a simple transformation. Let us introduce
the map
\begin{equation*}
  L^\lambda:\TX\to\TX,\quad
  L^\lambda(x,v):=(x,v-\lambda x),
\end{equation*}
observing that for every $\ssigma\in \prob_2(\TX\times \X)$
with $(\sfx^i)_\sharp\ssigma=\mu_i$, $i=0,1$, the
transformed plan $\ssigma^\lambda:=(L^\lambda,\ii_\X)_\sharp
\ssigma$ satisfies
\begin{align}
    \notag
    \int \la v_0,x_0-x_1\ra\,\d\ssigma^\lambda
    &=
      \int \la v_0-\lambda x_0,x_0-x_1\ra\,\d\ssigma
    \\\label{eq:121}
    &=
    \int \la v_0,x_0-x_1\ra\,\d\ssigma
    -\frac\lambda2\int |x_0-x_1|^2\,\d\ssigma 
    +\frac\lambda2\Big(\sqm{\mu_1}-\sqm{\mu_0}\Big).
  \end{align}

  Similarly, if $\Theta\in \prob_2(\TX\times \TX)$
  with $\sfx^i_\sharp\Theta=\mu_i$,
  the plan $\Theta^\lambda:=(L^\lambda,L^\lambda)_\sharp\Theta$ 
  satisfies
  \begin{align}
    \notag
    \int \la v_0-v_1,x_0-x_1\ra\,\d\Theta^\lambda
    &=
      \int \la v_0-v_1-\lambda (x_0-x_1),x_0-x_1\ra\,\d\Theta
    \\\label{eq:121bis}
    &=
      \int \la v_0-v_1,x_0-x_1\ra\,\d\Theta
      -\lambda\int |x_0-x_1|^2\,\d\Theta.
  \end{align}
\begin{lemma}
  \label{le:trick}
  $\frF$ is a $\lambda$-dissipative {\em MPVF}
  (resp.~satisfies \eqref{Hdiss}) if and only if
  $\frF^\lambda:=L^\lambda_\sharp(\frF)=
  \{L^\lambda_\sharp\Phi\mid\Phi\in \frF\}$
  is dissipative (resp.~satisfies \eqref{Hdiss} with $\lambda=0$).
\end{lemma}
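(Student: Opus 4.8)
The plan is to exploit that $L^\lambda$ is a linear homeomorphism of $\TX$, with inverse $L^{-\lambda}$, which fixes the base point ($\sfx\circ L^\lambda=\sfx$) and maps $\prob_2(\TX)$ onto itself (both $L^\lambda$ and $L^{-\lambda}$ being bounded). Consequently $\sfx_\sharp(L^\lambda_\sharp\Phi)=\sfx_\sharp\Phi$, so $L^\lambda_\sharp$ restricts to a bijection from $\frF$ onto $\frF^\lambda$ with $\dom(\frF^\lambda)=\dom(\frF)$ and $\frF^\lambda[\mu]=L^\lambda_\sharp(\frF[\mu])$; in particular ``$\bram{\Phi_0}{\Phi_1}\le0$ for all $\Phi_0,\Phi_1\in\frF^\lambda$'' is literally ``$\bram{L^\lambda_\sharp\Phi_0}{L^\lambda_\sharp\Phi_1}\le0$ for all $\Phi_0,\Phi_1\in\frF$'', and likewise for the pairing $\bram{\cdot}{\cdot}$ appearing in \eqref{Hdiss}. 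Thus the whole statement reduces to a computation of how the two pairings transform under $L^\lambda_\sharp$.

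The core step is to prove the identities
\[
\bram{L^\lambda_\sharp\Phi_0}{L^\lambda_\sharp\Phi_1}=\bram{\Phi_0}{\Phi_1}-\lambda W_2^2(\mu_0,\mu_1),\qquad
\bram{L^\lambda_\sharp\Phi_0}{\mu_1}=\bram{\Phi_0}{\mu_1}-\tfrac\lambda2 W_2^2(\mu_0,\mu_1)+\tfrac\lambda2\big(\sqm{\mu_1}-\sqm{\mu_0}\big),
\]
using the representation of the pairings as minima in Theorem \ref{thm:characterization}. Here I would observe that $\Ttheta\mapsto\Ttheta^\lambda:=(L^\lambda,L^\lambda)_\sharp\Ttheta$ is a bijection from $\Gamma(\Phi_0,\Phi_1)$ onto $\Gamma(L^\lambda_\sharp\Phi_0,L^\lambda_\sharp\Phi_1)$, and since $(\sfx^0,\sfx^1)_\sharp\Ttheta^\lambda=(\sfx^0,\sfx^1)_\sharp\Ttheta$ it restricts to a bijection $\Lambda(\Phi_0,\Phi_1)\to\Lambda(L^\lambda_\sharp\Phi_0,L^\lambda_\sharp\Phi_1)$; similarly $\ssigma\mapsto\ssigma^\lambda:=(L^\lambda,\ii_\X)_\sharp\ssigma$ is a bijection $\Lambda(\Phi_0,\mu_1)\to\Lambda(L^\lambda_\sharp\Phi_0,\mu_1)$. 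Then, taking the minima in \eqref{eq:22} and \eqref{eq:21} over these two families and substituting the already established identities \eqref{eq:121bis} and \eqref{eq:121}, and recalling that $\int|x_0-x_1|^2\,\d\Ttheta=W_2^2(\mu_0,\mu_1)$ for every $\Ttheta\in\Lambda(\Phi_0,\Phi_1)$ (and analogously on $\Lambda(\Phi_0,\mu_1)$), the correction terms become constant over each family and pass outside the minimum, giving the two displayed formulas.

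From these, the equivalences are immediate. For metric $\lambda$-dissipativity: the first identity shows $\bram{L^\lambda_\sharp\Phi_0}{L^\lambda_\sharp\Phi_1}\le0$ for all $\Phi_0,\Phi_1\in\frF$ if and only if $\bram{\Phi_0}{\Phi_1}\le\lambda W_2^2(\mu_0,\mu_1)$ for all such pairs, i.e.~$\frF^\lambda$ is dissipative iff $\frF$ is $\lambda$-dissipative, cf.~\eqref{eq:33}. For \eqref{Hdiss}: summing the second identity with the one obtained by interchanging the roles of $\Phi_0,\Phi_1$ (hence of $\mu_0,\mu_1$), the antisymmetric second-moment terms $\tfrac\lambda2(\sqm{\mu_1}-\sqm{\mu_0})$ and $\tfrac\lambda2(\sqm{\mu_0}-\sqm{\mu_1})$ cancel, leaving
\[
\bram{L^\lambda_\sharp\Phi_0}{\mu_1}+\bram{L^\lambda_\sharp\Phi_1}{\mu_0}=\bram{\Phi_0}{\mu_1}+\bram{\Phi_1}{\mu_0}-\lambda W_2^2(\mu_0,\mu_1),
\]
so $\frF^\lambda$ satisfies \eqref{Hdiss} with $\lambda=0$ iff $\frF$ satisfies \eqref{Hdiss}. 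The only point requiring care is the verification that the push-forward maps above are bijections preserving the optimality constraint defining $\Lambda$; this follows at once from $\sfx\circ L^\lambda=\sfx$ and the invertibility of $L^\lambda$, so I do not expect a genuine obstacle — the lemma is essentially bookkeeping on top of \eqref{eq:121} and \eqref{eq:121bis}.
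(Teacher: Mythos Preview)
Your proposal is correct and follows essentially the same approach as the paper: both arguments use the bijection of $\Lambda$-sets induced by $L^\lambda$ together with the identities \eqref{eq:121} and \eqref{eq:121bis} to derive the transformation formulae for the pairings, and then read off the two equivalences. Your write-up is in fact a bit more explicit than the paper's about the bijection preserving the optimality constraint and about the cancellation of the $\sqm{\cdot}$-terms in the \eqref{Hdiss} case, but there is no substantive difference.
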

\begin{proof}
  Let us first check the case of \eqref{Hdiss}.
  Since $\ssigma\in \Lambda_o(\Phi_0,\mu_1)$ if and only
  if
  $\ssigma^\lambda\in \Lambda_o(L^\lambda_\sharp\Phi_0,\mu_1)$,
  \eqref{eq:121} yields 
  \begin{align*}
    \int \la v_0,x_0-x_1\ra\,\d\ssigma^\lambda=
    \int \la v_0,x_0-x_1\ra\,\d\ssigma
    -\frac\lambda2\Big(\sqm{\mu_0}-\sqm{\mu_1}+W_2^2(\mu_0,\mu_1)\Big)
  \end{align*}
  and therefore
  \begin{equation}\label{eq:rellam}
    \bram{L^\lambda_\sharp \Phi_0}{\mu_1}=
    \bram{\Phi_0}{\mu_1}-\frac\lambda2\Big(\sqm{\mu_0}-\sqm{\mu_1}+W_2^2(\mu_0,\mu_1)\Big).
  \end{equation}
  Using the corresponding identity for $    \bram{L^\lambda_\sharp
    \Phi_1}{\mu_0}$ we obtain that $\frF^\lambda$ is dissipative.

  A similar argument, using the identity \eqref{eq:121bis}, shows
  the equivalence between the $\lambda$-dissipativity of $\frF$ and
  the dissipativity of $\frF^\lambda$.
\end{proof}

Let us conclude this section by showing that $\lambda$-dissipativity can be deduced from a Lipschitz like condition similar to the one considered in \cite{Piccoli_2019} (see Appendix \ref{sec:cfrPic}).

\begin{lemma}
  \label{le:pic1}
  Suppose that the \MPVF $\frF$ satisfies 
  \[\mathcal{W}_2(\frF[\nu], \frF[\nu']) \leq L W_2(\nu,
    \nu'),\quad\forall \, \nu, \nu' \in \dom(\frF),\] where
  $\mathcal{W}_2:\prob_2(\TX)\times\prob_2(\TX)\to[0,+\infty)$ is
  defined by
  \begin{equation*}
    \mathcal{W}_2^2(\Phi_0,\Phi_1) = \inf\left\{ \int_{\TX \times \TX} |v_0 - v_1|^2 \de\Ttheta(x_0,v_0,x_1,v_1) : \Ttheta \in\Lambda(\Phi_0,\Phi_1) \right\},
  \end{equation*}
  with $\Lambda(\cdot,\cdot)$ as in Definition \ref{def:lambda}.
  Then $\frF$ is $\lambda$-dissipative, for $\lambda:=\frac 12(1+L^2)$
\end{lemma}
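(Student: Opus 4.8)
The plan is to reduce the claimed inequality~\eqref{eq:33} to an elementary pointwise estimate via the dual formula of Theorem~\ref{thm:characterization}. Fix $\Phi_0,\Phi_1\in\frF$ and set $\mu_i:=\sfx_\sharp\Phi_i$, $i=0,1$; by~\eqref{eq:22} we have $\bram{\Phi_0}{\Phi_1}=\min\big\{\int_{\TX\times\TX}\scalprod{x_0-x_1}{v_0-v_1}\de\Ttheta\mid\Ttheta\in\Lambda(\Phi_0,\Phi_1)\big\}$, so it suffices to exhibit one coupling $\Ttheta\in\Lambda(\Phi_0,\Phi_1)$ for which $\int\scalprod{x_0-x_1}{v_0-v_1}\de\Ttheta\le\tfrac12(1+L^2)\,W_2^2(\mu_0,\mu_1)$.

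First I would pick $\Ttheta\in\Lambda(\Phi_0,\Phi_1)$ attaining the infimum in the definition of $\mathcal W_2^2(\Phi_0,\Phi_1)$. This is legitimate because $\Lambda(\Phi_0,\Phi_1)$ is compact in $\prob_2^{sw}(\TX\times\TX)$ (exactly as used in the proof of Theorem~\ref{thm:characterization}, through Lemma~\ref{le:compactnessP2}) and $\Ttheta\mapsto\int|v_0-v_1|^2\de\Ttheta$ is sequentially lower semicontinuous along strong-weak convergence; alternatively one works with an $\eps$-almost optimal $\Ttheta$ and lets $\eps\downarrow0$ at the end. For such a $\Ttheta$ the hypothesis gives $\int|v_0-v_1|^2\de\Ttheta=\mathcal W_2^2(\Phi_0,\Phi_1)\le L^2W_2^2(\mu_0,\mu_1)$, while the very membership $\Ttheta\in\Lambda(\Phi_0,\Phi_1)$ forces $(\sfx^0,\sfx^1)_\sharp\Ttheta\in\Gamma_o(\mu_0,\mu_1)$ and hence $\int|x_0-x_1|^2\de\Ttheta=W_2^2(\mu_0,\mu_1)$.

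It then remains to integrate the Young inequality $\scalprod ab\le\tfrac12|a|^2+\tfrac12|b|^2$ with $a=x_0-x_1$, $b=v_0-v_1$ against $\Ttheta$, which yields
\begin{equation*}
  \int\scalprod{x_0-x_1}{v_0-v_1}\de\Ttheta
  \le \tfrac12\int|x_0-x_1|^2\de\Ttheta+\tfrac12\int|v_0-v_1|^2\de\Ttheta
  \le \tfrac12\big(1+L^2\big)W_2^2(\mu_0,\mu_1).
\end{equation*}
Combining this with~\eqref{eq:22} gives $\bram{\Phi_0}{\Phi_1}\le\tfrac12(1+L^2)W_2^2(\mu_0,\mu_1)$, i.e.~\eqref{eq:33} holds with $\lambda=\tfrac12(1+L^2)$.

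I do not anticipate any genuine difficulty: the argument is just the combination of the duality identity~\eqref{eq:22}, the trivial bound $2\scalprod ab\le|a|^2+|b|^2$, and the definitions of $\mathcal W_2$ and $\Lambda$. The only step needing a line of care is the selection of an optimal (or near-optimal) $\Ttheta$ \emph{within} $\Lambda(\Phi_0,\Phi_1)$ for $\mathcal W_2$ --- which is precisely why $\mathcal W_2$ is defined with the optimality constraint on the $\X^2$-marginal built in --- and this relies on the same compactness and lower-semicontinuity facts already established for Theorem~\ref{thm:characterization}.
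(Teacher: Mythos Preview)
Your proof is correct and follows essentially the same approach as the paper: apply the characterization \eqref{eq:22} from Theorem~\ref{thm:characterization}, pick a (near-)optimal $\Ttheta\in\Lambda(\Phi_0,\Phi_1)$ for $\mathcal W_2$, and use Young's inequality together with the hypothesis. One small imprecision: the compactness of $\Lambda(\Phi_0,\Phi_1)$ invoked in Theorem~\ref{thm:characterization} is in $\prob_2(\TX\times\TX)$ (strong topology), not merely $\prob_2^{sw}$, but this only strengthens your argument, and your $\eps$-approximation alternative sidesteps the issue entirely.
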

\begin{proof}
Let $\nu',\nu''\in\dom(\frF) $, then by Theorem \ref{thm:characterization} and Young's inequality, we have
\begin{equation*}
\begin{split}
\bram{\frF[\nu']}{\frF[\nu'']}
&=\min\left\{\int_{\TX\times \TX}\scalprod{x'-x''}{v'-v''}\de\Ttheta\,:\,\Ttheta\in\Lambda(\frF[\nu'],\frF[\nu''])\right\}\\
&\le\frac{1}{2}\left(W_2^2(\nu',\nu'')+\mathcal{W}_2^2(\frF[\nu'],\frF[\nu''])\right)
\\&
\le\frac{L^2+1}{2}\,W_2^2(\nu',\nu''). \qedhere
\end{split}
\end{equation*}
\end{proof}

\subsection{Behaviour of \texorpdfstring{$\lambda$}{l}-dissipative \MPVF along geodesics}
\label{subsec:dissipative-geodesic}
Let us now study the behaviour of a MPVF $\frF$ along geodesics.
Recall that in the case of a dissipative map $\fF:\mathsf H\to \mathsf H$ in a Hilbert
space $\mathsf H$, it is quite immediate to prove that the real function
\begin{equation*}
f(t) := \scalprod{F(x_t)}{x_0-x_1},\quad
  x_t = (1-t)x_0 + tx_1,
  \quad t \in [0,1]
\end{equation*}
is monotone increasing.
This property has a natural counterpart
in the case of measures.
\begin{definition}\label{def:plangeodomV}
  Let $\frF \subset \prob_2(\TX)$, $\mu_0,\mu_1\in\overline{\dom(\frF)}$, $\mmu\in \Gamma_o(\mu_0,\mu_1)$. We define the sets
  \begin{align} \notag
    \rI\mmu\frF:={}&\Big\{t\in [0,1]:\sfx^t_\sharp\mmu\in \dom(\frF)\Big\},\\
    \label{eq:18}
    \CondGammao\frF{\mu_0}{\mu_1}{i}:={}&
                                          \Big\{\mmu\in \Gamma_o(\mu_0,\mu_1):
                                          i\text{ is an accumulation point of }\rI\mmu\frF
                                          \Big\}, i=0,1\\ \notag
    \CondGammao\frF{\mu_0}{\mu_1}{01}:={}&
                                           \CondGammao\frF{\mu_0}{\mu_1}{0}\cap \CondGammao\frF{\mu_0}{\mu_1}{1}.
  \end{align}

Notice that these sets depend on $\frF$ just through $\dom(\frF)$. In
particular, if $\mu_0,\mu_1\in\dom(\frF)$ and $\dom(\frF)$ is open or
geodesically convex according to
Definition \ref{def:W2geodesic} then
$\CondGammao\frF{\mu_0}{\mu_1}{01}\neq\emptyset$.
\end{definition}

\begin{definition}
  \label{def:frfl}
  Let $\frF \subset\prob_2(\TX)$ be a
  \MPVF.
  Let $\mu_0, \mu_1 \in
   \overline{\dom(\frF)}$, $\mmu \in
  \Gamma_o(\mu_0,\mu_1)
$
  and let $\mu_t:=\sfx^t_{\sharp} \mmu$, $t\in [0,1]$.
  For every $t\in \rI\mmu\frF$ we define
\begin{align*}
  \directionalm \frF\mmu t := \sup \left \{
                          \directionalm{\Phi}{\mmu}t \mid \Phi \in \frF[\mu_t]
                          \right \},
                                    \qquad
  \directionalp \frF\mmu t := \inf \left \{
                          \directionalp{\Phi}{\mmu}t 
                          \mid \Phi \in \frF[\mu_t] \right \}.
\end{align*}
\end{definition}

\begin{theorem} \label{theo:propflfr}
   Let us suppose that the \MPVF $\frF$
  satisfies \eqref{Hdiss},
  let $\mu_0,\mu_1\in\overline{\dom(\frF)}$, and let
  $\mmu\in\Gamma_o(\mu_0,\mu_1)
  $ with
  $W^2:=W_2^2(\mu_0,\mu_1)$. Then the following properties hold
\begin{enumerate}
\item $\directionalp \frF\mmu t \le \directionalm \frF\mmu t$ for every
  $t \in (0,1)\cap
  \rI\mmu\frF$;
\item $\directionalm \frF\mmu s \le \directionalp \frF\mmu t+\lambda W^2 (t-s)$
  for every $s,t\in \rI\mmu\frF$, $s < t $;
\item $t\mapsto \directionalm \frF\mmu t+\lambda W^2 t$ and
  $t\mapsto \directionalp \frF\mmu t+\lambda W^2 t $ are increasing respectively in $\rI\mmu\frF\setminus \{1\}$
  and in $\rI\mmu\frF\setminus \{0\}$.
\item the right (resp.~left)
  limits
  of $t\mapsto \directionalm \frF\mmu
  t$ and $
  t\mapsto \directionalp \frF\mmu t$ exist at every right (resp.~left) accumulation point of
  $\rI\mmu\frF$,
  and in those points the right (resp.~left) limits of
  $\directionalm \frF\mmu t$ 
  coincide with the right (resp.~left) limits of $\directionalp \frF\mmu t$.
\item[(5)]
  $\directionalp \frF\mmu t = \directionalm \frF\mmu t$ at every
  interior point 
  $t $ of $\rI\mmu\frF$
  where one of them is continuous.
\end{enumerate}

\end{theorem}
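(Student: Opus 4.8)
The plan is to prove item (2) first — it carries essentially all the content — and then read off (1) and (3)--(5) as soft consequences.

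\textbf{The core estimate (2).} Fix $s<t$ in $\rI\mmu\frF$, together with $\Phi_s\in\frF[\mu_s]$ and $\Phi_t\in\frF[\mu_t]$, and set $\mmu_{st}:=(\sfx^s,\sfx^t)_\sharp\mmu$. The key identities I would establish, at least when $(s,t)\neq(0,1)$, are
\[
  \bram{\Phi_s}{\mu_t}=(t-s)\,\directionalm{\Phi_s}{\mmu}{s},
  \qquad
  \bram{\Phi_t}{\mu_s}=-(t-s)\,\directionalp{\Phi_t}{\mmu}{t}.
\]
Here $\mmu_{st}\in\Gamma_o(\mu_s,\mu_t)$ because cyclical monotonicity of $\supp\mmu$ is preserved by the affine maps $\sfx^s,\sfx^t$, and moreover $\mmu_{st}$ is the \emph{unique} optimal plan between $\mu_s$ and $\mu_t$ whenever $(s,t)\neq(0,1)$, by applying Theorem \ref{theo:chargeo} to the sub-geodesics $(\mu_r)_{r\in[0,t]}$ and $(\mu_r)_{r\in[s,1]}$. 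Since $(x_0,x_1)\mapsto(\sfx^s(x_0,x_1),\sfx^t(x_0,x_1))$ is a linear bijection of $\X^2$ with $\sfx^s(x_0,x_1)-\sfx^t(x_0,x_1)=(t-s)(x_0-x_1)$, pushing forward along it identifies $\Gamma_s(\Phi_s,\mmu)$ with $\Lambda(\Phi_s,\mu_t)$ (one inclusion needs only optimality of $\mmu_{st}$, the reverse its uniqueness) in a way that rescales the integrand $\scalprod{x_0-x_1}{v_0}$ by the factor $t-s$; taking minima gives the first identity, and the second follows from the first applied to the reversed geodesic $\mathsf s_\sharp\mmu$ together with \eqref{eq:68} (cf.\ Remark \ref{rem:particular} for the interior/endpoint bookkeeping). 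Now \eqref{Hdiss}, applied to the admissible pair $\Phi_s,\Phi_t\in\frF$, reads $\bram{\Phi_s}{\mu_t}+\bram{\Phi_t}{\mu_s}\le\lambda W_2^2(\mu_s,\mu_t)=\lambda W^2(t-s)^2$; substituting the two identities and dividing by $t-s>0$ yields $\directionalm{\Phi_s}{\mmu}{s}-\directionalp{\Phi_t}{\mmu}{t}\le\lambda W^2(t-s)$, and passing to the supremum over $\Phi_s\in\frF[\mu_s]$ and the infimum over $\Phi_t\in\frF[\mu_t]$ proves (2) for $(s,t)\neq(0,1)$. The remaining case $(s,t)=(0,1)$ is obtained by inserting an intermediate $r\in\rI\mmu\frF\cap(0,1)$ and chaining the inequalities through $r$, using also (1) and (3).

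\textbf{Items (1) and (3).} Claim (1) is immediate from Lemma \ref{lem:starting}: for $t\in(0,1)\cap\rI\mmu\frF$ it gives $\directionalm{\Phi}{\mmu}{t}=\directionalp{\Phi}{\mmu}{t}$ for \emph{every} $\Phi\in\frF[\mu_t]$, hence $\directionalp\frF\mmu{t}=\inf_\Phi\directionalp{\Phi}{\mmu}{t}=\inf_\Phi\directionalm{\Phi}{\mmu}{t}\le\sup_\Phi\directionalm{\Phi}{\mmu}{t}=\directionalm\frF\mmu{t}$. For (3): if $s<t$ lie in $\rI\mmu\frF\setminus\{1\}$ then $t\in(0,1)$, so by (2) and then (1) at $t$ one gets $\directionalm\frF\mmu{s}+\lambda W^2 s\le\directionalp\frF\mmu{t}+\lambda W^2 t\le\directionalm\frF\mmu{t}+\lambda W^2 t$; symmetrically, if $s<t$ lie in $\rI\mmu\frF\setminus\{0\}$ then $s\in(0,1)$, so by (1) at $s$ and then (2) one gets $\directionalp\frF\mmu{s}+\lambda W^2 s\le\directionalm\frF\mmu{s}+\lambda W^2 s\le\directionalp\frF\mmu{t}+\lambda W^2 t$.

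\textbf{Items (4) and (5).} By (3) the functions $t\mapsto\directionalm\frF\mmu{t}+\lambda W^2 t$ and $t\mapsto\directionalp\frF\mmu{t}+\lambda W^2 t$ are monotone on $\rI\mmu\frF$ minus one endpoint, hence admit one-sided limits at each corresponding one-sided accumulation point of $\rI\mmu\frF$ (and these are finite whenever the point is interior, since there (2)--(3) bound the functions from both sides); the same then holds for $\directionalm\frF\mmu{\cdot}$ and $\directionalp\frF\mmu{\cdot}$. To see the $\directionalm$-- and $\directionalp$--limits coincide at, say, a right accumulation point $t_0$: for $t_0<r<r'$ in $\rI\mmu\frF$, (2) gives $\directionalm\frF\mmu{r}\le\directionalp\frF\mmu{r'}+\lambda W^2(r'-r)$; letting $r\downarrow t_0$ and then $r'\downarrow t_0$ yields $\lim_{r\downarrow t_0}\directionalm\frF\mmu{r}\le\lim_{r\downarrow t_0}\directionalp\frF\mmu{r}$, while (1) at interior $r$ gives the reverse inequality in the limit. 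Left accumulation points are symmetric, so (4) follows. Finally, for (5): at an interior point $t$ of $\rI\mmu\frF$ where one of the two functions is continuous, continuity combined with the equality of one-sided limits from (4), the ordering $\directionalp\frF\mmu{t-}\le\directionalp\frF\mmu{t}\le\directionalp\frF\mmu{t+}$ (and its $\directionalm$-analogue) from (3), and $\directionalp\frF\mmu{t}\le\directionalm\frF\mmu{t}$ from (1), squeeze $\directionalp\frF\mmu{t}=\directionalm\frF\mmu{t}$.

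\textbf{Main obstacle.} The one genuinely technical step is the derivation of the two scaling identities in (2): it requires carefully executed push-forward manipulations among the product spaces $\TX\times\X$ and $\X^2$, the correct invocation of the uniqueness statements of Theorem \ref{theo:chargeo} (and Remark \ref{rem:particular}), and the separate treatment of the boundary parameters $s=0$, $t=1$ and especially $(s,t)=(0,1)$. Everything else is the calculus of monotone real functions.
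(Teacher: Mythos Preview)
Your proof is correct and follows essentially the same approach as the paper: the scaling identities $\bram{\Phi_s}{\mu_t}=(t-s)\,\directionalm{\Phi_s}{\mmu}{s}$ and $\bram{\Phi_t}{\mu_s}=-(t-s)\,\directionalp{\Phi_t}{\mmu}{t}$ combined with \eqref{Hdiss} are exactly the core of the paper's argument for (2), and the derivations of (1), (3)--(5) match as well. The only cosmetic differences are that the paper first reduces to $\lambda=0$ via Lemma~\ref{le:trick} and proves the items in their natural order, while you carry $\lambda$ throughout and establish (2) first; your treatment of the boundary case $(s,t)=(0,1)$ via an intermediate point is also what the paper's ``it is enough'' implicitly relies on.
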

\begin{proof} Throughout all the proof we set $f_r(t):=\directionalm
  \frF\mmu t$ and $f_l(t):=\directionalp \frF\mmu t$.
  Thanks to Lemma \ref{le:trick} and in particular to \eqref{eq:rellam}, it is easy to check that
  it is sufficient to consider the dissipative case $\lambda=0$.
\begin{enumerate}
\item It is a direct consequence of Lemma \ref{lem:starting} and the definitions of $f_r$ and $f_l$.
\item We prove that for every $\Phi \in \frF[\mu_s]$ and $\Phi' \in \frF[\mu_t]$ it holds
 \begin{equation}\label{eq:startp}  \directionalm \Phi\mmu s
  \le
 \directionalp {\Phi'}\mmu t.
\end{equation}
The thesis will follow immediately passing to the $\sup$ over $\Phi \in \frF[\mu_s]$ in the LHS and to the $\inf$ over $\Phi' \in \frF[\mu_t]$ in the RHS. It is enough to prove \eqref{eq:startp} in case at least one between $s,t$ belongs to $(0,1)$. Let us define the map $L: \prob_2(\TX \times \X) \to \R$ as
\[ L(\gamma):= \int_{\TX \times \X} \scalprod{v_0}{x_0-x_1}\de \gamma(x_0,v_0,x_1) \quad \gamma \in \prob_2(\TX \times \X).\]
Observe that, since it never happens that $s=0$ and $t=1$ at the same time, the map $T:\Gamma_s(\Phi, \mmu)\to \Lambda(\Phi,\mu_t)$ defined as
\[ T(\ssigma) := (\sfx^s \circ (\sfx^0,\sfx^1),\sfv^0, \sfx^t \circ (\sfx^0,\sfx^1))_{\sharp} \ssigma\]
is a bijection s.t.~$(t-s)L(\ssigma)=L(T(\ssigma))$ for every $\ssigma \in \Gamma_s(\Phi, \mmu)$. This immediately gives that
\[ (t-s) \directionalm \Phi \mmu s = \bram{\Phi}{\mu_t}.\]
In the same way we can deduce that
\[ (s-t) \directionalp {\Phi'} \mmu t = \bram{\Phi'}{\mu_s}. \]
Thanks to the dissipativity of $\frF$ we get
\[ (t-s)\directionalm \Phi \mmu s - (t-s)\directionalp {\Phi'} \mmu t = \bram{\Phi}{\mu_t} +\bram{\Phi'}{\mu_s} \le 0.\]

\item Combining (1) and (2) we have that for every $s,t\in \rI\mmu\frF$ with $0< s<t <1$ it holds
\begin{equation} \label{eq:monotonicity} 
f_l(s) \le f_r(s) \le f_l(t) \le f_r(t). 
\end{equation}
This implies that both $f_l$ and $f_r$ are increasing in $\rI\mmu\frF\cap (0,1)$. Observe that, again combining (1) and (2), it also holds
\begin{align*}
  f_r(0) &\le f_l(t) \le f_r(t),\\
f_l(t) &\le f_r(t) \le f_l(1)
\end{align*}
for every $t \in \rI\mmu\frF\setminus \{0,1\}$, and then $f_r$ is increasing in
$\rI\mmu\frF\setminus\{1\}$ and $f_l$ is increasing in $\rI\mmu\frF\setminus \{0\}$.
\item
  It is an immediate consequence of \eqref{eq:monotonicity}.
\item It is a straightforward consequence of (4).
  \qedhere
\end{enumerate}
\end{proof}
Thanks to the previous Theorem
\ref{theo:propflfr} the next definition is well posed.
\begin{definition}
  \label{def:directiona}
   Let us suppose that the \MPVF $\frF$
  satisfies \eqref{Hdiss},
  let $\mu_0,\mu_1\in \overline{\dom(\frF)}$.
  \begin{align*}
 \text{If 
    $\mmu\in \CondGammao\frF{\mu_0}{\mu_1}{0}$ we set}
    \quad \directional \frF\mmu {0+}
    &:= 
      \lim_{t\downarrow0}\directionalm \frF\mmu t
      =\lim_{t\downarrow0}\directionalp \frF\mmu t
    \\
      \text{If 
    $\mmu\in \CondGammao\frF{\mu_0}{\mu_1}{1}$ we set}
    \quad \directional \frF\mmu {1-}
    &:=
      \lim_{t\uparrow1}\directionalm \frF\mmu t=
      \lim_{t\uparrow1}\directionalp \frF\mmu t.
  \end{align*}
\end{definition}
\begin{corollary}
  Let us keep the same notation of Theorem
  {\em \ref{theo:propflfr}}
  and let $s\in \rI\mmu\frF\cap (0,1)$
  with $\Phi\in \frF[\mu_s]$.
  \begin{enumerate}
  \item
    If $\mmu\in \CondGammao\frF{\mu_0}{\mu_1}0$,
    we have that
    \begin{align}
    \label{eq:91}
      \directional \frF\mmu {0+}
    &\le 
      \directionalp\Phi\mmu s+\lambda s W^2= \directionalm\Phi\mmu s+\lambda s W^2;
    \end{align}
      if moreover
      $\Phi_0\in \frF[\mu_0]$ then
      \begin{equation}
        \label{eq:54}
        \bram{\Phi_0}{\mu_1}\le  \directionalm{\Phi_0}{\mmu}{0}\le    \directional \frF\mmu{0+}.
  \end{equation}
    \item
        If $\mmu\in \CondGammao\frF{\mu_0}{\mu_1}1$,
    we have that
    \begin{align*}
      \directionalp\Phi\mmu s-\lambda (1-s) W^2
      =
      \directionalm\Phi\mmu s-\lambda (1-s) W^2
      \le
      \directional \frF\mmu {1-};
    \end{align*}
    if moreover $\Phi_1\in \frF[\mu_1]$ then
    \begin{equation}
      \label{eq:54bis}
      \directional \frF\mmu{1-}\le \directionalp{\Phi_1}{\mmu}1\le
    -\bram{\Phi_1}{\mu_0}
    \end{equation}
  \item
    In particular, for every $\Phi_0\in \frF[\mu_0]$, $\Phi_1\in
  \frF[\mu_1]$ and $\mmu\in \CondGammao\frF{\mu_0}{\mu_1}{01}$
  we obtain
  \begin{equation}
    \label{eq:92}
    \directionalm{\Phi_0}{\Phi_1}\mmu\le
    \directional \frF\mmu{0+}-
    \directional \frF\mmu{1-}\le \lambda W^2_2(\mu_0,\mu_1).
  \end{equation}
\end{enumerate}

\end{corollary}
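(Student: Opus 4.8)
The plan is to read off all three claims from the monotonicity and one-sided limit properties established in Theorem \ref{theo:propflfr}, combined with the elementary observation that along the geodesic $\mu_t=\sfx^t_\sharp\mmu$ the set $\Gamma_t(\Phi,\mmu)$ degenerates to a single point for $t\in(0,1)$, exactly as in the proof of Lemma \ref{lem:starting}. No compactness or new estimate is needed beyond these ingredients.

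For (1), fix $s\in\rI\mmu\frF\cap(0,1)$ and $\Phi\in\frF[\mu_s]$. Since $\mmu\in\Gamma_o(\mu_0,\mu_1)$, cyclical monotonicity of $\supp(\mmu)$ makes $\sfx^s$ injective on $\supp(\mmu)$ (the computation in the proof of Lemma \ref{lem:starting}), so $\Gamma_s(\Phi,\mmu)$ is a singleton and $\directionalm\Phi\mmu s=\directionalp\Phi\mmu s$; this is the displayed equality. For the displayed inequality I would take any $s'\in\rI\mmu\frF$ with $0<s'<s$, use Theorem \ref{theo:propflfr}(2) together with $\directionalp\frF\mmu s\le\directionalp\Phi\mmu s$ (infimum over $\frF[\mu_s]$) to get $\directionalm\frF\mmu{s'}\le\directionalp\Phi\mmu s+\lambda W^2(s-s')$, and then let $s'\downarrow0$ along $\rI\mmu\frF$ — legitimate since $0$ is an accumulation point of $\rI\mmu\frF$ when $\mmu\in\CondGammao\frF{\mu_0}{\mu_1}{0}$ — using Definition \ref{def:directiona} to conclude $\directional\frF\mmu{0+}\le\directionalp\Phi\mmu s+\lambda sW^2$. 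For the second chain, the inclusion $\Gamma_0(\Phi_0,\mmu)\subset\Lambda(\Phi_0,\mu_1)$ (the constraint $(\sfx^0,\sfx^1)_\sharp\ssigma=\mmu$ is stronger than $(\sfx^0,\sfx^1)_\sharp\ssigma\in\Gamma_o(\mu_0,\mu_1)$) and the variational characterisation of Theorem \ref{thm:characterization} give $\bram{\Phi_0}{\mu_1}\le\directionalm{\Phi_0}{\mmu}{0}$; moreover $\directionalm{\Phi_0}{\mmu}{0}\le\directionalm\frF\mmu{0}$ by definition of the supremum, and $\directionalm\frF\mmu{0}\le\directional\frF\mmu{0+}$ because $t\mapsto\directionalm\frF\mmu t+\lambda W^2t$ is increasing on $\rI\mmu\frF\setminus\{1\}$ (Theorem \ref{theo:propflfr}(3)) while $0$ is a right accumulation point of $\rI\mmu\frF$.

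Statement (2) is the mirror image, which I would derive from (1) through the reflection $\mathsf s$: by \eqref{eq:68} one has $\directionalp{\Phi_1}{\mmu}{1}=-\directionalm{\Phi_1}{\hat\mmu}{0}$ with $\hat\mmu=\mathsf s_\sharp\mmu\in\Gamma_o(\mu_1,\mu_0)$, so the second chain of (1) applied to $\hat\mmu$ gives $\bram{\Phi_1}{\mu_0}\le\directionalm{\Phi_1}{\hat\mmu}{0}=-\directionalp{\Phi_1}{\mmu}{1}$, i.e.~$\directionalp{\Phi_1}{\mmu}{1}\le-\bram{\Phi_1}{\mu_0}$. The remaining inequalities of (2) are obtained exactly as in (1), letting $t\uparrow1$ in Theorem \ref{theo:propflfr}(2)--(3) instead of $t\downarrow0$, and the equality $\directionalp\Phi\mmu s=\directionalm\Phi\mmu s$ is again the singleton property of $\Gamma_s(\Phi,\mmu)$ for $s\in(0,1)$.

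Finally, (3) is just the sum of (1) and (2): from $\directionalm{\Phi_0}{\Phi_1}\mmu=\directionalm{\Phi_0}{\mmu}{0}-\directionalp{\Phi_1}{\mmu}{1}$ (Definition \ref{def:last}), together with $\directionalm{\Phi_0}{\mmu}{0}\le\directional\frF\mmu{0+}$ from (1) and $\directional\frF\mmu{1-}\le\directionalp{\Phi_1}{\mmu}{1}$ from (2), one obtains $\directionalm{\Phi_0}{\Phi_1}\mmu\le\directional\frF\mmu{0+}-\directional\frF\mmu{1-}$; for the upper bound, pick $s,t\in\rI\mmu\frF\cap(0,1)$ with $s<t$ (possible since $\mmu\in\CondGammao\frF{\mu_0}{\mu_1}{01}$ makes both $0$ and $1$ accumulation points of $\rI\mmu\frF$), apply Theorem \ref{theo:propflfr}(2) to get $\directionalm\frF\mmu s\le\directionalp\frF\mmu t+\lambda W^2(t-s)$, and let $s\downarrow0$, $t\uparrow1$, so that $t-s\to1$ and $\directional\frF\mmu{0+}\le\directional\frF\mmu{1-}+\lambda W_2^2(\mu_0,\mu_1)$. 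The only real care needed is bookkeeping the directions of all these inequalities — supremum versus infimum in the definitions of $\directionalm\frF\mmu\cdot$ and $\directionalp\frF\mmu\cdot$, which monotonicity runs which way, and which one-sided limit is being taken — rather than any substantive obstacle.
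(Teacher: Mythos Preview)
Your proposal is correct and follows exactly the approach the paper intends: the corollary is stated without proof because all inequalities are immediate bookkeeping from Theorem \ref{theo:propflfr} (monotonicity and one-sided limits), Lemma \ref{lem:starting} (singleton property of $\Gamma_s(\Phi,\mmu)$ for $s\in(0,1)$), the inclusion $\Gamma_0(\Phi_0,\mmu)\subset\Lambda(\Phi_0,\mu_1)$, and the reflection \eqref{eq:68}. You have filled in these details accurately, including the direction of each sup/inf and the passage to the limits $s'\downarrow0$, $t\uparrow1$ along $\rI\mmu\frF$.
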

\eqref{eq:92} immediately yields the following property.
\begin{corollary}
  \label{cor:dissipativity}
  Suppose that a \MPVF $\frF$ satisfies
  \begin{equation}
    \label{eq:19}
    \text{for every $\mu_0,\mu_1\in \dom(\frF)$
      the set $\CondGammao\frF{\mu_0}{\mu_1}{01}$ of \eqref{eq:18} is not empty}
  \end{equation}
  (e.g. if $\dom(\frF)$ is open or geodesically convex),
  then $\frF$ is $\lambda$-dissipative if and only if it satisfies
  \eqref{Hdiss}.
\end{corollary}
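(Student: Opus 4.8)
The statement already contains half of what is needed: the implication ``$\lambda$-dissipative $\Rightarrow$ \eqref{Hdiss}'' is Remark \ref{rmk:equivdiss}, an immediate consequence of Corollary \ref{lem:control}, and it uses neither \eqref{eq:19} nor any geodesic convexity. So the plan is to prove that, under \eqref{eq:19}, \eqref{Hdiss} implies \eqref{eq:33}. Fix $\Phi_0,\Phi_1\in \frF$ and set $\mu_i:=\sfx_\sharp\Phi_i\in \dom(\frF)$; by \eqref{eq:19} pick $\mmu\in \CondGammao\frF{\mu_0}{\mu_1}{01}$. The strategy is to produce the chain of inequalities
\[ \bram{\Phi_0}{\Phi_1}\ \le\ \directionalm{\Phi_0}{\Phi_1}\mmu\ \le\ \lambda W_2^2(\mu_0,\mu_1), \]
whose second step is exactly the content of \eqref{eq:92} (this is where the choice $\mmu\in \CondGammao\frF{\mu_0}{\mu_1}{01}$ enters, ensuring $\directional\frF\mmu{0+}$ and $\directional\frF\mmu{1-}$ are well defined). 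Thus everything reduces to the first inequality, i.e.\ to comparing the pairing $\bram{\cdot}{\cdot}$ with the directional pairing along the geodesic generated by $\mmu$.

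For that first inequality I would argue by a gluing construction. Recall $\directionalm{\Phi_0}{\Phi_1}\mmu=\directionalm{\Phi_0}{\mmu}0-\directionalp{\Phi_1}{\mmu}1$. Choose $\ssigma^0\in \Gamma_0(\Phi_0,\mmu)$ attaining the minimum in the definition of $\directionalm{\Phi_0}{\mmu}0$ and $\ssigma^1\in \Gamma_1(\Phi_1,\mmu)$ attaining the maximum in the definition of $\directionalp{\Phi_1}{\mmu}1$; both admissible sets are nonempty since $\sfx^0_\sharp\mmu=\mu_0=\sfx_\sharp\Phi_0$ and $\sfx^1_\sharp\mmu=\mu_1=\sfx_\sharp\Phi_1$. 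Both $\ssigma^0$ and $\ssigma^1$ have $(\sfx^0,\sfx^1)$-marginal equal to $\mmu$; disintegrating each with respect to $\mmu$ and coupling the two velocity families conditionally independently over $\mmu$ produces a measure $\Ttheta\in \prob_2(\X^4)$ in the variables $(x_0,v_0,x_1,v_1)$ with $(\sfx^0,\sfv^0)_\sharp\Ttheta=\Phi_0$, $(\sfx^1,\sfv^1)_\sharp\Ttheta=\Phi_1$ and $(\sfx^0,\sfx^1)_\sharp\Ttheta=\mmu\in \Gamma_o(\mu_0,\mu_1)$, hence $\Ttheta\in \Lambda(\Phi_0,\Phi_1)$. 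Splitting $\la x_0-x_1,v_0-v_1\ra=\la x_0-x_1,v_0\ra-\la x_0-x_1,v_1\ra$ and noting that the first summand integrates only against the $(x_0,v_0,x_1)$-marginal of $\Ttheta$, which is $\ssigma^0$, and the second only against the $(x_0,v_1,x_1)$-marginal, which is $\ssigma^1$, one gets $\int \la x_0-x_1,v_0-v_1\ra\,\d\Ttheta=\directionalm{\Phi_0}{\mmu}0-\directionalp{\Phi_1}{\mmu}1=\directionalm{\Phi_0}{\Phi_1}\mmu$. Since $\Ttheta\in \Lambda(\Phi_0,\Phi_1)$, Theorem \ref{thm:characterization} yields $\bram{\Phi_0}{\Phi_1}\le \int \la x_0-x_1,v_0-v_1\ra\,\d\Ttheta=\directionalm{\Phi_0}{\Phi_1}\mmu$, closing the chain and giving \eqref{eq:33}.

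The disintegrate-and-glue bookkeeping and the verification of the three marginals of $\Ttheta$ are routine. The point that deserves care, and that I expect to be the main (minor) obstacle, is the attainment of the extrema defining $\directionalm{\Phi_0}{\mmu}0$ and $\directionalp{\Phi_1}{\mmu}1$: one needs $\Gamma_0(\Phi_0,\mmu)$ and $\Gamma_1(\Phi_1,\mmu)$ to be compact in the strong--weak topology, which follows from Proposition \ref{prop:finalmente} exactly as in the proof of Theorem \ref{thm:characterization}, since the $\X$-marginals of the admissible plans are pinned to the single measure $\mmu$ while the velocity second moments are controlled by $|\Phi_0|_2$ and $|\Phi_1|_2$, and the integrand $\la x_0-x_1,v_0\ra$ has quadratic growth.
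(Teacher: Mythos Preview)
Your proof is correct and follows the same route as the paper. The paper's one-line justification ``\eqref{eq:92} immediately yields'' relies on the comparison $\bram{\Phi_0}{\Phi_1}\le \directionalm{\Phi_0}{\Phi_1}\mmu$, which is treated as obvious; you fill this in explicitly via the gluing of $\ssigma^0\in\Gamma_0(\Phi_0,\mmu)$ and $\ssigma^1\in\Gamma_1(\Phi_1,\mmu)$ over their common $(x_0,x_1)$-marginal $\mmu$, exactly the step the paper later writes out in the proof of Proposition~\ref{prop:maximal}(d).
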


\begin{proposition}\label{prop:scalmscal}
  Let $\frF\subset\prob_2(\TX)$ be a \MPVF satisfying \eqref{Hdiss},
  let $\mu_0\in\overline{\dom(\frF)}$
  and
  let $\Phi\in \relcP2{\mu_0}{\TX}$.
Consider the following statements
\begin{enumerate}[label=$(\mathrm{P\arabic*})$]
\item \label{eq:35bis}
    $\bram\Phi{\mu}+\bram\Psi{\mu_0}\le \lambda W_2^2(\mu_0,\mu)$
    for every $\Psi\in \frF$ with $\mu=\sfx_\sharp\Psi$;
\item\label{eq:131}
    for every $\mu \in \dom(\frF)$ there exists $\Psi\in
    \frF[\mu]$ s.t. $\bram\Phi{\mu}+\bram\Psi{\mu_0}\le \lambda W_2^2(\mu_0,\mu)$;
\item\label{eq:53bis}
    $\directionalm{\Phi}{\mmu}0 \le \directional\frF{\mmu}{0+}$ for every
    $\mu_1\in \overline{\dom(\frF)}$, $\mmu\in \CondGammao\frF{\mu_0}{\mu_1}0$;
\item\label{eq:53}
    $\directionalm{\Phi}{\mmu}0 \le \directional\frF{\mmu}{0+}$ for every
    $\mu_1\in \dom(\frF)$, $\mmu\in \CondGammao\frF{\mu_0}{\mu_1}0$;
\item\label{eq:132bis} $\directionalm{\Phi}{\mmu}0 \le \lambda W_2^2(\mu_0,\mu_1)+
    \directional\frF{\mmu}{1-}$
      for every
    $\mu_1\in \overline{\dom(\frF)}$, $\mmu\in \CondGammao\frF{\mu_0}{\mu_1}1$;
\item\label{eq:132} $\directionalm{\Phi}{\mmu}0 \le \lambda W_2^2(\mu_0,\mu_1)+
    \directional\frF{\mmu}{1-}$
      for every
    $\mu_1\in \dom(\frF)$, $\mmu\in \CondGammao\frF{\mu_0}{\mu_1}1$.
\end{enumerate}
Then the following hold
\begin{enumerate}
\item \ref{eq:35bis} $\Rightarrow$ \ref{eq:131} $\Rightarrow$ \ref{eq:53bis} $\Rightarrow$ \ref{eq:53};
\item \ref{eq:35bis} $\Rightarrow$ \ref{eq:131} $\Rightarrow$ \ref{eq:132bis} $\Rightarrow$ \ref{eq:132};
\item\label{item:ex4.171} if for every $\mu_1\in \dom(\frF)$
  $\CondGammao\frF{\mu_0}{\mu_1}0\neq \emptyset$, then \ref{eq:53} $\Rightarrow$ \ref{eq:35bis} (in particular, \ref{eq:35bis}, \ref{eq:131}, \ref{eq:53bis}, \ref{eq:53} are equivalent);
\item\label{item:ex4.172} if
    for every $\mu_1\in \dom(\frF)$
    $\CondGammao\frF{\mu_0}{\mu_1}1\neq \emptyset$, then \ref{eq:132} $\Rightarrow$ \ref{eq:35bis} (in particular, \ref{eq:35bis}, \ref{eq:131}, \ref{eq:132bis}, \ref{eq:132} are equivalent).
\end{enumerate}
\end{proposition}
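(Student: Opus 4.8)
By the reduction employed in the proof of Theorem~\ref{theo:propflfr}, based on Lemma~\ref{le:trick} and the identity \eqref{eq:rellam}, it suffices to treat the dissipative case $\lambda=0$; I write $W^2:=W_2^2(\mu_0,\mu_1)$. Three of the implications are immediate: \ref{eq:35bis}$\Rightarrow$\ref{eq:131} because $\frF[\mu]\neq\emptyset$ for every $\mu\in\dom(\frF)$, and \ref{eq:53bis}$\Rightarrow$\ref{eq:53}, \ref{eq:132bis}$\Rightarrow$\ref{eq:132} because $\dom(\frF)\subset\overline{\dom(\frF)}$, so the weaker statement only quantifies over a subclass of the $\mu_1$'s. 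The ``in particular'' equivalences of items \ref{item:ex4.171}--\ref{item:ex4.172} then follow by composing implications, so the real content is \ref{eq:131}$\Rightarrow$\ref{eq:53bis}, \ref{eq:131}$\Rightarrow$\ref{eq:132bis}, \ref{eq:53}$\Rightarrow$\ref{eq:35bis} and \ref{eq:132}$\Rightarrow$\ref{eq:35bis}.

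The backbone is a small set of identities relating the pairings at the various measures along the geodesic $\mu_t:=\sfx^t_\sharp\mmu$ induced by $\mmu\in\Gamma_o(\mu_0,\mu_1)$. The $s=0$ case of the bijection $\Gamma_s(\Phi,\mmu)\to\Lambda(\Phi,\mu_t)$ constructed in step (2) of the proof of Theorem~\ref{theo:propflfr} uses only $\sfx_\sharp\Phi=\mu_0$ — not $\Phi\in\frF$ — and scales the integrand $\scalprod{v_0}{x_0-x_1}$ by $t$, so by Theorem~\ref{thm:characterization} one has $t\,\directionalm\Phi\mmu0=\bram\Phi{\mu_t}$ for every $\Phi\in\relcP2{\mu_0}\TX$ and every $t\in(0,1)$. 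Since moreover $\Gamma_0(\Phi,\mmu)\subset\Lambda(\Phi,\mu_1)$, comparing Definition~\ref{def:last} with Theorem~\ref{thm:characterization} gives $\bram\Phi{\mu_1}\le\directionalm\Phi\mmu0$ (cf.~\eqref{eq:54}), and symmetrically, via \eqref{eq:68}, $\bram\Psi{\mu_0}\le-\directionalp\Psi\mmu1$ for $\Psi\in\relcP2{\mu_1}\TX$ (cf.~\eqref{eq:54bis}). Finally, Lemma~\ref{lem:starting} gives $\bram\Psi{\mu_0}=-t\,\directionalm\Psi\mmu t=-t\,\directionalp\Psi\mmu t$ whenever $t\in(0,1)$ and $\Psi\in\relcP2{\mu_t}\TX$.

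For \ref{eq:131}$\Rightarrow$\ref{eq:53bis} (resp.~\ref{eq:132bis}), fix $\mu_1\in\overline{\dom(\frF)}$ and $\mmu\in\CondGammao\frF{\mu_0}{\mu_1}0$ (resp.~$\mmu\in\CondGammao\frF{\mu_0}{\mu_1}1$). For $t\in\rI\mmu\frF\cap(0,1)$, applying \ref{eq:131} to $\mu_t\in\dom(\frF)$ yields $\Psi_t\in\frF[\mu_t]$ with $\bram\Phi{\mu_t}+\bram{\Psi_t}{\mu_0}\le0$; substituting $\bram\Phi{\mu_t}=t\,\directionalm\Phi\mmu0$ and $\bram{\Psi_t}{\mu_0}=-t\,\directionalm{\Psi_t}\mmu t$, dividing by $t>0$ and using $\directionalm{\Psi_t}\mmu t\le\directionalm\frF\mmu t$, I get $\directionalm\Phi\mmu0\le\directionalm\frF\mmu t$; letting $t\downarrow0$ (resp.~$t\uparrow1$) along $\rI\mmu\frF$ and invoking Theorem~\ref{theo:propflfr}(3)--(4) and Definition~\ref{def:directiona}, the right-hand side converges to $\directional\frF\mmu{0+}$ (resp.~$\directional\frF\mmu{1-}$), which is the assertion. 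For \ref{eq:53}$\Rightarrow$\ref{eq:35bis}, take $\Psi\in\frF$, put $\mu_1:=\sfx_\sharp\Psi\in\dom(\frF)$ and choose $\mmu\in\CondGammao\frF{\mu_0}{\mu_1}0$; since $1\in\rI\mmu\frF$, Theorem~\ref{theo:propflfr}(2) with $s\downarrow0$ along $\rI\mmu\frF$, together with $\directionalp\frF\mmu1\le\directionalp\Psi\mmu1$, gives $\directional\frF\mmu{0+}\le\directionalp\Psi\mmu1$; then $\bram\Phi{\mu_1}\le\directionalm\Phi\mmu0\le\directional\frF\mmu{0+}\le\directionalp\Psi\mmu1$ by \ref{eq:53}, while $\bram\Psi{\mu_0}\le-\directionalp\Psi\mmu1$, and summing these two inequalities proves \ref{eq:35bis}. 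The implication \ref{eq:132}$\Rightarrow$\ref{eq:35bis} is the mirror argument, with $\mmu\in\CondGammao\frF{\mu_0}{\mu_1}1$ and Theorem~\ref{theo:propflfr}(2) used with $s\uparrow1$.

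The point I expect to require the most care is the geodesic bookkeeping: checking that the $s=0$ instance of the bijection in Theorem~\ref{theo:propflfr}(2) is admissible for an arbitrary $\Phi\in\relcP2{\mu_0}\TX$ — this is precisely what allows the hypotheses on $\Phi$ and on $\frF$ to interact — keeping the $r$/$l$ brackets and the signs straight, and passing to the limit along $\rI\mmu\frF$, which in general is not an interval, so that one must rely on the monotonicity of $t\mapsto\directionalm\frF\mmu t$ and $t\mapsto\directionalp\frF\mmu t$ and on the existence of one-sided limits at accumulation points, both supplied by Theorem~\ref{theo:propflfr}(3)--(4). The rest is routine.
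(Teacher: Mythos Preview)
Your proof is correct and largely parallels the paper's argument: the implication \ref{eq:131}$\Rightarrow$\ref{eq:53bis},\ref{eq:132bis} is handled exactly as in the paper (the identity $t\,\directionalm\Phi\mmu0=\bram\Phi{\mu_t}$ and the relation $-\bram{\Psi_t}{\mu_0}=t\,\directionalm{\Psi_t}\mmu t$ from Lemma~\ref{lem:starting}), and \ref{eq:132}$\Rightarrow$\ref{eq:35bis} via \eqref{eq:54} and \eqref{eq:54bis} is also the paper's route. Your reduction to $\lambda=0$ is a cosmetic difference---the paper keeps $\lambda$ explicit here, but the reduction is legitimate and you rightly point to the one used in Theorem~\ref{theo:propflfr}.

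The one genuine difference is your proof of \ref{eq:53}$\Rightarrow$\ref{eq:35bis}. The paper picks an intermediate $s\in\rI\mmu\frF\cap(0,1)$ and $\Phi_s\in\frF[\mu_s]$, chains $\directional\frF\mmu{0+}\le\directionalm{\Phi_s}\mmu s=\tfrac{1}{1-s}\bram{\Phi_s}{\mu_1}$, then applies \eqref{Hdiss} to bound $\bram{\Phi_s}{\mu_1}\le -\bram\Psi{\mu_s}$, and finally invokes the lower semicontinuity of Lemma~\ref{lem:lsc} to pass to the limit $s\downarrow0$. You instead exploit that $1\in\rI\mmu\frF$ (since $\mu_1\in\dom(\frF)$) and use Theorem~\ref{theo:propflfr}(2) directly at $t=1$ to get $\directional\frF\mmu{0+}\le\directionalp\frF\mmu1\le\directionalp\Psi\mmu1$, combined with $\bram\Psi{\mu_0}\le-\directionalp\Psi\mmu1$. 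This is more direct: it avoids both the explicit appeal to \eqref{Hdiss} and the semicontinuity lemma, relying only on the monotonicity already packaged in Theorem~\ref{theo:propflfr}.
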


\begin{proof}
We first prove that \ref{eq:131} $\Rightarrow$ \ref{eq:53bis},\ref{eq:132bis}.   
  Let us choose an arbitrary $\mu_1\in\overline{\dom(\frF)}$;
  by the definition of $\directionalm \frF\mmu t$ and arguing as in the proof of Theorem \ref{theo:propflfr}(2),
  for all $\mmu \in
  \Gamma_o(\mu_0,\mu_1)$ and $t\in \rI\mmu\frF$
  there exists $\Psi\in \frF[\mu_{t}]$ such that 
 \begin{align*}
\directionalm\Phi\mmu0&=\frac 1t\bram{\Phi}{\mu_t}\le
    -\frac{1}{t} \bram\Psi{\mu_0}+t\lambda W_2^2(\mu_0,\mu_1)=
    \directionalm{\Psi}{\mmu}t+t\lambda W_2^2(\mu_0,\mu_1)\\
&\le \directionalm\frF\mmu t +t\lambda W_2^2(\mu_0,\mu_1)
\end{align*}
  where we also used \eqref{eq:64}.
  If $\mmu\in \CondGammao\frF{\mu_0}{\mu_1}0$,
  by passing to the limit as $t\downarrow0$ we get \ref{eq:53bis}.

  In the second case, assuming that $\mmu\in \CondGammao\frF{\mu_0}{\mu_1}1$,
  we can pass to the limit as $t\uparrow1$ and we get \ref{eq:132bis}.  

We now prove item \eqref{item:ex4.171}. Let $\mu_1\in\dom(\frF)$, $\Psi\in\frF[\mu_1]$, $\mmu\in \CondGammao\frF{\mu_0}{\mu_1}0$, $s\in \rI\mmu\frF\cap (0,1)$, $\Phi_s\in \frF[\mu_s]$, with $\mu_s=\sfx^s_\sharp\mmu$. Assuming \ref{eq:53} and using \eqref{eq:54}, \eqref{eq:91}, \eqref{eq:64} and \eqref{Hdiss}, we have
\begin{align*}
\bram\Phi{\mu_1}&\le\directionalm{\Phi}{\mmu}0 \le \directional\frF{\mmu}{0+}\le\directionalm{\Phi_s}{\mmu}s+\lambda s W_2^2(\mu_0,\mu_1)\\
&=\frac{1}{1-s}\bram{\Phi_s}{\mu_1}+\lambda sW_2^2(\mu_0,\mu_1)\le -\frac{1}{1-s}\bram\Psi{\mu_s}+\lambda (1+s)W_2^2(\mu_0,\mu_1).
\end{align*}
By Lemma \ref{lem:lsc}, letting $s\downarrow0$ we get \ref{eq:35bis}.
Item \eqref{item:ex4.172} follows by \eqref{eq:54}, \eqref{eq:54bis}.
\end{proof}

\subsection{Extensions of dissipative \MPVF}
\label{subsec:extension}
Let us briefly study a few simple properties about
extensions of $\lambda$-dissipative {\MPVF}s.
The first one concerns the sequential closure in $\prob_2^{sw}(\TX)$
(the sequential closure may be smaller than the topological closure,
but see Proposition \ref{prop:finalmente}):
given $A\subset\prob_2(\TX)$, we will denote
by $\clo A$ its sequential closure defined by
\begin{equation*}
    \clo{A}:=\Big\{\Phi\in \prob_2(\TX):
    \exists\,\Phi_n\in A:\Phi_n\to\Phi\ \text{in }\prob_2^{sw}(\TX)\Big\}.
\end{equation*}

\begin{proposition}
  \label{prop:closure}
  If $\frF$ is a $\lambda$-dissipative \MPVF then
  its sequential closure
$\clo{\frF}$
  is $\lambda$-dissipative as well.
\end{proposition}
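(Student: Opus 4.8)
The plan is to reduce the assertion to two facts already at our disposal: the lower semicontinuity of the pairing $\bram{\cdot}{\cdot}$ from Lemma \ref{lem:lsc}, and the continuity of $W_2$ along the approximating sequences, which follows from the characterization of $\prob_2^{sw}$-convergence. Fix $\Phi_0,\Phi_1\in\clo\frF$ and set $\mu_i:=\sfx_\sharp\Phi_i$, $i=0,1$. By definition of the sequential closure there are sequences $(\Phi_i^n)_{n\in\N}\subset\frF$ with $\Phi_i^n\to\Phi_i$ in $\prob_2^{sw}(\TX)$; writing $\mu_i^n:=\sfx_\sharp\Phi_i^n$, the $\lambda$-dissipativity of $\frF$ gives $\bram{\Phi_0^n}{\Phi_1^n}\le\lambda W_2^2(\mu_0^n,\mu_1^n)$ for every $n\in\N$. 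It then suffices to pass to the limit in both sides and obtain $\bram{\Phi_0}{\Phi_1}\le\lambda W_2^2(\mu_0,\mu_1)$, which is exactly what we must prove.

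For the right-hand side, I would first use the sequential part of Proposition \ref{prop:finalmente}: $\Phi_i^n\to\Phi_i$ in $\prob_2^{sw}(\TX)$ implies $\Phi_i^n\to\Phi_i$ in $\prob(\X^s\times\X^w)$ and $\int|x|^2\,\d\Phi_i^n\to\int|x|^2\,\d\Phi_i$. Projecting on the first factor through $\sfx$ (continuous from $\X^s\times\X^w$ to $\X^s$) yields $\mu_i^n\to\mu_i$ narrowly in $\prob(\X^s)$ together with $\sqm{\mu_i^n}\to\sqm{\mu_i}$, hence $\rsqm{\mu_i^n}\to\rsqm{\mu_i}$; by the characterization \eqref{eq:important} this gives $\mu_i^n\overset{W_2}{\to}\mu_i$. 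In particular $W_2(\mu_0^n,\mu_1^n)\to W_2(\mu_0,\mu_1)$, so $\lambda W_2^2(\mu_0^n,\mu_1^n)\to\lambda W_2^2(\mu_0,\mu_1)$, independently of the sign of $\lambda$.

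For the left-hand side, I would invoke the lower semicontinuity estimate \eqref{eq:152} of Lemma \ref{lem:lsc}, which applies verbatim since $\Phi_i^n\to\Phi_i$ in $\prob_2^{sw}(\TX)$, and which gives $\liminf_n\bram{\Phi_0^n}{\Phi_1^n}\ge\bram{\Phi_0}{\Phi_1}$. Combining the two passages to the limit,
\[
  \bram{\Phi_0}{\Phi_1}\le\liminf_{n\to\infty}\bram{\Phi_0^n}{\Phi_1^n}\le\lim_{n\to\infty}\lambda W_2^2(\mu_0^n,\mu_1^n)=\lambda W_2^2(\mu_0,\mu_1),
\]
which is precisely the $\lambda$-dissipativity of $\clo\frF$. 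Since every ingredient has already been established in the excerpt, no serious obstacle is expected; the only point demanding a little care is the passage from $\prob_2^{sw}(\TX)$-convergence of the tangent measures to $W_2$-convergence of their $\X$-marginals, which is exactly what Proposition \ref{prop:finalmente} combined with \eqref{eq:important} provides.
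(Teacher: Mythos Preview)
Your argument is correct and follows essentially the same route as the paper's proof: both pass to the limit in $\bram{\Phi_0^n}{\Phi_1^n}\le\lambda W_2^2(\mu_0^n,\mu_1^n)$ using the lower semicontinuity \eqref{eq:152} from Lemma~\ref{lem:lsc} on the left and the $W_2$-convergence of the $\X$-marginals on the right. You have simply spelled out in more detail why $\prob_2^{sw}(\TX)$-convergence of $\Phi_i^n$ forces $W_2$-convergence of $\mu_i^n$, which the paper records in a single clause.
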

\begin{proof}
  If $\Phi^i$, $i=0,1$, belong to $\clo\frF$,
  we can find sequences $\Phi^i_n\in\frF$ such that
  $\Phi^i_n\to \Phi^i$ in $\prob_2^{sw}(\TX)$
  as $n\to\infty$, $i=0,1$. It is then sufficient to pass to the limit in the inequality
  \begin{displaymath}
    \bram{\Phi^0_n}{\Phi^1_n}\le \lambda W_2^2(\mu^0_n,\mu^1_n),\quad
    \mu^i_n=\sfx_\sharp \Phi^i_n
  \end{displaymath}
  using the lower semicontinuity property \eqref{eq:152}
  and the fact that convergence in $\prob_2^{sw}(\TX)$ yields
  $\mu^i_n\to \sfx_\sharp\Phi^i$ in $\prob_2(\X)$ as $n\to\infty$.
\end{proof}
A second result concerns the convexification of the sections of
$\frF$.
For every $\mu\in \dom(\frF)$ we set
\begin{align*}
  \conv\frF[\mu]:={}
  &
  \text{the convex hull of }\frF[\mu]=
  \Big\{\sum_k\alpha_k\Phi_k:\Phi_k\in \frF[\mu],
                      \alpha_k\ge 0,\sum_k\alpha_k=1\Big\},\\
    \cloco\frF[\mu]:=
    {}&
        \clo{\conv\frF[\mu]}.                    
\end{align*}
Notice that if $\frF[\mu]$ is bounded in $\prob_2(\TX)$ then
$\cloco\frF[\mu]$ coincides with the closed convex hull of
$\frF[\mu]$.
\begin{proposition}
  If $\frF$ is $\lambda$-dissipative, then
  $\conv\frF$ and $\cloco\frF$ are $\lambda$-dissipative as well.
\end{proposition}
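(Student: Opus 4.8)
The plan is to derive both statements directly from two facts already established: the joint convexity of the pairing $\bram\cdot\cdot$ proved in Lemma~\ref{le:convexity-pairing}, and the stability of $\lambda$-dissipativity under sequential $\prob_2^{sw}$-closure proved in Proposition~\ref{prop:closure}.

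First I would handle $\conv\frF$. Its domain coincides with $\dom(\frF)$ and each section $\conv\frF[\mu]$ is contained in $\relcP2\mu\TX$, since a finite convex combination of measures in $\relcP2\mu\TX$ again belongs to $\relcP2\mu\TX$. So fix $\Phi^0\in\conv\frF[\mu_0]$, $\Phi^1\in\conv\frF[\mu_1]$ and write $\Phi^0=\sum_j\alpha_j\Phi^0_j$, $\Phi^1=\sum_k\beta_k\Phi^1_k$ with $\Phi^0_j\in\frF[\mu_0]$, $\Phi^1_k\in\frF[\mu_1]$, $\alpha_j,\beta_k\ge0$, $\sum_j\alpha_j=\sum_k\beta_k=1$. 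Then $(\Phi^0,\Phi^1)=\sum_{j,k}\alpha_j\beta_k\,(\Phi^0_j,\Phi^1_k)$ is a convex combination in $\relcP2{\mu_0}\TX\times\relcP2{\mu_1}\TX$ with weights $\alpha_j\beta_k$ summing to $1$, and the joint convexity of $(\Phi,\Psi)\mapsto\bram\Phi\Psi$ from Lemma~\ref{le:convexity-pairing} together with the $\lambda$-dissipativity of $\frF$ yields
\[
\bram{\Phi^0}{\Phi^1}\le\sum_{j,k}\alpha_j\beta_k\,\bram{\Phi^0_j}{\Phi^1_k}\le\sum_{j,k}\alpha_j\beta_k\,\lambda W_2^2(\mu_0,\mu_1)=\lambda W_2^2(\mu_0,\mu_1).
\]
Since $\sfx_\sharp\Phi^i=\mu_i$, this is precisely \eqref{eq:33} for $\conv\frF$.

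Finally, $\cloco\frF$ is by definition the sequential closure $\clo{\conv\frF}$ in $\prob_2^{sw}(\TX)$; having just shown that $\conv\frF$ is $\lambda$-dissipative, I would simply invoke Proposition~\ref{prop:closure} with $\conv\frF$ in place of $\frF$ to conclude that $\cloco\frF$ is $\lambda$-dissipative as well. I do not expect a genuine obstacle; the one point deserving care is that Lemma~\ref{le:convexity-pairing} must be used in its \emph{joint} form in both arguments simultaneously (and that $\sum_{j,k}\alpha_j\beta_k(\Phi^0_j,\Phi^1_k)$ is indeed a convex combination of the pairs $(\Phi^0_j,\Phi^1_k)$), which is exactly what makes the single chain of inequalities above valid.
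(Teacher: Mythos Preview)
Your argument is correct and follows the same route as the paper's proof (joint convexity from Lemma~\ref{le:convexity-pairing} for $\conv\frF$, then Proposition~\ref{prop:closure} for the closure); the paper additionally reduces to $\lambda=0$ via Lemma~\ref{le:trick}, but this is cosmetic and your direct handling of general $\lambda$ is fine.

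One small inaccuracy: $\cloco\frF$ is \emph{not} defined as $\clo{\conv\frF}$ but section-wise, $\cloco\frF[\mu]:=\clo{\conv\frF[\mu]}$, so in general only the inclusion $\cloco\frF\subset\clo{\conv\frF}$ holds (limits in the graph closure may come from sequences with varying first marginal). This inclusion is exactly what the paper invokes, and it suffices: once $\clo{\conv\frF}$ is $\lambda$-dissipative, so is any subset of it, in particular $\cloco\frF$. Just adjust that one sentence.
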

\begin{proof}
  By Proposition \ref{prop:closure} and noting that $\cloco\frF\subset\clo{\conv\frF}$,
  it is sufficient to prove that $\conv\frF$ is $\lambda$-dissipative.
  By Lemma \ref{le:trick} it is not restrictive to assume $\lambda=0$.
  Let $\Phi^i\in \conv\frF[\mu_i]$, $i=0,1$;
  there exist positive coefficients $\alpha^i_k$, $k=1,\cdots,K$,
  with $\sum_k\alpha^i_k=1$, 
  and elements $\Phi^i_k\in \frF[\mu^i]$, $i=0,1$,
  such that $\Phi^i=\sum_{k=1}^K\alpha^i_k\Phi^i_k$.
  Setting $\beta_{h,k}:=\alpha^0_h\alpha^1_k$, we can apply
  Lemma \ref{le:convexity-pairing}
  and we obtain
    \begin{displaymath}
      \bram{\Phi^0}{\Phi^1}
      =
      \Big[{\sum_{h,k}\beta_{h,k}\Phi^0_h},{\sum_{h,k}\beta_{h,k}\Phi^1_k}
      \Big]_r
    \le
    \sum_{h,k}\beta_{h,k}\bram{\Phi^0_h}{\Phi^1_k}\le 0.\qedhere
  \end{displaymath}
\end{proof}
As a last step, we want to study the
properties of the extended \MPVF
\begin{equation}\label{eq:maxHilb}
  \begin{aligned}
    \hat{\frF}:=\Big\{&\Phi \in\prob_2(\TX):\mu=\sfx_\sharp\Phi\in \overline{\dom(\frF)},\\&
    \bram{\Phi}{\nu}+\bram{\Psi}{\mu} \le \lambda
      W_2^2(\mu,\nu) \quad\forall \,\Psi\in \frF,\ \nu=\sfx_\sharp\Psi
    \Big\}.
  \end{aligned}
\end{equation}
It is obvious that $\frF\subset \hat\frF$; if the domain of $\frF$ satisfies the geometric condition
\eqref{eq:157}, the following result shows that $\hat\frF$ provides
the maximal $\lambda$-dissipative extension of $\frF$.
\begin{proposition}
  \label{prop:maximal}
  Let $\frF$ be a $\lambda$-dissipative \MPVF.
  \begin{enumerate}[label=\rm(\alph*)]
  \item If $\frF'\supset \frF$ is $\lambda$-dissipative
    with $\dom(\frF')\subset
    \overline{\dom(\frF)}$, then $\frF'\subset
    \hat\frF$. In particular $\cloco{\clo{\frF}}\subset\hat\frF$.
  \item
    $\widehat{{\clo\frF}}=\hat\frF$ and
    $\widehat{{\conv\frF}}=\hat\frF$.
  \item $\hat\frF$ is sequentially closed and $\hat\frF[\mu]$ is
    convex
    for every $\mu\in \dom(\hat\frF)$.
  \item If $\dom(\frF)$ satisfies \eqref{eq:19}, then
    the restriction of $\hat\frF$ to $\dom(\frF)$ is
    $\lambda$-dissipative and
    for every $\mu_0,\mu_1\in \dom(\frF)$
    \begin{equation}
      \label{eq:158}
      \directional\frF{\mmu}{0+}=
      \directional{\hat\frF}{\mmu}{0+},\quad
      \directional\frF{\mmu}{1-}=
      \directional{\hat\frF}{\mmu}{1-}
      \quad\text{for every }
      \mmu\in \CondGammao\frF{\mu_0}{\mu_1}{01}.
    \end{equation}
  \item If $\mu_0\in \overline{\dom(\frF)},\
      \mu_1\in \dom(\frF)$
      and $\CondGammao\frF{\mu_0}{\mu_1}1\neq\emptyset$
      then
      \begin{equation*}
        \Phi_i\in \hat\frF[\mu_i]\quad\Rightarrow\quad
        \bram{\Phi_0}{\Phi_1}\le \lambda W_2^2(\mu_0,\mu_1).
      \end{equation*}
    \item If
      \begin{equation}
        \text{for every $\mu_0,\mu_1\in \overline{\dom(\frF)}$ 
        the set $\CondGammao\frF{\mu_0}{\mu_1}{01}$ is not
        empty,}
      \label{eq:157}  
      \end{equation}
      then $\hat \frF$ is $\lambda$-dissipative as well
      and for every $\mu_0,\mu_1\in \overline{\dom(\frF)}$
      \eqref{eq:158} holds.
  \end{enumerate}
\end{proposition}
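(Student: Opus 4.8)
The statements (a)--(c) are of a soft nature. For (a), if $\Phi\in\frF'[\mu]$ with $\mu\in\dom(\frF')\subset\overline{\dom(\frF)}$, then for every $\Psi\in\frF\subset\frF'$, $\nu=\sfx_\sharp\Psi$, the $\lambda$-dissipativity of $\frF'$ together with Corollary~\ref{lem:control} gives $\bram\Phi\nu+\bram\Psi\mu\le\bram\Phi\Psi\le\lambda W_2^2(\mu,\nu)$, so $\Phi\in\hat\frF$; and since convergence in $\prob_2^{sw}(\TX)$ forces convergence of first marginals in $\prob_2(\X)$, the operations $\clo{\,\cdot\,}$ and $\conv(\cdot)$ do not enlarge the closure of the domain, so (by Proposition~\ref{prop:closure} and its convex analogue) $\cloco{\clo\frF}$ is a $\lambda$-dissipative extension of $\frF$ with $\dom(\cloco{\clo\frF})\subset\overline{\dom(\frF)}$, whence $\cloco{\clo\frF}\subset\hat\frF$ by the first part. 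In (b) one has $\overline{\dom(\clo\frF)}=\overline{\dom(\conv\frF)}=\overline{\dom(\frF)}$; the inclusions $\widehat{\clo\frF},\widehat{\conv\frF}\subset\hat\frF$ are immediate (the defining quantifier ranges over a larger set of $\Psi$), and the opposite ones follow by taking $\Psi_n\in\frF$, $\Psi_n\to\Psi$ in $\prob_2^{sw}(\TX)$ (resp.\ writing $\Psi\in\conv\frF$ as a finite convex combination of elements of $\frF$ over the same base point) and passing to the limit in the defining inequality, using the lower semicontinuity of $\bram{\cdot}{\cdot}$ from Lemma~\ref{lem:lsc} (resp.\ its convexity, Lemma~\ref{le:convexity-pairing}) and the continuity of $W_2$. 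Part (c) is proved with the same two tools: sequential closedness of $\hat\frF$ via Lemma~\ref{lem:lsc}, and convexity of $\hat\frF[\mu]$ via the convexity of $\Phi\mapsto\bram\Phi\nu$.

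The heart of the statement is (e), from which (d) and (f) follow. Recalling that $\frF$ satisfies \eqref{Hdiss} (Remark~\ref{rmk:equivdiss}), the key remark is that $\Phi_0\in\hat\frF[\mu_0]$ is \emph{exactly} property \ref{eq:35bis} of Proposition~\ref{prop:scalmscal} with base point $\mu_0$; hence every element of $\hat\frF[\mu_0]$ satisfies \ref{eq:53bis} and \ref{eq:132bis}. Given $\Phi_0\in\hat\frF[\mu_0]$, $\Phi_1\in\hat\frF[\mu_1]$ and a plan $\mmu\in\CondGammao\frF{\mu_0}{\mu_1}{1}$ with reversal $\hat\mmu=\mathsf s_\sharp\mmu\in\CondGammao\frF{\mu_1}{\mu_0}{0}$, I would apply \ref{eq:132bis} to $\Phi_0$ along $\mmu$, \ref{eq:53bis} to $\Phi_1$ along $\hat\mmu$, and the elementary identity $\directional\frF{\hat\mmu}{0+}=-\directional\frF\mmu{1-}$ (a direct consequence of \eqref{eq:68}), to obtain
\[\directionalm{\Phi_0}\mmu0\le\lambda W_2^2(\mu_0,\mu_1)+\directional\frF\mmu{1-},\qquad -\directionalp{\Phi_1}\mmu1=\directionalm{\Phi_1}{\hat\mmu}0\le-\directional\frF\mmu{1-}.\]
Summing these yields $\directionalm{\Phi_0}{\Phi_1}\mmu=\directionalm{\Phi_0}\mmu0-\directionalp{\Phi_1}\mmu1\le\lambda W_2^2(\mu_0,\mu_1)$, with $\directionalm{\Phi_0}{\Phi_1}\mmu$ as in Definition~\ref{def:last}. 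Finally, gluing over $\mmu$ a minimizer $\ssigma_0\in\Gamma_0(\Phi_0,\mmu)$ for $\directionalm{\Phi_0}\mmu0$ with a maximizer $\ssigma_1\in\Gamma_1(\Phi_1,\mmu)$ for $\directionalp{\Phi_1}\mmu1$ produces a plan $\Ttheta\in\Lambda(\Phi_0,\Phi_1)$ with $\int\la x_0-x_1,v_0-v_1\ra\,\d\Ttheta=\directionalm{\Phi_0}\mmu0-\directionalp{\Phi_1}\mmu1$, so that $\bram{\Phi_0}{\Phi_1}\le\lambda W_2^2(\mu_0,\mu_1)$ by \eqref{eq:22}. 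This proves (e); the argument works verbatim for $\mu_1\in\overline{\dom(\frF)}$, so under \eqref{eq:157} the required $\mmu$ exists for all $\mu_0,\mu_1\in\overline{\dom(\frF)}$, giving the $\lambda$-dissipativity of $\hat\frF$ in (f), while under \eqref{eq:19} it applies to every pair in $\dom(\frF)$, giving the $\lambda$-dissipativity of the restriction $\hat\frF|_{\dom(\frF)}$ in (d) (alternatively one may deduce \eqref{Hdiss} for $\hat\frF|_{\dom(\frF)}$ and invoke Corollary~\ref{cor:dissipativity}).

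It remains to prove the identities \eqref{eq:158} (understood for $\hat\frF|_{\dom(\frF)}$ under \eqref{eq:19}, for $\hat\frF$ under \eqref{eq:157}, in which cases the relevant index sets coincide with $\rI\mmu\frF$). Since $\rI\mmu{\hat\frF}\supset\rI\mmu\frF$ and $\hat\frF[\mu_t]\supset\frF[\mu_t]$, one has $\directionalm{\hat\frF}\mmu t\ge\directionalm\frF\mmu t$ on $\rI\mmu\frF$, and letting $t\downarrow0$ along $\rI\mmu\frF$ (an accumulation point of both sets) gives $\directional{\hat\frF}\mmu{0+}\ge\directional\frF\mmu{0+}$. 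For the converse, fix $t''\in\rI\mmu\frF\cap(0,1)$ and $t\in\rI\mmu\frF\cap(0,t'')$: for any $\Phi\in\hat\frF[\mu_t]$, the defining inequality of $\hat\frF$ tested against an arbitrary $\Psi\in\frF[\mu_{t''}]$, combined with the rescaling identities $\bram{\Phi}{\mu_{t''}}=(t''-t)\directionalm{\Phi}\mmu t$ and $\bram{\Psi}{\mu_t}=-(t''-t)\directionalp{\Psi}\mmu{t''}$ (obtained exactly as in the proof of Theorem~\ref{theo:propflfr}(2) via the change of variables $(x_0,x_1)\mapsto((1-t)x_0+tx_1,(1-t'')x_0+t''x_1)$ on $\supp\mmu$) and with $W_2^2(\mu_t,\mu_{t''})=(t''-t)^2W_2^2(\mu_0,\mu_1)$, yields $\directionalm{\Phi}\mmu t\le\directionalp{\Psi}\mmu{t''}+\lambda(t''-t)W_2^2(\mu_0,\mu_1)$. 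Taking the supremum over $\Phi\in\hat\frF[\mu_t]$ and the infimum over $\Psi\in\frF[\mu_{t''}]$ gives $\directionalm{\hat\frF}\mmu t\le\directionalp\frF\mmu{t''}+\lambda(t''-t)W_2^2(\mu_0,\mu_1)$; letting first $t\downarrow0$ and then $t''\downarrow0$ along $\rI\mmu\frF$ and invoking Theorem~\ref{theo:propflfr}(4) and Definition~\ref{def:directiona} for $\frF$ produces $\directional{\hat\frF}\mmu{0+}\le\directional\frF\mmu{0+}$. The identity at $1$ follows by the symmetric argument (or by replacing $\mmu$ with $\hat\mmu$).

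The main obstacle I foresee lies entirely in the bookkeeping of (d)--(f): one must check that the index sets $\rI\mmu\frF$, $\rI\mmu{\hat\frF}$ and their accumulation points match up so that Proposition~\ref{prop:scalmscal}, the limits of Definition~\ref{def:directiona}, and Theorem~\ref{theo:propflfr} may legitimately be applied; that the reversal identities ($\hat\mmu=\mathsf s_\sharp\mmu$, $\directional\frF{\hat\mmu}{0+}=-\directional\frF\mmu{1-}$, $\directionalm{\Phi}{\hat\mmu}0=-\directionalp{\Phi}\mmu1$) are used with the correct signs; and that the geodesic rescaling identities are invoked only at parameters where they are valid. The soft parts (a)--(c) and the final gluing step are routine.
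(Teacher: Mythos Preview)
Your proof is correct and follows essentially the same route as the paper for (a)--(c) and for the dissipativity assertions in (d)--(f): both rely on Proposition~\ref{prop:scalmscal} (your key remark that membership in $\hat\frF$ is exactly property \ref{eq:35bis}), the reversal identities \eqref{eq:68}, and the gluing inequality $\bram{\Phi_0}{\Phi_1}\le\directionalm{\Phi_0}{\Phi_1}\mmu$ (which the paper invokes implicitly via \eqref{eq:92} and which you spell out).

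The one place where the paper is noticeably simpler is \eqref{eq:158}. You establish one inequality from $\frF\subset\hat\frF$ and then prove the reverse by a direct rescaling computation testing $\Phi\in\hat\frF[\mu_t]$ against $\Psi\in\frF[\mu_{t''}]$. The paper instead uses \emph{both} free inequalities coming from $\frF\subset\hat\frF$, namely $\directionalm\frF\mmu t\le\directionalm{\hat\frF}\mmu t$ and $\directionalp\frF\mmu t\ge\directionalp{\hat\frF}\mmu t$ on $\rI\mmu\frF$: since the already-proved $\lambda$-dissipativity of $\hat\frF|_{\dom(\frF)}$ (resp.\ $\hat\frF$) makes Theorem~\ref{theo:propflfr}(4) and Definition~\ref{def:directiona} applicable to $\hat\frF$ as well as to $\frF$, the $r$- and $l$-limits coincide at $0+$ and $1-$ for each, and the two inequalities sandwich the limits into equality in one line. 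Your longer argument works and has the minor virtue of not needing \eqref{Hdiss} for $\hat\frF$ a priori to exhibit convergence, but the sandwich is the shortcut you were looking for.
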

\begin{proof}
  Claim (a) is obvious since every $\lambda$-dissipative extension $\frF'$
  of
  $\frF$ in $\overline{\dom(\frF)}$ satisfies $\frF'\subset
  \hat\frF$.

  (b) Let us prove that if $\Phi\in \hat \frF$
  then
  $\Phi\in \widehat{\clo\frF}$. If $\Psi\in \clo\frF$ we
  can find a sequence $\Psi_n\in \frF$ converging to
  $\Psi$ in $\prob_2^{sw}(\TX)$ as $n\to\infty$.
  We can then pass to the limit in the inequalities
  \begin{displaymath}
    \bram{\Phi}{\nu_n}+\bram{\Phi_n}{\mu}\le \lambda
    W_2^2(\mu,\nu_n),\quad
    \mu=\sfx_\sharp\Phi,\ \nu_n=\sfx_\sharp\Psi_n,
  \end{displaymath}
  using the lower semicontinuity results of Lemma \ref{lem:lsc}. We conclude since $\overline{\dom(\frF)}=\overline{\dom(\clo\frF)}$.

  In order to prove that $\Phi\in \hat\frF\ \Rightarrow\ \Phi\in
  \widehat{{\conv\frF}}$ we take
  $\Psi=\sum\alpha_k\Psi_k\in \conv{\frF}$;
  for some $\Psi_k\in \frF[\nu]$, $\nu=\sfx_\sharp\Psi\in \dom(\frF)$, and positive coefficients
  $\alpha_k$, $k=1,\cdots,K$, with $\sum_{k}\alpha_k=1$.
  Taking a convex combination of the inequalities
  \begin{displaymath}
    \bram{\Phi}{\nu}+\bram{\Psi_k}\mu\le \lambda W_2^2(\mu,\nu),\quad
    \text{for every }k=1,\cdots,K,
  \end{displaymath}
  and using Lemma \ref{le:convexity-pairing}
  we obtain
  \begin{displaymath}
    \bram{\Phi}{\nu}+\bram{\Psi}\mu
    \le
    \sum_k\alpha_k\Big(\bram{\Phi}{\nu}+\bram{\Psi_k}\mu\Big)
    \le \lambda W_2^2(\mu,\nu).
  \end{displaymath}
  The proof of claim (c) follows by a similar argument.

  (d)
  Let $\mu_i\in \dom(\frF)$, 
  $\Phi_i\in \hat\frF[\mu_i]$, $i=0,1$, and
  $\mmu\in \CondGammao\frF{\mu_0}{\mu_1}{01}$.
  The implication \ref{eq:35bis}$\Rightarrow$\ref{eq:53}
  of Proposition \ref{prop:scalmscal}
  applied to $\mmu$ and to $\mathsf s_\sharp\mmu$ 
  yields
  \begin{equation*}
    \directionalm{\Phi_0}{\mmu}0\le
    \directional\frF\mmu{0+},\quad
    \directionalm{\Phi_1}{\mathsf s_\sharp \mmu}0\le
    \directional\frF{\mathsf s_\sharp\mmu}{0+}=
    -\directional\frF{\mmu}{1-}
  \end{equation*}
  so that \eqref{eq:92} yields
  \begin{displaymath}
    \bram{\Phi_0}{\Phi_1}\le
    \directionalm{\Phi_0}{\mmu}0+
    \directionalm{\Phi_1}{\mathsf s_\sharp \mmu}0
    \le
    \directional\frF\mmu{0+}-
    \directional\frF{\mmu}{1-}\le
    \lambda W_2^2(\mu_0,\mu_1).
  \end{displaymath}
  In order to prove \eqref{eq:158}
  we observe that $\frF \subset \hat \frF$ so that, for every $\mmu\in
  \CondGammao{\frF}{\mu_0}{\mu_1}{01}$ and every 
  $t\in \rI\mmu{\frF}$,
   we have 
  $\directionalm{\frF}\mmu t \le \directionalm{\hat \frF}\mmu t$ and
  $\directionalp{\frF}\mmu t \ge  \directionalp{\hat \frF}\mmu t$,
  hence \eqref{eq:158} is a consequence of Definition
  \ref{def:directiona} and Theorem \ref{theo:propflfr}.
  
  The proof of claim (f) follows by the same argument.

  In the case of claim (e),
  we use
  the implication \ref{eq:35bis}$\Rightarrow$\ref{eq:132}
  of Proposition \ref{prop:scalmscal}
  applied to $\mmu$
  and the implication \ref{eq:35bis}$\Rightarrow$\ref{eq:53bis}
  applied to $\mathsf s_\sharp\mmu$, obtaining
  \begin{equation*}
    \directionalm{\Phi_0}{\mmu}0\le
    \lambda W_2^2(\mu_0,\mu_1)
    +\directional\frF\mmu{1-},\quad
    \directionalm{\Phi_1}{\mathsf s_\sharp \mmu}0\le
    \directional\frF{\mathsf s_\sharp\mmu}{0+}=
    -\directional\frF{\mmu}{1-}
  \end{equation*}
  and then 
  \begin{displaymath}
    \bram{\Phi_0}{\Phi_1}\le
    \directionalm{\Phi_0}{\mmu}0+
    \directionalm{\Phi_1}{\mathsf s_\sharp \mmu}0
    \le
    \lambda W_2^2(\mu_0,\mu_1).\qedhere
  \end{displaymath}
\end{proof}

\section{Examples of \texorpdfstring{$\lambda$}{l}-dissipative MPVFs}\label{sec:examples}
In this section we present significant examples of $\lambda$-dissipative MPVFs
which are interesting for applications.

\subsection{Subdifferentials of \texorpdfstring{$\lambda$}{l}-convex functionals}\label{sec:Exsubdiff}
Recall that
a functional
$\func:\prob_2(\X)\to(-\infty,+\infty]$ is 
$\lambda$-(geodesically) convex on $\prob_2(\X)$ (see \cite[Definition 9.1.1]{ags})
if
for any $\mu_0,\mu_1$ in the proper domain
$D(\func):=\{\mu\in\prob_2(\X)\mid\func(\mu)<+\infty\}$
there exists $\mmu\in\Gamma_o(\mu_0,\mu_1)$ such that
\[\func(\mu_t)\le (1-t)\func(\mu_0)+t\func(\mu_1)-\frac{\lambda}{2}t(1-t) W_2^2(\mu_0,\mu_1)\qquad\forall \, t\in[0,1],\]
where $(\mu_t)_{t\in[0,1]}$ is the constant speed geodesic induced by $\mmu$, i.e. $\mu_t=\sfx^t_{\sharp}\mmu$.

The \emph{Fr\'echet subdifferential $\boldsymbol{\partial}\func$} of $\func$ 
\cite[Definition 10.3.1]{ags}
is a \MPVF which can be characterized \cite[Theorem 10.3.6]{ags} by
\begin{equation*}
  \Phi\in \boldsymbol{\partial}\func[\mu]
  \quad\Leftrightarrow\quad
  \mu\in D(\func),\
  \func(\nu)-\func(\mu)\ge -\brap{\Phi}{\nu}
+\frac{\lambda}{2}W_2^2(\mu,\nu)
\quad\text{for every }\nu\in D(\func).
\end{equation*}
According to the notation introduced in \eqref{eq:20}, we set
\begin{equation}\label{eqdef:Vphi}
  -\boldsymbol{\partial}\func[\mu]=
  J_\sharp \boldsymbol{\partial}\func[\mu],\quad
  J(x,v)=(x,-v),
\end{equation}
and we have
the following result.

\begin{theorem}
  If $\func:\prob_2(\X)\to(-\infty,+\infty]$ is a proper, lower semicontinuous and $\lambda$-convex functional,
  then
  $-\boldsymbol{\partial}\func$ is a \emph{$(-\lambda)$-dissipative}
  \MPVF.
\end{theorem}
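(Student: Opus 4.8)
The plan is to unwind the definition of $(-\lambda)$-dissipativity for $-\boldsymbol\partial\func$ and reduce it to the defining subdifferential inequality. Concretely, take $\Phi_0 \in \boldsymbol\partial\func[\mu_0]$ and $\Phi_1 \in \boldsymbol\partial\func[\mu_1]$ with $\mu_i = \sfx_\sharp\Phi_i \in D(\func)$; we must show $\bram{-\Phi_0}{-\Phi_1} \le -\lambda W_2^2(\mu_0,\mu_1)$. By Remark \ref{rem:reduction} we have $\bram{-\Phi_0}{-\Phi_1} = -\brap{\Phi_0}{\Phi_1}$, so the claim is equivalent to
\[
\brap{\Phi_0}{\Phi_1} \ge \lambda W_2^2(\mu_0,\mu_1),
\]
i.e.\ to the $\lambda$-accretivity of $\boldsymbol\partial\func$. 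By Corollary \ref{lem:control}, $\brap{\Phi_0}{\mu_1} + \brap{\Phi_1}{\mu_0} \le \brap{\Phi_0}{\Phi_1}$, so it suffices to control the left-hand side from below — but that inequality goes the wrong way. Instead, I would argue directly along a geodesic, which is the natural object since $\func$ is $\lambda$-convex along some $\mmu \in \Gamma_o(\mu_0,\mu_1)$.

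First I would fix $\mmu \in \Gamma_o(\mu_0,\mu_1)$ realizing the $\lambda$-convexity inequality, and set $\mu_t := \sfx^t_\sharp\mmu$. Applying the subdifferential characterization at $\mu_0$ with $\nu = \mu_t$ gives
\[
\func(\mu_t) - \func(\mu_0) \ge -\brap{\Phi_0}{\mu_t} + \tfrac{\lambda}{2}W_2^2(\mu_0,\mu_t),
\]
and symmetrically at $\mu_1$ with $\nu = \mu_t$,
\[
\func(\mu_t) - \func(\mu_1) \ge -\brap{\Phi_1}{\mu_t} + \tfrac{\lambda}{2}W_2^2(\mu_1,\mu_t).
\]
Now I would use Lemma \ref{lem:starting}: since $(\mu_t)$ is a geodesic from $\mu_0$ to $\mu_1$ and $\Phi_0 \in \relcP2{\mu_0}{\TX}$, one has $\brap{\Phi_0}{\mu_t} = t\,\directionalp{\Phi_0}{\mmu}{0} = -t\,\bram{\Phi_0}{\mu_1}\cdot(\text{sign correction})$; more precisely the lemma expresses $\brap{\Phi_0}{\mu_t}$ as a multiple of the directional pairing $\directional{\Phi_0}{\mmu}{0}$, which is $t$ times a quantity independent of $t$, and similarly $\brap{\Phi_1}{\mu_t}$ is $(1-t)$ times a $t$-independent quantity. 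Multiplying the first displayed inequality by $(1-t)$, the second by $t$, adding, and using $W_2^2(\mu_0,\mu_t) = t^2 W_2^2(\mu_0,\mu_1)$, $W_2^2(\mu_1,\mu_t) = (1-t)^2 W_2^2(\mu_0,\mu_1)$ together with the $\lambda$-convexity bound on $(1-t)\func(\mu_0) + t\func(\mu_1) - \func(\mu_t)$, the combination should collapse — after dividing by $t(1-t)$ and letting $t \downarrow 0$ (or, symmetrically, $t \uparrow 1$) — to exactly $\brap{\Phi_0}{\mu_1} + \brap{\Phi_1}{\mu_0} \ge \lambda W_2^2(\mu_0,\mu_1)$, and then by the other direction of Corollary \ref{lem:control} for $\brap{}{}$ (the inequality $\brap{\Phi_0}{\mu_1}+\brap{\Phi_1}{\mu_0}\ge\brap{\Phi_0}{\Phi_1}$) one concludes $\brap{\Phi_0}{\Phi_1}\ge\lambda W_2^2(\mu_0,\mu_1)$.

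The main obstacle I anticipate is the bookkeeping with signs and the two one-sided pairings $\bram\cdot\cdot$ vs.\ $\brap\cdot\cdot$: along a geodesic these coincide (this is precisely the content of Lemma \ref{lem:starting}, whose $\sfx^t$-injectivity on $\supp(\mmu)$ is the key point), so one must be careful to invoke that collapse at the right moment — namely after restricting attention to the geodesic $(\mu_t)$ — rather than for the general plan between $\mu_0$ and $\mu_1$. A secondary subtlety is that the subdifferential inequality is stated with $\brap{\Phi}{\nu}$ and one needs the correct monotonicity/limit behaviour of $t \mapsto \directionalp{\Phi_i}{\mmu}{t}$ from Theorem \ref{theo:propflfr} (or just the explicit linear-in-$t$ formula from Lemma \ref{lem:starting}) to pass to the limit cleanly; alternatively one can avoid limits entirely by working at a fixed interior $t$ and noting the $t(1-t)$ factors cancel exactly. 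Once the pairing identities along the geodesic are in hand, the computation is a routine rearrangement of the Hilbertian-style convexity inequality, exactly mirroring the classical proof that the subdifferential of a $\lambda$-convex function on a Hilbert space is $\lambda$-monotone.
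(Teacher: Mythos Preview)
Your final step is a genuine logical gap. You arrive at $\brap{\Phi_0}{\mu_1}+\brap{\Phi_1}{\mu_0}\ge\lambda W_2^2(\mu_0,\mu_1)$ and then invoke Corollary~\ref{lem:control}, which says $\brap{\Phi_0}{\mu_1}+\brap{\Phi_1}{\mu_0}\ge\brap{\Phi_0}{\Phi_1}$. But from $A\ge\lambda W^2$ and $A\ge B$ you cannot conclude $B\ge\lambda W^2$; the inequality points the wrong way. Note also that the inequality $\brap{\Phi_0}{\mu_1}+\brap{\Phi_1}{\mu_0}\ge\lambda W^2$ already follows by simply adding the two subdifferential inequalities at $\nu=\mu_1$ and $\nu=\mu_0$ --- no geodesic is needed --- so if that is all you extract from the geodesic computation, you have discarded exactly the extra information it produces.

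The geodesic argument, carried out carefully, gives something strictly sharper. For $t\in(0,1)$ the bijection used in the proof of Theorem~\ref{theo:propflfr}(2) yields $\brap{\Phi_0}{\mu_t}=t\,\directionalp{\Phi_0}{\mmu}{0}$ and $\brap{\Phi_1}{\mu_t}=-(1-t)\,\directionalm{\Phi_1}{\mmu}{1}$ (mind the sign flip: a max becomes a min). Your weighted sum then collapses, with no limit in $t$ required, to
\[
\directionalp{\Phi_0}{\mmu}{0}-\directionalm{\Phi_1}{\mmu}{1}\ \ge\ \lambda\,W_2^2(\mu_0,\mu_1),
\]
an inequality along the \emph{single} plan $\mmu$. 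Now choose $\sigma_0\in\Gamma_0(\Phi_0,\mmu)$ realizing $\directionalp{\Phi_0}{\mmu}{0}$ and $\sigma_1\in\Gamma_1(\Phi_1,\mmu)$ realizing $\directionalm{\Phi_1}{\mmu}{1}$; both have $(\sfx^0,\sfx^1)$--marginal equal to $\mmu$, so the gluing lemma produces $\Ttheta\in\Lambda(\Phi_0,\Phi_1)$ with $(\sfx^0,\sfv^0,\sfx^1)_\sharp\Ttheta=\sigma_0$ and $(\sfx^0,\sfv^1,\sfx^1)_\sharp\Ttheta$ corresponding to $\sigma_1$. For this $\Ttheta$ one has $\int\langle x_0-x_1,v_0-v_1\rangle\,\d\Ttheta=\directionalp{\Phi_0}{\mmu}{0}-\directionalm{\Phi_1}{\mmu}{1}\ge\lambda W_2^2(\mu_0,\mu_1)$, and hence $\brap{\Phi_0}{\Phi_1}\ge\lambda W_2^2(\mu_0,\mu_1)$, which is the desired $(-\lambda)$--dissipativity of $-\boldsymbol\partial\func$. (The paper itself does not write out a proof here, treating the statement as an immediate consequence of the cited characterization from \cite{ags}.)
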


Referring to \cite{ags}, here we list interesting and explicit examples of
$(-\lambda)$-dissipative MPVFs
induced by proper, lower semicontinuous and $\lambda$-convex functionals,
focusing on the cases
when $\dom(\boldsymbol\partial\func)=\prob_2(\X).$ 
\begin{enumerate}
\item \emph{Potential energy.}  Let $P:\X \to \R$ be
  a l.s.c. and $\lambda$-convex functional
  satisfying
  \begin{displaymath}
    |\partial^o P(x)| \le C(1+|x|) \quad \text{for every }x \in \X,
  \end{displaymath}
  for some constant $C>0$,
  where $\partial^o P(x)$ is the element of minimal norm in $\partial P(x)$.
  By \cite[Proposition 10.4.2]{ags}
  the PVF
  \begin{equation*}
    \frF[\mu]:=(\ii_{\X} , -\partial^o P)_{\sharp} \mu,
    \quad
    \mu\in \prob_2(\X),
  \end{equation*}
  is a $(-\lambda)$-dissipative selection of
  $-\boldsymbol\partial\mathcal F_P$ for the
  potential energy functional
  \[ \mathcal F_P(\mu) := \int_\X P \de \mu, \quad \mu \in \prob_2(\X).\]
\item \emph{Interaction energy.}
  If $W:\X \to [0,+\infty)$ is an even, differentiable, and $\lambda$-convex
  function for some $\lambda \in \R$, whose differential has a linear growth, 
  then, by \cite[Theorem 10.4.11]{ags}, the PVF
  \begin{displaymath}
    \frF[\mu]:= (\ii_{\X} , (-\nabla W \ast \mu))_{\sharp}\mu,
    \quad
    \mu\in \prob_2(\X),
  \end{displaymath}
  is a $(-\lambda)$-dissipative selection of $-\boldsymbol\partial\mathcal F_W$,
  the opposite of the Wasserstein subdifferential of
  the interaction energy functional  
  \[ \func_W(\mu) := \frac{1}{2} \int_{\X^2} W(x-y) \de (\mu \otimes \mu)(x,y) , \quad \mu \in \prob_2(\X).\]
\item \emph{Opposite Wasserstein distance.} Let $\bar{\mu} \in \prob_2(\X)$ be fixed and consider the functional $\func_{\text{Wass}} : \prob_2(\X) \to \R$ defined as
\[ \func_{\text{Wass}}(\mu) := - \frac{1}{2} W_2^2(\mu, \bar{\mu}), \quad \mu \in \prob_2(\X),\]
which is geodesically $(-1)$-convex \cite[Proposition 9.3.12]{ags}.
Setting
\[ \boldsymbol b(\mu) := \argmin \left \{ \int_\X |\boldsymbol b(x)-x|^2 \de \mu:
    \boldsymbol b=\bry\ggamma\in
     L^2_\mu(\X;\X), 
    \ \ggamma \in \Gamma_o(\mu, \bar{\mu}) \right \} ,\]
the PVF
\begin{equation*}
  \frF[\mu]:=(\ii_\X, \ii_\X-\boldsymbol b(\mu) )_{\#}\mu,\quad
  \mu\in \prob_2(\X)
\end{equation*}
is a selection of $-\boldsymbol{\partial}\func_{\text{Wass}}(\mu)$ and it is therefore
$1$-dissipative.
\end{enumerate}

\subsection{MPVF concentrated on the graph of a multifunction} \label{subsec:graphB}
The previous example of Section \ref{sec:Exsubdiff} has a natural generalization in terms of dissipative graphs in $\X\times \X$
\cite{AuC,AuF,BrezisFR}.
We consider a (not empty) $\lambda$-dissipative set
$F\subset \X\times \X$, i.e.~satisfying
\begin{equation*}
  \scalprod{v_0-v_1}{x_0-x_1}\le\lambda |x_0-x_1|^2
  \quad\text{for any }(x_0,v_0),\ (x_1,v_1)\in F.
\end{equation*}
The corresponding \MPVF defined as
\[\frF:=\left\{\Phi\in\prob_2(\TX)\mid \Phi\text{ is concentrated on }F
  \right\}\]
is $\lambda$-dissipative as well.
In fact, if
$\Phi_0,\Phi_1\in \frF$ with $\nu_i=\sfx_\sharp\Phi_i$, $i=0,1$,
and $\Ttheta\in \Lambda(\Phi_0,\Phi_1)$
then $(x_0,v_0,x_1,v_1)\in F\times F$ $\Ttheta$-a.e., so that 
\begin{align*}
                                \int_{\TX\times \TX} \scalprod{v_0-v_1}{x_0-x_1}\de\Ttheta(x_0,v_0,x_1,v_1)
  \le \lambda\int_{\TX\times \TX}|x_0-x_1|^2\de\Ttheta
     =\lambda W_2^2(\nu_0,\nu_1).
\end{align*}
since $(\sfx^0,\sfx^1)_\sharp\Ttheta\in \Gamma_o(\nu_0,\nu_1)$.
Taking the supremum w.r.t.~$\Ttheta\in \Lambda(\Phi_0,\Phi_1)$ we
obtain $\brap{\Phi_0}{\Phi_1}\le \lambda W_2^2(\nu_0,\nu_1)$
which is even stronger than $\lambda$-dissipativity.
If $\dom(F)=\X$
then $\dom(\frF)$ contains $\prob_{\rm c}(\X)$, the set of Borel probability measures with compact support.
If $F$ has also a linear growth, then it is easy to check that
$\dom(\frF)=\prob_2(\X)$ as well.

Despite the analogy just shown with dissipative operators in Hilbert
spaces, there are important differences with the Wasserstein
framework, as highlighted in the following examples. The main point here is that the dissipativity property
of Definition \ref{def:dissipative} does not force
the sections $\sfv_\sharp \frF[\mu]$ to belong to the tangent space
$\Tan_\mu\prob_2(\X)$.

\begin{example}\label{ex:unboundF}
  Let $\X=\R^2$,
  let $B:=\{ x \in \R^2 \mid |x| \le 1\}$ be the closed unit ball, 
  let $\lebd$ be the (normalized) Lebesgue measure on $B$,
  and let $\Rotn:\R^2\to\R^ 2$, $\Rotn(x_1,x_2)=(x_2,-x_1)$ be the
  anti-clockwise rotation of $\pi/2$ degrees. We define the \MPVF
\[ \frF[\nu] = \begin{cases} (\ii_{\R^2}, 0)_{\sharp} \nu, \quad &\text{ if } \nu \in \prob_2(\R^2) \setminus \{ \lebd\}, \\ \left \{ (\ii_{\R^2}, a\Rotn)_{\sharp}\lebd \mid a \in \R \right \}, \quad &\text{ if } \nu = \lebd. \end{cases} \]
Observe that $\dom(\frF) = \prob_2(\R^2)$ and $\frF$ is obviously unbounded at $\nu= \lebd$. $\frF$ also satisfies \eqref{Hdiss} with $\lambda=0$ (hence it is dissipative): it is enough to check that 
\begin{equation}
\bram{(\ii_{\R^2}, a\Rotn)_{\sharp} \lebd}{\nu} =0 \quad \text{ for
  every } \nu \in \prob_2(\R^2), \, a \in \R.\label{eq:31}
\end{equation}
To prove \eqref{eq:31}, we notice that the optimal transport plan from
$\lebd$ to $\nu$ is concentrated on a map and optimal maps belong to
the tangent space $\Tan_{\lebd}\prob_2(\R^2)$ \cite[Prop.~8.5.2]{ags}; by Remark
\ref{rem:particular}
we have just to check that 
\begin{equation*} 
\int_{\R^2} \scalprod{\Rotn(x)}{\nabla \varphi(x)}\de \lebd(x) =0 \quad \forall \varphi \in \rmC^{\infty}_c(\R^2),
\end{equation*}
that is a consequence of the Divergence Theorem on $B$. This example
is in contrast with the Hilbertian theory of dissipative operators
according to which an everywhere defined dissipative operator is locally bounded (see \cite[Proposition 2.9]{BrezisFR}).
\end{example}

\begin{example}
  \label{ex:contnotmax}
In the same setting of the previous example, let us define the \MPVF
\[ \frF[\nu] = (\ii_{\R^2}, \Rotn)_{\sharp} \nu, \quad
  \Rotn(x_1,x_2)=(x_2,-x_1),\quad
  \nu \in \prob_2(\R^2).\]
It is easy to check that $\frF$ is dissipative and Lipschitz
continuous (as a map from $\prob_2(\R^2)$ to
$\prob_2(\mathrm{T}\R^2)$). Moreover, arguing as in Example
\ref{ex:unboundF}, we can show that $(\ii_{\R^d}, 0)_{\sharp} \lebd
\in \hat \frF [\lebd]$, where $\hat\frF$ is defined in
\eqref{eq:maxHilb}. This is again in contrast with the Hilbertian
theory of dissipative operators, stating that a single valued,
everywhere defined, and continuous dissipative operator coincides with
its
maximal extension (see \cite[Proposition 2.4]{BrezisFR}).
\end{example}

\subsection{Interaction field induced by a dissipative map}
Let us consider the Hilbert space $\Y=\X^n$, $n\in\N$, endowed with the scalar product
$\langle \boldsymbol x,\boldsymbol y\rangle:=\frac 1n \sum_{i=1}^n \langle x_i,y_i\rangle$,
for every $\boldsymbol x=(x_i)_{i=1}^n,\ \boldsymbol y=(y_i)_{i=1}^n\in \X^n$.
We identify $\TY$ with $(\TX)^n$
and we denote by $\sfx^i,\sfv^i$ the $i$-th coordinate maps.
Every permutation $\sigma:\{1,\cdots,n\}\to\{1,\cdots,n\}$ in $\mathrm{Sym}(n)$
operates on $\Y$
by the obvious formula
$\sigma(\boldsymbol x)_i=x_{\sigma(i)}$, $i=1,\cdots,n$,
$\boldsymbol x\in \Y$.

Let $G: \Y\to\Y$ be a Borel $\lambda$-dissipative
map bounded on bounded sets (this property is always true
if $\Y$ has finite dimension) and satisfying
\begin{equation}
  \label{eq:67}
  \xx\in \dom(G)\quad\Rightarrow\quad
  \sigma(\xx)\in \dom(G),\ 
  G(\sigma(\xx))=\sigma(G(\xx))\quad
  \text{for every permutation $\sigma$}.
\end{equation}
Denoting by $(G^1,\cdots,G^n)$ the components of $G$, by $\sfx^i$ the projections from $\Y$ to $\X$ and by $\mu^{\otimes n}=\bigotimes_{i=1}^n\mu$,
the \MPVF
\begin{equation*}
  \frF[\mu]:=(\sfx^1,G^1)_\sharp \mu^{\otimes n}
  \quad\text{with domain }
  \dom(\frF):=\prob_b(\X)
\end{equation*}
is $\lambda$-dissipative as well.
In fact, if $\mu,\nu\in \dom(\frF)$,
$\Phi=(\sfx^1,G^1)_\sharp \mu^{\otimes n}$
and $\Psi= (\sfx^1,G^1)_\sharp\nu^{\otimes n}$,
and $\ggamma\in \Gamma_o(\mu,\nu)$, we can
consider the plan
$\boldsymbol \beta:=
P_\sharp \ggamma^{\otimes n}\in \Gamma(\mu^{\otimes n},\nu^{\otimes
  n})$,
where $P((x_1,y_1),\cdots,(x_n,y_n) ):=
((x_1,\cdots,x_n),(y_1,\cdots, y_n))$.
Considering the map
$H^1(\xx,\yy):=(x_1,G^1(\xx),y_1,G^1(\yy))$
we have
$\Ttheta:=H^1_\sharp \boldsymbol\beta\in \Lambda(\Phi,\Psi)$, so that
\begin{align*}
  \bram\Phi\Psi
  &
    \le 
  \int \langle v_1-w_1,x_1-y_1\rangle\,\d\Ttheta(x_1,v_1,y_1,w_1)
  =
    \int \langle G^1(\xx)-G^1(\yy),x_1-y_1\rangle
  \,\d\boldsymbol
    \beta(\xx,\yy)
  \\&
  =\frac 1n\sum_{k=1}^n
  \int \langle G^k(\xx)-G^k(\yy),x_k-y_k\rangle
  \,\d\boldsymbol
  \beta(\xx,\yy)
  =
  \int \langle G(\xx)-G(\yy),\xx-\yy\rangle
  \,\d\boldsymbol
  \beta(\xx,\yy)
\end{align*}
where we used \eqref{eq:67} and the invariance of
$\boldsymbol \beta$ with respect to permutations.
The $\lambda$-dissipativity of $G$ then yields
\begin{align*}
  \int \langle G(\xx)-G(\yy),\xx-\yy\rangle
  \,\d\boldsymbol
  \beta(\xx,\yy)&
    \le
    \lambda
    \int |\xx-\yy|^2_\Y
  \,\d\boldsymbol
    \beta(\xx,\yy)
=
   \lambda \frac1n\sum_{k=1}^n
    \int |x_k-y_k|^2_\Y
  \,\d\boldsymbol
    \beta(\xx,\yy)\\
   & =
  \lambda\frac1n\sum_{k=1}^n
    \int |x_k-y_k|^2_\Y
  \,\d\boldsymbol
    \ggamma(x_k,y_k)
    =\lambda W_2^2(\mu,\nu).
\end{align*}
A typical example when $n=2$ is provided by
\begin{displaymath}
  G(x_1,x_2):=(A(x_1-x_2),A(x_2-x_1))
\end{displaymath}
where $A:\X\to \X$ is a Borel, locally bounded, dissipative
and antisymmetric map 
satisfying $A(-z)=-A(z)$.
We easily get
\begin{align*}
  \langle
  &
    G(\xx)-G(\yy),\xx-\yy\rangle
  \\&=
    \frac 12
    \Big(\langle A(x_1-x_2)-A(y_1-y_2),x_1-y_1\rangle
    -
    \langle A(x_1-x_2)-A(y_1-y_2),x_2-y_2\rangle\Big)
  \\&  = \frac{1}{2}
    \langle A(x_1-x_2)-A(y_1-y_2),x_1-x_2-(y_1-y_2)\rangle
  \le 0.
\end{align*}
In this case
\begin{displaymath} \frF[\mu]=(\ii_\X,\boldsymbol a[\mu])_\sharp\mu,\quad
  \boldsymbol a[\mu](x)=\int_\X A(x-y)\,\d\mu(y)
  \quad\text{for every }x\in \X.
\end{displaymath}

\section{Solutions to Measure Differential Inclusions}\label{sec:MDE}
\subsection{Metric characterization and EVI}
Let $\interval$ denote
an arbitrary (bounded or unbounded) interval in $\R$.

The aim of this section is to study a suitable notion of solution to
the following
differential inclusion in the $L^2$-Wasserstein space of probability measures
\begin{equation}
  \label{eq:CP}
  \dot{\mu}(t) \in \frF[\mu(t)],\qquad t\in \interval,
\end{equation}
driven by a \MPVF $\frF$ as in Definition \ref{def:MPVF}. In
particular, we will address the usual Cauchy problem when \eqref{eq:CP}
is supplemented by a given initial condition.

Measure Differential Inclusions have been introduced in \cite{Piccoli_MDI} extending to the multi-valued framework the theory of Measure Differential Equations developed in \cite{Piccoli_2019}. In these papers, the author aims to describe the evolution of curves in the space of probability measures under the action of a so called \emph{probability vector field} $\frF$ (see Definition \ref{def:MPVF} and Remark \ref{Vmultifunc}). However, as exploited also in \cite{Camilli_MDE}, 
the definition of solution to \eqref{eq:CP} given in \cites{Piccoli_2019,Piccoli_MDI,Camilli_MDE} is too weak and it does not enjoy uniqueness property which is recovered only at the level of the semigroup through an approximation procedure.

From the Wasserstein viewpoint, the simplest way to interpret \eqref{eq:CP} is
to ask for a 
locally absolutely continuous curve
$\mu:\interval\to\prob_2(\X)$
to satisfy
\begin{equation}
  \label{eq:CPW}
  (\ii_\X , \vv_t)_{\sharp} \mu_t \in {\frF}[\mu_t] \quad
    \text{for a.e. }t \in
    \interval,
\end{equation}
where $\vv$ is the Wasserstein metric velocity vector associated to $\mu$ (see Theorem \ref{thm:tangentv}).
Even in the case of a regular PVF, however, \eqref{eq:CPW} is too strong, since there is no reason why
a given $\frF[\mu_t]$ should be associated to a vector field of the tangent space $\Tan_{\mu_t}\prob_2(\X)$. 
Starting from \eqref{eq:CPW}, we thus introduce a weaker definition of solution to
\eqref{eq:CP},
modeled on the so-called EVI formulation for gradient flows,
which will eventually suggest, as a natural formulation of \eqref{eq:CP}, the relaxed version of \eqref{eq:CPW}
as a differential inclusion with respect to
the extension $\hat \frF$ of $\frF$ introduced in \eqref{eq:maxHilb}.

We start from this simple remark:
whenever $\frF$ is $\lambda$-dissipative, recalling Theorem
\ref{thm:refdiff} and Remark \ref{rmk:equivdiss}, one easily sees that
every locally absolutely continuous solution according to the above definition \eqref{eq:CPW} also satisfies
the Evolution Variational Inequality ($\lambda$-\wEVI)
\begin{equation} \label{eq:EVI}
  \tag{$\lambda$-\wEVI}
  \frac{1}{2}
  \frac{\de}{\de t} W_2^2(\mu_t, \nu) \le \lambda W_2^2(\mu_t, \nu)
  - \bram{\Phi}{\mu_t}\quad
  \text{in }\mathscr D'\big(\intt\interval\big),
\end{equation}
for every $\nu \in \dom(\frF)$ and every $\Phi \in \frF[\nu]$,
where $\bram{\cdot}{\cdot}$ is the functional pairing in Definition
\ref{def:scalarprodop} (in fact, \eqref{eq:EVI} holds a.e.~in
$\interval$).
This provides a heuristic motivation for the following definition.

\begin{definition}[$\lambda$-Evolution Variational Inequality]
  Let $\frF$ be a \MPVF and let $\lambda \in \R$.
  We say that a continuous curve $\mu: \interval  \to \overline{\dom(\frF)}$
  is a \emph{$\lambda$-\wEVI solution} to \eqref{eq:CP} for the \MPVF $\frF$ if 
  \eqref{eq:EVI} 
  holds for every $\nu
\in \dom(\frF)$ and every $\Phi \in \frF[\nu]$.\\
A $\lambda$-\wEVI solution $\mu$ is said to be a \emph{strict solution} if $\mu_t\in \dom(\frF)$ for every $t\in \interval$,
$t > \inf \interval$.
\\
A $\lambda$-\wEVI solution $\mu$ is said to be a \emph{global solution} if $\sup\interval=+\infty$.
\end{definition}
In Example \ref{ex:rulla} we will clarify the interest in imposing no
more than continuity in the above definition.

Recall that the right 
upper and lower Dini derivatives of 
  a function $\zeta:\interval \to \R$ are defined for every $t \in \interval$, $t < \sup \interval$ by
\begin{equation}\label{eq:RupDerW2}
  \updt \zeta(t)
  \limsup_{h \downarrow 0}
  \frac{\zeta(t+h)-\zeta(t)}h,
  \qquad
  \lodt \zeta(t)
  \liminf_{h \downarrow 0}
  \frac{\zeta(t+h)-\zeta(t)}h.  
\end{equation}

\begin{remark} 
  Arguing as in \cite[Lemma A.1]{MuratoriSavare} and
using the lower semicontinuity of the map $t\mapsto \bram{\Phi}{\mu_t}$,
the distributional inequality of \eqref{eq:EVI}
can be equivalently reformulated
in terms of the right upper or lower Dini derivatives
  of the squared distance function
  and requiring the condition to hold for every
  $t\in \intt{\interval}$:
  \begin{align}
    \label{eq:81}
    \tag{$\lambda$-EVI$_1$}
    \frac12\updt W_2^2(\mu_t,\nu)
    &\le \lambda W_2^2(\mu_t,\nu)-
      \bram{\Phi}{\mu_t}
    &
    &\text{for every }
          t\in \intt{\interval},\ \Phi\in \frF,\ \nu=\sfx_\sharp\Phi,\\ \label{eq:81n}
    \tag{$\lambda$-EVI$_2$}
      \frac12\lodt W_2^2(\mu_t,\nu)
      &\le \lambda W_2^2(\mu_t,\nu)-
      \bram{\Phi}{\mu_t}
          &&
    \text{for every }
          t\in \intt{\interval},\ \Phi\in \frF,\ \nu=\sfx_\sharp\Phi.
  \end{align}
A further equivalent formulation
  \cite[Theorem 3.3]{MuratoriSavare}
  involves the difference quotients:
  for every $s,t\in \interval$, $s<t$
  \begin{equation}
    \label{eq:140}
    \tag{$\lambda$-EVI$_3$}
    {\mathrm e^{-2\lambda(t-s)}}\, W_2^2(\mu_t,\nu)-
    W_2^2(\mu_s,\nu)\le -2 \int_s^t \mathrm e^{-2\lambda (r-s)}\bram{\Phi}{\mu_r}\,\d r
    \quad \text{for every }
          \Phi\in \frF,\ \nu=\sfx_\sharp\Phi.
  \end{equation}
  Finally, 
  if $\mu$ is also locally absolutely continuous, then
  \eqref{eq:81} and \eqref{eq:81n} are also equivalent to
  \begin{equation*}
    \begin{aligned}
      \frac12\frac\d{\d t} W_2^2(\mu_t,\nu)
      &\le \lambda W_2^2(\mu_t,\nu)-
      \bram{\Phi}{\mu_t}
    \end{aligned}
    \quad
    \text{for a.e.~}
    t\in \interval
    \text{ and every}\ \Phi\in \frF,\ \nu=\sfx_\sharp\Phi.
  \end{equation*}
\end{remark}

The following Lemma provides a further insight.
\begin{lemma}
  \label{le:easy}
  Let 
  $\frF$ be a
  $\lambda$-dissipative \MPVF 
  and let $\mu: \interval \to \overline{\dom(\frF)}$
  be a continuous $\lambda$-\wEVI solution to \eqref{eq:CP}.
  We have
  \begin{subequations}
  \begin{align} \label{eq:SEVIa}
      \frac{1}{2}\updt
      W_2^2(\mu_t, \nu)
      \le \directional\frF{\mmu}{0+}
                        \quad
                        &\text{for every $\nu\in \overline{\dom(\frF)}$, $t\in \intt{\interval}$,
                          $\mmu\in \CondGammao\frF{\mu_t}\nu0$,}\\
    \label{eq:142}
                        \frac{1}{2}\updt
      W_2^2(\mu_t, \nu)
      \le \lambda W_2^2(\mu_t,\nu)+\directional\frF{\mmu}{1-}
                        \quad
                        &\text{for every $\nu\in \overline{\dom(\frF)}$, $t\in \intt{\interval},$
                          $\mmu\in \CondGammao\frF{\mu_t}\nu1$.}
  \end{align}
  \end{subequations}
If moreover $\mu$ is locally absolutely continuous 
  with Wasserstein velocity
  field
  $\vv$ satisfying \eqref{eq:74}
  for every $t$ in the subset $A(\mu)\subset \interval$ of full
  Lebesgue measure,
  then
    \begin{subequations}
  \begin{align}
    \label{eq:76}
    \bram{(\ii_\X ,
      \vv_t)_{\sharp}\mu_t}{\nu}
    &\le \lambda W_2^2(\mu_t,\nu)-
    \bram{\Phi}{\mu_t}
    &&\text{if $t\in A(\mu)$,\ $\Phi\in \frF,\ \nu=\sfx_\sharp\Phi$},
    \\
          \label{eq:87b}
    \directionalm{(\ii_\X ,
      \vv_t)_{\sharp}\mu_t}{\mmu}0
    &\le \directional\frF{\mmu}{0+}
    && \text{if $t\in A(\mu)$, } \nu\in \overline{\dom(\frF)},\ \mmu\in
       \CondGammao\frF{\mu_t}{\nu}{0},
       \\
          \label{eq:87c}
    \directionalm{(\ii_\X ,
      \vv_t)_{\sharp}\mu_t}{\mmu}0
    &\le
      \lambda W_2^2(\mu_t,\nu)+
    \directional\frF{\mmu}{1-}
    && \text{if $t\in A(\mu)$, } \nu\in \overline{\dom(\frF)},\ \mmu\in
       \CondGammao\frF{\mu_t}{\nu}{1}.
  \end{align}
\end{subequations}
\end{lemma}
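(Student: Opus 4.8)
\emph{Plan.} Since $\frF$ is $\lambda$-dissipative it satisfies \eqref{Hdiss} (Remark~\ref{rmk:equivdiss}), so Theorem~\ref{theo:propflfr} and Definition~\ref{def:directiona} apply and every one-sided limit $\directional\frF\mmu{0+}$, $\directional\frF\mmu{1-}$ occurring below is well defined. The idea is to derive all the stated inequalities from a single ``master'' estimate obtained by testing the $\lambda$-EVI inequality along the geodesics joining $\mu_t$ to $\nu$, and then passing to the limit towards the endpoints of these geodesics.

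Fix $t\in\intt{\interval}$, $\nu\in\overline{\dom(\frF)}$, $\mmu\in\Gamma_o(\mu_t,\nu)$ and set $\sigma_s:=\sfx^s_\sharp\mmu$, so that $\sigma_0=\mu_t$, $\sigma_1=\nu$, $W_2^2(\mu_t,\sigma_s)=s^2W_2^2(\mu_t,\nu)$ and $\rI\mmu\frF=\{s\in[0,1]:\sigma_s\in\dom(\frF)\}$. For every $s\in\rI\mmu\frF\cap(0,1)$ I choose $\Phi_s\in\frF[\sigma_s]$ and apply \eqref{eq:81} with $\nu$ replaced by $\sigma_s$. Using the monotonicity of the map $b^+$ of Lemma~\ref{lem:aeb} (with $\ssigma=\mmu$), which yields $\tfrac12\updt W_2^2(\mu_t,\nu)=b^+(1)\le b^+(s)=\tfrac1{2s}\updt W_2^2(\mu_t,\sigma_s)$, together with the identity $-\tfrac1s\bram{\Phi_s}{\mu_t}=\directionalm{\Phi_s}{\mmu}s$ of Lemma~\ref{lem:starting} and the bound $\directionalm{\Phi_s}{\mmu}s\le\directionalm\frF\mmu s$, I obtain
\begin{equation*}
  \tfrac12\updt W_2^2(\mu_t,\nu)\le\lambda\,s\,W_2^2(\mu_t,\nu)+\directionalm\frF\mmu s\qquad\text{for every }s\in\rI\mmu\frF\cap(0,1).
\end{equation*}
Letting $s\downarrow 0$ along $\rI\mmu\frF$ when $\mmu\in\CondGammao\frF{\mu_t}\nu0$ gives \eqref{eq:SEVIa}, and letting $s\uparrow 1$ along $\rI\mmu\frF$ when $\mmu\in\CondGammao\frF{\mu_t}\nu1$ gives \eqref{eq:142}, the convergence $\directionalm\frF\mmu s\to\directional\frF\mmu{0+}$ (resp.\ $\directional\frF\mmu{1-}$) being guaranteed by Theorem~\ref{theo:propflfr}(4) and Definition~\ref{def:directiona}.

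For the absolutely continuous case, \eqref{eq:76} is immediate: by Theorem~\ref{thm:refdiff}, for $t\in A(\mu)$ one has $\updt W_2^2(\mu_t,\nu)=2\bram{(\ii_\X,\vv_t)_\sharp\mu_t}{\nu}$, so \eqref{eq:81} is exactly \eqref{eq:76}. For \eqref{eq:87b}--\eqref{eq:87c} I repeat the computation above at a point $t\in A(\mu)$, replacing the use of Lemma~\ref{lem:aeb} by an exact identity: for $s\in(0,1)$ the unique optimal plan between $\mu_t$ and $\sigma_s$ is $(\sfx^0,\sfx^s)_\sharp\mmu$ (Theorem~\ref{theo:chargeo}), whence $\Lambda((\ii_\X,\vv_t)_\sharp\mu_t,\sigma_s)$ is a singleton and, by the remark following Theorem~\ref{thm:refdiff} and the vector-field case of Remark~\ref{rem:particular},
\begin{equation*}
  \tfrac12\updt W_2^2(\mu_t,\sigma_s)=\bram{(\ii_\X,\vv_t)_\sharp\mu_t}{\sigma_s}=s\,\directionalm{(\ii_\X,\vv_t)_\sharp\mu_t}{\mmu}0 .
\end{equation*}
Inserting this in \eqref{eq:81} with $\nu=\sigma_s$, dividing by $s$ and using Lemma~\ref{lem:starting} once more, I get $\directionalm{(\ii_\X,\vv_t)_\sharp\mu_t}{\mmu}0\le\lambda sW_2^2(\mu_t,\nu)+\directionalm\frF\mmu s$ for all $s\in\rI\mmu\frF\cap(0,1)$; passing to the limit $s\downarrow0$ when $\mmu\in\CondGammao\frF{\mu_t}\nu0$ yields \eqref{eq:87b}, and $s\uparrow1$ when $\mmu\in\CondGammao\frF{\mu_t}\nu1$ yields \eqref{eq:87c}.

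\emph{Main obstacle.} The delicate point is the bookkeeping between the three pairings $\bram\cdot\cdot$, $\directionalm\cdot{\mmu}\cdot$ and $\directionalm\frF\mmu\cdot$ — in particular converting the rescaled difference quotient $\tfrac1{2s}\updt W_2^2(\mu_t,\sigma_s)$ along the sub-geodesic back to a directional pairing based at $\mu_t$, which relies on the uniqueness of optimal plans in the interior of a geodesic (Theorem~\ref{theo:chargeo}) and the explicit description of $\Lambda$ in Remark~\ref{rem:particular}. Once these reductions are in place, the passages to the limit are routine consequences of Theorem~\ref{theo:propflfr}.
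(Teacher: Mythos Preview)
Your proof is correct and follows essentially the same route as the paper. For \eqref{eq:SEVIa}, \eqref{eq:142} and \eqref{eq:76} the arguments coincide almost verbatim (geodesic interpolation via Lemma~\ref{lem:aeb}, the identity $-\tfrac1s\bram{\Phi_s}{\mu_t}=\directionalm{\Phi_s}{\mmu}s$ from Lemma~\ref{lem:starting}, and Theorem~\ref{thm:refdiff} respectively). For \eqref{eq:87b}--\eqref{eq:87c} the paper simply invokes Proposition~\ref{prop:scalmscal} (observing that \eqref{eq:76} is precisely its hypothesis \ref{eq:35bis} with $\Phi=(\ii_\X,\vv_t)_\sharp\mu_t$), whereas you reproduce inline the computation that proves the implication \ref{eq:35bis}$\Rightarrow$\ref{eq:53bis},\ref{eq:132bis} of that proposition --- so the two arguments are the same, only packaged differently.
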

\begin{proof}
  In order to check \eqref{eq:76} it is sufficient to
  combine \eqref{eq:refdiffa} of Theorem \ref{thm:refdiff}
  with
  \eqref{eq:81}.
  \eqref{eq:87b} and \eqref{eq:87c} then follow applying Proposition \ref{prop:scalmscal}.  
  Let us now prove \eqref{eq:SEVIa}:
  let us fix $\nu \in \overline{\dom(\frF)}$ and $t \in \intt{\interval}$. 
  Take $\mmu \in   \Gamma_o(\mu_t,\nu)$
  and define the constant speed geodesic
  $(\nu_{s})_{s \in [0,1]}$ by $\nu_{s}: = (\sfx^s)_{\sharp}\mmu$,
  thus in particular $\nu_{0}=\mu_t$ and $\nu_{1}=\nu$. Then by Lemma
  \ref{lem:aeb}, for every
  $s\in     \rI\mmu\frF\cap (0,1) $
  and $\Phi_s\in \frF(\nu_s)$ we have
\begin{equation*}
\begin{split}
  \frac 12\updt W_2^2(\mu_t, \nu) &
  \le \frac{1}{2s} \updt W_2^2(\mu_t, \nu_s)
  \le - \frac{1}{s} \bram{\Phi_s}{\mu_t} + \frac{\lambda}{s} W_2^2(
  \mu_t,\nu_{s})
  \\&\le \directionalm\frF{\mmu}{s}+\lambda s W_2^2(\mu_t,\nu),
\end{split}
\end{equation*}
where the second inequality comes
from \eqref{eq:81}.
Taking $\mmu \in \CondGammao\frF{\mu_t}{\nu}{0}$ and passing to the limit as $s\downarrow0$ we get
\eqref{eq:SEVIa}. Analogously for \eqref{eq:142}.
\end{proof}
We can now give an interpretation of
absolutely continuous $\lambda$-\EVI solutions
in terms of differential inclusions.

\begin{theorem}\label{theo:equivwssol} Let $\frF$ be a
  $\lambda$-dissipative \MPVF
  and let $\mu: \interval \to \overline{\dom(\frF)}$ be a locally absolutely
  continuous 
  curve.
  \begin{enumerate}
  \item If $\mu$ satisfies the differential
    inclusion \eqref{eq:CPW} driven by any $\lambda$-dissipative
    extension of $\frF$ in
     ${\dom(\frF)}$, 
    then $\mu$ is also a $\lambda$-\wEVI solution to \eqref{eq:CP} for $\frF$.
  \item $\mu$ is a $\lambda$-\EVI solution
    of \eqref{eq:CP} for $\frF$
    if and only if
    \begin{equation}
      \label{eq:156}
      (\ii_\X , \vv_t)_{\sharp} \mu_t \in {\hat\frF}[\mu_t] \quad
    \text{for a.e. }t \in
    \interval.
    \end{equation}
  \item If $\dom(\frF)$ satisfies \eqref{eq:19} and
    $\mu_t\in \dom(\frF)$
    for a.e.~$t\in \interval$,
    then the following properties are equivalent:
    \begin{itemize}
    \item[-] $\mu$ is a $\lambda$-\wEVI solution to \eqref{eq:CP} for $\frF$.
    \item[-] $\mu$ satisfies \eqref{eq:87b}.
    \item[-] $\mu$ is a $\lambda$-\wEVI solution to \eqref{eq:CP} for the restriction of $\hat\frF$ to $\dom(\frF)$.
    \end{itemize}
  \item
    If $\frF$ satisfies \eqref{eq:157} 
    then
    $\mu$ is a $\lambda$-\wEVI solution to \eqref{eq:CP} for $\frF$ if and only if
    it is a $\lambda$-\EVI solution to \eqref{eq:CP} for $\hat\frF$.
  \end{enumerate}
\end{theorem}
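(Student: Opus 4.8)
The plan is to deduce the four claims from ingredients already in place: the identity $\frac12\frac{\d}{\d t}W_2^2(\mu_t,\nu)=\bram{(\ii_\X,\vv_t)_\sharp\mu_t}{\nu}$ a.e.\ (Theorem \ref{thm:refdiff}); the consequences \eqref{eq:76}, \eqref{eq:87b}, \eqref{eq:142} of the $\lambda$-\EVI inequality collected in Lemma \ref{le:easy}; the equivalences in Proposition \ref{prop:scalmscal}; and the structure of the maximal extension $\hat\frF$ recorded in Proposition \ref{prop:maximal}. Throughout I abbreviate $\Phi_t:=(\ii_\X,\vv_t)_\sharp\mu_t\in\relcP2{\mu_t}\TX$ for $t\in A(\mu)$, and I use that $\overline{\dom(\frF)}=\overline{\dom(\hat\frF)}$ (since $\dom(\frF)\subseteq\dom(\hat\frF)\subseteq\overline{\dom(\frF)}$), so the continuity requirement on $\mu$ is insensitive to replacing $\frF$ by $\hat\frF$.

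\emph{Claims (1) and (2).} For (1), let $\frF'\supseteq\frF$ be the $\lambda$-dissipative extension with $\dom(\frF')=\dom(\frF)$ driving \eqref{eq:CPW}, so $\Phi_t\in\frF'[\mu_t]$ for a.e.\ $t$. Since $\frF'$ is $\lambda$-dissipative it satisfies \eqref{Hdiss} (Remark \ref{rmk:equivdiss}), whence $\bram{\Phi_t}{\nu}+\bram{\Psi}{\mu_t}\le\lambda W_2^2(\mu_t,\nu)$ for every $\Psi\in\frF'\supseteq\frF$ with $\nu=\sfx_\sharp\Psi$; combined with Theorem \ref{thm:refdiff} this is the a.e.\ form of \eqref{eq:EVI} for $\frF$. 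For the ``only if'' half of (2), \eqref{eq:76} of Lemma \ref{le:easy} reads, after rearranging, $\bram{\Phi_t}{\nu}+\bram{\Psi}{\mu_t}\le\lambda W_2^2(\mu_t,\nu)$ for all $\Psi\in\frF$, $\nu=\sfx_\sharp\Psi$, and $t\in A(\mu)$; as $\sfx_\sharp\Phi_t=\mu_t\in\overline{\dom(\frF)}$, the definition \eqref{eq:maxHilb} of $\hat\frF$ gives $\Phi_t\in\hat\frF[\mu_t]$ a.e. The ``if'' half is the same chain read backwards: $\Phi_t\in\hat\frF[\mu_t]$ a.e.\ yields the same inequality for $\Psi\in\frF$, and Theorem \ref{thm:refdiff} converts it into \eqref{eq:EVI}. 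As a byproduct, (1) also follows from (2) since $\frF'\subseteq\hat\frF$ by Proposition \ref{prop:maximal}(a).

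\emph{Claim (3).} Here (i)$\Rightarrow$(ii) is exactly \eqref{eq:87b} of Lemma \ref{le:easy}. For (ii)$\Rightarrow$(i): at a.e.\ $t$ with $\mu_t\in\dom(\frF)$, statement \eqref{eq:87b} is property \ref{eq:53bis} of Proposition \ref{prop:scalmscal} applied to $\Phi=\Phi_t$, $\mu_0=\mu_t$; since \eqref{eq:19} guarantees $\CondGammao\frF{\mu_t}{\mu_1}0\neq\emptyset$ for $\mu_1\in\dom(\frF)$, parts (1) and (3) of that proposition upgrade \ref{eq:53bis} to \ref{eq:35bis}, i.e.\ $\bram{\Phi_t}{\nu}+\bram{\Psi}{\mu_t}\le\lambda W_2^2(\mu_t,\nu)$ for all $\Psi\in\frF$, $\nu=\sfx_\sharp\Psi$, and Theorem \ref{thm:refdiff} turns this into \eqref{eq:EVI} for $\frF$. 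For (i)$\Rightarrow$(iii): by claim (2), $\Phi_t\in\hat\frF[\mu_t]$ a.e., hence $\Phi_t\in(\hat\frF|_{\dom(\frF)})[\mu_t]$ a.e.\ because $\mu_t\in\dom(\frF)$ a.e.; since $\hat\frF|_{\dom(\frF)}$ is $\lambda$-dissipative (Proposition \ref{prop:maximal}(d)) it satisfies \eqref{Hdiss} (Corollary \ref{lem:control}), so $\bram{\Phi_t}{\nu}+\bram{\Psi}{\mu_t}\le\lambda W_2^2(\mu_t,\nu)$ for every $\Psi\in\hat\frF|_{\dom(\frF)}$, $\nu=\sfx_\sharp\Psi$, and Theorem \ref{thm:refdiff} gives \eqref{eq:EVI} for $\hat\frF|_{\dom(\frF)}$. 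Finally (iii)$\Rightarrow$(i) is immediate from $\frF\subseteq\hat\frF|_{\dom(\frF)}$.

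\emph{Claim (4) and the main difficulty.} ``$\lambda$-\EVI solution for $\hat\frF$ $\Rightarrow$ for $\frF$'' is immediate from $\frF\subseteq\hat\frF$, since the class of test pairs $(\nu,\Phi)$ in \eqref{eq:EVI} only shrinks. For the converse, use \eqref{eq:157}: Proposition \ref{prop:maximal}(f) then gives that $\hat\frF$ is $\lambda$-dissipative and that $\directional\frF\mmu{1-}=\directional{\hat\frF}\mmu{1-}$ for all $\mu_0,\mu_1\in\overline{\dom(\frF)}$ and $\mmu\in\CondGammao\frF{\mu_0}{\mu_1}{01}$. Given $\Phi\in\hat\frF$, $\nu:=\sfx_\sharp\Phi\in\overline{\dom(\frF)}$, and $t\in\intt{\interval}$, pick $\mmu\in\CondGammao\frF{\mu_t}{\nu}{01}$ (nonempty by \eqref{eq:157}); by \eqref{eq:142} of Lemma \ref{le:easy} (valid since $\mu$ solves \eqref{eq:EVI} for $\frF$) and then the above identity,
\[ \frac12\updt W_2^2(\mu_t,\nu)\le\lambda W_2^2(\mu_t,\nu)+\directional\frF\mmu{1-}=\lambda W_2^2(\mu_t,\nu)+\directional{\hat\frF}\mmu{1-}, \]
while \eqref{eq:54bis} applied to the $\lambda$-dissipative $\hat\frF$ (legitimate because $\mmu\in\CondGammao{\hat\frF}{\mu_t}{\nu}1$, as $\rI\mmu\frF\subseteq\rI\mmu{\hat\frF}$, and $\Phi\in\hat\frF[\nu]$) gives $\directional{\hat\frF}\mmu{1-}\le-\bram{\Phi}{\mu_t}$; chaining these produces \eqref{eq:81} for $\hat\frF$, i.e.\ $\mu$ is a $\lambda$-\EVI solution for $\hat\frF$. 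The step I expect to be genuinely delicate — and the real content behind (3) and (4) — is this passage from $\frF$ to $\hat\frF$: one must verify that the geometric conditions \eqref{eq:19} and \eqref{eq:157} are inherited by the extension, so that the equivalences of Proposition \ref{prop:scalmscal} and the identities of Proposition \ref{prop:maximal} can be invoked for the a priori much larger $\hat\frF$; once these are available, the remaining manipulations are routine combinations of Theorem \ref{thm:refdiff} and Lemma \ref{le:easy}.
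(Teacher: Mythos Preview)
Your proof is correct and follows essentially the same path as the paper for claims (1)--(3): Theorem \ref{thm:refdiff} for the a.e.\ derivative identity, Lemma \ref{le:easy} for \eqref{eq:76} and \eqref{eq:87b}, Proposition \ref{prop:scalmscal} for the equivalences, and Proposition \ref{prop:maximal}(d) for the $\lambda$-dissipativity of $\hat\frF|_{\dom(\frF)}$.

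For claim (4) you take a slightly more elaborate route than the paper. You argue via \eqref{eq:142}, the identity $\directional\frF\mmu{1-}=\directional{\hat\frF}\mmu{1-}$ from Proposition \ref{prop:maximal}(f), and then \eqref{eq:54bis} applied to $\hat\frF$. The paper instead proceeds more directly: once Proposition \ref{prop:maximal}(f) gives that $\hat\frF$ is $\lambda$-dissipative, claim (2) already yields $(\ii_\X,\vv_t)_\sharp\mu_t\in\hat\frF[\mu_t]$ a.e., and then the very same computation used for claim (1) (dissipativity $\Rightarrow$ \eqref{Hdiss} $\Rightarrow$ \eqref{eq:EVI} via Theorem \ref{thm:refdiff}) applied with $\hat\frF$ in place of $\frF$ finishes the job. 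Your concern that inheriting \eqref{eq:19} or \eqref{eq:157} to $\hat\frF$ is delicate is unfounded: since $\dom(\frF)\subseteq\dom(\hat\frF)$, the sets $\rI\mmu{\hat\frF}$ can only be larger than $\rI\mmu\frF$, so the corresponding $\Gamma_o^i$-sets for $\hat\frF$ automatically contain those for $\frF$. In short, both routes work; the paper's is shorter because it recycles claims (1)--(2) rather than reopening the geodesic machinery.
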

\begin{proof}
  (1)
  It is sufficient to apply Theorem \ref{thm:refdiff}
  and the definition of $\lambda$-dissipativity.

  The left-to-right implication $\Rightarrow$
  of (2) follows by \eqref{eq:76} of Lemma \ref{le:easy}
  and the definition of $\hat \frF$.

  Conversely, if
  $\mu$ satisfies \eqref{eq:156}, $\nu \in \dom(\frF)$, $\Phi\in
  \frF[\nu]$, 
  then Theorem
  \ref{thm:refdiff}
  and the definition of $\hat \frF$ 
  yield 
\begin{align*}
  \frac{1}{2}\frac{\de}{\de t}
  W_2^2(\mu_t, \nu)
  &=
    \bram{(\ii_\X , \vv_t)_{\sharp} \mu_t}\nu
  \le \lambda W_2^2(\mu_t, \nu) - \bram{\Phi}{\mu_t}
  \quad\text{a.e.~in }\interval.
 \end{align*}
 Claim (3) is an immediate consequence of Lemma \ref{le:easy}, Proposition \ref{prop:maximal}(d) and Proposition \ref{prop:scalmscal}.

 Claim (4) is a consequence of Proposition \ref{prop:maximal}(f) and the $\lambda$-dissipativity of $\hat \frF$.
\end{proof}

\begin{proposition} \label{prop:gfvsevi}Let $\func: \prob_2(\X) \to (-\infty, + \infty]$ be a proper, lower semicontinuous and $\lambda$-convex functional and let $\mu \in \rmC(\interval; \dom(\boldsymbol{\partial}\func))$ be a locally absolutely continuous curve. Then
\begin{enumerate}
    \item if $\mu$ is a Gradient Flow for $\func$ i.e.
    \[ (\ii_\X, \vv_t)_{\sharp}\mu_t \in -\boldsymbol{\partial} \func(\mu_t) \quad \text{ a.e. } t \in \interval,\]
    then $\mu$ is a $(-\lambda)$-\EVI solution of \eqref{eq:CP} for the \MPVF $ -\boldsymbol{\partial} \func$ as in \eqref{eqdef:Vphi};
    \item if $\mu$ is a $(-\lambda)$-\EVI solution of \eqref{eq:CP} for the \MPVF $ -\boldsymbol{\partial} \func$ and the domain of $\boldsymbol{\partial} \func$ satisfies
    \begin{equation*}
          \text{ for a.e. } t \in \interval, \,  \CondGammao{\boldsymbol{\partial} \func}{\mu_t}\nu0 \ne \emptyset \quad \forall \nu \in \dom(\boldsymbol\partial \func),
    \end{equation*}
    then $\mu$ is a Gradient Flow for $\func$.
\end{enumerate}
\end{proposition}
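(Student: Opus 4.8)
The plan is to reduce both implications to the characterization already established in Theorem \ref{theo:equivwssol}, applied to the $(-\lambda)$-dissipative \MPVF $\frF = -\boldsymbol\partial\func$ with dissipativity constant $-\lambda$ (recall that $-\boldsymbol\partial\func$ is $(-\lambda)$-dissipative when $\func$ is $\lambda$-convex, as stated just before the statement). For item (1), suppose $\mu$ is a Gradient Flow, i.e.\ $(\ii_\X,\vv_t)_\sharp\mu_t \in -\boldsymbol\partial\func[\mu_t]$ for a.e.\ $t$. Since $\mu$ is locally absolutely continuous and takes values in $\dom(\boldsymbol\partial\func) = \dom(-\boldsymbol\partial\func)$, this is precisely the differential inclusion \eqref{eq:CPW} driven by $\frF = -\boldsymbol\partial\func$ itself (which is trivially a $(-\lambda)$-dissipative extension of itself within $\dom(\frF)$). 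Hence Theorem \ref{theo:equivwssol}(1) applies directly and yields that $\mu$ is a $(-\lambda)$-\EVI solution of \eqref{eq:CP} for $-\boldsymbol\partial\func$. This direction is essentially immediate once the identification with \eqref{eq:CPW} is made explicit.

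For item (2), suppose $\mu$ is a $(-\lambda)$-\EVI solution for $-\boldsymbol\partial\func$, that $\mu_t \in \dom(\boldsymbol\partial\func)$ for a.e.\ $t$ (this is given, since $\mu \in \rmC(\interval;\dom(\boldsymbol\partial\func))$), and that the extra geometric hypothesis $\CondGammao{\boldsymbol\partial\func}{\mu_t}{\nu}{0}\neq\emptyset$ holds for a.e.\ $t$ and every $\nu\in\dom(\boldsymbol\partial\func)$. I would invoke Theorem \ref{theo:equivwssol}(3): under exactly the condition \eqref{eq:19} localized at $\mu_t$ (which is what the displayed hypothesis provides at a.e.\ $t$), being a $\lambda$-\EVI solution is equivalent to $\mu$ satisfying \eqref{eq:87b}, i.e.\ $\directionalm{(\ii_\X,\vv_t)_\sharp\mu_t}{\mmu}0 \le \directional{(-\boldsymbol\partial\func)}{\mmu}{0+}$ for a.e.\ $t$, every $\nu\in\dom(\boldsymbol\partial\func)$, and every $\mmu\in\CondGammao{\boldsymbol\partial\func}{\mu_t}{\nu}{0}$. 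The remaining step is to convert this statement about the pairing $\directional{\cdot}{\cdot}{0+}$ back into membership $(\ii_\X,\vv_t)_\sharp\mu_t \in -\boldsymbol\partial\func[\mu_t]$. Here I would use the subdifferential characterization recalled in Section \ref{sec:Exsubdiff}: $\Psi\in\boldsymbol\partial\func[\mu]$ iff $\func(\nu)-\func(\mu)\ge -\brap\Psi\nu + \tfrac\lambda2 W_2^2(\mu,\nu)$ for all $\nu\in D(\func)$; equivalently, writing $\Phi = -\Psi = (\ii_\X,\vv_t)_\sharp\mu_t$, we need $\func(\nu)-\func(\mu_t)\ge \bram\Phi\nu + \tfrac\lambda2 W_2^2(\mu_t,\nu)$ — wait, more carefully: $-\boldsymbol\partial\func[\mu]$ is the set of $\Phi=J_\sharp\Psi$ with $\Psi\in\boldsymbol\partial\func[\mu]$, so membership is $\func(\nu)-\func(\mu)\ge \directionalm{\Phi}{\mmu}0 + \tfrac\lambda2 W_2^2(\mu,\nu)$ along an optimal $\mmu$, using Remark \ref{rem:particular} to rewrite $-\brap{-\Phi}{\nu}$ via the directional pairing when the optimal plan is essentially unique; and along a geodesic the $\lambda$-convexity of $\func$ combined with Theorem \ref{theo:propflfr} gives $\directional{(-\boldsymbol\partial\func)}{\mmu}{0+} \le \func(\nu)-\func(\mu_t) - \tfrac\lambda2 W_2^2(\mu_t,\nu)$. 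Chaining this with \eqref{eq:87b} yields $\directionalm{(\ii_\X,\vv_t)_\sharp\mu_t}{\mmu}0 \le \func(\nu)-\func(\mu_t)-\tfrac\lambda2 W_2^2(\mu_t,\nu)$, which is exactly the subdifferential inequality, hence $(\ii_\X,\vv_t)_\sharp\mu_t\in-\boldsymbol\partial\func[\mu_t]$ for a.e.\ $t$.

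The main obstacle I anticipate is the careful bookkeeping in this last conversion: relating the metric directional pairing $\directional{(-\boldsymbol\partial\func)}{\mmu}{0+}$ to the increment $\func(\nu)-\func(\mu_t)$ of the functional along the geodesic induced by $\mmu\in\CondGammao{\boldsymbol\partial\func}{\mu_t}{\nu}{0}$, and making sure the inequalities go in the right direction after the sign flip coming from $-\boldsymbol\partial\func$ versus $\boldsymbol\partial\func$ and from $\bram{\cdot}{\cdot}$ versus $\brap{\cdot}{\cdot}$. Concretely, one needs: (i) that the right derivative at $0$ of $t\mapsto\func(\mu_t)$ along the geodesic is controlled by $-\directional{(-\boldsymbol\partial\func)}{\mmu}{0+}$ — this is the standard fact that slopes of $\lambda$-convex functionals are recovered by subdifferential pairings, essentially \cite[Ch.~10]{ags}; and (ii) that convexity upgrades a derivative estimate at $t=0$ to the global inequality for $t=1$. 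Both are routine in the gradient-flow literature but require invoking the precise statements from \cite{ags}; I would cite \cite[Theorem 10.3.6]{ags} and the $\lambda$-convexity inequality directly rather than reprove them. Everything else is a direct appeal to Theorem \ref{theo:equivwssol} and needs no new estimates.
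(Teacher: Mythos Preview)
Your approach for both parts matches the paper's strategy: part (1) is immediate from Theorem \ref{theo:equivwssol}(1), and for part (2) you correctly identify the chain \eqref{eq:87b} $\to$ geodesic estimate for $\directional{(-\boldsymbol\partial\func)}{\mmu}{0+}$ $\to$ subdifferential inequality. One minor point: Theorem \ref{theo:equivwssol}(3) as stated requires the global condition \eqref{eq:19}, not a version localized at $\mu_t$; the paper instead invokes \eqref{eq:87b} directly from Lemma \ref{le:easy}, which only needs $\mmu\in\CondGammao{\boldsymbol\partial\func}{\mu_t}{\nu}{0}$. This is a citation issue, not a mathematical one.

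There is, however, a genuine gap in your item (2). After chaining the inequalities you obtain
\[
\bram{(\ii_\X,\vv_t)_\sharp\mu_t}{\nu}\le\func(\nu)-\func(\mu_t)-\tfrac{\lambda}{2}W_2^2(\mu_t,\nu)
\]
only for $\nu\in\dom(\boldsymbol\partial\func)$, since the hypothesis $\CondGammao{\boldsymbol\partial\func}{\mu_t}{\nu}{0}\neq\emptyset$ and the construction of $\directional{(-\boldsymbol\partial\func)}{\mmu}{0+}$ both require $\nu$ in the domain of the subdifferential. But membership $(\ii_\X,\vv_t)_\sharp\mu_t\in-\boldsymbol\partial\func[\mu_t]$ demands the inequality for every $\nu\in D(\func)$, which in general is strictly larger. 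The paper closes this gap with a nontrivial density argument: it shows $\dom(\boldsymbol\partial\func)$ is dense in $D(\func)$ \emph{in energy} (i.e.\ there exist $\nu_\tau\to\nu$ with $\func(\nu_\tau)\to\func(\nu)$) by applying the Minimizing Movement/JKO scheme to the shifted convex functional $\func^\lambda(\nu)=\func(\nu)-\tfrac{\lambda}{2}\sqm{\nu}$, citing \cite[Corollary 4.5]{NaldiSavare} and \cite[Lemmas 3.1.2, 10.3.4]{ags}, and then uses the lower semicontinuity of $\nu\mapsto\bram{(\ii_\X,\vv_t)_\sharp\mu_t}{\nu}$ (Lemma \ref{lem:lsc}) to pass to the limit. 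Your proposal omits this step entirely.
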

\begin{proof}
The first assertion is a consequence Theorem \ref{theo:equivwssol}(1). We prove the second claim; by \eqref{eq:87b} we have that for a.e.~$t \in \interval$ it holds
\[ \bram{(\ii_\X ,\vv_t)_{\sharp}\mu_t}{\nu} \le \directionalm{(\ii_\X ,\vv_t)_{\sharp}\mu_t}{\mmu}0 \le \directional{-\boldsymbol{\partial} \func}{\mmu}{0+} \quad \forall \, \nu \in \dom(\func) \, \, \forall \, \mmu \in \CondGammao{\boldsymbol{\partial} \func}{\mu_t}\nu0. \]
We show that for every $\nu_0, \nu_1 \in \dom(\boldsymbol{\partial} \func)$ and every $\nnu \in \CondGammao\frF{\nu_0}{\nu_1}{0}$
\begin{equation}\label{eq:gradflow} \directional{-\boldsymbol{\partial} \func}{\nnu}{0+} \le \func(\nu_1) - \func(\nu_0) - \frac{\lambda}{2}W_2^2(\nu_0, \nu_1).
\end{equation}
To prove that, we take $s \in \rI\nnu{\boldsymbol{\partial} \func}\cap (0,1)$ and $\Phi_s \in - \boldsymbol{\partial} \func(\nu_s)$. By definition of subdifferential we have
\[ \bram{\Phi_s}{\nu_1} \le \func(\nu_1)-\func(\nu_s)- \frac{\lambda}{2} W_2^2(\nu_s, \nu_1)\]
where $\nu_s = \sfx^{s}_{\sharp}\nnu$. Dividing by $(1-s)$, using \eqref{eq:64} and passing to the infimum w.r.t.~$\Phi_s \in - \boldsymbol{\partial} \func(\nu_s)$ we obtain
\[\directional{-\boldsymbol{\partial} \func}{\nnu}{r,s} \le \frac{1}{1-s}\left (\func(\nu_1) - \func(\nu_s) \right ) - \frac{\lambda(1-s)}{2}W_2^2(\nu_0, \nu_1).\]
Passing to the limit as $s \downarrow 0$ and using the lower semicontinuity of $\func$ lead to the result. Once that \eqref{eq:gradflow} is established we have that for a.e. $t \in \interval$ it holds
\begin{equation} \label{eq:almostdefsubd}  \bram{(\ii_\X ,\vv_t)_{\sharp}\mu_t}{\nu} \le \func(\nu) - \func(\mu_t) - \frac{\lambda}{2}W_2^2(\mu_t, \nu) \text{ for every } \nu \in \dom(\boldsymbol\partial \func).
\end{equation}
To conclude it is enough to use the lower semicontinuity of the LHS (see Lemma \ref{lem:lsc}) and the fact that $\dom(\boldsymbol\partial \func)$ is dense in $\dom(\func)$ in energy: indeed we can apply \cite[Corollary 4.5]{NaldiSavare} and \cite[Lemma 3.1.2]{ags} to the proper, lower semicontinuous and convex functional $\func^{\lambda}: \prob_2(\X) \to (-\infty, + \infty]$ defined as
\[ \func^{\lambda}(\nu)=\func(\nu) - \frac{\lambda}{2}\sqm{\nu}\]
to get the existence, for every $\nu \in \dom(\func)$, of a family $(\nu_\tau)_{\tau>0} \subset \dom(\func^{\lambda}) = \dom(\func)$ s.t.
\[ \nu_{\tau} \to \nu, \quad \func^{\lambda}(\nu_{\tau}) \to \func^{\lambda}(\nu) \quad \text{ as } \tau \downarrow 0.\]
Of course $\func(\nu_{\tau}) \to \func(\nu)$ as $\tau \downarrow 0$ and, applying \cite[Lemma 10.3.4]{ags}, we see that $\nu_{\tau} \in \dom(\boldsymbol\partial \func^{\lambda})$. However $\boldsymbol \partial \func^{\lambda} = L^{\lambda}_{\sharp} \boldsymbol \partial \func$ (see \eqref{eq:rellam}) so that $\nu_\tau \in \dom(\boldsymbol \partial \func)$. We can thus write \eqref{eq:almostdefsubd} for $\nu_{\tau}$ in place of $\nu$ and pass to the limit as $\tau \downarrow 0$, obtaining that, by definition of subdifferential, $(\ii_\X ,\vv_t)_{\sharp}\mu_t \in -\boldsymbol \partial \func (\mu_t)$ for a.e.~$t\in \interval$.
\end{proof}

We derive a further useful a priori bound for $\lambda$-\EVI
solutions.
\begin{proposition}
  Let $\frF$ be a $\lambda$-dissipative \MPVF and let $T \in (0, + \infty]$.
  Every $\lambda$-\wEVI solution $\mu :[0,T) \to
  \overline{\dom(\frF)}$
  with initial datum $\mu_0\in \dom(\frF)$ satisfies the a priori bound
  \begin{equation}
    \label{eq:128bis}
    W_2(\mu_t,\mu_0)\le 2 |\frF|_2(\mu_0) \int_0^t\mathrm e^{\lambda
      s}\,\d s,\quad \text{for all }t \in [0, T),
  \end{equation}
  where 
  \[ |\frF|_2(\mu):={}\inf\Big\{|\Phi|_2:\Phi\in \frF[\mu]\Big\}\quad \text{for every }\mu\in \dom(\frF).\]
\end{proposition}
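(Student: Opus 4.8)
The plan is to test the $\lambda$-\EVI inequality against the fixed measure $\nu=\mu_0$, using an arbitrary $\Phi_0\in\frF[\mu_0]$ (available since $\mu_0\in\dom(\frF)$), reduce the problem to a scalar integral inequality for $t\mapsto W_2(\mu_t,\mu_0)$, close it by a Gronwall/Bihari argument, and finally pass to the infimum over $\Phi_0\in\frF[\mu_0]$.

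First I would control the pairing term $\bram{\Phi_0}{\mu_t}$. For any $t\in[0,T)$ and any $\ssigma\in\Lambda(\Phi_0,\mu_t)$, the $\TX$-marginal of $\ssigma$ is $\Phi_0$, so $\int|v_0|^2\,\d\ssigma=|\Phi_0|_2^2$, while $(\sfx^0,\sfx^1)_\sharp\ssigma\in\Gamma_o(\mu_0,\mu_t)$ gives $\int|x_0-x_1|^2\,\d\ssigma=W_2^2(\mu_0,\mu_t)$. Cauchy--Schwarz then yields $\big|\int\la x_0-x_1,v_0\ra\,\d\ssigma\big|\le W_2(\mu_0,\mu_t)\,|\Phi_0|_2$, and minimising over $\ssigma$ in the characterisation \eqref{eq:21} of Theorem \ref{thm:characterization} produces the key bound $-\bram{\Phi_0}{\mu_t}\le W_2(\mu_0,\mu_t)\,|\Phi_0|_2$ for every $t\in[0,T)$.

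Next I would insert $s=0$, $\nu=\mu_0$, $\Phi=\Phi_0$ into the difference-quotient formulation \eqref{eq:140} of the $\lambda$-\EVI inequality (licit since $\mu$ is only assumed continuous); using $W_2^2(\mu_0,\mu_0)=0$ and the previous bound this gives
\[
\mathrm e^{-2\lambda t}W_2^2(\mu_t,\mu_0)\le -2\int_0^t \mathrm e^{-2\lambda r}\bram{\Phi_0}{\mu_r}\,\d r\le 2|\Phi_0|_2\int_0^t \mathrm e^{-2\lambda r}W_2(\mu_0,\mu_r)\,\d r.
\]
Writing $m(t):=\mathrm e^{-\lambda t}W_2(\mu_t,\mu_0)$, which is continuous, nonnegative and vanishes at $t=0$, this becomes $m(t)^2\le R(t):=2|\Phi_0|_2\int_0^t \mathrm e^{-\lambda r}m(r)\,\d r$. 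Since $R\in\rmC^1$ is nondecreasing with $R(0)=0$ and $R'(t)=2|\Phi_0|_2\mathrm e^{-\lambda t}m(t)\le 2|\Phi_0|_2\mathrm e^{-\lambda t}\sqrt{R(t)}$, for each $\delta>0$ I would estimate $\frac{\d}{\d t}\sqrt{R(t)+\delta}=R'(t)/(2\sqrt{R(t)+\delta})\le |\Phi_0|_2\mathrm e^{-\lambda t}$, integrate on $[0,t]$, and let $\delta\downarrow0$ to obtain $m(t)\le\sqrt{R(t)}\le|\Phi_0|_2\int_0^t\mathrm e^{-\lambda r}\,\d r$. Multiplying by $\mathrm e^{\lambda t}$ and substituting $s=t-r$ gives $W_2(\mu_t,\mu_0)\le|\Phi_0|_2\int_0^t\mathrm e^{\lambda s}\,\d s$, and taking the infimum over $\Phi_0\in\frF[\mu_0]$ yields \eqref{eq:128bis} (in fact with the constant $1$ in place of $2$).

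I do not expect a genuine obstacle: once the pairing is controlled, the rest is an elementary Gronwall/Bihari estimate. The points requiring a little care are the identity $\int|v_0|^2\,\d\ssigma=|\Phi_0|_2^2$ together with the optimality of the $\X^2$-marginal in the first step, the appeal to \eqref{eq:140} in the merely-continuous setting, and the $\delta$-regularisation of $\sqrt R$, which is the standard device to cope with the non-Lipschitz behaviour of the square root at the origin. Alternatively one may run the same computation from the Dini-derivative form \eqref{eq:81}, regularising $t\mapsto W_2^2(\mu_t,\mu_0)+\delta^2$ and invoking a standard comparison lemma for functions with bounded upper right Dini derivative, obtaining the same conclusion.
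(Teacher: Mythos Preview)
Your proof is correct and follows essentially the same route as the paper: test the $\lambda$-\EVI against $\nu=\mu_0$, bound $-\bram{\Phi_0}{\mu_t}\le|\Phi_0|_2\,W_2(\mu_0,\mu_t)$ via Cauchy--Schwarz, and close by a Gronwall/Bihari argument. The paper works from the Dini-derivative formulation \eqref{eq:81} and quotes \cite[Lemma~4.1.8]{ags} for the final step, whereas you use the integral form \eqref{eq:140} and carry out the Bihari estimate by hand; your version is self-contained and, as you observe, actually yields the sharper constant~$1$ in place of the paper's~$2$.
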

\begin{proof}
  Let $\Phi\in \frF(\mu_0)$. \eqref{eq:EVI} with $\nu:=\mu_0$ then
  yields
  \begin{equation*}
    \updt W_2^2(\mu_t,\mu_0)-2\lambda W_2^2(\mu_t,\mu_0)\le
    -2\bram\Phi{\mu_t}\le 2|\Phi|_2\,W_2(\mu_t,\mu_0), \quad \text{for every }t\in [0,T).
  \end{equation*}
  We can then apply the estimate of \cite[Lemma 4.1.8]{ags} to obtain
  \begin{equation*}
    \mathrm e^{-\lambda t}
    W_2(\mu_t,\mu_0)\le 2|\Phi|_2\int_0^t \mathrm e^{-\lambda s}\,\d s,\quad \text{for all }t\in [0,T),
  \end{equation*}
  which in turn yields \eqref{eq:128bis}.
\end{proof}

We conclude this section with a result showing the robustness of the notion of $\lambda$-\EVI solution.
\begin{proposition}
  \label{prop:stability}
  If $\mu_n: \interval\to \overline{\dom(\frF)}$ is a sequence of
  $\lambda$-\EVI solutions locally uniformly converging to $\mu$ as $n\to\infty$,
  then $\mu$ is a $\lambda$-\EVI solution.
\end{proposition}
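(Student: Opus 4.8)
The plan is to pass to the limit in the integral reformulation \eqref{eq:140} of the $\lambda$-\EVI property, which is tailor-made for stability under uniform convergence since it involves no derivatives. First I would dispose of the easy facts: $\mu$ is continuous, being a locally uniform limit of the continuous curves $\mu_n$, and $\mu_t=\lim_n\mu_{n,t}$ lies in $\overline{\dom(\frF)}$ for every $t\in\interval$ because that set is $W_2$-closed. It then suffices to show that \eqref{eq:140} holds for $\mu$, for an arbitrary fixed $\nu\in\dom(\frF)$ and $\Phi\in\frF[\nu]$.

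Fix $s<t$ in $\interval$. Each $\mu_n$ satisfies
\[
  \mathrm e^{-2\lambda(t-s)}\,W_2^2(\mu_{n,t},\nu)-W_2^2(\mu_{n,s},\nu)\le
  -2\int_s^t\mathrm e^{-2\lambda(r-s)}\bram{\Phi}{\mu_{n,r}}\de r .
\]
The left-hand side converges to $\mathrm e^{-2\lambda(t-s)}W_2^2(\mu_t,\nu)-W_2^2(\mu_s,\nu)$ by continuity of $W_2$. For the right-hand side I would take $\liminf_n$ and use two ingredients. The first is the one-sided semicontinuity $\liminf_n\bram{\Phi}{\mu_{n,r}}\ge\bram{\Phi}{\mu_r}$ for every $r$, which is precisely Lemma \ref{lem:lsc} applied to the constant sequence $\Phi_n\equiv\Phi$ and to $\mu_{n,r}\to\mu_r$ in $\prob_2(\X)$; the same lemma, together with the continuity of $\mu_n$, makes $r\mapsto\bram{\Phi}{\mu_{n,r}}$ lower semicontinuous, hence Borel. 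The second is a uniform lower bound on the integrand that legitimates Fatou's Lemma: from the characterization in Theorem \ref{thm:characterization} and Cauchy--Schwarz one has $|\bram{\Phi}{\varrho}|\le|\Phi|_2\,W_2(\nu,\varrho)$ for every $\varrho\in\prob_2(\X)$ (any competitor $\ssigma\in\Lambda(\Phi,\varrho)$ satisfies $\int|x_0-x_1|^2\de\ssigma=W_2^2(\nu,\varrho)$ and $\int|v_0|^2\de\ssigma=|\Phi|_2^2$), while $\sup\{W_2(\nu,\mu_{n,r}):n\in\N,\ r\in[s,t]\}<\infty$ because $\mu_n\to\mu$ uniformly on the compact interval $[s,t]$. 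Fatou's Lemma and the positivity of $r\mapsto\mathrm e^{-2\lambda(r-s)}$ then give $\liminf_n\int_s^t\mathrm e^{-2\lambda(r-s)}\bram{\Phi}{\mu_{n,r}}\de r\ge\int_s^t\mathrm e^{-2\lambda(r-s)}\bram{\Phi}{\mu_r}\de r$, and passing to the $\liminf$ in the displayed inequality produces \eqref{eq:140} for $\mu$. By the equivalences recalled in the Remark after the definition of $\lambda$-\EVI solution, this means $\mu$ is a $\lambda$-\EVI solution.

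An equivalent route I would keep as a fallback is to test the distributional inequality \eqref{eq:EVI} for $\mu_n$ against a nonnegative $\varphi\in\mathscr D(\intt\interval)$: the $W_2^2(\mu_{n,\cdot},\nu)\varphi'$ and $\lambda W_2^2(\mu_{n,\cdot},\nu)\varphi$ terms pass by dominated convergence, while $-\bram{\Phi}{\mu_{n,\cdot}}\varphi$ is again controlled by Fatou plus Lemma \ref{lem:lsc} using $\varphi\ge0$. The one genuinely delicate point in either version is the sign bookkeeping on the right-hand side: $\bram{\Phi}{\cdot}$ is only lower semicontinuous, not continuous, so I must check that semicontinuity points the way the inequality needs, and it does precisely because $\bram{\Phi}{\cdot}$ enters with a minus sign and the Cauchy--Schwarz lower bound is what allows the exchange of $\liminf$ with the integral.
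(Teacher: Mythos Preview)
Your proof is correct and follows essentially the same approach as the paper, which passes to the limit in the integral characterization \eqref{eq:140} using the lower semicontinuity of $\mu\mapsto\bram{\Phi}{\mu}$ from Lemma \ref{lem:lsc} together with Fatou's Lemma. You actually supply more detail than the paper's one-line sketch, in particular the Cauchy--Schwarz lower bound $|\bram{\Phi}{\varrho}|\le|\Phi|_2\,W_2(\nu,\varrho)$ that justifies Fatou on the bounded interval $[s,t]$, and the measurability check.
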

\begin{proof}
  $\mu$ is a continuous curve defined in $\interval$ with values in $\overline{\dom(\frF)}$.
  Using pointwise convergence, the lower semicontinuity of
  $\mu\mapsto \bram\Phi\mu$ of Lemma \ref{lem:lsc}, and Fatou's Lemma,
  it is easy to pass to the limit in the equivalent characterization \eqref{eq:140}
  of $\lambda$-\EVI solutions, written for $\mu_n$.
\end{proof}
\subsection{Local existence of \texorpdfstring{$\lambda$}{l}-\EVI solutions by the Explicit Euler Scheme}

In order to prove the existence of a $\lambda$-\EVI solution to \eqref{eq:CP}, our strategy is to employ an approximation argument through an Explicit Euler scheme as it occurs for ODEs. \\
In the following $\floor{\cdot}$ and $\ceil{\cdot}$ denote the floor and the ceiling functions respectively.

\begin{definition}[Explicit Euler Scheme]\label{def:EEscheme}
  Let $\frF$ be a \MPVF and suppose we are given
  a step size $\tau>0$, an initial datum $\mu^0_\tau\in\dom(\frF)$,
  a bounded interval $[0,T]$, corresponding to
  the final step $\finalstep T\tau:=\ceil{T/\tau},$
  and a stability bound $L>0$.
  A sequence
  $(M^n_\tau,\fF_\tau^n)_{0\le n\le \finalstep T\tau}\subset \dom(\frF)\times \frF $ is a
  \emph{$L$-stable solution to the Explicit Euler Scheme in $[0,T]$ starting from $\mu^0_\tau\in \dom(\frF)$} if 
\begin{equation}
  \label{eq:EE}
  \tag{EE}
  \left\{
\begin{aligned}
  M_{\tau}^0& = \mu^0_\tau ,\\
  \fF_{\tau}^n &\in \frF[M_{\tau}^n],\
  |\fF_\tau^n|_2 \le L
  &&0\le n<\finalstep T\tau,\\
  M_{\tau}^{n} &= (\exp^{\tau})_{\sharp} \fF_{\tau}^{n-1} 
  &&1\le n\le \finalstep T\tau.
\end{aligned}
\right.
\end{equation}

We define the following two different interpolations of the sequence $(M^n_\tau,\fF_\tau^n)$:
\begin{itemize}
\item the affine interpolation:
\begin{equation}\label{eq:affineM}
  M_{\tau}(t) := (\exp^{t-n\tau})_{\sharp} \fF_{\tau}^n \text{\quad if } t \in [n\tau, (n+1)\tau]
  \text{ for some } n \in \N,\ 0\le n<\finalstep T\tau,
\end{equation}
\item the piecewise constant interpolation:
\begin{equation*}
\begin{split}
\bar{M}_{\tau}(t) &:= M^{\floor{t/\tau}}_{\tau}, \quad t\in [0,T], \\
\fF_{\tau}(t) &:= \frF^{\floor{t/\tau}}_{\tau}, \quad t\in [0,T].
\end{split}
\end{equation*}
\end{itemize}
We will call $\mathscr E(\mu^0_\tau,\tau,T,L)$ (resp.~$\mathscr M(\mu^0_\tau,\tau,T,L)$)
the (possibly empty) 
set of all the curves $(M_\tau,\fF_\tau)$ (resp.~$M_\tau$) arising from the
solution of \eqref{eq:EE}.
\end{definition}
The affine interpolation can be trivially written as
\[M_\tau(t)=\left(\exp^{t-\floor{t/\tau}\tau}\right)_{\sharp}\left(\fF_\tau(t)\right),\quad
  t\in [0,T],\]
and $M_\tau$ satisfies the uniform Lipschitz bound
\begin{equation}
  \label{eq:80}
  W_2(M_\tau(t),M_\tau(s))\le L|t-s|\quad
  0\le s\le t\le T,\quad
  M_\tau\in \mathscr E(\mu_0,\tau,T,L).
\end{equation}
Notice that, since in general $\frF[\mu]$ is not reduced to a singleton,
the sets $\mathscr E(\mu_0,\tau,T,L)$ and $\mathscr M(\mu_0,\tau,T,L)$
may contain more than one element (or may be empty).
Stable solutions to the Explicit Euler scheme generated by a $\lambda$-dissipative
\MPVF exhibit a nice behaviour, which is clarified by the following important result,
which will be proved in Section \ref{sec:EulerScheme} (see Proposition
\ref{prop:samestep} and Theorems \ref{theo:convergence},
\ref{prop:rate}, \ref{theo:strong-solution}), with a more accurate
estimate of the error constants $A(\vartheta)$.
We stress that in the next statement
$A(\vartheta)$ solely depend on $\vartheta$
(in particular, it is independent of $\lambda, L, T,\tau,\eta,
M_\tau,M_\eta$).
\begin{theorem} \label{thm:apriori-estimate} Let $\frF$ be a
  $\lambda$-dissipative \MPVF.
  \begin{enumerate}
  \item For every 
    $\mu_0,\mu_0' \in \dom(\frF)$,
    every 
    $M_\tau\in \mathscr M(\mu_0,\tau,T,L)$, $M_\tau'\in \mathscr M(\mu_0',\tau,T,L)$
    with $\tau\lambda_+\le 2$ we have
    \begin{equation}
      \label{eq:71}
      W_2(M_\tau(t),M_\tau'(t))
      \le \mathrm e^{\lambda t}W_2(\mu_0,\mu_0') +8L\sqrt {t\tau}\Big(1+|\lambda|\sqrt{t\tau}\Big)\mathrm e^{\lambda_+ t}
        \quad\text{for every }t\in [0,T].
    \end{equation}
      \item For every $\vartheta>1$
  there exists a constant $A(\vartheta)$ such that
  if $M_\tau\in \mathscr M(M^0_\tau,\tau,T,L)$
  and $M_\eta\in \mathscr M(M^0_\eta,\eta,T,L)$
  with $\lambda_+(\tau+\eta)\le 1$ then 
  \begin{equation*}
    W_2(M_\tau(t),M_\eta(t))\le
    \Big(\vartheta W_2(M^0_\tau,M^0_\eta)+
    A(\vartheta) L\sqrt{(\tau+\eta)(t+\tau+\eta)}\Big)\mathrm
    e^{\lambda_+\, t},\quad
    t\in [0,T].
  \end{equation*}
\item
  For every $\vartheta>1$
  there exists a constant $A(\vartheta)$ 
  such that
  if $\mu\in \rmC([0,T];\overline{\dom(\frF)})$ is a $\lambda$-\EVI solution
  and $M_\tau\in \mathscr M(M^0_\tau,\tau,T,L)$
  then 
  \begin{equation}
        \label{eq:139}
  W_2(\mu(t), M_{\tau}(t))\le
  \Big(\vartheta\, W_2(\mu_0,M^0_\tau)+
  A(\vartheta)L\sqrt{\tau(t+\tau)}\Big)
  \mathrm e^{\lambda_+ t}
  \quad
  \text{for every }t\in [0,T].
\end{equation}
\item If $n\mapsto \tau(n)$ is a vanishing sequence of
  time steps, $(\mu_{0,n})_{n\in \N}$ is a sequence in $\dom(\frF)$ converging to
  $\mu_0\in \overline{\dom(\frF)}$ in $\prob_2(\X)$ and
  $M_n\in \mathscr M(\mu_{0,n},\tau(n),T,L)$,
  then $M_n$ is uniformly converging to a limit curve
  $
  \mu\in \Lip([0,T];\overline{\dom(\frF)})$ which
  is a $\lambda$-\EVI solution
  starting from $\mu_0$.
\end{enumerate}
\end{theorem}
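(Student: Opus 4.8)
The whole statement rests on a single \emph{one–step inequality}. If $\Phi\in\frF[\mu]$, $\Psi\in\frF[\nu]$ with $|\Phi|_2,|\Psi|_2\le L$, then the semiconcavity of the squared Wasserstein distance along the exponential deformation (Proposition~\ref{prop:concavity}, with $s_0=t_0=0$, $s_1=t_1=\tau$), the definition of $\bram{\cdot}{\cdot}$ and the $L$-stability bound yield
\begin{equation}\label{eq:plan-onestep}
W_2^2(\exp^\tau_\sharp\Phi,\exp^\tau_\sharp\Psi)\le W_2^2(\mu,\nu)+2\tau\,\bram{\Phi}{\Psi}+2\tau^2L^2,
\end{equation}
and the analogous bound with $\Psi$ replaced by the rest field $(\ii_\X,0)_\sharp\nu$, for which $\exp^\tau_\sharp(\ii_\X,0)_\sharp\nu=\nu$ and the pairing becomes $\bram{\Phi}{\nu}$. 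Inserting the $\lambda$-dissipativity inequality \eqref{eq:33} into \eqref{eq:plan-onestep} gives, for two $L$-stable Euler sequences, the scalar recursion $d_{n+1}^2\le(1+2\lambda\tau)\,d_n^2+2\tau^2L^2$ with $d_n:=W_2(M_\tau(n\tau),M_\tau'(n\tau))$. Part~(1) then follows from a discrete Grönwall argument — using $\sqrt{1+2\lambda\tau}\le e^{\lambda\tau}$ when $1+2\lambda\tau\ge0$ (ensured, after separating the signs of $\lambda$, by $\tau\lambda_+\le2$), estimating $\sum_{k<n}(1+2\lambda\tau)^{(n-1-k)/2}$ and $\sqrt{2\,n\tau^2}=\sqrt{2\tau}\sqrt{t}$ — together with the Lipschitz bound \eqref{eq:80}, which lets one pass from grid times to arbitrary $t\in[0,T]$ up to $O(L\tau)=O(L\sqrt{t\tau})$ corrections; these produce the constant $8$ and the $|\lambda|\sqrt{t\tau}$ term.

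Parts~(2) and~(3) are obtained by the same philosophy after a doubling of the time variable, adapting the Crandall–Liggett/Nochetto–Savare technique. For~(2) I would study $a(s,t):=W_2(M_\tau(s),M_\eta(t))$ along the affine interpolations \eqref{eq:affineM}: applying \eqref{eq:plan-onestep} to a single $\tau$-step in the first slot and a single $\eta$-step in the second, with \eqref{eq:33}, one gets a two–parameter recursive inequality of ``discrete heat'' type, $a^2_{m+1,n+1}\le(1+2\lambda(\tau+\eta))a^2_{m,n}+CL^2(\tau^2+\eta^2)$ plus mixed terms; comparing $a_{m,n}$ with the solution of the associated finite–difference inequality, whose fundamental solution carries a binomial/Poisson weight concentrated near the diagonal $m\tau\approx n\eta$, produces the bound $\vartheta\,W_2(M^0_\tau,M^0_\eta)+A(\vartheta)L\sqrt{(\tau+\eta)(t+\tau+\eta)}$, with $\vartheta>1$ absorbing the lower–order binomial corrections and $A(\vartheta)$ depending only on $\vartheta$. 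Part~(3) is the mixed version in which one process is a $\lambda$-\EVI solution $\mu$ and the other the discrete $M_\tau$: testing \eqref{eq:81} (equivalently the integral form \eqref{eq:140}) with $\nu=M^n_\tau$ and $\Phi=\fF^n_\tau$, and combining it with the discrete inequality $W_2^2(M^{n+1}_\tau,\mu_t)\le W_2^2(M^n_\tau,\mu_t)+2\tau\,\bram{\fF^n_\tau}{\mu_t}+\tau^2L^2$ after trading $\bram{\fF^n_\tau}{\mu_t}$ against $-\bram{\,\cdot\,}{M^n_\tau}$ via \eqref{Hdiss}, leads to a continuous–in–$t$, discrete–in–$n\tau$ differential–difference inequality for $W_2^2(\mu_t,M^n_\tau)$ whose analysis gives \eqref{eq:139}.

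For part~(4) I would combine~(2) with completeness. If $\tau(n)\downarrow0$ and $\mu_{0,n}\to\mu_0$ in $\prob_2(\X)$, then~(2) shows $\sup_{[0,T]}W_2(M_n,M_m)\to0$ as $n,m\to\infty$, so $(M_n)$ is uniformly Cauchy and converges uniformly, by completeness of $(\prob_2(\X),W_2)$, to a curve $\mu$; the Lipschitz bound \eqref{eq:80} passes to the limit, and since each $\mu_t$ is a $W_2$-limit of grid values in $\dom(\frF)$ one obtains $\mu\in\Lip([0,T];\overline{\dom(\frF)})$ with $\mu(0)=\mu_0$. To see that $\mu$ is a $\lambda$-\EVI solution, fix $\nu\in\dom(\frF)$ and $\Psi\in\frF[\nu]$; \eqref{eq:plan-onestep} with second field $(\ii_\X,0)_\sharp\nu$ and then \eqref{Hdiss} give the discrete inequality $W_2^2(M^{k+1}_n,\nu)\le(1+2\lambda\tau(n))W_2^2(M^k_n,\nu)-2\tau(n)\bram{\Psi}{M^k_n}+\tau(n)^2L^2$; iterating between indices corresponding to times $s<t$ and rearranging into the discrete analogue of \eqref{eq:140}, one lets $n\to\infty$: the left-hand side converges by uniform convergence of $M_n$ and continuity of $W_2^2(\cdot,\nu)$, while on the right-hand side Lemma~\ref{lem:lsc} gives $\liminf_n\bram{\Psi}{M_n(r)}\ge\bram{\Psi}{\mu_r}$ pointwise and, the integrands being uniformly bounded because the $M_n$ stay in a $W_2$-bounded set, reverse Fatou yields the required limit of the integral term. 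Hence $\mu$ satisfies \eqref{eq:140} for all such $\nu,\Psi$, i.e.\ it is a $\lambda$-\EVI solution starting from $\mu_0$; alternatively, once the $M_n$ are recognized as approximate \EVI solutions, one concludes by the stability Proposition~\ref{prop:stability}.

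The main obstacle is the two–parameter bookkeeping in~(2)–(3): propagating the discrete Grönwall estimates with their $\sqrt{\cdot}$-type error terms through the doubling of variables while keeping $A(\vartheta)$ a function of $\vartheta$ only, and controlling the sign of $\lambda$ uniformly (whence the smallness conditions $\tau\lambda_+\le2$ and $\lambda_+(\tau+\eta)\le1$). Conceptually — and this is precisely why the \EVI formulation is used — no compactness of $\dom(\frF)$ or of sublevels is available in infinite dimensions, so in~(4) the limit curve must be produced solely from the Cauchy estimate~(2), and its identification as a solution must go through the \EVI inequality, which is stable under mere uniform convergence thanks to the semicontinuity of the pairings in Lemma~\ref{lem:lsc}.
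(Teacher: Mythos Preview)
Your overall strategy is sound, and Parts~(1) and~(4) match the paper's arguments closely (Proposition~\ref{prop:samestep} and Theorem~\ref{theo:strong-solution}): a one-step semiconcavity estimate plus dissipativity feeds a Gr\"onwall argument, and the Cauchy estimate plus lower semicontinuity of the pairing (Lemma~\ref{lem:lsc}) yields the limit \EVI solution.

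For Parts~(2) and~(3), however, the paper takes a different route. Rather than a discrete two-index recursion analysed via binomial/Poisson weights (the classical Crandall--Liggett scheme you sketch), it works with the \emph{continuous} affine interpolations and the differential inequality \ref{eq:IEVI}: writing $w(r,s)=W_2(M_\tau(r),M_\eta(s))$ (and the variants $w_\tau,w_\eta,w_{\tau,\eta}$ with piecewise-constant interpolants), summing the two IEVI's with dissipativity gives a first-order inequality $\partial_r(e^{-2\lambda s}w_\eta^2)+\partial_s(e^{-2\lambda s}w_\tau^2)\le Z$ in the $(r,s)$ plane, which is then integrated by the Divergence Theorem over a thin strip $Q^\eps_{0,t}$ around the diagonal (Theorems~\ref{theo:convergence} and~\ref{prop:rate}, following the Nochetto--Savar\'e approach). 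This bypasses all combinatorial bookkeeping and produces the $\sqrt{(\tau+\eta)(t+\tau+\eta)}$ error with explicit $A(\vartheta)$ by elementary estimates on strip integrals; your discrete approach is in principle equally valid but heavier to execute.

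Two corrections to your sketch. First, the recursion you would actually obtain is not of the form $a^2_{m+1,n+1}\le\cdots a^2_{m,n}$: moving one index at a time via \eqref{eq:ievigeneral} and combining through \eqref{Hdiss} yields the convex-combination form $\frac{\eta}{\tau+\eta}a^2_{m+1,n}+\frac{\tau}{\tau+\eta}a^2_{m,n+1}\le(1+\frac{2\tau\eta}{\tau+\eta}\lambda)a^2_{m,n}+\tau\eta L^2$, which is the structure admitting the random-walk interpretation. Second, in Part~(3) you do not need \eqref{Hdiss} at all: testing \eqref{eq:81} with $\nu=\bar M_\tau(r)$, $\Phi=\fF_\tau(r)$ already produces $-\bram{\fF_\tau(r)}{\mu_s}$, which cancels directly against the $+\bram{\fF_\tau(r)}{\mu_s}$ coming from IEVI for $M_\tau$; no element of $\frF[\mu_s]$ is required, and indeed $\mu_s$ need not lie in $\dom(\frF)$.
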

If we assume that the Explicit Euler scheme is locally solvable,
Theorem \ref{thm:apriori-estimate}
provides a crucial tool to obtain local existence and uniqueness of $\lambda$-\EVI solutions.
\begin{definition}[Local and global solvability of \eqref{eq:EE}]
  We say that the Explicit Euler Scheme \eqref{eq:EE} associated to a \MPVF $\frF$
  is \emph{locally solvable}
  at $\mu_0\in \dom(\frF)$ if there exist strictly positive constants
  $\ttau,T,L$ such that
  $\mathscr E(\mu_0,\tau,T,L)$ is not empty for every $\tau\in (0,\ttau)$.\\
  We say that \eqref{eq:EE} is \emph{globally solvable} at 
  $\mu_0\in \dom(\frF)$ if for every $T>0$ there exist strictly positive constants
  $\ttau,L$ such that
  $\mathscr E(\mu_0,\tau,T,L)$ is not empty for every $\tau\in (0,\ttau)$.
\end{definition}
Let us now state the main existence result for
$\lambda$-\EVI solutions. Given $T \in (0, + \infty]$ and $\mu:[0,T)\to\prob_2(\X)$ we
denote by $|\dot\mu|_+(t)$ the right upper metric derivative
\begin{equation*}
  |\dot\mu|_+(t):=
  \limsup_{h\downarrow0}\frac{W_2(\mu_{t+h},\mu_t)}h.
\end{equation*}
\begin{theorem}[Local existence and uniqueness]
  \label{thm:existence}
  Let $\frF$ be a $\lambda$-dissipative \MPVF.
  \begin{enumerate}[label=\rm(\alph*)]
  \item If
    the Explicit Euler Scheme is locally solvable at
    $\mu_0\in \dom(\frF)$, then there exists $T>0$
    and a unique $\lambda$-\EVI solution $\mu\in \Lip([0,T];\overline{\dom(\frF)})$ starting from $\mu_0$,
    satisfying
    \begin{equation}
      \label{eq:88}
      t\mapsto \mathrm e^{-\lambda t}|\dot \mu|_+(t)
      \quad\text{is decreasing in }[0,T).
    \end{equation}
  If $\mu':[0,T']\to \overline{\dom(\frF)}$ is any other $\lambda$-\EVI solution starting from $\mu_0$
  then $\mu(t)=\mu'(t)$ if $0\le t\le T\land T'$.
\item If
  the Explicit Euler Scheme is locally solvable in $\dom(\frF)$ and 
    \begin{equation}
      \label{eq:86}
      \begin{gathered}
        \text{for any local $\lambda$-\EVI solution $\mu$ starting from $\mu_0\in \dom(\frF)$}\\
        \text{there exists }\delta>0:\quad
        t\in [0,\delta]\quad\Rightarrow\quad\mu(t)\in \dom(\frF),
    \end{gathered}
    \end{equation}
    then for every $\mu_0 \in \dom(\frF)$ there exist a unique
    maximal time $T\in (0,\infty]$ and a
    unique strict $\lambda$-\EVI solution $\mu\in \Lip_{\rm loc}([0,T);\dom(\frF))$ starting from $\mu_0$,
    which satisfies \eqref{eq:88} and
    \begin{equation}
      \label{eq:79}
      T<\infty\quad\Rightarrow\quad
      \lim_{t\uparrow T}\mu_t\not\in\dom(\frF).
    \end{equation}
    Any other $\lambda$-\EVI solution $\mu':[0,T')\to \overline{\dom(\frF)}$
    starting from $\mu_0$ coincides with $\mu$
    in $[0,T\land T')$.
  \end{enumerate}
\end{theorem}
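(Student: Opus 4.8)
The plan is to obtain $\mu$ as a uniform limit of Explicit Euler approximations and to deduce local uniqueness from the a priori comparison, provided by Theorem~\ref{thm:apriori-estimate}, between a $\lambda$-\EVI solution and a discrete solution with the same initial datum. For (a), local solvability at $\mu_0$ gives constants $\ttau,T,L>0$ with $\mathscr M(\mu_0,\tau,T,L)\neq\emptyset$ for all $\tau\in(0,\ttau)$; I would fix a vanishing sequence $\tau(n)$, pick $M_n\in\mathscr M(\mu_0,\tau(n),T,L)$, and invoke Theorem~\ref{thm:apriori-estimate}(4) (with $\mu_{0,n}\equiv\mu_0$) to get uniform convergence $M_n\to\mu$ on $[0,T]$ to a $\lambda$-\EVI solution $\mu\in\Lip([0,T];\overline{\dom(\frF)})$ starting from $\mu_0$, with $|\dot\mu|_+\le L$ inherited from \eqref{eq:80}. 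For uniqueness, if $\mu':[0,T']\to\overline{\dom(\frF)}$ is any $\lambda$-\EVI solution with $\mu'(0)=\mu_0$, then Theorem~\ref{thm:apriori-estimate}(3) applied to $\mu'$ and $M_n$ gives, for every $\vartheta>1$,
\[ W_2(\mu'(t),M_n(t))\le A(\vartheta)L\sqrt{\tau(n)(t+\tau(n))}\,\mathrm e^{\lambda_+ t},\qquad t\in[0,T\land T'], \]
so letting $n\to\infty$ forces $\mu'=\mu$ on $[0,T\land T']$; in particular two $\lambda$-\EVI solutions with the same initial datum agree on the overlap of their domains.

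For the decreasing estimate \eqref{eq:88} the point I would exploit is that a time translation $M_n(\cdot+k\tau(n))$ with $k=\ceil{s/\tau(n)}$, restricted to $[0,T-k\tau(n)]$, is itself an Euler solution, namely an element of $\mathscr M(M_n(k\tau(n)),\tau(n),T-k\tau(n),L)$. Using \eqref{eq:80} together with the uniform convergence we get $M_n(k\tau(n))\to\mu(s)$, hence $\mu(\cdot+s)$ is also a uniform limit of Euler solutions with step sizes $\tau(n)$. Passing to the limit in Theorem~\ref{thm:apriori-estimate}(1) along these matched step sizes (the error term there vanishes as $\tau\downarrow0$) yields $W_2(\mu(t+s),\mu(t))\le\mathrm e^{\lambda t}W_2(\mu(s),\mu_0)$; applying the same with an arbitrary base point in place of $0$ and taking $\limsup_{h\downarrow0}$ of the corresponding difference quotient gives $|\dot\mu|_+(t)\le\mathrm e^{\lambda(t-s)}|\dot\mu|_+(s)$ for $0\le s\le t<T$, which is \eqref{eq:88}.

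For (b), assuming \eqref{eq:EE} is locally solvable throughout $\dom(\frF)$ and that \eqref{eq:86} holds, I would run the usual extension argument: part (a) gives a solution on some $[0,T_0]$, \eqref{eq:86} forces $\mu(t)\in\dom(\frF)$ on an initial interval $(0,\delta_0]$, and then part (a) applied at $\mu(\delta_0)\in\dom(\frF)$ extends the solution, the pieces being glued consistently by the local uniqueness above; taking $T$ to be the supremum of the lengths of such strict solutions produces a unique strict $\lambda$-\EVI solution $\mu\in\Lip_{\mathrm{loc}}([0,T);\dom(\frF))$ satisfying \eqref{eq:88}. If $T<\infty$, \eqref{eq:88} together with finiteness of $|\dot\mu|_+$ on the first piece bounds $|\dot\mu|_+$ on all of $[0,T)$, so $\mu$ is uniformly continuous there and extends to some $\mu_T\in\overline{\dom(\frF)}$; were $\mu_T\in\dom(\frF)$, part (a) applied at $\mu_T$ together with \eqref{eq:86} would extend the strict solution past $T$, contradicting maximality, so $\lim_{t\uparrow T}\mu_t\notin\dom(\frF)$, which is \eqref{eq:79}. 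Agreement with an arbitrary $\lambda$-\EVI solution $\mu'$ on $[0,T\land T')$ then follows because the set of times up to which $\mu'=\mu$ is nonempty, closed, and open (at each such time the common value lies in $\dom(\frF)$, where part (a) applies).

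I expect the delicate steps to be the sharp rate \eqref{eq:88} — which rests on the easy but essential remark that a time-shifted Euler scheme is again an Euler scheme, so that the contraction part of Theorem~\ref{thm:apriori-estimate}(1) transfers to $\mu(\cdot+s)$ — and the maximality argument in (b), where one must first use \eqref{eq:88} to guarantee that $|\dot\mu|_+$ stays bounded, and hence that $\lim_{t\uparrow T}\mu_t$ exists in $\overline{\dom(\frF)}$, before invoking \eqref{eq:86} to close the argument.
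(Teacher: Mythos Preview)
Your proposal is correct and follows essentially the same route as the paper: existence via convergence of the Euler scheme (Theorem~\ref{thm:apriori-estimate}(4)), uniqueness via the comparison estimate \eqref{eq:139} of Theorem~\ref{thm:apriori-estimate}(3), the monotonicity \eqref{eq:88} by exploiting that time-shifted discrete solutions are again discrete solutions with the same step and then passing to the limit in \eqref{eq:71}, and the maximal extension in (b) via the standard open--closed connectedness argument. The paper writes the shift argument for \eqref{eq:88} a bit more explicitly (comparing $M_\tau(s_\tau+\cdot)$ and $M_\tau(s_\tau+h_\tau+\cdot)$ with $s_\tau=\tau\lfloor s/\tau\rfloor$, $h_\tau=\tau\lfloor h/\tau\rfloor$ before passing to the limit), and organizes (b) by taking the supremum over the collection $\mathcal S$ of strict solutions rather than iterated gluing, but the content is the same.
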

\begin{proof}
  (a)
  Let $\ttau, T,L$ positive constants such that
  $\mathscr E(\mu_0,\tau,T,L)$ is not empty for every $\tau\in (0,\ttau)$.
  Thanks to Theorem \ref{thm:apriori-estimate}(2),
  the family $M_\tau\in \mathscr E(\mu_0,\tau,T,L)$
  satisfies the Cauchy condition in $\mathrm C([0,T];\prob_2(\X))$
  so that there exists a unique limit curve $\mu=\lim_{\tau\downarrow0}M_\tau$
  which is also Lipschitz in time, thanks to the a-priori bound
  \eqref{eq:80}.
  Theorem \ref{thm:apriori-estimate}(4) shows that $\mu$ is a $\lambda$-\EVI solution
  starting from $\mu_0$ and
  the estimate \eqref{eq:139} of Theorem \ref{thm:apriori-estimate}(3)
  shows that any other $\lambda$-\EVI solution 
  in an interval $[0,T']$ starting from $\mu_0$ should coincide with
  $\mu$ in $[0,T'\land T]$.

  Let us now check \eqref{eq:88}: we fix $s,t$ such that $0\le s<t<T$
  and $h\in (0,T-t)$,
  and we set $s_\tau:=\tau\floor{s/\tau}$, $h_\tau:=\tau \floor{h/\tau}$.
  The curves $r\mapsto M_\tau(s_\tau+r)$,
  $r\mapsto M_\tau(s_\tau+h_\tau+r)$ belong
  to $\mathscr M(M_\tau(s_\tau),\tau,t-s,L)$ and
  $\mathscr M(M_\tau(s_\tau+h_\tau),\tau,t-s,L)$, so that \eqref{eq:71} yields
  \begin{equation*}
    W_2(M_\tau(s_\tau+t-s),M_\tau(s_\tau+h_\tau+(t-s)))\le \mathrm e^{\lambda (t-s)}
    W_2(M_\tau(s_\tau),M_\tau(s_\tau+h_\tau))+B\sqrt\tau,
  \end{equation*}
  for $B=B(\lambda, L, \ttau,T)$.
  Passing to the limit as $\tau\downarrow0$ we get
  \begin{equation*}
    W_2(\mu(t),\mu(t+h))\le \mathrm e^{\lambda (t-s)}
    W_2(\mu(s),\mu(s+h)).
  \end{equation*}
  Dividing by $h$ and passing to the limit as $h\downarrow0$ we get
  \eqref{eq:88}.
  
  \medskip\noindent
  (b)
  Let us call $\mathcal S$ the collection of
  $\lambda$-\EVI solutions $\mu:[0,S)\to \dom(\frF)$
  starting from $\mu_0$ with values in $\dom(\frF)$ and defined in some interval
  $[0,S)$, $S=S(\mu)$. Thanks to \eqref{eq:86} and the previous claim
  the set $\mathcal S$ is not empty.

  It is also easy to check that two curves $\mu',\mu''\in \mathcal S$
  coincide
  in the common domain $[0,S)$ with $S:=S(\mu')\land S(\mu'')$:
  in fact the set $\{t\in [0,S):\mu'(r)=\mu''(r) \text{ if }0\le r\le t\}$ contains $t=0$,
  is closed since $\mu',\mu''$ are continuous, and it is also open
  since if $\mu'=\mu''$ in $[0,t]$ then
  the previous claim and the fact that $\mu'(t)=\mu''(t)\in \dom(\frF)$
  show that $\mu'=\mu''$ also in a right neighborhood of $t$.
  Since $[0,S)$ is connected, we conclude that $\mu'=\mu''$ in $[0,S)$.

  We can thus define $T:=\sup\{S(\mu):\mu\in \mathcal S\}$
  obtaining that there exists a unique $\lambda$-\EVI solution
  $\mu$ starting from $\mu_0$ and defined in $[0,T)$
  with values in $\dom(\frF)$.
  
  If $T<\infty$, since $\mu$ is Lipschitz in $[0,T)$ thanks to \eqref{eq:88},
  we know that
  there exists the limit $\bar\mu:=\lim_{t\uparrow T}\mu(t)$ in $\prob_2(\X)$.
  If $\bar \mu\in \dom(\frF)$ we can extend $\mu$ to a $\lambda$-\EVI solution
  with values in $\dom(\frF)$ and defined in an interval $[0,T')$ with $T'>T$,
  which contradicts the maximality of $T$.
\end{proof}

Recall that
a set $A$ in a metric space $X$ is locally closed
if every point of $A$ has a neighborhood $U$ such that
$A\cap U = \bar{A} \cap U$. Equivalently, $A$ is the intersection of an open and a closed subset of $X$.
In particular, open or closed sets are locally closed.
\begin{corollary}
  \label{cor:uniqueness}
  Let $\frF$ be a $\lambda$-dissipative \MPVF for which the Explicit Euler Scheme is locally solvable
  in $\dom(\frF)$.
  If $\dom(\frF)$ is locally closed
    then for every $\mu_0\in \dom(\frF)$ there exists a unique
    maximal strict $\lambda$-\EVI solution $\mu\in \Lip_{\rm loc}([0,T);\dom(\frF))$, $T \in (0, + \infty]$, satisfying \eqref{eq:79}.
\end{corollary}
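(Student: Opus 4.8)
The plan is to reduce the statement to Theorem \ref{thm:existence}(b): the only hypothesis there, beyond local solvability of the Explicit Euler Scheme (which is assumed), is the short-time invariance condition \eqref{eq:86}, namely that every local $\lambda$-\EVI solution starting from a point of $\dom(\frF)$ stays in $\dom(\frF)$ for a positive time. So the whole proof amounts to deducing \eqref{eq:86} from the assumption that $\dom(\frF)$ is locally closed in $\prob_2(\X)$.

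First I would fix $\mu_0\in\dom(\frF)$ and use local closedness to pick an open neighborhood $U$ of $\mu_0$ in $\prob_2(\X)$ with $\dom(\frF)\cap U=\overline{\dom(\frF)}\cap U$. Then I would take an arbitrary local $\lambda$-\EVI solution $\mu:[0,S)\to\overline{\dom(\frF)}$ starting from $\mu_0$; by the very definition of $\lambda$-\EVI solution, $\mu$ is continuous with values in $\overline{\dom(\frF)}$ and $\mu(0)=\mu_0\in U$. By continuity of $\mu$ at $t=0$ there is $\delta\in(0,S)$ such that $\mu(t)\in U$ for every $t\in[0,\delta]$, hence $\mu(t)\in\overline{\dom(\frF)}\cap U=\dom(\frF)\cap U\subset\dom(\frF)$ for all $t\in[0,\delta]$. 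This is precisely \eqref{eq:86}.

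With \eqref{eq:86} in hand, Theorem \ref{thm:existence}(b) applies directly and produces, for every $\mu_0\in\dom(\frF)$, a unique maximal time $T\in(0,+\infty]$ and a unique strict $\lambda$-\EVI solution $\mu\in\Lip_{\rm loc}([0,T);\dom(\frF))$ starting from $\mu_0$, satisfying the blow-up alternative \eqref{eq:79}, and coinciding with any other $\lambda$-\EVI solution on the common interval of definition. There is essentially no genuine obstacle here; the one point to keep in mind is that ``$\lambda$-\EVI solution'' already encodes both continuity of the curve and the state constraint $\mu_t\in\overline{\dom(\frF)}$, which is exactly what makes the neighborhood argument legitimate. (One could alternatively invoke the local Lipschitz estimate coming from Theorem \ref{thm:apriori-estimate} to control $\mu$ near $t=0$, but plain continuity at the initial time is all that is needed.)
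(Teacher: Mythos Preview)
Your proof is correct and matches the paper's intended argument: the corollary is stated without proof precisely because it follows immediately from Theorem~\ref{thm:existence}(b) once one observes that local closedness of $\dom(\frF)$ yields condition~\eqref{eq:86}, which is exactly what you verify via the continuity-and-neighborhood argument.
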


Let us briefly discuss
the question of local solvability of the Explicit Euler scheme.
The main constraints of the Explicit Euler construction relies on the
a priori stability bound and in the
condition $M_\tau^n\in \dom(\frF)$ for every step $0\le n\le \finalstep T\tau$.
This constraint is feasible if at each measure $M^n_\tau$, $0\le n<\finalstep T\tau$, the set
$\Adm_{\tau,L}(M^n_\tau)$ defined by
\begin{equation*}
  \Adm_{\tau,L}(\mu):=\Big\{\Phi\in \frF[\mu]: |\Phi|_2 \le L
  \quad
  \text{and}\quad \exp_{\sharp}^{\tau} \Phi \in \dom(\frF)
  \Big\}
\end{equation*}
is not empty.
If $\dom(\frF)$ is open and $\frF$ is locally bounded, then it is easy to check that
the Explicit Euler scheme is locally solvable (see Lemma \ref{le:open-easy}).
We will adopt the following notation:
\begin{align}
  \label{eq:59}
  |\frF|_2(\mu):={}&\inf\Big\{|\Phi|_2:\Phi\in \frF[\mu]\Big\}\quad
                     \text{for every }\mu\in \dom(\frF),
\end{align}
and we will also introduce
the upper semicontinuous envelope $  |\frF|_{2\star}$ of
the function $|\frF|_2$:
i.e.
\begin{equation*}
  \begin{aligned}
    |\frF|_{2\star}(\mu):={}&\inf_{\delta>0}\sup\Big\{|\frF|_2(\nu):
    \nu\in \dom(\frF),\ W_2(\nu,\mu)\le
    \delta\Big\}
    \\={}&\sup
    \Big\{\limsup_{k\to\infty}|\frF|_2(\mu_k):\mu_k\in \dom(\frF),\ \mu_k\to \mu\text{ in
    }\prob_2(\X)\Big\}.
  \end{aligned}
\end{equation*}

\begin{lemma}
  \label{le:open-easy}
  If $\frF$ is a $\lambda$-dissipative \MPVF, $\mu_0\in \mathrm
  {Int}(\dom(\frF))$
  and $\frF$ is bounded in a neighborhood of $\mu_0$,
  i.e.
  there exists $\varrho>0$ such that $|\frF|_2$ is bounded in $\rB{\mu_0}\varrho$, then
  the Explicit Euler scheme is locally solvable at $\mu_0$
  and the locally Lipschitz solution $\mu$ given by Theorem
  \ref{thm:existence}(a) satisfies
  \begin{equation}
    |\dot\mu|_+(t)
    \le e^{\lambda t} |\frF|_{2\star}(\mu_0)
  \quad \forall \, t \in [0,T).\label{eq:103}
\end{equation}
In particular, if $\dom(\frF)$ is open and
$\frF$ is locally bounded,
for every $\mu_0\in \dom(\frF)$ there exists a unique
maximal $\lambda$-\EVI solution $\mu\in \Lip_{\rm loc}([0,T);\prob_2(\X))$
satisfying \eqref{eq:79} and \eqref{eq:103}.
\end{lemma}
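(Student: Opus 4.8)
The plan is to reduce everything to Theorem \ref{thm:existence}; the only substantial point is to check that the Explicit Euler scheme \eqref{eq:EE} is locally solvable at $\mu_0$ with an admissible speed bound that can be pushed down to $|\frF|_{2\star}(\mu_0)$. The remaining assertions then follow by passing to the limit in the uniform Lipschitz bound \eqref{eq:80} and by invoking the monotonicity property \eqref{eq:88}.

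\emph{Local solvability.} Fix $\eps>0$. Since $\mu_0\in\mathrm{Int}(\dom(\frF))$ and $|\frF|_2$ is bounded near $\mu_0$, the very definition of the envelope $|\frF|_{2\star}$ yields a radius $\delta_\eps>0$ such that $\rB{\mu_0}{\delta_\eps}\subset\dom(\frF)$ and $|\frF|_2(\nu)\le|\frF|_{2\star}(\mu_0)+\eps$ for every $\nu\in\rB{\mu_0}{\delta_\eps}$. Put $L_\eps:=|\frF|_{2\star}(\mu_0)+2\eps$ and $\ttau_\eps:=T_\eps:=\delta_\eps/(4L_\eps)$. For $\tau\in(0,\ttau_\eps)$ one builds the Euler sequence inductively: $M^0_\tau=\mu_0$, and as long as $M^n_\tau\in\rB{\mu_0}{\delta_\eps}$ (so that $|\frF|_2(M^n_\tau)<L_\eps$) one selects $\fF^n_\tau\in\frF[M^n_\tau]$ with $|\fF^n_\tau|_2<L_\eps$ and sets $M^{n+1}_\tau:=\exp^\tau_\sharp\fF^n_\tau$; each step moves the measure by less than $\tau L_\eps$ in $W_2$, so $W_2(M^n_\tau,\mu_0)<\finalstep{T_\eps}{\tau}\,\tau L_\eps\le\delta_\eps$ for all $n\le\finalstep{T_\eps}{\tau}=\ceil{T_\eps/\tau}$, and the recursion goes through. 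Hence $\mathscr E(\mu_0,\tau,T_\eps,L_\eps)\neq\emptyset$ for every $\tau\in(0,\ttau_\eps)$, i.e.\ \eqref{eq:EE} is locally solvable at $\mu_0$.

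\emph{Existence and speed estimate.} By Theorem \ref{thm:existence}(a) there is $T>0$ and a unique $\lambda$-\EVI solution $\mu\in\Lip([0,T];\overline{\dom(\frF)})$ starting from $\mu_0$, and it satisfies \eqref{eq:88}. To bound $|\dot\mu|_+(0)$ fix $\eps$ as above: every $M_\tau\in\mathscr M(\mu_0,\tau,T_\eps,L_\eps)$ satisfies $W_2(M_\tau(t),\mu_0)\le L_\eps t$ on $[0,T_\eps]$ by \eqref{eq:80} (recall $M_\tau(0)=\mu_0$), and letting $\tau\downarrow0$ along a vanishing sequence of step sizes, for which $M_\tau\to\mu$ uniformly by Theorem \ref{thm:apriori-estimate}(4), we get $W_2(\mu_t,\mu_0)\le L_\eps t$ on $[0,T_\eps]$, hence $|\dot\mu|_+(0)\le L_\eps=|\frF|_{2\star}(\mu_0)+2\eps$. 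Sending $\eps\downarrow0$ gives $|\dot\mu|_+(0)\le|\frF|_{2\star}(\mu_0)$, and since $t\mapsto\mathrm e^{-\lambda t}|\dot\mu|_+(t)$ is decreasing on $[0,T)$ by \eqref{eq:88}, we obtain $|\dot\mu|_+(t)\le\mathrm e^{\lambda t}|\dot\mu|_+(0)\le\mathrm e^{\lambda t}|\frF|_{2\star}(\mu_0)$ for all $t\in[0,T)$, which is \eqref{eq:103}.

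\emph{The open, locally bounded case.} If $\dom(\frF)$ is open and $\frF$ is locally bounded, then the first step applies at every $\mu_0\in\dom(\frF)$, so \eqref{eq:EE} is locally solvable in $\dom(\frF)$; moreover any local $\lambda$-\EVI solution is continuous and issues from the open set $\dom(\frF)$, so it stays in $\dom(\frF)$ on a right neighbourhood of $0$, i.e.\ \eqref{eq:86} holds. Theorem \ref{thm:existence}(b) then supplies, for each $\mu_0\in\dom(\frF)$, a unique maximal $\lambda$-\EVI solution $\mu\in\Lip_{\rm loc}([0,T);\dom(\frF))\subset\Lip_{\rm loc}([0,T);\prob_2(\X))$ with \eqref{eq:88} and \eqref{eq:79}, and the estimate of the previous step (whose value at $t=0$ propagates via \eqref{eq:88}) gives \eqref{eq:103} on $[0,T)$. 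The main obstacle throughout is the quantitative bookkeeping in the first step: one must coordinate $\delta_\eps$, $L_\eps$, $T_\eps$, $\ttau_\eps$ so that the discrete orbit never leaves the ball where $|\frF|_2$ is controlled while keeping $L_\eps$ within $2\eps$ of $|\frF|_{2\star}(\mu_0)$, which is exactly what yields the sharp constant in \eqref{eq:103}; everything else is a direct application of Theorem \ref{thm:existence} together with the interpolation bounds \eqref{eq:80} and \eqref{eq:88}.
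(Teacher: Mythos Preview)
Your proof is correct and follows essentially the same approach as the paper's: establish local solvability by an induction showing that the discrete orbit stays in a ball where $|\frF|_2$ is controlled, then obtain \eqref{eq:103} from the Lipschitz bound \eqref{eq:80} on the interpolants combined with the monotonicity \eqref{eq:88}. Your treatment is in fact more explicit than the paper's, which compresses the speed estimate into the single line ``\eqref{eq:88} shows that $|\dot\mu_t|_+\le L\mathrm e^{\lambda t}$ for every $L>|\frF|_{2\star}(\mu_0)$''; you spell out the passage to the limit via \eqref{eq:80} and make the $\eps$-dependence of $\delta_\eps,L_\eps,T_\eps$ transparent, and you correctly identify that the final assertion follows from Theorem~\ref{thm:existence}(b) after checking \eqref{eq:86} via openness and continuity.
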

\begin{proof}
  Let $\mu_0\in \mathrm{Int}(\dom(\frF))$ and let $\varrho, L>0$ so that
  $|\frF|_2(\mu)<L$ for every $\mu\in \rB{\mu_0}\varrho$.
  We set 
  $T:=\varrho/(2L)$, $\ttau=T\land 1$ and we perform 
  a simple induction argument to prove that
  $W_2(M^n_\tau,\mu_0)\le L n\tau<\varrho$ if $n\le \finalstep T\tau$
  so that we can always find an element $\fF^n_\tau\in \frF_{\tau,L}$.
  In fact, if $W_2(M^n_\tau,\mu_0)<Ln\tau$ and $n<\finalstep T\tau$
  then
  $W_2(M^{n+1}_\tau,\mu_0)\le W_2(M^{n+1}_\tau,M^n_\tau)+ W_2(M^n_\tau,\mu_0)
  \le L(n+1)\tau$.
  \eqref{eq:88} shows that
  $|\dot\mu_t|_+\le L\mathrm e^{\lambda t}$
  for every $L>|\frF|_{2\star}(\mu_0)$, so that we obtain \eqref{eq:103}.
\end{proof}
More refined estimates will be discussed in the next sections.
Here we will show another example, tailored to the case of measures
with bounded support.
\begin{proposition}
  \label{prop:local-bounded}
  Let $\frF$ be a $\lambda$-dissipative \MPVF
  such that $\dom(\frF)\subset \prob_{\rm b}(\X)$ and
  for every $\mu_0\in \dom(\frF)$ there exist $\varrho>0$, $L>0$
  such that
  for every $\mu\in \prob_{\rm b}(\X)$
  \begin{equation*}
    \supp(\mu)\subset \supp(\mu_0)+\mathrm B_\X(\varrho)\quad\Rightarrow\quad
    \exists \Phi\in \frF[\mu]: \supp(\sfv_\sharp\Phi)\subset \mathrm B_\X(L).
  \end{equation*}
  Then for every $\mu_0\in \dom(\frF)$ there exists $T\in (0,+\infty]$ and a unique
  maximal strict $\lambda$-\EVI solution $\mu\in \Lip_{\rm{loc}}([0,T);\dom(\frF))$
  satisfying \eqref{eq:79}.
\end{proposition}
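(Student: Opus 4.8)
The plan is to deduce the statement from Theorem \ref{thm:existence}(b): it suffices to check, for this particular $\frF$, that the Explicit Euler Scheme \eqref{eq:EE} is locally solvable in $\dom(\frF)$ and that the persistence condition \eqref{eq:86} holds. Both will come from propagating along the Euler iteration the information that velocities and supports stay bounded, so that the structural hypothesis on $\frF$ keeps applying.

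For local solvability I would fix $\mu_0\in\dom(\frF)$, take the constants $\varrho,L>0$ given by the hypothesis, and set $T:=\varrho/(4L)$, $\ttau:=T\wedge1$. For $\tau\in(0,\ttau)$ and $N:=\finalstep T\tau$ one has $N\tau<T+\tau<2T=\varrho/(2L)$. I would then build $(M^n_\tau,\fF^n_\tau)_{0\le n\le N}$ recursively while maintaining the inductive inclusion $\supp(M^n_\tau)\subset\supp(\mu_0)+\overline{\mathrm B_\X(n\tau L)}$. Since $n\tau L<\varrho/2<\varrho$, this support always lies inside $\supp(\mu_0)+\mathrm B_\X(\varrho)$, so the hypothesis both forces $M^n_\tau\in\dom(\frF)$ and, for $n<N$, provides $\fF^n_\tau\in\frF[M^n_\tau]$ with $\supp(\sfv_\sharp\fF^n_\tau)\subset\overline{\mathrm B_\X(L)}$, whence $|\fF^n_\tau|_2\le L$. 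Using $\supp(\fF^n_\tau)\subset\supp(M^n_\tau)\times\supp(\sfv_\sharp\fF^n_\tau)$ and $\exp^\tau(x,v)=x+\tau v$, the new measure $M^{n+1}_\tau=\exp^\tau_\sharp\fF^n_\tau$ is concentrated on $\supp(\mu_0)+\overline{\mathrm B_\X((n+1)\tau L)}$, which closes the induction. Hence $\mathscr E(\mu_0,\tau,T,L)\neq\emptyset$ for every $\tau\in(0,\ttau)$, i.e.\ \eqref{eq:EE} is locally solvable at each $\mu_0\in\dom(\frF)$.

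To verify \eqref{eq:86} I would take any local $\lambda$-\EVI solution $\mu$ starting from $\mu_0\in\dom(\frF)$. By the local solvability just established and Theorem \ref{thm:existence}(a), on a small interval $[0,T_0)$ the curve $\mu$ coincides with the limit $\bar\mu=\lim_{\tau\downarrow0}M_\tau$ of Euler interpolations $M_\tau\in\mathscr M(\mu_0,\tau,T_0,L)$. For $t\in[0,T_0]$, running the same bookkeeping over the last, partial step $t-n\tau\le\tau$ shows that $M_\tau(t)$ is concentrated on $\supp(\mu_0)+\overline{\mathrm B_\X(tL)}$. Since $W_2$-convergence implies narrow convergence (recall \eqref{eq:important}) and the complement of the closed set $\supp(\mu_0)+\overline{\mathrm B_\X(tL)}$ is open, the limit $\bar\mu(t)$ is concentrated there as well; for $t<\varrho/L$ this set is contained in $\supp(\mu_0)+\mathrm B_\X(\varrho)$ and $\bar\mu(t)\in\prob_{\rm b}(\X)$, so the hypothesis gives $\bar\mu(t)\in\dom(\frF)$. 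Thus $\mu(t)=\bar\mu(t)\in\dom(\frF)$ for $t$ in a right neighbourhood of $0$, which is \eqref{eq:86}. With both hypotheses in hand, Theorem \ref{thm:existence}(b) yields the unique maximal $T\in(0,+\infty]$ and the unique strict $\lambda$-\EVI solution $\mu\in\Lip_{\rm loc}([0,T);\dom(\frF))$ starting from $\mu_0$, together with the blow-up alternative \eqref{eq:79} (and the regularity \eqref{eq:88}).

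The only point requiring genuine care is the passage of the support constraint to the $W_2$-limit $\bar\mu(t)$: $W_2$-convergence does not manifestly preserve supports, but it does imply narrow convergence, and narrow limits of measures concentrated on a fixed closed set remain concentrated on it (Portmanteau applied to the open complement). Everything else — the inductive support and velocity bounds, the calibration of $T$ and $\ttau$ so that $\finalstep T\tau\,\tau L<\varrho$, and the reduction to Theorem \ref{thm:existence} — is routine.
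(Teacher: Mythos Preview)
Your proof is correct and follows essentially the same route as the paper: set $T=\varrho/(4L)$, $\ttau=T\wedge1$, propagate the support bound $\supp(M^n_\tau)\subset\supp(\mu_0)+\overline{\mathrm B_\X(n\tau L)}$ through the Euler iteration to get local solvability, then pass this bound to the limit curve so that \eqref{eq:86} holds and Theorem \ref{thm:existence}(b) applies. The only cosmetic difference is that the paper cites Theorem \ref{thm:apriori-estimate}(3) directly (which gives $W_2(\mu(t),M_\tau(t))\to0$ for any $\lambda$-\EVI solution $\mu$) while you invoke the uniqueness clause of Theorem \ref{thm:existence}(a); these are equivalent here, and your explicit Portmanteau argument for the support bound is precisely what the paper leaves implicit.
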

\begin{proof}
  Arguing as in the proof of Lemma \ref{le:open-easy},
    it is easy to check that
  setting $T:=\varrho/4L$, $\boldsymbol{\tau} = T \wedge 1$
  we can find a discrete solution $(M_\tau,\fF_\tau)\in \mathscr E(\mu_0,\tau,T,L)$
  satisfying the more restrictive condition
  $\supp(M^n_\tau)\subset \supp(\mu_0)+\mathrm B_\X(Ln\tau)\subset
  \supp(\mu_0)+\mathrm B_\X(\varrho/2)$, $\supp(\sfv_\sharp \fF^n_\tau)\subset \mathrm B_\X(L)$
  so that
  the Explicit Euler scheme is locally solvable and $M_\tau$ satisfies the uniform bound
    \begin{equation}
    \label{eq:87}
    \supp(M_\tau(t))\subset \supp(\mu_0)+\mathrm B_\X(\varrho/2)\quad\text{for every }t\in [0,T].
  \end{equation}
  Theorem \ref{thm:existence} then yields the existence of a local solution, and
  Theorem \ref{thm:apriori-estimate}(3) shows that the local solution
  satisfies the same bound \eqref{eq:87} on the support, so that \eqref{eq:86} holds.
\end{proof}

\subsection{Stability and uniqueness}\label{sec:locstabeuniq}
In the following theorem we prove a stability result for $\lambda$-\EVI
solutions 
of \eqref{eq:CP}, as it occurs in the classical Hilbertian case
scenario. We distinguish three cases: the first one
assumes that the Explicit Euler scheme is locally solvable in
$\dom(\frF)$. 
\begin{theorem}[Uniqueness and Stability]\label{theo:uniqsol}
  Let $\frF$ be a $\lambda$-dissipative \MPVF
  such that the Explicit Euler scheme is locally solvable in $\dom(\frF)$, and 
  let $\mu^1, \mu^2: [0,T) \to \overline{\dom(\frF)}$, $T\in (0, + \infty]$, be
  $\lambda$-\wEVI solutions to \eqref{eq:CP}.
  If 
  $\mu^1$ is strict,
  then
  \begin{equation}
    \label{eq:146}
    W_2(\mu^1_t,\mu^2_t)\le W_2(\mu^1_0,\mu^2_0)\mathrm e^{\lambda_+ \,t}
    \quad
    \text{for every }t\in [0,T).
  \end{equation}
  In particular, if $\mu^1_0=\mu^2_0$ then $\mu^1\equiv\mu^2$ in $[0,T)$.\\
  If $\mu^1,\mu^2$ are both strict, then
  \begin{equation}
    W_2(\mu_t^1, \mu_t^2) \le W_2(\mu^1_0, \mu^2_0)
    \mathrm  e^{\lambda t}
 \quad\text{for all } t \in [0,T).\label{eq:94bis}
\end{equation}
\end{theorem}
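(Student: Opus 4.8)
The plan is to derive both estimates from the a priori error bounds for the Explicit Euler scheme collected in Theorem \ref{thm:apriori-estimate}, after reducing the two global inequalities to purely local statements and then propagating them by a continuation argument. Concretely, I would first reduce to the following \emph{local estimate}: for every $s\in(0,T)$ there is $\delta_s>0$ with $s+\delta_s\le T$ such that $W_2(\mu^1_t,\mu^2_t)\le W_2(\mu^1_s,\mu^2_s)\,\mathrm e^{\lambda_+(t-s)}$ for all $t\in[s,s+\delta_s]$ (and, in the ``both strict'' case, the same with $\lambda_+$ replaced by $\lambda$). Indeed, setting $\varphi(t):=W_2(\mu^1_t,\mu^2_t)\mathrm e^{-\lambda_+ t}$ (resp.\ $\psi(t):=W_2(\mu^1_t,\mu^2_t)\mathrm e^{-\lambda t}$), this says $\varphi$ is non-increasing on each $[s,s+\delta_s]$; since $\varphi$ is continuous on $[0,T)$, a standard connectedness argument — fix $a<b$ in $(0,T)$, let $c$ be the supremum of the times up to which $\varphi$ stays $\le\varphi(a)$, and use the local property at $c$ to contradict $c<b$ — upgrades this to monotonicity of $\varphi$ on all of $(0,T)$, and continuity at $t=0$ gives $\varphi(t)\le\varphi(0)=W_2(\mu^1_0,\mu^2_0)$, i.e.\ \eqref{eq:146}; likewise for \eqref{eq:94bis}. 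This is also where the mere continuity of $\mu^1,\mu^2$ up to $t=0$, with values only in $\overline{\dom(\frF)}$ there, is handled.

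For the local estimate giving \eqref{eq:146}, fix $s\in(0,T)$. Since $\mu^1$ is strict, $\mu^1_s\in\dom(\frF)$, so local solvability of \eqref{eq:EE} provides $L,\delta_s,\ttau>0$ (with $s+\delta_s\le T$) such that $\mathscr E(\mu^1_s,\tau,\delta_s,L)\ne\emptyset$ for $\tau\in(0,\ttau)$. Picking a vanishing sequence $\tau(n)$ and $M_n\in\mathscr M(\mu^1_s,\tau(n),\delta_s,L)$, Theorem \ref{thm:apriori-estimate}(4) gives uniform convergence of $M_n$ on $[0,\delta_s]$ to a $\lambda$-\EVI solution starting from $\mu^1_s$; since $r\mapsto\mu^1_{s+r}$ is itself such a solution (the $\lambda$-\EVI inequality is local in time, hence stable under time-shifted restriction), the uniqueness clause of Theorem \ref{thm:existence}(a) identifies the limit with $r\mapsto\mu^1_{s+r}$. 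Applying the error estimate \eqref{eq:139} of Theorem \ref{thm:apriori-estimate}(3) to the $\lambda$-\EVI solution $r\mapsto\mu^2_{s+r}$ and the discrete solution $M_n$ yields
\[
W_2(\mu^2_{s+r},M_n(r))\le\Big(\vartheta\,W_2(\mu^2_s,\mu^1_s)+A(\vartheta)\,L\sqrt{\tau(n)(r+\tau(n))}\Big)\mathrm e^{\lambda_+ r},\qquad r\in[0,\delta_s].
\]
Letting $n\to\infty$ (using $M_n(r)\to\mu^1_{s+r}$ and continuity of $W_2$) and then $\vartheta\downarrow1$ gives the local estimate.

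For the local estimate giving \eqref{eq:94bis}, suppose in addition $\mu^2$ is strict, so $\mu^2_s\in\dom(\frF)$ as well; after replacing $\delta_s$ and $L$ by the common (minimum, resp.\ maximum) choices I also obtain $M_n'\in\mathscr M(\mu^2_s,\tau(n),\delta_s,L)$ with $M_n'\to\mu^2_{s+\cdot}$ uniformly, by the same identification. I then invoke the two–discrete–solutions estimate \eqref{eq:71} of Theorem \ref{thm:apriori-estimate}(1) (applicable once $\tau(n)\lambda_+\le2$):
\[
W_2(M_n(r),M_n'(r))\le\mathrm e^{\lambda r}W_2(\mu^1_s,\mu^2_s)+8L\sqrt{r\tau(n)}\Big(1+|\lambda|\sqrt{r\tau(n)}\Big)\mathrm e^{\lambda_+ r},
\]
whose leading coefficient is $\mathrm e^{\lambda r}$ — which is precisely why the sharper rate is available only when \emph{both} solutions are strict — and pass to the limit $n\to\infty$ to get $W_2(\mu^1_{s+r},\mu^2_{s+r})\le\mathrm e^{\lambda r}W_2(\mu^1_s,\mu^2_s)$. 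Concluding via the reduction step proves \eqref{eq:94bis}; the case $\mu^1_0=\mu^2_0$ follows at once from \eqref{eq:146}.

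The genuinely hard analytic input — convergence of the Explicit Euler scheme together with the sharp error constants — is already packaged in Theorem \ref{thm:apriori-estimate}, so here it is used as a black box. The only real difficulty is organizational: on each small subinterval one must correctly identify the strict solution $\mu^1$ (and, in the second case, $\mu^2$) with the limit of the Explicit Euler scheme \emph{started at the interior time $s$}, which requires the local existence and uniqueness of Theorem \ref{thm:existence}(a); and then one must propagate the right-local monotonicity of $\varphi$ (resp.\ $\psi$) to global monotonicity on $(0,T)$ and across the endpoint $t=0$, where $\mu^1_0,\mu^2_0$ are only known to lie in $\overline{\dom(\frF)}$. I expect this bookkeeping, rather than any estimate, to be the main point to get right.
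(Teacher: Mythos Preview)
Your proof is correct and follows essentially the same strategy as the paper: use local solvability of \eqref{eq:EE} at an interior time $s$, compare via the error bounds of Theorem \ref{thm:apriori-estimate}, let $\tau\downarrow0$ and $\vartheta\downarrow1$, and propagate the resulting local monotonicity of $t\mapsto\mathrm e^{-\lambda_+ t}W_2(\mu^1_t,\mu^2_t)$ (resp.\ $\mathrm e^{-\lambda t}W_2(\mu^1_t,\mu^2_t)$) to all of $(0,T)$, then to $t=0$ by continuity.

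The only organizational difference is that you take an unnecessary detour through the identification $M_n\to\mu^1_{s+\cdot}$ via Theorem \ref{thm:apriori-estimate}(4) and the uniqueness clause of Theorem \ref{thm:existence}(a). The paper avoids this entirely by applying \eqref{eq:139} \emph{twice}, to both $r\mapsto\mu^1_{s+r}$ and $r\mapsto\mu^2_{s+r}$, against the \emph{same} discrete curve $M^1_\tau\in\mathscr M(\mu^1_s,\tau,\delta,L)$, and then using the triangle inequality
\[
W_2(\mu^1_{s+h},\mu^2_{s+h})\le W_2(M^1_\tau(h),\mu^2_{s+h})+W_2(M^1_\tau(h),\mu^1_{s+h}).
\]
In the second term the initial data coincide, so \eqref{eq:139} gives a bound of order $\sqrt\tau$ with no $\vartheta$-factor to worry about; no identification of the limit is needed. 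The same shortcut works for \eqref{eq:94bis} with the three-term triangle inequality and \eqref{eq:71}. Your argument is perfectly valid, just slightly less direct.
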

\begin{proof}
  In order to prove \eqref{eq:146},
  let us fix $t\in (0,T)$.
  Since the Explicit Euler scheme is locally solvable and $\mu^1_t\in \dom(\frF)$,
  there exist $\boldsymbol \tau,\delta,L$
  such that $\mathscr M(\mu^1_t,\tau,\delta,L)$
  is not empty
  for every $\tau\in (0,\ttau)$.
  If $M^1_\tau\in \mathscr M(\mu^1_t,\tau,\delta,L)$, then \eqref{eq:139} yields
  \begin{equation*}
    \begin{aligned}
      W_2(\mu^1_{t+h},\mu^2_{t+h})
      &\le
      W_2(M^1_{\tau}(h),\mu^2_{t+h})+
      W_2(M^1_{\tau}(h),\mu^1_{t+h})
      \\&
      \le
      \vartheta W_2(\mu^1_t,\mu^2_t)\mathrm e^{\lambda_+ h}+
      B\sqrt\tau\quad
    \text{if }0\le h\le\delta,
    \end{aligned}
  \end{equation*}
  for $B=B(\lambda, L, \ttau,\delta)$
  Passing to the limit as $\tau\downarrow0$
  we obtain
  \begin{equation*}
    W_2(\mu^1_{t+h},\mu^2_{t+h})\le 
    \vartheta
    W_2(\mu^1_t,\mu^2_t) \mathrm e^{\lambda_+ h}
  \end{equation*}
  and a further limit as $\vartheta\downarrow1$ yields
  \begin{equation*}
    W_2(\mu^1_{t+h},\mu^2_{t+h})\le 
    W_2(\mu^1_t,\mu^2_t) \mathrm e^{\lambda_+ h}\quad\text{for every }h\in [0,\delta],
  \end{equation*}
  which implies that the map $t\mapsto \mathrm e^{-\lambda_+ t}W_2(\mu^1_t,\mu^2_t)$ is decreasing in $[t,t+\delta]$.
  Since $t$ is arbitrary, we obtain \eqref{eq:146}.

  \medskip\noindent  
  In order to prove the estimate \eqref{eq:94bis}
  (which is better than \eqref{eq:146} when $\lambda<0$),
  we argue in a similar way, using \eqref{eq:71}.

  As before, for a given $t\in (0,T)$, 
  since the Explicit Euler scheme is locally solvable and $\mu^1_t,\mu^2_t\in \dom(\frF)$,
  there exist $\boldsymbol \tau,\delta,L$
  such that $\mathscr M(\mu^1_t,\tau,\delta,L)$
  and
  $\mathscr M(\mu^2_t,\tau,\delta,L)$ are not empty
  for every $\tau\in (0,\ttau)$.
  If $M^i_\tau\in \mathscr M(\mu^i_t,\tau,\delta,L)$, for $i=1,2$, \eqref{eq:71} and \eqref{eq:139} then yield
  \begin{equation*}
    \begin{aligned}
      W_2(\mu^1_{t+h},\mu^2_{t+h})
      &\le
      W_2(\mu^1_{t+h},M^1_\tau(h))+
      W_2(M^1_{\tau}(h),M^2_\tau(h))+
      W_2(\mu^2_{t+h},M^2_\tau(h))
      \\&
      \le \mathrm e^{\lambda h}
    W_2(\mu^1_t,\mu^2_t)+B\sqrt \tau\quad
    \text{if }0\le h \le \delta,
    \end{aligned}
  \end{equation*}
  for $B=B(\lambda, L, \ttau,\delta)$.
  Passing to the limit as $\tau\downarrow0$
  we obtain
  \begin{equation*}
    W_2(\mu^1_{t+h},\mu^2_{t+h})\le \mathrm e^{\lambda h}
     W_2(\mu^1_t,\mu^2_t)
   \end{equation*}
   which implies that the map $t\mapsto \mathrm e^{-\lambda t}W_2(\mu^1_t,\mu^2_t)$ is decreasing in $(0,T)$.
 \end{proof}
 It is possible to prove \eqref{eq:94bis}
 by a direct argument depending on the definition of $\lambda$-\EVI solution
 and a geometric condition on $\dom(\frF)$. The simplest situation
 deals with absolutely continuous curves.
 \begin{theorem}[Stability for absolutely continuous solutions]\label{theo:uniqsol2}
  Let $\frF$ be a $\lambda$-dissipative \MPVF
  and let $\mu^1, \mu^2: [0,T) \to \overline{\dom(\frF)}$, $T\in (0, + \infty]$, be
  \emph{locally absolutely continuous}
  $\lambda$-\wEVI solutions to \eqref{eq:CP}.
  If $\CondGammao\frF{\mu^1_t}{\mu^2_t}0 \ne \emptyset$ for a.e. $t \in (0,T)$, then \eqref{eq:94bis} holds.
In particular, if $\mu^1_0=\mu^2_0$ then
$\mu^1\equiv\mu^2$ in $[0,T)$.
\end{theorem}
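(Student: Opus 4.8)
The plan is to reduce \eqref{eq:94bis} to the differential inequality $w'(t)\le 2\lambda\,w(t)$ for a.e.\ $t\in(0,T)$, where $w(t):=W_2^2(\mu^1_t,\mu^2_t)$; since $w$ is locally absolutely continuous (being the square of the distance evaluated along two loc.\ abs.\ cont.\ curves), Gronwall's lemma then gives $w(t)\le w(0)\mathrm e^{2\lambda t}$, i.e.\ \eqref{eq:94bis}, and the uniqueness statement for equal initial data is immediate. Let $\vv^1,\vv^2$ be the Wasserstein velocity fields of $\mu^1,\mu^2$ from Theorem \ref{thm:tangentv}, put $\Phi^i_t:=(\ii_\X,\vv^i_t)_\sharp\mu^i_t\in\prob_2(\TX)$, and work on the full-measure set $A\subset(0,T)$ of times $t$ at which $w$ is differentiable, $t\in A({\mu^1})\cap A({\mu^2})$, and $\CondGammao\frF{\mu^1_t}{\mu^2_t}0\ne\emptyset$.

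First I would fix $t\in A$ and $\mmu\in\CondGammao\frF{\mu^1_t}{\mu^2_t}0$, and set $\hat\mmu:=\mathsf s_\sharp\mmu\in\Gamma_o(\mu^2_t,\mu^1_t)$. A short bookkeeping with the swap map shows $\rI{\hat\mmu}\frF=1-\rI\mmu\frF$, so the assumption that $0$ is an accumulation point of $\rI\mmu\frF$ is equivalent to $1$ being an accumulation point of $\rI{\hat\mmu}\frF$; hence $\hat\mmu\in\CondGammao\frF{\mu^2_t}{\mu^1_t}1$, and combining \eqref{eq:68} with Theorem \ref{theo:propflfr}(4) (existence and coincidence of the one-sided limits of $\directionalm\frF\mmu t$ and $\directionalp\frF\mmu t$ at accumulation points) gives $\directional\frF{\hat\mmu}{1-}=-\directional\frF{\mmu}{0+}$. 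Now I apply Lemma \ref{le:easy}: inequality \eqref{eq:87b} for the solution $\mu^1$ with test measure $\nu=\mu^2_t$ and coupling $\mmu$ yields $\directionalm{\Phi^1_t}{\mmu}0\le\directional\frF{\mmu}{0+}$, while inequality \eqref{eq:87c} for the solution $\mu^2$ with test measure $\nu=\mu^1_t$ and coupling $\hat\mmu$ yields $\directionalm{\Phi^2_t}{\hat\mmu}0\le\lambda W_2^2(\mu^1_t,\mu^2_t)+\directional\frF{\hat\mmu}{1-}$. Rewriting $\directionalm{\Phi^2_t}{\hat\mmu}0=-\directionalp{\Phi^2_t}{\mmu}1$ via \eqref{eq:68}, adding the two inequalities, and cancelling $\directional\frF{\mmu}{0+}$ produces
\[
\directionalm{\Phi^1_t}{\Phi^2_t}{\mmu}=\directionalm{\Phi^1_t}{\mmu}0-\directionalp{\Phi^2_t}{\mmu}1\le\lambda W_2^2(\mu^1_t,\mu^2_t),
\]
where the first equality is Definition \ref{def:last}.

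The remaining step is to recognize the left-hand side as $\tfrac12 w'(t)$. Since $\Phi^i_t$ is concentrated on the graph of $\vv^i_t$, the ``graph'' case of Remark \ref{rem:particular} gives $\directionalm{\Phi^1_t}{\mmu}0=\int\la\vv^1_t(x_0),x_0-x_1\ra\de\mmu$ and $\directionalp{\Phi^2_t}{\mmu}1=\int\la\vv^2_t(x_1),x_0-x_1\ra\de\mmu$, so that $\directionalm{\Phi^1_t}{\Phi^2_t}{\mmu}=\int\la\vv^1_t(x_0)-\vv^2_t(x_1),x_0-x_1\ra\de\mmu$; on the other hand, since $\mmu\in\Gamma_o(\mu^1_t,\mu^2_t)$ and $t\in A({\mu^1})\cap A({\mu^2})$, formula \eqref{eq:78} of Theorem \ref{thm:refdiff2} identifies this same integral with $\tfrac12\frac{\de}{\de t}W_2^2(\mu^1_t,\mu^2_t)=\tfrac12 w'(t)$. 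Hence $w'(t)\le2\lambda w(t)$ on $A$, $\mathrm e^{-2\lambda t}w(t)$ is non-increasing, and \eqref{eq:94bis} follows.

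I expect the main obstacle to be the asymmetry between what is assumed and what is needed: only the $0$-side accumulation along the geodesic from $\mu^1_t$ to $\mu^2_t$ is available, so one must play the $0$-side estimate \eqref{eq:87b} against $\mu^1$ but the $1$-side estimate \eqref{eq:87c} against $\mu^2$ along the reversed geodesic, and then use the $\mathsf s$-symmetry relations of \eqref{eq:68} together with Theorem \ref{theo:propflfr}(4) to make the two $\directional\frF{}{}$ terms cancel exactly rather than merely add up. A comparatively minor point is checking that all directional pairings involved are finite and well defined for a.e.\ $t$, which follows from $\vv^i_t\in L^2_{\mu^i_t}(\X;\X)$ on $A({\mu^i})$.
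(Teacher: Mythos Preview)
Your proof is correct and follows essentially the same route as the paper: compute $\tfrac12\frac{\de}{\de t}W_2^2(\mu^1_t,\mu^2_t)$ via Theorem \ref{thm:refdiff2}, bound the $\mu^1$-contribution by $\directional\frF{\mmu_t}{0+}$ using \eqref{eq:87b}, bound the $\mu^2$-contribution by $\lambda W_2^2+\directional\frF{\mathsf s_\sharp\mmu_t}{1-}$ using \eqref{eq:87c} applied along the swapped plan, and cancel via $\directional\frF{\mathsf s_\sharp\mmu_t}{1-}=-\directional\frF{\mmu_t}{0+}$. The only cosmetic difference is that the paper starts from the derivative formula \eqref{eq:78} and bounds each integral directly, whereas you first package the two terms into the pairing $\directionalm{\Phi^1_t}{\Phi^2_t}{\mmu}$ of Definition \ref{def:last} and identify it with the derivative at the end; the ingredients and their order of application are the same.
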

\begin{proof}
  Since $\mu^1,\mu^2$ are locally absolutely continuous curves, we can apply
  Theorem
  \ref{thm:refdiff2} and find a subset $A\subset A({\mu^1})\cap
  A({\mu^2})$ of full Lebesgue measure such that
  \eqref{eq:78} holds and $\CondGammao\frF{\mu^1_t}{\mu^2_t}0 \ne \emptyset$ for every $t \in A$.
  Selecting $\mmu_t\in \CondGammao\frF{\mu^1_t}{\mu^2_t}0$, we have 
  \begin{equation*}
    \frac12\frac\d{\d t}W_2^2(\mu^1_t,\mu^2_t)
    =
      \int \la \vv_t^1(x_1),x_1-x_2\ra\,\d\mmu_t(x_1,x_2)+
      \int \la \vv_t^2(x_2),x_2-x_1\ra\,\d\mmu_t(x_1,x_2).
    \end{equation*}
    Using
  \eqref{eq:87b}, \eqref{eq:87c}, for every $t\in A$  we get 
  \begin{displaymath}
   \frac12\frac\d{\d t}W_2^2(\mu^1_t,\mu^2_t) = \bram{(\ii_\X ,
        \vv_t)_{\sharp}\mu_t^1}{\mu_t^2}\le
    \directional\frF{\mmu_t}{0+}+
    \lambda W_2^2(\mu^1_t,\mu^2_t)
    +\directional\frF{\mathsf s_\sharp \mmu_t}{1-}=
    \lambda W_2^2(\mu^1_t,\mu^2_t),
  \end{displaymath}
  where we also used the property
\begin{equation*}
  \directional{\frF}{\mathsf s_\sharp\mmu_t}{1-}=-\directional{\frF}{
    \mmu_t}{0+}.
 \qedhere
\end{equation*}
\end{proof}
The last situation deals with comparison between
an absolutely continuous and a merely continuous $\lambda$-EVI
solution.
The argument is technically more involved
and takes inspiration from the proof of \cite[Theorem
1.1]{NochettoSavare}:
we refer to the Introduction of \cite{NochettoSavare} for an
explanation of the heuristic idea.
Since it is also at the core of the discrete estimates of Theorem
\ref{thm:apriori-estimate}, we
present it here in the easier
continuous setting. 
\begin{theorem}[Refined stability]
  \label{theo:duesolcomp}Let $T>0$ and let
  $\mu^1 \in
  \mathrm{AC}([0,T]; \dom(\frF))$ and $\mu^2 \in \rmC([0,T];
  \overline{\dom(\frF)})$ be $\lambda$-\EVI solutions for the
  $\lambda$-dissipative \MPVF $\frF$. If
  at least one of the following properties hold:
  \begin{enumerate}
  \item 
    $\CondGammao\frF{\mu^1_r}{\mu^2_s}{0} \ne \emptyset \text{
      for every } s \in (0,T) \text{ and }r\in [0,T) \setminus N$
    with $N\subset (0,T),\ \mathcal L^1(N)=0$;
    \item
      $\mu^1$ satisfies \eqref{eq:CPW},
  \end{enumerate} then 
\[ W_2(\mu^1_t, \mu^2_t) \le e^{\lambda t} W_2(\mu^1_0, \mu^2_0) \quad \text{ for every } t \in [0,T]. \]
\end{theorem}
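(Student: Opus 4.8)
The plan is to establish two one-sided differential estimates for $(r,s)\mapsto W_2^2(\mu^1_r,\mu^2_s)$ — one in $r$, exploiting that $\mu^1$ is absolutely continuous, and one in $s$, using only that $\mu^2$ is a continuous $\lambda$-\EVI solution — whose ``coupling terms'' cancel on the diagonal, and then to glue them together along $r=s$ by a doubling-of-variables argument in the spirit of \cite{Kruzkov,Crandall-Evans,NochettoSavare}. Let $\vv^1$ be the Wasserstein velocity field of $\mu^1$ and $A(\mu^1)$ the full-measure set of Theorem~\ref{thm:tangentv}. By Theorem~\ref{thm:refdiff}, for every $s$ and every $r\in A(\mu^1)$,
\[ \tfrac12\,\frac{\d}{\d r}W_2^2(\mu^1_r,\mu^2_s)=\bram{(\ii_\X,\vv^1_r)_\sharp\mu^1_r}{\mu^2_s}, \]
while, writing $\partial^+_s$ for the right upper Dini derivative in $s$, \eqref{eq:81} applied to $\mu^2$ gives, for every $\nu\in\dom(\frF)$ and every $\Phi\in\frF[\nu]$, the bound $\tfrac12\,\partial^+_s W_2^2(\mu^2_s,\nu)\le\lambda W_2^2(\mu^2_s,\nu)-\bram{\Phi}{\mu^2_s}$.

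Under hypothesis~(2) the matching of these two is immediate: for a.e.\ $r$ one has $(\ii_\X,\vv^1_r)_\sharp\mu^1_r\in\frF[\mu^1_r]$ with $\mu^1_r\in\dom(\frF)$, so the choice $\Phi=(\ii_\X,\vv^1_r)_\sharp\mu^1_r$, $\nu=\mu^1_r$ in the second estimate produces exactly the term $\bram{(\ii_\X,\vv^1_r)_\sharp\mu^1_r}{\mu^2_s}$ of the first, and no condition on $\dom(\frF)$ is used. Under hypothesis~(1) the matching is routed through the geodesic pairings of Section~\ref{sec:tangent-bundle}: fixing $\mmu\in\CondGammao\frF{\mu^1_r}{\mu^2_s}0$ (nonempty by assumption), optimality of $\mmu$ yields $\bram{(\ii_\X,\vv^1_r)_\sharp\mu^1_r}{\mu^2_s}\le\directionalm{(\ii_\X,\vv^1_r)_\sharp\mu^1_r}{\mmu}0\le\directional\frF\mmu{0+}$ by \eqref{eq:87b}, whereas $\mathsf s_\sharp\mmu\in\CondGammao\frF{\mu^2_s}{\mu^1_r}1$ (since $\sfx^\sigma_\sharp(\mathsf s_\sharp\mmu)=\sfx^{1-\sigma}_\sharp\mmu$), so \eqref{eq:142} applied to $\mu^2$ together with the identity $\directional\frF{\mathsf s_\sharp\mmu}{1-}=-\directional\frF\mmu{0+}$ — already exploited in the proof of Theorem~\ref{theo:uniqsol2} — gives $\tfrac12\,\partial^+_s W_2^2(\mu^1_r,\mu^2_s)\le\lambda W_2^2(\mu^1_r,\mu^2_s)-\directional\frF\mmu{0+}$. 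In either case the two bounds take the form $\partial_r(\cdot)\le 2C$ and $\partial^+_s(\cdot)\le 2\lambda W_2^2(\mu^1_r,\mu^2_s)-2C$ with the same quantity $C$; a formal chain rule on the diagonal would then give $\tfrac{\d}{\d t}W_2^2(\mu^1_t,\mu^2_t)\le 2\lambda W_2^2(\mu^1_t,\mu^2_t)$, hence the conclusion \eqref{eq:94bis} by Gronwall's lemma.

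The substance of the proof is to legitimise this chain rule although $\mu^2$ is merely continuous, and here I would double the time variable. For $\eps>0$, consider $G_\eps(t,s):=\mathrm e^{-2\lambda s}W_2^2(\mu^1_t,\mu^2_s)-\frac1{2\eps}(t-s)^2$ on $[0,T]^2$ (the exponential weight absorbing $\lambda$), let $s_\eps(t)$ be a maximiser of $G_\eps(t,\cdot)$, and note that $t-s_\eps(t)\to0$ uniformly as $\eps\downarrow0$ by continuity of $(t,s)\mapsto W_2^2(\mu^1_t,\mu^2_s)$ on the compact square. The envelope $\varphi_\eps(t):=\max_s G_\eps(t,s)$ is Lipschitz; for a.e.\ $t$ its right derivative is estimated from above via the $r$-estimate evaluated at $s=s_\eps(t)$ (a bona fide derivative, since $r\mapsto W_2^2(\mu^1_r,\mu^2_{s_\eps(t)})$ is absolutely continuous) plus the penalty contribution, while the maximality of $s_\eps(t)$ together with the $s$-estimate at $s_\eps(t)$ bounds the coupling quantity $C$ so that the penalty terms are reabsorbed; one is left with $\updt\varphi_\eps(t)\le 2\lambda\varphi_\eps(t)+o_\eps(1)$ — passing, if convenient, through the integrated formulation \eqref{eq:140} of the EVI to bypass left/right Dini subtleties. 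Integrating and letting $\eps\downarrow0$, using the lower semicontinuity of $\Phi\mapsto\bram{\Phi}{\nu}$ from Lemma~\ref{lem:lsc} and the domination $\big|\bram{(\ii_\X,\vv^1_r)_\sharp\mu^1_r}{\mu^2_s}\big|\le|\dot\mu^1|(r)\,W_2(\mu^1_r,\mu^2_s)$ with $|\dot\mu^1|\in L^1(0,T)$ to pass to the limit, one obtains $\mathrm e^{-2\lambda t}W_2^2(\mu^1_t,\mu^2_t)\le W_2^2(\mu^1_0,\mu^2_0)$, which is the assertion.

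I expect the doubling step to be the main obstacle. Its delicate points are that the maximiser $s_\eps(t)$ is neither unique nor a priori measurable in $t$, so the right derivative of the Lipschitz envelope $\varphi_\eps$ has to be controlled by a bound valid at \emph{every} maximiser rather than at a chosen one; that the penalty gradient $(t-s_\eps(t))/\eps$ must be kept bounded, which is precisely where the one-sided $s$-estimate at the maximiser is invoked; and that the passage $\eps\downarrow0$ in the integral inequality relies on the semicontinuity and $L^1$-domination above. The exact cancellation of the coupling terms $C$ is what makes this limit transparent, and the whole scheme is a simplified, continuous-time prototype of the discrete error estimates for the Explicit Euler method proved in Theorem~\ref{thm:apriori-estimate}.
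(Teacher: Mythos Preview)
Your identification of the two one-sided estimates and the cancellation of the coupling term is exactly right and matches the paper: the equality $\partial_r w^2(r,s)=h(r,s)$ from Theorem~\ref{thm:refdiff}, and the inequality $\partial^+_s w^2(r,s)\le 2\lambda w^2(r,s)-h(r,s)$ obtained either via \eqref{eq:87b}--\eqref{eq:142} under~(1) or directly under~(2), so that $\partial_r(\mathrm e^{-2\lambda s}w^2)+\partial_s(\mathrm e^{-2\lambda s}w^2)\le0$ formally.

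The gap is in your doubling mechanism. At a maximiser $s_\eps(t)$ of $G_\eps(t,\cdot)$, the first-order condition gives the \emph{upper} bound $\partial^+_s G_\eps\le0$, i.e.\ $\partial^+_s\Phi\le-\tfrac{t-s_\eps}{\eps}$, while the \EVI also gives only an \emph{upper} bound $\partial^+_s\Phi\le-2C\mathrm e^{-2\lambda s_\eps}$. Two upper bounds on the same quantity cannot be combined to yield $\tfrac{t-s_\eps}{\eps}\ge 2C\mathrm e^{-2\lambda s_\eps}$, which is what you would need for the penalty to reabsorb the coupling term in your envelope estimate $\partial^+\varphi_\eps(t)\le 2C\mathrm e^{-2\lambda s_\eps}-\tfrac{t-s_\eps}{\eps}$. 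In the viscosity-solution setting this works because one function is a subsolution and the other a supersolution, producing bounds of opposite sign; here both curves satisfy a one-sided \EVI, and the asymmetry you do have ---equality in $r$ versus inequality in $s$--- is not exploited by maximising in~$s$.

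The paper avoids this by not using an envelope at all. After extending $\mu^1$ by its initial value for $r<0$, it sums the $r$-equality and the $s$-inequality so that $h$ cancels identically, obtaining $\partial_r(\mathrm e^{-2\lambda s}w^2)+\partial_s(\mathrm e^{-2\lambda s}w^2)\le \mathrm e^{-2\lambda s}f$ with $f$ a boundary term supported in $\{r<0\}$. Then it applies the divergence theorem over the thin strip $Q^\eps_{0,t}=\{0\le s\le t,\ s-\eps\le r\le s\}$: the diagonal boundary terms are controlled by $\int_{t-\eps}^t|\dot\mu^1|$, the contribution of $f$ from $\{r<0\}$ vanishes as $\eps\downarrow0$, and one recovers $\mathrm e^{-2\lambda t}w^2(t,t)\le \vartheta\,w^2(0,0)+o_\eps(1)$, then lets $\vartheta\downarrow1$. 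No first-order optimality condition is ever invoked; the cancellation happens inside the integrand before integration.
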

\begin{proof} 
We extend $\mu^1$ in $(-\infty, 0)$ with the constant value
$\mu^1_0$,
we denote by $\vv$ the Wasserstein velocity field
associated to $\mu^1$ (and extended to $0$ outside $A(\mu^1)$)
and
we define the functions $w,f,h:(-\infty,T]\times [0,T]\to \R$ by
\[ w(r,s):= W_2(\mu^1_r, \mu^2_s)\]
\[ f(r,s) := \begin{cases} 2|\frF|_2(\mu_0^1) w(0,s) \quad &\text{ if
    } r<0, \\ 0 &\text{ if } r \ge 0, \end{cases}\qquad
  h(r,s):=
\begin{cases} 0\quad &\text{ if
    } r<0, \\ 2\bram{(\ii_\X,\vv_r)_\sharp\mu^1_r}{\mu^2_s} &\text{ if
    } r \ge 0.
  \end{cases}\]

Theorem \ref{thm:refdiff} yields
\begin{align}\label{eq:firstone}
  \frac{\partial}{\partial r} w^2(r,s) =
  h(r,s)
  \quad &\text { in }
          \mathcal{D}'(-\infty,T)
          \text{ for
          every } s \in
          [0,T].
\end{align}
In case (1) holds,
writing \eqref{eq:142} for $\mu^2$ with $\nu=\mu^1_r$
with $r\in (-\infty,T]\setminus N$,
then for every $\mmu_{rs} \in \CondGammao\frF{\mu^1_r}{\mu^2_s}{0}$ we obtain
\begin{align}
\label{eq:secondone}
  \upds  w^2(r,s) \le 2\lambda w^2(r,s)
  -2\directional\frF{\mmu_{rs}}{0+} \quad &\text { for } s\in
                                            (0,T)\text{ and }
                                            r \in (-\infty,T)\setminus
                                            N.
\end{align}
On the other hand \eqref{eq:87b} yields
\begin{equation}
  \label{eq:32}
  \begin{aligned}
    -2\directional\frF{\mmu_{rs}}{0+}
    &\le -2
    \directionalm{(\ii_\X,\vv_r)_\sharp\mu^1_r}{\mmu_{rs}}{0} \le
    -2\bram{(\ii_\X,\vv_r)_\sharp\mu^1_r}{\mu^2_s} \quad\text{for
      every }r\in A(\mu^1)\setminus N,\\
    -2\directional\frF{\mmu_{rs}}{0+}
    &\le 2|\frF|_2(\mu_0^1) w(0,s)=f(r,s) \quad\text{for every }r<0.
  \end{aligned}
\end{equation}
Combining \eqref{eq:secondone} and
\eqref{eq:32} we obtain
\begin{equation*}
  \upds w^2(r,s)\le 2\lambda w^2(r,s)
  +f(r,s)-h(r,s)\quad\text{for }s\in (0,T),\ r\in (-\infty,0]\cup
  A(\mu^1)\setminus N.
\end{equation*}
Since $|h(r,s)|\le 2 |\dot \mu^1_r|\,w(r,s)$,
applying Lemma \ref{lem:distrib} we get
\begin{equation}
  \label{eq:35biss}
  \frac{\partial}{\partial s} w^2(r,s)\le 2\lambda w^2(r,s)
  +f(r,s)-h(r,s)\quad\text{in }\mathcal D'(0,T)\text{ for a.e.~}r\in (-\infty,T].
\end{equation}
\eqref{eq:35biss} can also be deduced in case (2) using \eqref{eq:CPW}.

By multiplying both inequalities \eqref{eq:firstone} and
\eqref{eq:35biss}
by $e^{-2\lambda s}$ we get
\begin{align*}
  \frac{\partial}{\partial r} \Big(e^{-2\lambda s}w^2(r,s)\Big) =
  e^{-2\lambda s}h(r,s)\quad &\text { in } \mathcal{D}'(-\infty,T) \text{ and every } s \in [0,T], \\
\label{eq:secondone}
  \frac{\partial}{\partial s} \Big(e^{-2\lambda s}w^2(r,s) \Big)\le
  e^{-2\lambda s}\big(f(r,s)-h(r,s)\big)
  \quad &\text { in } \mathcal{D}'(0,T) \text{ and a.e.~} r \in (-\infty,T].
\end{align*}
We fix $t\in [0,T]$ and $\eps>0$ and we apply the Divergence theorem
in \cite[Lemma 6.15]{NochettoSavare} on the two-dimensional strip
$Q_{0,t}^{\eps}$ as in Figure \ref{F:strip},
\[ Q_{0,t}^{\eps} := \{ (r,s) \in \R^2 \mid 0\le s \le t \, , \, s-\eps \le r \le s \}, \]
 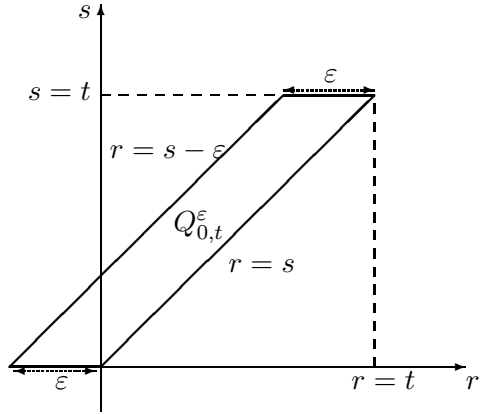
\begin{figure}[htbp]
    \unitlength.6cm
    \centering
 \begin{picture}(10,10)
  \put(1,0){\vector(0,1)9}
  \put(0,1){\vector(1,0)9}
  \multiput(7,1)(0,.4){15}{\line(0,1){.2}}
  \multiput(1,7)(.4,0){15}{\line(1,0){.2}}
  \put(5.2,7.1){\vector(-1,0){.2}}
  \put(6.8,7.1){\vector(1,0){.2}}
  \multiput(5.1,7.1)(.1,0){18}{\line(1,0){.05}}
  \put(5.9,7.3){$\varepsilon$}
  \put(-0.7,0.9){\vector(-1,0){.2}}
  \put(0.7,0.9){\vector(1,0){.2}}
  \multiput(-0.7,0.9)(.1,0){15}{\line(1,0){.05}}
  \put(0,.5){$\varepsilon$}
  \put(6.5,.5){$r=t$}
  \put(-0.6,6.9){$s=t$}
  \thicklines
  \put(1,1){\line(1,1){6}}
  \put(-1,1){\line(1,1){6}}
  \put(-1,1){\line(1,0){2}}
  \put(5,7){\line(1,0)2}
\put(2.6,4){$Q^\varepsilon_{0,t}$}
\put(9,.5){$r$}
\put(.5,8.7){$s$}
\put(3.8,3.1){$r=s$}
\put(1.2,5.6){$r=s-\varepsilon$}
\end{picture}\caption{Strip $Q^\eps_{0,t}$ corresponding to penalization
about the diagonal $\{r=s\}$.}
\label{F:strip}
\end{figure}
and we get
\begin{equation*}
\int_{t-\eps}^t e^{-2\lambda t} w^2(r,t)\de r \le \int_{-\eps}^0 w^2(r,0) \de r + \iint_{Q^{\eps}_{0,t}} e^{-2\lambda s} f(r,s)\de r \de s.
\end{equation*} 
Using 
\[ w(t,t) \le \int_{r}^t |\dot{\mu}^1|(u) \de u+ w(r,t) \le \int_{t-\eps}^t |\dot{\mu}^1|(u) \de u + w(r,t)  \quad \text{ if } t-\eps \le r \le t, \] 
then, for every $\vartheta, \vartheta_{\star}>1$ conjugate coefficients ($\vartheta_{\star}=\vartheta/(\vartheta-1)$), we get
\begin{equation}\label{eq:mix1}
w^2(t,t)\le\vartheta w^2(r,t)+\vartheta_{\star}\left ( \int_{t-\eps}^t |\dot{\mu}^1|(u) \de u \right )^2.
\end{equation}
Integrating \eqref{eq:mix1} w.r.t. $r$ in the interval $(t-\eps,t)$, we obtain
\begin{equation} \label{eq:lhsbdd} 
e^{-2\lambda t} w^2(t,t) \le \frac{\vartheta}{\eps} \int_{t-\eps}^t e^{-2\lambda t} w^2(r,t)\de r + \vartheta_{\star}\left ( \int_{t-\eps}^t |\dot{\mu}^1|(u) \de u \right )^2 \max\{1, e^{2|\lambda|T}\}.
\end{equation}
Finally, we have the following inequality
\begin{equation} \label{eq:fbound} \eps^{-1} \iint_{Q^{\eps}_{0,t}} e^{-2 \lambda s} f(r,s) \de r \de s \le
2|\frF|_2(\mu_0) \int_0^{\eps} e^{-2\lambda s}w(0,s)\de s.
\end{equation}
Summing up \eqref{eq:lhsbdd} and \eqref{eq:fbound} we obtain 

\begin{equation*}
e^{-2\lambda t} w^2(t) \le \vartheta \left ( w^2(0) + 2|\frF|_2(\mu_0) \int_0^{\eps} e^{-2\lambda s}w(0,s)\de s \right) + \vartheta_{\star}\left ( \int_{t-\eps}^t |\dot{\mu}^1|(u) \de u \right )^2 \max\{1, e^{2|\lambda|T}\}.
\end{equation*}
where we have used the notation $w(s)= w(s,s)$. Taking the limit as
$\eps \downarrow 0$ and $\vartheta \downarrow 1$, we obtain the
thesis.
\end{proof} 

\begin{corollary}[Local Lipschitz estimate]
  Let $\frF$ be a $\lambda$-dissipative \MPVF
  and let $\mu :(0,T) \to \overline{\dom(\frF)}$, $T\in (0, + \infty]$, be 
  a $\lambda$-\wEVI solution to \eqref{eq:CP}. 
  If at least one of the following two conditions holds
  \begin{enumerate}[label=\rm(\alph*)]
  \item $\mu$ is strict and \eqref{eq:EE} is locally solvable in $\dom(\frF)$,
  \item $\mu$ is locally absolutely continuous
    and \eqref{eq:157} holds,
  \end{enumerate}
  then $\mu$ is locally Lipschitz and
  \begin{equation}
    \label{eq:100}
    t\mapsto \mathrm e^{-\lambda t} |\dot \mu|_+(t)
    \quad\text{is decreasing in }(0,T).
\end{equation}
\end{corollary}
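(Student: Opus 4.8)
The plan is to reduce \eqref{eq:100} to a single pointwise comparison, valid in both settings (a) and (b):
\[
  W_2(\mu_{t+h},\mu_t)\le \mathrm e^{\lambda(t-s)}\,W_2(\mu_{s+h},\mu_s),
  \qquad 0<s<t<T,\ 0<h<T-t,
\]
and then differentiate. To obtain it, fix such $s,t,h$ and consider the time-translated curves $\mu^1_r:=\mu_{s+r}$ on $[0,T-s)$ and $\mu^2_r:=\mu_{s+h+r}$ on $[0,T-s-h)$. Since \eqref{eq:EVI} is a condition local in time, $\mu^1$ and $\mu^2$ are again $\lambda$-\EVI solutions, starting respectively from $\mu_s$ and $\mu_{s+h}$; I would restrict both to the common interval $[0,T-s-h)$, which still contains $r=t-s$ because $h<T-t$.

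In case (a): $\mu$ is strict and $s,\,s+h>0$, so $\mu^1$ and $\mu^2$ are \emph{strict} $\lambda$-\EVI solutions with initial data in $\dom(\frF)$, and since (a) also provides local solvability of \eqref{eq:EE} in $\dom(\frF)$, the ``both strict'' estimate \eqref{eq:94bis} of Theorem~\ref{theo:uniqsol} gives $W_2(\mu^1_r,\mu^2_r)\le \mathrm e^{\lambda r}W_2(\mu_s,\mu_{s+h})$; evaluating at $r=t-s$ yields the comparison. In case (b): $\mu^1,\mu^2$ are locally absolutely continuous, and applying \eqref{eq:157} to the pair $(\mu^1_r,\mu^2_r)\in\overline{\dom(\frF)}\times\overline{\dom(\frF)}$ we get that $\CondGammao\frF{\mu^1_r}{\mu^2_r}{01}$, hence a fortiori $\CondGammao\frF{\mu^1_r}{\mu^2_r}{0}$, is non-empty for every $r$; so Theorem~\ref{theo:uniqsol2} applies and again gives $W_2(\mu^1_r,\mu^2_r)\le \mathrm e^{\lambda r}W_2(\mu_s,\mu_{s+h})$, whence the comparison.

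Dividing the comparison by $h>0$ and letting $h\downarrow0$ gives $|\dot\mu|_+(t)\le \mathrm e^{\lambda(t-s)}|\dot\mu|_+(s)$ for all $0<s<t<T$, i.e.\ $t\mapsto \mathrm e^{-\lambda t}|\dot\mu|_+(t)$ is non-increasing on $(0,T)$, interpreted as a function with values in $[0,+\infty]$; this is \eqref{eq:100}. To upgrade this to local Lipschitz continuity I would next exhibit points where $|\dot\mu|_+$ is finite. In case (a), for any $s\in(0,T)$ we have $\mu_s\in\dom(\frF)$ and \eqref{eq:EE} is locally solvable there, so Theorem~\ref{thm:existence}(a) produces the unique $\lambda$-\EVI solution issuing from $\mu_s$, which is Lipschitz near its initial time and, by the uniqueness clause of the same theorem, agrees with $r\mapsto\mu_{s+r}$ on a right-neighborhood of $0$; hence $|\dot\mu|_+(s)<\infty$. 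In case (b), local absolute continuity forces $|\dot\mu|_+(t)=|\dot\mu|(t)<\infty$ for a.e.\ $t\in(0,T)$, so such finiteness points accumulate at $0$. In either case, once $|\dot\mu|_+(s_0)<\infty$ for some $s_0$, the monotonicity just proved gives $|\dot\mu|_+(t)\le \mathrm e^{\lambda(t-s_0)}|\dot\mu|_+(s_0)<\infty$ on $(s_0,T)$, bounded on each compact subinterval; letting $s_0\downarrow0$ then shows $|\dot\mu|_+$ is locally bounded on $(0,T)$, and the classical fact that a continuous curve whose right upper metric derivative is $\le C$ on an interval is $C$-Lipschitz there yields local Lipschitz continuity of $\mu$.

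The translation invariance of \eqref{eq:EVI}, the inheritance of strictness / local absolute continuity by $\mu^1,\mu^2$, and the $\limsup$ manipulation are all routine; the step requiring care is the logical ordering just described — \eqref{eq:100} must be established first as a statement about the possibly infinite function $|\dot\mu|_+$ (so that one cannot argue by ``gluing'' local Lipschitz neighbourhoods, since a merely locally non-increasing function need not be globally non-increasing), and only afterwards, using a finiteness point together with that monotonicity, does one recover local boundedness of $|\dot\mu|_+$ and hence the Lipschitz bound. I do not foresee any genuine obstacle beyond choosing, according to whether (a) or (b) holds, between Theorem~\ref{theo:uniqsol} and Theorem~\ref{theo:uniqsol2} when producing the comparison.
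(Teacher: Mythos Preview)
Your proof is correct and follows essentially the same route as the paper: compare the translated curves $r\mapsto\mu_{s+r}$ and $r\mapsto\mu_{s+h+r}$, apply the stability estimate \eqref{eq:94bis} (via Theorem~\ref{theo:uniqsol} in case (a) or Theorem~\ref{theo:uniqsol2} in case (b)), divide by $h$ and pass to the $\limsup$. The paper's proof is terser and simply cites \eqref{eq:94bis} without separating the two cases, and it elides the finiteness-and-monotonicity argument for local Lipschitz continuity that you spell out carefully; your extra detail is sound and fills in what the paper leaves implicit.
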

\begin{proof}
  Since for every $h>0$ the curve
  $t\mapsto \mu_{t+h}$ is a $\lambda$-\EVI solution,
  \eqref{eq:94bis} yields 
  \begin{equation*}
    \mathrm e^{-\lambda (t-s)}W_2(\mu_{t+h},\mu_t)
    \le W_2(\mu_{s+h},\mu_s)
    \quad \text{for every $0<s<t$.}
  \end{equation*}
  Dividing by $h$ and taking the limsup as $h\downarrow0$, we
  get \eqref{eq:100}, which in turn shows the local Lipschitz character
  of $\mu$.  
\end{proof}

\subsection{Global existence and generation of \texorpdfstring{$\lambda$}{l}-flows}\label{sec:globexist}
We collect here a few simple results on the existence of global solutions and the generation of a $\lambda$-flow.
A first result can be deduced from the
global solvability of the Explicit Euler scheme.
\begin{theorem}
  [Global existence]
  \label{thm:global}
  Let $\frF$ be a $\lambda$-dissipative \MPVF.
  If the Explicit Euler Scheme is globally solvable at $\mu_0\in \dom(\frF)$,
  then 
  there exists a unique global $\lambda$-\EVI solution
  $\mu\in \Lip_{\rm loc}([0,\infty);\overline{\dom(\frF)})$ starting from $\mu_0$.
\end{theorem}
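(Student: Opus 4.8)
The statement asserts global existence and uniqueness of a $\lambda$-\EVI solution under the hypothesis that the Explicit Euler scheme is globally solvable at $\mu_0$, i.e.\ for every $T>0$ there are $\ttau,L>0$ with $\mathscr E(\mu_0,\tau,T,L)\neq\emptyset$ for all $\tau\in(0,\ttau)$. The natural strategy is to run the machinery of Theorem \ref{thm:apriori-estimate} on each fixed bounded interval $[0,T]$, produce a $\lambda$-\EVI solution $\mu^T$ there, and then glue these solutions together using the uniqueness part of Theorem \ref{thm:existence}(a) so as to obtain a single solution defined on $[0,\infty)$.

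First I would fix $T>0$ and apply global solvability to obtain $\ttau=\ttau(T)$ and $L=L(T)$ so that $\mathscr M(\mu_0,\tau,T,L)$ is nonempty for every $\tau\in(0,\ttau)$. Choosing any vanishing sequence of step sizes $\tau(n)\downarrow 0$ with $\tau(n)<\ttau$, and picking $M_n\in\mathscr M(\mu_0,\tau(n),T,L)$, part (4) of Theorem \ref{thm:apriori-estimate} (with $\mu_{0,n}\equiv\mu_0$) guarantees that $M_n$ converges uniformly on $[0,T]$ to a curve $\mu^T\in\Lip([0,T];\overline{\dom(\frF)})$ which is a $\lambda$-\EVI solution starting from $\mu_0$; the Lipschitz bound comes from \eqref{eq:80}, whose constant $L=L(T)$ is uniform along the sequence. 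This settles existence on each $[0,T]$.

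The gluing step is where a little care is needed. For $0<T_1<T_2$ the two solutions $\mu^{T_1}$ and $\mu^{T_2}$ are both $\lambda$-\EVI solutions on $[0,T_1]$ starting from the same datum $\mu_0\in\dom(\frF)$. Since the Explicit Euler scheme is globally, hence in particular locally, solvable at $\mu_0$, Theorem \ref{thm:existence}(a) applies: it gives a time $T^\ast>0$ and a unique $\lambda$-\EVI solution on $[0,T^\ast]$, and moreover states that \emph{any} other $\lambda$-\EVI solution starting from $\mu_0$ coincides with it on the common interval $[0,T^\ast\wedge T']$. Applying this comparison twice (to $\mu^{T_1}$ and to the canonical local solution, and to $\mu^{T_2}$ and the canonical local solution) yields $\mu^{T_1}=\mu^{T_2}$ on $[0,T^\ast]$. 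To propagate the coincidence to all of $[0,T_1]$ I would use the standard connectedness argument already employed in the proof of Theorem \ref{thm:existence}(b): the set $\{t\in[0,T_1]:\mu^{T_1}(r)=\mu^{T_2}(r)\ \forall r\in[0,t]\}$ is nonempty, closed by continuity, and open — at any $t$ in it one has $\mu^{T_1}(t)=\mu^{T_2}(t)$, this common value may fail to lie in $\dom(\frF)$, but one can instead invoke the estimate \eqref{eq:139} of Theorem \ref{thm:apriori-estimate}(3) applied on a small interval starting at $t$ using a discrete solution issuing from $\mu^{T_1}(t)$ when $\mu^{T_1}(t)\in\dom(\frF)$, and otherwise fall back on the refined-stability comparison; the cleanest route is to observe that both restrictions are limits of discrete solutions and apply \eqref{eq:139} with $M^0_\tau=\mu_0$ on the \emph{whole} interval, giving $W_2(\mu^{T_1}(t),M_\tau(t))$ and $W_2(\mu^{T_2}(t),M_\tau(t))$ both $\to 0$, whence $\mu^{T_1}\equiv\mu^{T_2}$ on $[0,T_1]$ directly. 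This last observation in fact makes the connectedness argument unnecessary: each $\mu^T$ is the uniform limit of the \emph{same} family of discrete solutions $M_\tau\in\mathscr M(\mu_0,\tau,T,L)$ restricted appropriately, so consistency is automatic.

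Having consistency, I would define $\mu:[0,\infty)\to\overline{\dom(\frF)}$ by $\mu(t):=\mu^T(t)$ for any $T>t$; this is well defined, and it is a $\lambda$-\EVI solution because the defining inequality \eqref{eq:EVI} is local in time and holds on each $[0,T]$. It is locally Lipschitz since on each $[0,T]$ it satisfies the Lipschitz bound with constant $L(T)$, so $\mu\in\Lip_{\rm loc}([0,\infty);\overline{\dom(\frF)})$. Uniqueness of the global solution follows at once from Theorem \ref{thm:existence}(a): any $\lambda$-\EVI solution $\mu'$ starting from $\mu_0$ agrees with the canonical local solution on a neighborhood of $0$, hence with $\mu$ there, and the same consistency/connectedness reasoning extends the agreement to all of $[0,\infty)$. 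The main obstacle, and the only genuinely delicate point, is the gluing: making sure that the solutions produced on different intervals really are restrictions of one another despite the fact that Theorem \ref{thm:existence}(a) only directly compares on a \emph{short} interval $[0,T^\ast]$ — which is resolved, as indicated, by noting that all of them arise as uniform limits of discrete Euler solutions with the same initial datum $\mu_0$, so that the error estimate \eqref{eq:139} forces them to coincide wherever both are defined.
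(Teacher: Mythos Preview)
Your approach is correct and essentially the same as the paper's, which simply says to argue as in the proof of Theorem~\ref{thm:existence}(a) on each interval $[0,T]$. Your detour through connectedness for the gluing step is unnecessary: under global solvability the $T$ in Theorem~\ref{thm:existence}(a) can be any $T>0$, so its uniqueness clause (which is itself proved via \eqref{eq:139}) already gives $\mu^{T_1}=\mu^{T_2}$ on the full common interval $[0,T_1\wedge T_2]$, exactly the conclusion you eventually reach by invoking \eqref{eq:139} directly.
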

\begin{proof}
  We can argue as in the proof of
  Theorem \ref{thm:existence}(a), observing that
  the global solvability of \eqref{eq:EE} allows for
  the construction of a limit solution on every interval $[0,T]$, $T>0$.
\end{proof}
Let us provide a simple condition ensuring global solvability,
whose proof is deferred to Section \ref{sec:EulerScheme}.
\begin{proposition} \label{prop:gsolvcond} 
  Let $\frF$ be a $\lambda$-dissipative \MPVF
  such that for every $R>0$ there exist $M=\mathrm M(R)>0$
  and $\bar\tau=\bar\tau(R)>0$
  such that
  \begin{equation}
    \label{eq:101pre}
    \mu\in \dom(\frF),\
    \rsqm \mu\le R,
    \ 0<\tau\le \bar\tau\quad
    \Rightarrow\quad
    \exists \,\Phi\in \frF[\mu]:
    |\Phi|_2\le \mathrm M(R),\
    \exp^\tau_\sharp\Phi\in \dom(\frF).
  \end{equation}
  Then the Explicit Euler scheme is globally solvable in $\dom(\frF)$.
  \end{proposition}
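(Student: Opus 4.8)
The plan is to fix an arbitrary $\mu_0\in\dom(\frF)$ and $T>0$ and exhibit $\ttau,L>0$, depending only on $\mu_0,T$ (and $\frF$), such that $\mathscr E(\mu_0,\tau,T,L)\neq\emptyset$ for every $\tau\in(0,\ttau)$; by Definition of global solvability this gives the statement. The construction of the iterates $(M^n_\tau,\fF^n_\tau)$ is forced on us by \eqref{eq:101pre}: the only real issue is the feasibility constraint, which at step $n$ requires $M^n_\tau\in\dom(\frF)$ and $\rsqm{M^n_\tau}\le R$ for the $R$ at which the hypothesis is invoked. Hence we need an \emph{a priori} bound on $\rsqm{M^n_\tau}$, valid for $0\le n\le \finalstep T\tau$, that does \emph{not} degenerate as the velocity bound $\mathrm M(R)$ grows. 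The crude estimate $\rsqm{M^{n+1}_\tau}\le \rsqm{M^n_\tau}+\tau|\fF^n_\tau|_2$ is useless here (it would ask for a fixed point $R\gtrsim \rsqm{\mu_0}+T\,\mathrm M(R)$, which may fail to exist when $\mathrm M$ is superlinear), so the key idea — in the spirit of the a priori bound \eqref{eq:128bis} — is to use $\lambda$-dissipativity to control the displacement $W_2(M^n_\tau,\mu_0)$ in terms of $|\frF|_2(\mu_0)$ alone, up to a quadratic-in-$\tau$ Euler error.

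Concretely, since $\mu_0\in\dom(\frF)$ we may fix $\bar\Phi\in\frF[\mu_0]$ and set $m_0:=|\bar\Phi|_2$ (which can be taken arbitrarily close to $|\frF|_2(\mu_0)$), and write $d_n:=W_2(M^n_\tau,\mu_0)$. Applying the semiconcavity inequality of Proposition \ref{prop:concavity}(3), in the form $W_2^2(\exp^\tau_\sharp\Phi,\nu)\le W_2^2(\sfx_\sharp\Phi,\nu)+2\tau\bram{\Phi}{\nu}+\tau^2|\Phi|_2^2$, with $\Phi=\fF^n_\tau$ and $\nu=\mu_0$, then the weak dissipativity \eqref{Hdiss} (valid by Remark \ref{rmk:equivdiss}) in the form $\bram{\fF^n_\tau}{\mu_0}+\bram{\bar\Phi}{M^n_\tau}\le \lambda d_n^2$, and finally the elementary estimate $\bram{\bar\Phi}{M^n_\tau}\ge -|\bar\Phi|_2\,d_n=-m_0 d_n$ obtained from Theorem \ref{thm:characterization} and Cauchy--Schwarz, one gets
\[
 d_{n+1}^2\le (1+2\lambda\tau)d_n^2+2\tau m_0 d_n+\tau^2|\fF^n_\tau|_2^2\le (1+c\tau)d_n^2+\tau\big(m_0^2+\tau L^2\big),
\]
with $c:=2\lambda_++1$ and $L$ any bound for the $|\fF^n_\tau|_2$ used. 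A discrete Gronwall inequality with $d_0=0$ then yields, whenever $n\tau\le T+\tau$,
\[
 d_n^2\le \frac{e^{c(T+\tau)}}{c}\,\big(m_0^2+\tau L^2\big).
\]

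Now the constants are chosen without circularity: put $D_0^2:=\frac{e^{c(T+1)}}{c}m_0^2+1$, then $R:=\rsqm{\mu_0}+D_0$, then $L:=\mathrm M(R)$, and finally pick $\ttau\in\big(0,\min\{1,\bar\tau(R)\}\big]$ small enough that $\ttau L^2\,\frac{e^{c(T+1)}}{c}\le 1$ — possible precisely because $D_0,R,L$ have already been fixed. For $\tau\in(0,\ttau]$ one builds $(M^n_\tau,\fF^n_\tau)$ by induction on $n\le\finalstep T\tau$: assuming $M^n_\tau\in\dom(\frF)$ and $d_n\le D_0$, one has $\rsqm{M^n_\tau}\le \rsqm{\mu_0}+d_n\le R$, so \eqref{eq:101pre} (with $\tau\le\ttau\le\bar\tau(R)$) provides $\fF^n_\tau\in\frF[M^n_\tau]$ with $|\fF^n_\tau|_2\le \mathrm M(R)=L$ and $M^{n+1}_\tau:=\exp^\tau_\sharp\fF^n_\tau\in\dom(\frF)$; the Gronwall bound above (applicable since $(n+1)\tau\le T+1$) together with the choice of $\ttau$ and $\tau\le 1$ gives $d_{n+1}^2\le \frac{e^{c(T+1)}}{c}m_0^2+1=D_0^2$, closing the induction. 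Hence $(M^n_\tau,\fF^n_\tau)_{0\le n\le\finalstep T\tau}\in\mathscr E(\mu_0,\tau,T,L)$ for all $\tau\in(0,\ttau)$, which is global solvability at $\mu_0$, and $\mu_0\in\dom(\frF)$ being arbitrary this proves that \eqref{eq:EE} is globally solvable in $\dom(\frF)$. I expect the only genuine obstacle to be the a priori moment estimate: one must avoid the naive bound and instead extract from $\lambda$-dissipativity a displacement estimate whose leading order depends only on $|\frF|_2(\mu_0)$, the quadratic-in-$\tau$ correction being harmless after summation over the $O(T/\tau)$ steps.
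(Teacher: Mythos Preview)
Your proof is correct and follows essentially the same approach as the paper: both fix $\bar\Phi\in\frF[\mu_0]$, derive the recursive inequality $d_{n+1}^2\le (1+c\tau)d_n^2+\tau(|\bar\Phi|_2^2+\tau L^2)$ via the semiconcavity of $g$ (Proposition~\ref{prop:concavity}(3), equivalently \eqref{eq:ievigeneral}), the weak dissipativity \eqref{Hdiss}, and Young's inequality, then close the induction by choosing $R$ independently of $L$ and $\ttau$ small enough that $\ttau L^2$ times the Gronwall constant is bounded. The paper writes the explicit constants $R=\rsqm{\mu_0}+(|\Psi_0|_2+1)\sqrt{2T}\,e^{(1+2\lambda_+)T}$ and $\ttau=L^{-2}\wedge\bar\tau(R)\wedge T$, but the logic is the same as yours.
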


Global existence of $\lambda$-\EVI solution is also related to
the existence of a $\lambda$-flow.
\begin{definition}
  We say that the $\lambda$-dissipative \MPVF $\frF$ \emph{generates a $\lambda$-flow} if
  for every $\mu_0\in \overline{\dom(\frF)}$ there exists a unique
  $\lambda$-\EVI solution $\mu=\mathrm S[\mu_0]$ starting from $\mu_0$
  and the maps $\mu_0\mapsto \mathrm S_t[\mu_0]=(\mathrm S[\mu_0])_t$
  induce a semigroup of Lipschitz transformations
  $(\rmS_t)_{t\ge0}$ of $\overline{\dom(\frF)}$ satisfying
  \begin{equation}
    \label{eq:143}
    W_2(\mathrm S_t[\mu_0],\mathrm S_t[\mu_1])\le
    \mathrm e^{\lambda t}W_2(\mu_0,\mu_1)\quad
    \text{for every }t\ge0.
  \end{equation}
\end{definition}

\begin{theorem}
  [Generation of a $\lambda$-flow]
  \label{thm:globall}
  Let $\frF$ be a $\lambda$-dissipative \MPVF.
  If at least one of the following properties is satisfied:
  \begin{enumerate}[label=\rm(\alph*)]
  \item
    the Explicit Euler Scheme is globally solvable
    for every $\mu_0$ in a dense subset of $\dom(\frF)$;
  \item the Explicit Euler Scheme is locally solvable
    in $\dom(\frF)$ and,
    for every $\mu_0$ in a dense subset of $\dom(\frF)$,
    there exists a strict global $\lambda$-\EVI solution starting from
    $\mu_0$;
      \item
    the Explicit Euler Scheme is locally solvable
    in $\dom(\frF)$ and $\dom(\frF)$ is closed;
  \item for every $\mu_0\in \dom(\frF)$, $\mu_1 \in \overline{\dom(\frF)}$
    $\CondGammao\frF{\mu_0}{\mu_1}{0}\neq\emptyset$ 
    and,
    for every $\mu_0$ in a dense subset of $\dom(\frF)$,
    there exists a locally absolutely continuous strict global
    $\lambda$-\EVI solution starting from $\mu_0$;
  \item for every 
      $\mu_0$ in a dense subset of $\dom(\frF)$,
      there exists a locally absolutely continuous 
      solution of \eqref{eq:CPW} starting from $\mu_0$,
  \end{enumerate}
  then $\frF$ generates a $\lambda$-flow.
\end{theorem}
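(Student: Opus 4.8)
The plan is to reduce every one of the five sufficient conditions (a)--(e) to a common intermediate situation: namely, that there is a dense subset $D_0 \subset \dom(\frF)$ such that every $\mu_0 \in D_0$ admits a \emph{global} $\lambda$-\EVI solution, together with the uniform a priori contraction estimate \eqref{eq:146} (or \eqref{eq:94bis}) between such solutions. Once this is achieved, the global flow on all of $\overline{\dom(\frF)}$ is obtained by density and completeness. Concretely, first I would treat the easy implications among the conditions. Condition (a) gives global $\lambda$-\EVI solutions on $D_0$ directly via Theorem \ref{thm:global}; condition (b) postulates them; condition (c) follows from Corollary \ref{cor:uniqueness} once one checks that the maximal solution cannot blow up in finite time, since $\dom(\frF)$ closed forces the limit $\lim_{t\uparrow T}\mu_t$ (which exists by the Lipschitz bound \eqref{eq:88} / \eqref{eq:128bis}) to lie in $\dom(\frF)$, contradicting \eqref{eq:79}; condition (e) produces locally absolutely continuous solutions of \eqref{eq:CPW}, which by Theorem \ref{theo:equivwssol}(1) are $\lambda$-\EVI solutions, and these are global because \eqref{eq:CPW} solutions do not leave $\dom(\frF)$; condition (d) is the same, using in addition the geometric hypothesis $\CondGammao\frF{\mu_0}{\mu_1}{0}\neq\emptyset$ to invoke Theorem \ref{theo:uniqsol2} for the contraction estimate. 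In all cases I also need local solvability of \eqref{eq:EE} in $\dom(\frF)$ to invoke the stability machinery of Theorem \ref{thm:apriori-estimate}; in cases (a) and (b) this is part of the hypotheses, and in cases (d),(e) local existence of a $\lambda$-\EVI solution through each point already suffices for the uniqueness argument below, so one can work with Theorem \ref{theo:uniqsol2} / \ref{theo:duesolcomp} directly.

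Next I would establish \textbf{uniqueness and the contraction property on $D_0$}. Given $\mu_0,\mu_1 \in D_0$ with global $\lambda$-\EVI solutions $\mu^0,\mu^1$, at least one of them is strict (in cases (a),(b),(c) because $\dom(\frF)$-valued, and in (d),(e) the \eqref{eq:CPW}-solution is strict), so Theorem \ref{theo:uniqsol} (or Theorem \ref{theo:uniqsol2} in case (d)) yields
\begin{equation*}
  W_2(\mu^0_t,\mu^1_t)\le W_2(\mu_0,\mu_1)\,\mathrm e^{\lambda t}\quad\text{for every }t\ge 0,
\end{equation*}
and in particular $\mu^0=\mu^1$ when $\mu_0=\mu_1$, so that the assignment $\mathrm S_t[\mu_0]:=\mu^0_t$ is well defined on $D_0$. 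The semigroup property on $D_0$ follows from uniqueness: $s\mapsto \mathrm S_{t+s}[\mu_0]$ and $s\mapsto \mathrm S_s[\mathrm S_t[\mu_0]]$ are both global $\lambda$-\EVI solutions starting from the same point $\mathrm S_t[\mu_0]\in \dom(\frF)$ (here one uses that the tail of a $\lambda$-\EVI solution is again a $\lambda$-\EVI solution, which is immediate from the distributional formulation \eqref{eq:140}), hence they coincide.

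Then I would \textbf{extend $\rmS_t$ from $D_0$ to $\overline{\dom(\frF)}$ by uniform continuity}. Fix $T>0$ and $\mu_0 \in \overline{\dom(\frF)}$; choose $\mu_{0,n}\in D_0$ with $\mu_{0,n}\to\mu_0$ in $\prob_2(\X)$. The contraction estimate shows $(\mathrm S_\cdot[\mu_{0,n}])_n$ is Cauchy in $\rmC([0,T];\prob_2(\X))$, uniformly in $T$ with the weight $\mathrm e^{-\lambda T}$, hence converges to a curve $\mu\in \rmC([0,\infty);\overline{\dom(\frF)})$; by Proposition \ref{prop:stability} the limit $\mu$ is again a $\lambda$-\EVI solution, it starts from $\mu_0$, and its value does not depend on the approximating sequence, again by the contraction estimate. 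Set $\mathrm S_t[\mu_0]:=\mu_t$. Passing to the limit in the contraction inequality gives \eqref{eq:143} on all of $\overline{\dom(\frF)}$, the semigroup identity passes to the limit as well, and \eqref{eq:143} with $\lambda$ replaced by its value gives the Lipschitz continuity of each $\rmS_t$. Finally, uniqueness of the $\lambda$-\EVI solution starting from an arbitrary $\mu_0\in\overline{\dom(\frF)}$: if $\tilde\mu$ were another one, then comparing $\tilde\mu$ with $\mathrm S_\cdot[\mu_{0,n}]$ through Theorem \ref{theo:duesolcomp} (whose hypothesis (1) or (2) is met in the respective cases) and letting $n\to\infty$ forces $\tilde\mu=\mathrm S_\cdot[\mu_0]$.

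\textbf{Main obstacle.} The delicate point is the very last step --- uniqueness of the $\lambda$-\EVI solution starting from a \emph{boundary} point $\mu_0\in\overline{\dom(\frF)}\setminus\dom(\frF)$, where neither $\tilde\mu$ nor the candidate flow need be strict, so Theorem \ref{theo:uniqsol} does not apply directly. This is exactly what Theorem \ref{theo:duesolcomp} (refined stability, comparing a merely continuous $\lambda$-\EVI solution with an absolutely continuous / strict one) is designed to handle, so the argument must be arranged so that the \emph{approximating} curves $\mathrm S_\cdot[\mu_{0,n}]$ are strict (cases (a),(b),(c)) or locally absolutely continuous solutions of \eqref{eq:CPW} (cases (d),(e)), and one applies Theorem \ref{theo:duesolcomp} with $\mu^1=\mathrm S_\cdot[\mu_{0,n}]$ and $\mu^2=\tilde\mu$, then sends $n\to\infty$ using continuity of $W_2$; the geometric conditions on $\dom(\frF)$ built into (c),(d) are precisely what make hypothesis (1) or (2) of Theorem \ref{theo:duesolcomp} available. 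I also need to double-check that in cases (d) and (e) the constructed global solutions on $D_0$ are genuinely defined for all $t\ge0$: this is where one uses that a locally absolutely continuous solution of \eqref{eq:CPW} stays in $\dom(\frF)$ and that the a priori bound \eqref{eq:128bis} prevents finite-time blow-up of the second moment, so no maximality obstruction arises.
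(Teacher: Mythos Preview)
Your overall architecture is correct and, for cases (b)--(e), essentially matches the paper's argument: construct the flow on a dense subset via the appropriate stability theorem (Theorem \ref{theo:uniqsol} for (b),(c), Theorem \ref{theo:uniqsol2} for (d)), extend by density using Proposition \ref{prop:stability}, and settle uniqueness for an arbitrary $\lambda$-\EVI solution by comparing it with the approximating strict/absolutely continuous solutions and sending the approximation parameter to zero (Theorem \ref{theo:uniqsol} for (b),(c), Theorem \ref{theo:duesolcomp} for (d),(e)).

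There is, however, a genuine gap in your handling of case (a). You assert that the global solutions obtained on $D_0$ from Theorem \ref{thm:global} are strict (``$\dom(\frF)$-valued''), and you plan to feed them into Theorem \ref{theo:uniqsol} or Theorem \ref{theo:duesolcomp}. Neither step is justified. The Euler limit furnished by Theorem \ref{thm:global} lies only in $\Lip_{\rm loc}([0,\infty);\overline{\dom(\frF)})$; nothing in hypothesis (a) forces it to stay in $\dom(\frF)$, so the strictness assumption of Theorem \ref{theo:uniqsol} is unavailable. Moreover, hypothesis (a) gives global solvability of \eqref{eq:EE} only on the dense set $D$, not local solvability on all of $\dom(\frF)$ as Theorem \ref{theo:uniqsol} requires, and it gives neither the geometric condition $\CondGammao\frF{\cdot}{\cdot}{0}\neq\emptyset$ nor \eqref{eq:CPW}, so Theorem \ref{theo:duesolcomp} does not apply either.

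The paper avoids this by never leaving the discrete level in case (a). For the contraction on $D$ it passes to the limit $\tau\downarrow 0$ directly in the discrete-vs-discrete estimate \eqref{eq:71}, obtaining \eqref{eq:143} on $D$ without any strictness. For uniqueness of an arbitrary $\lambda$-\EVI solution $\mu$ starting from $\mu_0\in\overline{\dom(\frF)}$, it compares $\mu$ with a \emph{discrete} curve $M^1_\tau\in\mathscr M(\mu_1,\tau,T,L)$, $\mu_1\in D$, via the continuous-vs-discrete estimate \eqref{eq:139} (Theorem \ref{thm:apriori-estimate}(3)), sends $\tau\downarrow 0$ to obtain $W_2(\mu_t,\rmS_t[\mu_1])\le 2W_2(\mu_0,\mu_1)\mathrm e^{\lambda_+ t}$, and then sends $\mu_1\to\mu_0$. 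This is the missing idea you need for (a).
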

\begin{proof}
  (a) Let $D$ be the dense subset of $\dom(\frF)$ for which
  \eqref{eq:EE} is globally solvable.
  For every $\mu_0\in D$ we define
  $\rmS_t[\mu_0]$, $t\ge0$, as the value at time $t$ of the unique
  $\lambda$-\EVI solution starting from $\mu_0$, whose existence
  is guaranteed by Theorem \ref{thm:global}.

  If $\mu_0,\mu_1\in D$, $T>0$, we can find
  $\boldsymbol \tau, L$
  such that $\mathscr M(\mu_0,\tau,T,L)$
  and
  $\mathscr M(\mu_1,\tau,T,L)$ are not empty
  for every $\tau\in (0,\ttau)$.
  We can then pass to the limit in the uniform estimate
  \eqref{eq:71}
  for every choice of $M^i_\tau\in \mathscr M(\mu_i,\tau,T,L)$, $i=0,1$, obtaining
  \eqref{eq:143} for every $\mu_0,\mu_1\in D$.

  We can then extend the map $\rmS_t$ to $\overline D=\overline{\dom(\frF)}$
  still preserving the same property. Proposition \ref{prop:stability}
  shows that for every $\mu_0\in \overline{\dom(\frF)}$
  the continuous curve $t\mapsto \rmS_t[\mu_0]$ is a
  $\lambda$-\EVI solution starting from $\mu_0$.

  Finally, if $\mu\in \rmC([0,T');\overline{\dom(\frF)})$ is
  any $\lambda$-\EVI solution starting from $\mu_0$,
  we can apply \eqref{eq:139} to get
  \begin{equation}
    \label{eq:137}
    W_2(\mu_t,M^1_\tau(t))\le \Big(2W_2(\mu_0,\mu_1)+C(\boldsymbol{\tau}, L,T)\sqrt \tau\Big)\mathrm e^{\lambda_+ t}
    \quad\text{for every }t\in [0,T],
  \end{equation}
  for every $T<T'$, $\tau<\ttau$, where $C(\boldsymbol{\tau}, L,T)>0$ is a suitable constant.
  Passing to the limit as $\tau\downarrow0$ in \eqref{eq:137}
  we obtain
  \begin{equation}
    \label{eq:137bis}
    W_2(\mu_t,\rmS_t[\mu_1])\le 2W_2(\mu_0,\mu_1)\mathrm e^{\lambda_+ t}
    \quad\text{for every }t\in [0,T].
  \end{equation}
  Choosing now a sequence $\mu_{1,n}$ in $D$ converging to $\mu_0$
  and observing that we can choose arbitrary $T<T'$, we eventually get
  $\mu_t=\rmS_t[\mu_0]$ for every $t\in [0,T')$.

  \medskip\noindent
  (b)
  Let $D$ be the dense subset of $\dom(\frF)$ such that
  there exists a global strict $\lambda$-\EVI solution starting from $D$.
  By Theorem \ref{theo:uniqsol} such a solution is unique
  and the corresponding family of solution maps $\rmS_t:D\to \dom(\frF)$
  satisfy \eqref{eq:143}. Arguing as in the previous claim,
  we can extend $\rmS_t$ to $\overline{\dom(\frF)}$ still preserving \eqref{eq:143}
  and the fact that $t\mapsto \rmS_t[\mu_0]$ is a $\lambda$-\EVI solution.

  If $\mu$ is $\lambda$-\EVI solution starting from $\mu_0$,
  Theorem \ref{theo:uniqsol} shows that \eqref{eq:137bis}
  holds for every $\mu_1\in D$. By approximation we conclude
  that $\mu_t=\rmS_t[\mu_0]$.

  \medskip\noindent
  (c)
  Corollary \ref{cor:uniqueness} shows that for every
  initial datum $\mu_0\in \dom(\frF)$
  there exists a global $\lambda$-\EVI solution. We can then apply
  Claim (b).  

  \medskip\noindent
  (d)
  Let $D$ be the dense subset of $\dom(\frF)$ such that
  there exists a locally absolutely continuous strict global $\lambda$-\EVI solution starting from $D$.
  By Theorem \ref{theo:uniqsol2} such a solution is the unique locally absolutely continuous solution starting from $\mu_0$ and the corresponding family of solution maps $\rmS_t:D\to \dom(\frF)$
  satisfy \eqref{eq:143}. Arguing as in the previous claim (b),
  we can extend $\rmS_t$ to $\overline{\dom(\frF)}$ still preserving \eqref{eq:143} (again thanks to Theorem \ref{theo:uniqsol2}) and the fact that $t\mapsto \rmS_t[\mu_0]$ is a $\lambda$-\EVI solution.

  If $\mu$ is a $\lambda$-\EVI solution starting from $\mu_0 \in \overline{\dom(\frF)}$ and $(\mu_0^n)_{n \in \N} \subset D$ is a sequence converging to $\mu_0$, we can apply Theorem \ref{theo:duesolcomp}(1) and conclude
  that $\mu_t=\rmS_t[\mu_0]$.
  
  \medskip\noindent
  (e)
  The proof follows by the same argument of the previous claim,
  eventually applying Theorem \ref{theo:duesolcomp}(2).  
\end{proof}
By Lemma \ref{le:open-easy} we
immediately get the following result.
\begin{corollary}
  If $\frF$ is locally bounded $\lambda$-dissipative \MPVF with
  $\dom(\frF)=\prob_2(\X)$ then for every $\mu_0\in \prob_2(\X)$
  there exists a unique global $\lambda$-\EVI solution
  starting from $\mu_0$.
\end{corollary}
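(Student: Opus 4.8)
The plan is to read the statement off from Lemma~\ref{le:open-easy} and then rule out finite‑time blow‑up. Since $\dom(\frF)=\prob_2(\X)$, this domain is trivially open, and $\frF$ is locally bounded by hypothesis; hence the ``in particular'' part of Lemma~\ref{le:open-easy} applies and yields, for every $\mu_0\in\prob_2(\X)$, a unique maximal $\lambda$-\EVI solution $\mu\in\Lip_{\rm loc}([0,T);\prob_2(\X))$ starting from $\mu_0$, which additionally satisfies the blow‑up alternative \eqref{eq:79} and the speed estimate \eqref{eq:103}. It remains only to prove $T=+\infty$; the uniqueness assertion will then be inherited from Theorem~\ref{thm:existence}(a).

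To see that $T=+\infty$, I would argue by contradiction, assuming $T<+\infty$. Local boundedness of $\frF$ around $\mu_0$ gives $|\frF|_{2\star}(\mu_0)<+\infty$, so \eqref{eq:103} provides a uniform bound $|\dot\mu|_+(t)\le \mathrm e^{\lambda_+ T}\,|\frF|_{2\star}(\mu_0)=:C<+\infty$ on $[0,T)$. A continuous curve on an interval whose right upper metric derivative is bounded by $C$ is $C$-Lipschitz, hence uniformly continuous; since $(\prob_2(\X),W_2)$ is complete, the limit $\mu_T:=\lim_{t\uparrow T}\mu_t$ exists in $\prob_2(\X)$. But $\prob_2(\X)=\dom(\frF)$, so $\mu_T\in\dom(\frF)$, contradicting \eqref{eq:79}. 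Therefore $T=+\infty$ and $\mu$ is a global $\lambda$-\EVI solution.

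For uniqueness, if $\mu'\colon[0,T')\to\overline{\dom(\frF)}=\prob_2(\X)$ is any $\lambda$-\EVI solution starting from $\mu_0$, the last line of Theorem~\ref{thm:existence}(a) forces $\mu'(t)=\mu(t)$ for $t\in[0,T\land T')=[0,T')$, so $\mu'$ is a restriction of $\mu$; hence $\mu$ is the unique global $\lambda$-\EVI solution from $\mu_0$. One could equivalently invoke Theorem~\ref{thm:globall}(a): Lemma~\ref{le:open-easy} makes the Explicit Euler scheme globally solvable at every point of $\prob_2(\X)$, so $\frF$ generates a $\lambda$-flow, which already encodes global existence and uniqueness. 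I do not expect any real obstacle here: the only substantive point is the completeness argument excluding blow‑up, and that is essentially immediate.
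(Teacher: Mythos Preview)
Your argument is correct and follows the paper's approach: the paper simply says ``By Lemma~\ref{le:open-easy} we immediately get the following result,'' and you have unpacked exactly that. Note that your step re-proving the existence of $\mu_T=\lim_{t\uparrow T}\mu_t$ is slightly redundant, since \eqref{eq:79} as established in Theorem~\ref{thm:existence}(b) (which underlies the ``in particular'' clause of Lemma~\ref{le:open-easy}) already presupposes that the limit exists; the proof of Theorem~\ref{thm:existence}(b) derives this from \eqref{eq:88}.
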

We conclude this section by showing a consistency result
with the Hilbertian theory, related to the example
of Section \ref{subsec:graphB}.
\begin{corollary}[Consistency with the theory of contraction
  semigroups in Hilbert spaces]
  \label{cor:consistency}
    Let $F \subset \X \times \X$ be a
    dissipative maximal subset
    generating the 
  semigroup $(R_t)_{t \ge 0}$ 
  of nonlinear contractions {\upshape \cite[Theorem
  3.1]{BrezisFR}}.
  Let $\frF$ be the dissipative \MPVF
\[ \frF := \{ \Phi \in \prob_2(\TX) \mid \Phi \text{ is concentrated
    on } F \}.\]
The semigroup
$\mu_0\mapsto \mathrm S_t[\mu_0]:=(R_t)_{\sharp}\mu_0$, $t\ge0$,
is the $0$-flow generated by $\frF$ in $\overline{\dom(\frF)}$.
\end{corollary}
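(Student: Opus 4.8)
The plan is to build the $0$-\EVI solutions out of the Br\'ezis semigroup $(R_t)_{t\ge0}$ for sufficiently regular initial data, and then invoke the generation machinery of Theorem~\ref{thm:globall}(e) to identify $(R_t)_\sharp$ with the $0$-flow generated by $\frF$. First I would record the basic structure. By Section~\ref{subsec:graphB} the \MPVF $\frF$ is $0$-dissipative (indeed $\brap{\Phi_0}{\Phi_1}\le 0$). Identify $\dom(\frF)$ with $\{\mu\in\prob_2(\X):\mu(\dom(F))=1,\ F^\circ\in L^2_\mu(\X;\X)\}$, where $F^\circ$ is the Borel minimal-norm section of $F$. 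The standard Br\'ezis estimates — for $x\in\dom(F)$ the curve $t\mapsto R_tx$ stays in $\dom(F)$ and is locally Lipschitz, with $\tfrac{\d^+}{\d t}R_tx=F^\circ(R_tx)$, $|F^\circ(R_tx)|\le|F^\circ(x)|$ and $|h^{-1}(R_hx-x)|\le|F^\circ(x)|$ — yield $(R_t)_\sharp(\dom(\frF))\subset\dom(\frF)$. Since $|R_tx-R_ty|\le|x-y|$, pushing an optimal plan $\ggamma\in\Gamma_o(\mu,\nu)$ through $(R_t,R_t)$ shows that $\rmS_t:=(R_t)_\sharp$ is a $W_2$-contraction; together with $\rmS_0=\ii_\X$, $\rmS_{t+s}=\rmS_t\circ\rmS_s$ and a density/continuity argument (elements of $\overline{\dom(\frF)}$ are concentrated on $\overline{\dom(F)}$, where $R_t$ is defined), $(\rmS_t)_{t\ge0}$ is a semigroup of $W_2$-contractions of $\overline{\dom(\frF)}$ into itself, so \eqref{eq:143} holds with $\lambda=0$.

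Next, for $\mu_0\in\dom(\frF)$ I would show that $\mu_t:=\rmS_t[\mu_0]$ is a strict, Lipschitz, global $0$-\EVI solution. The bound $|R_tx-R_sx|\le|t-s|\,|F^\circ(x)|$ gives $W_2(\mu_t,\mu_s)\le|t-s|\,\|F^\circ\|_{L^2_{\mu_0}}$, and $\mu_t\in\dom(\frF)$ by the previous step. Differentiating $t\mapsto\int\varphi\,\d\mu_t=\int\varphi\circ R_t\,\d\mu_0$ for $\varphi\in\Cyl(\X)$ and passing to the limit by dominated convergence (using $|h^{-1}(R_hy-y)|\le|F^\circ(y)|$ and $\int|F^\circ(R_tx)|^2\,\d\mu_0(x)\le\int|F^\circ|^2\,\d\mu_0$), one checks that the Borel field $\ww_t:=F^\circ$ solves the continuity equation for $\mu$ with $\sup_t\|\ww_t\|_{L^2_{\mu_t}}<\infty$. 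Since $(\ii_\X,\ww_t)_\sharp\mu_t$ is concentrated on $F$, it belongs to $\frF[\mu_t]$, so $\mu$ solves the differential inclusion \eqref{eq:CPW}; by the argument behind Theorem~\ref{theo:equivwssol}(1) — which, through Theorem~\ref{thm:refdiff} and Corollary~\ref{lem:control}, only uses that a velocity field of $\mu$ defines an element of $\frF[\mu_t]$ — $\mu$ is a $0$-\EVI solution. Hence the hypothesis of Theorem~\ref{thm:globall}(e) holds with the dense subset equal to $\dom(\frF)$ itself, and $\frF$ generates a $0$-flow. For $\mu_0\in\dom(\frF)$ the unique $0$-\EVI solution starting from $\mu_0$ must then coincide with $\rmS_\cdot[\mu_0]=(R_\cdot)_\sharp\mu_0$, and by density of $\dom(\frF)$ in $\overline{\dom(\frF)}$, the $W_2$-contractivity of both the flow and $(R_t)_\sharp$, and Proposition~\ref{prop:stability}, the curve $(R_\cdot)_\sharp\mu_0$ is the unique $0$-\EVI solution for every $\mu_0\in\overline{\dom(\frF)}$. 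Therefore $\rmS_t[\mu_0]=(R_t)_\sharp\mu_0$ is precisely the $0$-flow generated by $\frF$ in $\overline{\dom(\frF)}$.

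The main obstacle is that the velocity field $\ww_t=F^\circ$ used above is in general \emph{not} the Wasserstein (tangent) velocity of $\mu$: by Example~\ref{ex:contnotmax} the tangent velocity may belong only to the maximal extension $\hat\frF$, so \eqref{eq:CPW}, Theorem~\ref{theo:equivwssol}(1) and Theorem~\ref{thm:globall}(e) have to be applied with a non-tangent field. This is legitimate thanks to the Remark following Theorem~\ref{thm:refdiff}, which shows that in all the relevant identities involving $\bram{\cdot}{\cdot}$ the tangent field may be replaced by any Borel velocity field $\ww$ of $\mu$ with $\|\ww_t\|_{L^2_{\mu_t}}\in L^1_{loc}$ — and here one additionally has $(\ii_\X,\ww_t)_\sharp\mu_t\in\frF[\mu_t]$. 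An alternative, more technical route would be to verify instead the local solvability of the Explicit Euler scheme in $\dom(\frF)$ (so as to use Theorem~\ref{thm:globall}(b)), or the geometric condition $\CondGammao\frF{\mu^1_r}{\mu^2_s}{0}\ne\emptyset$ required in Theorem~\ref{theo:duesolcomp}(1) when comparing $\rmS_\cdot[\mu_0^n]$, $\mu_0^n\in\dom(\frF)$, with an arbitrary continuous $0$-\EVI solution; the velocity-field remark is the cleanest option.
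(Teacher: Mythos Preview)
Your approach is correct and uses the same generation criterion as the paper (Theorem~\ref{thm:globall}(e)), but the paper makes a different and somewhat more economical choice of dense subset. Instead of working with the full domain $\dom(\frF)=\{\mu\in\prob_2(\X):\mu(\dom(F))=1,\ F^\circ\in L^2_\mu\}$, the paper takes the set $D$ of finite empirical measures $\frac{1}{n}\sum_{j=1}^n\delta_{x_j}$ with $x_j\in\dom(F)$, checks that $D$ is dense in $\overline{\dom(\frF)}$, and then verifies \eqref{eq:CPW} literally for the curves $\mu_t=(R_t)_\sharp\mu_0=\frac{1}{n}\sum_j\delta_{R_t(x_{j,0})}$. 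The point is that for a discrete measure the tangent space $\Tan_{\mu_t}\prob_2(\X)$ is all of $L^2_{\mu_t}(\X;\X)$, so the pointwise velocity $\vv_t(x_{j,t})=\dot x_{j,t}=F^\circ(x_{j,t})$ \emph{is} the Wasserstein velocity; hence $(\ii_\X,\vv_t)_\sharp\mu_t\in\frF[\mu_t]$ with the genuine tangent field, and no appeal to the Remark after Theorem~\ref{thm:refdiff} is needed.

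Your route instead handles arbitrary $\mu_0\in\dom(\frF)$ at the price of the observation you identify: the velocity $\ww_t=F^\circ$ need not be tangent, but the key identities $\frac{\d}{\d t}\tfrac12 W_2^2(\mu_t,\nu)=\bram{(\ii_\X,\ww_t)_\sharp\mu_t}{\nu}$ and the comparison in Theorem~\ref{theo:duesolcomp}(2) go through because $\bram{(\ii_\X,\ww_t)_\sharp\mu_t}{\nu}=\bram{(\ii_\X,\vv_t)_\sharp\mu_t}{\nu}$ for any $L^2$ velocity field of $\mu$. This is a legitimate reading of the Remark and of the proof of Theorem~\ref{theo:duesolcomp}, so your argument is complete. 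The trade-off is clarity versus generality: the paper's discrete-measure trick sidesteps the tangent/non-tangent subtlety entirely and keeps the verification of \eqref{eq:CPW} elementary, whereas your argument shows that the \EVI property in fact holds from \emph{every} initial datum in $\dom(\frF)$ without passing through approximation, at the cost of unpacking the role of the tangent projection in the relevant identities.
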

\begin{proof}
  Let $D$ be the set of
  discrete measures $\frac 1n\sum_{j=1}^n\delta_{x_j}$
  with $x_j\in \dom(F)$.
  Since every $\mu_0\in \overline{\dom(\frF)}$ is supported
  in $\overline{\dom (F)}$, $D$ is dense in $\overline{\dom(\frF)}$.
  Our thesis follows by applying Theorem \ref{thm:globall}(e)
  if we show that for every
  $\mu_0=\frac1n\sum_{j=1}^n\delta_{x_{j,0}}\in D$
  there exists a locally absolutely continuous solution
  $\mu:[0,\infty)\to D$ of \eqref{eq:CPW} starting from $\mu_0$.

  It can be directly checked that
\begin{displaymath}
  \mu_t:=(R_t)_\sharp \mu_0=
  \frac1n\sum_{j=1}^n\delta_{x_{j,t}},\quad
  x_{j,t}:=R_t(x_{j,0})
\end{displaymath}
satisfies the continuity equation with Wasserstein velocity vector
$\vv_t$ (defined on the finite support of $\mu_t$) satisfying
\begin{displaymath}
  \vv_{t}(x_{j,t})=\dot x_{j,t}=F^\circ (x_{j,t}),\quad
  |\vv_t(x_{j,t})|\le |F^\circ(x_{j,0})|
  \quad\text{for every }j=1,\cdots, n,\ \text{and a.e.~}t>0,
\end{displaymath}
where $F^\circ$ is the minimal selection of $F$. It follows that 
\[ (\ii_\X, \vv_t)_{\sharp}\mu_t \in \frF[\mu_t] \quad \text{ for a.e. } t>0,\]
so that $\mu$ is a Lipschitz \EVI solution for $\frF$ starting from
$\mu_0$. We can thus conclude observing that
the map $\mu_0\mapsto (R_t)_\sharp\mu_0$
are contractions in $\prob_2(\X)$ and
the curve $\mu_t=(R_t)_\sharp\mu_0 $
is continuous with values in $\overline{\dom(\frF)}$.
\end{proof}

\subsection{Barycentric property}\label{sec:bar}
If we assume that the \MPVF $\frF$ is a sequentially closed
subset of $\prob_2^{sw}(\TX)$ with
convex sections, we are able to provide a stronger result showing a particular property satisfied by the solutions of \eqref{eq:CP} (see Theorem \ref{theo:barycentric}).
This is called \emph{barycentric property} and it is strictly connected with the weaker definition of solution discussed in \cites{Piccoli_2019, Piccoli_MDI, Camilli_MDE}.

We first introduce a directional closure of $\frF$ along
smooth cylindrical deformations.
We set
\begin{equation*}
  \expcyl {}\varphi(x):=x+\nabla\varphi(x)\quad
  \text{for every }\varphi\in \Cyl(\X)
\end{equation*}
and
\begin{equation} \label{eq:deform}
  \begin{aligned}
    \pclo\frF[\mu]:={}
    \Big\{\Phi\in \prob_2(\X)\mid{}&\exists\,\varphi\in \Cyl(\X),\ (r_n)_{n\in \N}\subset [0,+\infty),\ r_n\downarrow0,
    \ \Phi_n\in \frF[\expcyl{r_n}\varphi_\sharp\mu]:
    \\&
                        \Phi_n\to\Phi\text{ in }\prob_2^{sw}(\TX)\Big\}.
  \end{aligned}
\end{equation}
\begin{definition}[Barycentric property]\label{def:barprop}
Let $\frF$ be a \MPVF.
We say that a locally absolutely continuous curve
$\mu: \interval \to \dom(\frF)$ satisfies the
\emph{barycentric property} 
(resp.~the \emph{relaxed
  barycentric property})
if for a.e. $t \in \interval$ there exists
$\Phi_t \in \frF[\mu_t]$
(resp.~$\Phi_t \in \cloco{\pclo\frF[\mu_t]}$)
s.t.
\begin{equation} \label{eq:barycentricproperty}
 \frac{\de}{\de t} \int_\X \varphi(x) \de \mu_t (x)= \int_{\TX} \scalprod{\nabla \varphi(x)}{v} \de \Phi_t(x,v) \quad \forall \, \varphi \in \Cyl(\X).
 \end{equation}
\end{definition}
Notice that
$\frF\subset\pclo\frF\subset \clo\frF$ and $\pclo\frF=\frF$
if $\frF$ is sequentially closed in $\prob_2^{sw}(\TX)$. Recalling Proposition \ref{prop:maximal}(a)
we also get
\begin{equation*}
  \cloco{\pclo\frF}\subset \hat\frF,
\end{equation*}
so that the relaxed barycentric property implies
the corresponding property for the extended MPVF $\hat\frF$.
\begin{remark} If $\X = \R^d$, the property stated in Definition \ref{def:barprop} coincides with the weak definition of solution to \eqref{eq:CP} given in \cite{Piccoli_MDI}. 
\end{remark}

The aim is to prove that the $\lambda$-\EVI solution of \eqref{eq:CP} enjoys the barycentric property of Definition \ref{def:barprop}, under suitable mild conditions on $\frF$. This is
strictly related to the behaviour of $\frF$ along the family of smooth deformations
induced by cylindrical functions.
Let us denote by $\proj_\mu$ the orthogonal projection in
$L^2_\mu(\X;\X)$ onto the tangent space $\Tan_\mu\prob_2(\X)$ and by $\bry{\Phi}$ the barycenter of $\Phi$ as in Definition \ref{def:bary}.

\begin{theorem} \label{theo:barycentric} Let $\frF$ be a
  $\lambda$-dissipative \MPVF
  such that for every $\mu\in \dom(\frF)$
  there exist constants
  $M,\eps>0$
  such that 
  \begin{equation}
    \label{eq:138}
      \forall\varphi\in \Cyl(\X):\ \sup_\X|\nabla\varphi|\le \eps
      \quad\Rightarrow\quad
      \expcyl {}\varphi_\sharp\mu\in \dom(\frF),\
      |\frF|_2(\expcyl {}\varphi_\sharp\mu)<M.
  \end{equation}
  If $\mu: \interval \to \dom(\frF)$ is a locally absolutely continuous 
  $\lambda$-\EVI solution of \eqref{eq:CP}
  with Wasserstein velocity
  field
  $\vv$ satisfying \eqref{eq:74}
  for every $t$ in the subset $A(\mu)\subset \interval$ of full
  Lebesgue measure, then 
  \begin{equation}
\text{for every $t \in A(\mu)$ there exists $\Phi_t \in \cloco
  {\pclo \frF}[\mu_t]$ such that}\quad
\vv_t = \proj_{\mu_t} \circ \bry{\Phi_t}.\label{eq:141}
\end{equation}
In particular,
$\mu$ satisfies the relaxed barycentric property.

If moreover $\pclo\frF=\frF$ and for every $\nu\in \dom(\frF)$ $\frF[\nu]$ is a convex subset of $\prob_2(\TX)$, then
$\mu$ satisfies \eqref{eq:barycentricproperty}.
\end{theorem}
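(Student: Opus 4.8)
The plan is to exploit the fact that a $\lambda$-\EVI solution is, by Theorem \ref{theo:equivwssol}(2), a solution of the differential inclusion $(\ii_\X,\vv_t)_\sharp\mu_t\in\hat\frF[\mu_t]$ for a.e.\ $t$, and to relate the Wasserstein velocity field $\vv_t$ to the barycenters of measures in $\pclo\frF[\mu_t]$ using the smooth cylindrical deformations $\expcyl{r}\varphi$. First I would fix $t\in A(\mu)$ and test the $\lambda$-\EVI inequality \eqref{eq:EVI} against $\nu=\expcyl{r}\varphi_\sharp\mu_t$ for $\varphi\in\Cyl(\X)$ with $\sup_\X|\nabla\varphi|\le\eps$ and $r\in(0,1]$: assumption \eqref{eq:138} guarantees that this $\nu$ lies in $\dom(\frF)$ and that $|\frF|_2(\nu)<M$, so there is $\Phi^{r,\varphi}\in\frF[\nu]$ with $|\Phi^{r,\varphi}|_2<M$. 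Using Theorem \ref{thm:refdiff} to compute $\tfrac12\frac{\d}{\d t}W_2^2(\mu_s,\nu)|_{s=t}=\bram{(\ii_\X,\vv_t)_\sharp\mu_t}{\nu}$, \eqref{eq:EVI} reads
\begin{equation*}
  \bram{(\ii_\X,\vv_t)_\sharp\mu_t}{\expcyl{r}\varphi_\sharp\mu_t}
  \le \lambda W_2^2(\mu_t,\expcyl{r}\varphi_\sharp\mu_t)-\bram{\Phi^{r,\varphi}}{\mu_t}.
\end{equation*}

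The key step is to divide this inequality by $r$ and let $r\downarrow0$. On the left, since for small $r$ the map $\expcyl{r}\varphi=\ii_\X+r\nabla\varphi$ is the gradient of a convex function (it is $\eps$-close to the identity), it is the optimal transport map from $\mu_t$ to $\expcyl{r}\varphi_\sharp\mu_t$, so $\Gamma_o$ between them is unique and the pairing reduces (Remark \ref{rem:particular}, second particular case, or directly Theorem \ref{thm:characterization}) to $-r\int\la\vv_t(x),\nabla\varphi(x)\ra\,\d\mu_t(x)+o(r)$; dividing by $r$ gives $-\int\la\vv_t,\nabla\varphi\ra\,\d\mu_t$. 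The $\lambda W_2^2$ term is $O(r^2)$, hence negligible. For the right-hand term $-\bram{\Phi^{r,\varphi}}{\mu_t}/r$, I would note that $\Phi^{r,\varphi}$ has first marginal $\expcyl{r}\varphi_\sharp\mu_t\to\mu_t$ in $\prob_2(\X)$ and uniformly bounded second moment $<M$; by Proposition \ref{prop:finalmente}(2) the family $\{\Phi^{r,\varphi}\}_{r}$ is relatively sequentially compact in $\prob_2^{sw}(\TX)$, so along a subsequence $r_n\downarrow0$ we get $\Phi^{r_n,\varphi}\to\Phi_\varphi\in\pclo\frF[\mu_t]$ (this is exactly the directional closure \eqref{eq:deform}). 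The optimal coupling from $\expcyl{r_n}\varphi_\sharp\mu_t$ to $\mu_t$ converges to $(\ii_\X)_\sharp\mu_t$ (identity coupling), so $\bram{\Phi^{r_n,\varphi}}{\mu_t}\to\bram{\Phi_\varphi}{\mu_t}$; one then has to check this limit equals $0$, i.e.\ the "directional derivative" of $W_2^2$ at $\mu_t$ against $\mu_t$ vanishes — but more carefully, dividing by $r_n$ first and using the lower/upper semicontinuity of the pairings (Lemma \ref{lem:lsc}) together with the fact that $\bram{\Phi^{r_n,\varphi}}{\mu_t}$ is of order $r_n$ (since $\Lambda(\Phi^{r_n,\varphi},\mu_t)$ consists of plans whose $\X^2$-marginal tends to the diagonal). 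The outcome is
\begin{equation*}
  -\int_\X\la\vv_t,\nabla\varphi\ra\,\d\mu_t\le -\int_\X\la\bry{\Phi_\varphi},\nabla\varphi\ra\,\d\mu_t
\end{equation*}
for some $\Phi_\varphi\in\pclo\frF[\mu_t]$, where I used \eqref{eq:30} to rewrite $\bram{\Phi_\varphi}{\mu_t}$ via the barycenter when the optimal coupling is the identity. Replacing $\varphi$ by $-\varphi$ gives the reverse inequality with possibly a different $\Phi_{-\varphi}$, and convexifying ($\tfrac12\Phi_\varphi+\tfrac12\Phi_{-\varphi}\in\cloco{\pclo\frF}[\mu_t]$) together with the sign flip yields $\int\la\vv_t-\bry\Psi,\nabla\varphi\ra\,\d\mu_t=0$ for that convex combination $\Psi$.

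From here I would run a selection/compactness argument: the set $\{\bry\Phi:\Phi\in\cloco{\pclo\frF}[\mu_t]\}$ is a convex subset of $L^2_{\mu_t}(\X;\X)$, weakly closed by \eqref{eq:138}-boundedness and closedness of $\cloco{\pclo\frF}[\mu_t]$, and by the above it "touches" $\vv_t$ in every cylindrical direction. Since $\vv_t\in\Tan_{\mu_t}\prob_2(\X)=\overline{\{\nabla\varphi:\varphi\in\Cyl(\X)\}}$ and $\proj_{\mu_t}$ is the orthogonal projection onto this tangent space, a standard separation/Hahn–Banach argument (exactly as in establishing the minimal-selection characterization of Wasserstein velocities, cf.\ \cite[\S 8.4]{ags}) produces $\Phi_t\in\cloco{\pclo\frF}[\mu_t]$ with $\proj_{\mu_t}\circ\bry{\Phi_t}=\vv_t$, which is \eqref{eq:141}. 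The relaxed barycentric property \eqref{eq:barycentricproperty} then follows because $\frac{\d}{\d t}\int\varphi\,\d\mu_t=\int\la\nabla\varphi,\vv_t\ra\,\d\mu_t=\int\la\nabla\varphi,\proj_{\mu_t}\bry{\Phi_t}\ra\,\d\mu_t=\int\la\nabla\varphi,\bry{\Phi_t}\ra\,\d\mu_t=\int_{\TX}\la\nabla\varphi,v\ra\,\d\Phi_t$, using that $\nabla\varphi\in\Tan_{\mu_t}\prob_2(\X)$ and the defining property of the orthogonal projection, plus \eqref{eq:30}. Finally, when $\pclo\frF=\frF$ with convex sections, $\cloco{\pclo\frF}[\mu_t]=\frF[\mu_t]$ (sections already closed and convex), so $\Phi_t\in\frF[\mu_t]$ and \eqref{eq:barycentricproperty} holds in the unrelaxed form.

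The main obstacle I expect is the rigorous $r\downarrow0$ analysis of the term $\bram{\Phi^{r,\varphi}}{\mu_t}/r$: one must simultaneously control the convergence $\Phi^{r,\varphi}\to\Phi_\varphi$ in $\prob_2^{sw}$, the convergence of the associated optimal couplings $\Lambda_o(\Phi^{r,\varphi},\mu_t)$, and extract the correct first-order term, since naively the quotient could be indeterminate. The clean way is probably to avoid dividing that term at all: keep $r>0$ fixed, pass to the limit along $r_n$ in the *undivided* inequality after having already divided the left side — i.e.\ exploit that $\bram{(\ii_\X,\vv_t)_\sharp\mu_t}{\expcyl{r}\varphi_\sharp\mu_t}$ is already $O(r)$ and that $\bram{\Phi^{r,\varphi}}{\mu_t}$ is $O(r)$ as well (because its defining couplings concentrate near the diagonal at rate $r$), so that dividing by $r$ and using Lemma \ref{lem:lsc} on the rescaled quantities is legitimate; making "rate $r$" precise via the explicit bound $|x_0-x_1|\le r\sup|\nabla\varphi|$ on $\supp$ of the relevant coupling is the technical heart.
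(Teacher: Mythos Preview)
Your overall strategy is correct and coincides with the paper's: perturb $\mu_t$ by a cylindrical deformation, use the \EVI property at the perturbed measure, extract a $\prob_2^{sw}$-limit in $\pclo\frF[\mu_t]$ by compactness, obtain a one-sided inequality against $\nabla\varphi$, and conclude by a separation argument. Two points deserve correction or comparison.

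First, the convexification step is wrong and should be discarded. From $\int\langle\vv_t,\nabla\varphi\rangle\,\d\mu_t\ge\int\langle\bry{\Phi_\varphi},\nabla\varphi\rangle\,\d\mu_t$ and the reversed inequality with $-\varphi$ (and a different $\Phi_{-\varphi}$), setting $\Psi=\tfrac12\Phi_\varphi+\tfrac12\Phi_{-\varphi}$ does \emph{not} give $\int\langle\vv_t-\bry\Psi,\nabla\varphi\rangle\,\d\mu_t=0$: both $\langle\vv_t,\nabla\varphi\rangle$ and $\langle\bry\Psi,\nabla\varphi\rangle$ merely lie between the same bounds. This step is also unnecessary: the one-sided inequality $\langle\vv_t,\nabla\zeta\rangle_{L^2_{\mu_t}}\le\sup_{\bb\in K}\langle\bb,\nabla\zeta\rangle_{L^2_{\mu_t}}$ for every $\zeta\in\Cyl(\X)$, with $K=\{\proj_{\mu_t}(\bry\Phi):\Phi\in\pclo\frF[\mu_t],\ |\Phi|_2\le M\}$, is exactly the hypothesis of the support-function Lemma~\ref{lem:abstract}, giving $\vv_t\in\cloco K$ directly. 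The paper then argues (via $\prob_2^{sw}$-compactness of approximating convex combinations) that every element of $\cloco K$ is $\proj_{\mu_t}(\bry{\Phi_t})$ for some $\Phi_t\in\cloco{\pclo\frF[\mu_t]}$.

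Second, the paper sidesteps the division-by-$r$ difficulty you flag by a neat reorganisation. Instead of the raw \EVI, it invokes \eqref{eq:87b}: with $\nu^\zeta=(\ii_\X-\delta\nabla\zeta)_\sharp\mu_t$ and the unique optimal plan $\ssigma^\zeta$, one has $\delta\int\langle\vv_t,\nabla\zeta\rangle\,\d\mu_t\le\directional\frF{\ssigma^\zeta}{0+}$. Picking $\Phi_k\in\frF[\sfx^{s_k}_\sharp\ssigma^\zeta]$ with $|\Phi_k|_2\le M$ and using Lemma~\ref{lem:starting} to write $\directionalp{\Phi_k}{\ssigma^\zeta}{s_k}=(1-s_k)^{-1}\brap{\Phi_k}{\nu^\zeta}$, the second argument $\nu^\zeta$ is now \emph{fixed}, so the upper semicontinuity of $\brap{\cdot}{\nu^\zeta}$ (Lemma~\ref{lem:lsc}) applies with no rescaling. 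Your direct route works too---since $\bram{\Phi^{r,\varphi}}{\mu_t}=r\int\langle v,\nabla\varphi(\rr_r(x))\rangle\,\d\Phi^{r,\varphi}$ exactly (unique optimal coupling induced by the inverse map $\rr_r$), dividing by $r$ leaves an integrand converging uniformly to $\langle\nabla\varphi(x),v\rangle\in\testsw\TX$---but the paper's fixed-reference-point device turns the limit into a one-liner.
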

\begin{proof}
  In the following $t$ is a fixed element of $A(\mu)$ and $M$ is the constant associated to the measure $\mu_t$ 
in \eqref{eq:138}.
For every 
$\zeta \in \Cyl(\X)$ there exists $\delta=\delta(\zeta)>0$ such that
$\nu^{\zeta}:=\expcyl{}{-\delta\zeta}_\sharp\mu_t
\in \dom(\frF)$ and $\ssigma^\zeta:=(\ii_\X, \expcyl{}{-\delta\zeta})_\sharp
\mu_t\in\CondGammao \frF{\mu_t}{\nu^\zeta}{01}$ is the unique optimal transport plan between $\mu_t$ and $\nu^{\zeta}$.\\
Thanks to Theorem \ref{thm:refdiff}, the map $s \mapsto W_2^2(\mu_s, \nu^{\zeta})$ is differentiable at $s=t$, moreover by employing also \eqref{eq:87b}, it holds
\begin{equation}
\label{eq:startingpoint2}
  \delta\int_{\X} \scalprod{\vv_t(x)}{\nabla \zeta (x)} \de \mu_t(x) = \frac{\de}{\de t} \frac{1}{2}W_2^2(\mu_t, \nu^{\zeta}) \le
 \directional\frF{\ssigma^\zeta}{0+} =
\lim_{s \downarrow 0}\, \directionalp\frF{\ssigma^\zeta} s.
\end{equation}
We can choose a decreasing vanishing sequence $(s_k)_{k \in\N} \subset
(0,1)$, measures $\nu_k^\zeta:=\sfx^{s_k}_\sharp\ssigma^\zeta$
and $\Phi_k^\zeta\in \frF[\nu_k^\zeta]$ such that
$\sup_k|\Phi_k^\zeta|_2\le M$ and $\Phi_k^\zeta\to \Phi^\zeta$
in $\prob_2^{sw}(\TX)$.
Then,
by \eqref{eq:139}, we get $\Phi^{\zeta} \in \pclo\frF[\mu_t]$ with
 $|\Phi^{\zeta}|_2 \le M$
 and by \eqref{eq:startingpoint2} and the upper semicontinuity of $\brap{\cdot}{\cdot}$ (see Lemma \ref{lem:lsc})
we get
\begin{equation}\label{eq:finalestim}
  \delta \int_{\X} \scalprod{\vv_t(x)}{\nabla \zeta (x)} \de \mu_t(x) \le \brap{\Phi^{\zeta}}{\nu^{\zeta}} =
  \delta \int_{\TX}\scalprod{v}{\nabla \zeta(x)} \de \Phi^{\zeta}(x,v).
\end{equation}
Indeed, notice that, by \cite[Lemma 5.3.2]{ags}, we have
$\Lambda(\Phi^{\zeta}, \nu^\zeta)=\{\Phi^{\zeta}\otimes\nu^\zeta\}$
with
$(\sfx^0,\sfx^1)_{\sharp}(\Phi^{\zeta}\otimes\nu^\zeta)=\ssigma^\zeta$.

By means of the identity highlighted in Remark \ref{rmk:barycenter}, the expression in \eqref{eq:finalestim} can be written as follows
\[ \scalprod{\vv_t}{\nabla \zeta}_{L^2_{\mu_t}(\X;\X)} \le \scalprod{\bry{\Phi^{\zeta}}}{\nabla \zeta}_{L^2_{\mu_t}(\X;\X)}
  =
  \scalprod{\proj_{\mu_t}(\bry{\Phi^{\zeta}})}{\nabla \zeta}_{L^2_{\mu_t}(\X;\X)}\]
so that 
\begin{equation*}
  \scalprod{\vv_t}{\nabla \zeta}_{L^2_{\mu_t}(\X;\X)} \le
  \sup_{\boldsymbol b \in K}\,\scalprod{\boldsymbol b}{\nabla \zeta}_{L^2_{\mu_t}(\X;\X)} \quad \text{for all } \, \zeta \in \Cyl(\X)
\end{equation*}
where
\begin{equation} \label{eq:defK}
K := \left\{ \proj_{\mu_t}(\bry{\Phi})\,:\, \Phi \in \pclo\frF[\mu_t], \, |\Phi|_2 \le M\right\}\subset \Tan_{\mu_t} \prob_2(\X).
\end{equation}
Applying Lemma \ref{lem:abstract} in
$\Tan_{\mu_t} \prob_2(\X)\subset L^2_{\mu_t}(\X;\X)$
we  obtain that $\vv_t \in \cloco K$.
In order to obtain \eqref{eq:141} it is sufficient to prove that
$ \vv_t$ is the $L^2$-projection of the barycenter of an element of
$\cloco{\pclo\frF[\mu_t]}$.

Notice that an element $\vv\in \Tan_\mu \prob_2(\X)$
coincides with $\proj_{\mu}(\bry{\Phi})$ for $\Phi\in \relcP 2{\mu}\TX$ if and only
if
\begin{equation}
\label{eq:97}
\int \la \vv,\nabla\zeta\ra\,\d\mu=
\int \la v,\nabla\zeta\ra\,\d\Phi(x,v)\quad\text{for every }\zeta\in \mathrm{Cyl}(\X).
\end{equation}
It is easy to check that
any element $\vv\in \conv K$ can be represented as
$\proj_{\mu_t}(\bry{\Phi}) $ (and thus as in \eqref{eq:97}) for some
$\Phi\in \conv{\pclo\frF[\mu_t]}$.
If $\vv\in \cloco K$ we can find
a sequence $\Phi_n\in \conv{\pclo\frF[\mu_t]}$ such that
$|\Phi_n|_2\le M$ and
$\vv_n=\proj_{\mu_t}(\bry{\Phi_n})\to \vv$ in $L^2_{\mu_t}(\X;\X)$.
Since the sequence $(\Phi_n)_{n \in\N}$ is relatively compact in
$\prob_2^{sw}(\TX)$ by Proposition \ref{prop:finalmente}(2),
we can extract a (not relabeled) 
subsequence converging to
a limit $\Phi$ in $\prob_2^{sw}(\TX)$, as $n\to+\infty$.
By definition $\Phi\in \cloco{\pclo{\frF}[\mu_t]}$ with 
$|\Phi|_2\le M$.
We can eventually pass to the limit in \eqref{eq:97}
written for $\vv_n$ and $\Phi_n$ 
thanks to $\prob_2^{sw}(\TX)$ convergence, obtaining
the corresponding identity for $\vv$ and $\Phi$ in the limit.
\\Finally, being $\mu$ locally absolutely continuous, it satisfies the continuity equation driven by $\vv$ in the sense of distributions (see Theorem \ref{thm:tangentv}), so that
\[ \frac{\de}{\de t} \int_\X \zeta(x) \de \mu_t(x) = \int_{\X} \scalprod{ \nabla \zeta(x)}{\vv_t(x)} \de \mu_t(x) = \int_{\TX} \scalprod{\nabla \zeta (x)}{v} \de \Phi_t(x,v) \quad\forall\zeta \in \text{Cyl}(\X).\qedhere
\]
\end{proof}

\begin{remark}
  We notice that it is always possible to
  estimate the 
  value of $M$ in \eqref{eq:defK} by $  |\frF|_{2\star}(\mu_t)$.
\end{remark}

\begin{remark}
Using a standard approximation argument (see for example the proof of Lemma 5.1.12(f) in \cite{ags}) it is possible to show that actually the barycentric property \eqref{eq:barycentricproperty} holds for every $\varphi \in \rmC^{1,1}(\X; \R)$ (indeed, in this case, $\nabla \varphi \in \Tan_{\mu} \prob_2(\X)$ for every $\mu \in \prob_2(\X)$).
\end{remark}

\medskip
As a complement to the studies investigated in this section, we prove
the converse characterization of Theorem \ref{theo:barycentric} in the
particular case of \emph{regular measures}
or \emph{regular vector fields}.
We refer to \cite[Definitions 6.2.1, 6.2.2]{ags} for the definition of
$\prob_2^r(\X)$, that is the space of regular measures on $\X$.
When $\X=\R^d$ has finite dimension, $\prob_2^r(\X)$
is just the subset of measures in $\prob_2(\X)$ which are absolutely
continuous
w.r.t.~the Lebesgue measure $\mathcal L^d$.

\begin{theorem}\label{thm:RegularM}
  Let $\frF$ be a $\lambda$-dissipative \MPVF. Let $\mu: \interval \to \dom(\frF)$
  be a locally absolutely continuous curve satisfying the relaxed
  barycentric property of Definition \ref{def:barprop}.
  If for a.e.~$t\in \interval$ at least one of the following properties holds:
  \begin{enumerate}
  \item $\mu_t\in\prob_2^r(\X)$,
  \item $\pclo\frF[\mu_t]$
    contains a unique element $\Phi_t$ concentrated
    on a map, i.e.~$\Phi_t=(\ii_\X, \bry{\Phi_t})_\sharp\mu_t$
  \end{enumerate}
  then $\mu$ is $\lambda$-\EVI solution of \eqref{eq:CP}.
\end{theorem}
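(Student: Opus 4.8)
The plan is to verify the distributional inequality \eqref{eq:EVI} directly. Fix $\nu\in\dom(\frF)$ and $\Phi\in\frF[\nu]$; since $\mu$ is locally absolutely continuous, $t\mapsto W_2^2(\mu_t,\nu)$ is locally absolutely continuous, so it suffices to prove $\frac12\frac{\d}{\d t}W_2^2(\mu_t,\nu)\le\lambda W_2^2(\mu_t,\nu)-\bram{\Phi}{\mu_t}$ for a.e.\ $t\in\interval$. Let $\vv$ be the Wasserstein velocity field from Theorem \ref{thm:tangentv}, and let $E\subset A(\mu)$ be the full-measure set of $t$ at which: $s\mapsto W_2^2(\mu_s,\nu)$ is differentiable with $\frac12\frac{\d}{\d t}W_2^2(\mu_t,\nu)=\bram{(\ii_\X,\vv_t)_\sharp\mu_t}{\nu}$ (Theorem \ref{thm:refdiff}); the relaxed barycentric property holds with a selection $\Psi_t\in\cloco{\pclo\frF}[\mu_t]$; and (1) or (2) is satisfied. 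Comparing the continuity equation for $\mu$ (Theorem \ref{thm:tangentv}) with \eqref{eq:barycentricproperty} and using the barycentric identity \eqref{eq:30}, one gets $\int_\X\langle\nabla\varphi,\vv_t-\bry{\Psi_t}\rangle\,\de\mu_t=0$ for all $\varphi\in\Cyl(\X)$ and $t\in E$, hence $\vv_t=\proj_{\mu_t}(\bry{\Psi_t})$.

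The core step will be the identity $\bram{(\ii_\X,\vv_t)_\sharp\mu_t}{\nu}=\bram{\Psi_t}{\nu}$ for every $t\in E$. Under hypothesis (1), regularity of $\mu_t$ forces a unique $\ggamma\in\Gamma_o(\mu_t,\nu)$, concentrated on the graph of an optimal map $T$ with $T-\ii_\X\in\Tan_{\mu_t}\prob_2(\X)$ (\cite[Proposition 8.5.2]{ags}); then every $\ssigma\in\Lambda(\Psi_t,\nu)$ satisfies $x_1=T(x_0)$ $\ssigma$-a.e., so by disintegration $\int\langle v_0,x_0-x_1\rangle\,\de\ssigma=\int_\X\langle\bry{\Psi_t}(x),x-T(x)\rangle\,\de\mu_t(x)$, independently of $\ssigma$; since $x-T(x)$ is a tangent field this equals $\int_\X\langle\vv_t(x),x-T(x)\rangle\,\de\mu_t$, which is $\bram{(\ii_\X,\vv_t)_\sharp\mu_t}{\nu}$ by the same computation. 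Under hypothesis (2), $\cloco{\pclo\frF}[\mu_t]$ reduces to the singleton $\{\Phi_t\}$, so $\Psi_t=\Phi_t=(\ii_\X,\bb)_\sharp\mu_t$ with $\bb=\bry{\Phi_t}$ and $\vv_t=\proj_{\mu_t}\bb$; by the Remark following Theorem \ref{thm:refdiff} (i.e.\ \cite[Lemma 5.3.2]{ags}), for $\ww\in\{\bb,\vv_t\}$ one has $\bram{(\ii_\X,\ww)_\sharp\mu_t}{\nu}=\min\{\int\langle\ww(x),x-y\rangle\,\de\ggamma\mid\ggamma\in\Gamma_o(\mu_t,\nu)\}$, and for each fixed $\ggamma$ the two integrands differ by $\langle\bb-\vv_t,\bar r_\ggamma\rangle_{L^2_{\mu_t}}$, where $\bar r_\ggamma(x):=x-\int y\,\de\ggamma_x(y)$ is the barycentric projection of $\ggamma$, which belongs to $\Tan_{\mu_t}\prob_2(\X)$ (\cite[Proposition 8.5.2]{ags}); since $\bb-\vv_t=\bb-\proj_{\mu_t}\bb$ is orthogonal to $\Tan_{\mu_t}\prob_2(\X)$, this difference vanishes and the identity follows in both cases.

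To conclude, I would use $\cloco{\pclo\frF}\subset\hat\frF$ (the discussion after \eqref{eq:deform}, via Proposition \ref{prop:maximal}(a)): then $\Psi_t\in\hat\frF[\mu_t]$, and the defining property \eqref{eq:maxHilb} of $\hat\frF$ gives $\bram{\Psi_t}{\nu}+\bram{\Phi}{\mu_t}\le\lambda W_2^2(\mu_t,\nu)$. Combining this with the core identity yields $\frac12\frac{\d}{\d t}W_2^2(\mu_t,\nu)=\bram{\Psi_t}{\nu}\le\lambda W_2^2(\mu_t,\nu)-\bram{\Phi}{\mu_t}$ for a.e.\ $t$; since $\nu\in\dom(\frF)$ and $\Phi\in\frF[\nu]$ were arbitrary and $\mu$ is continuous with values in $\dom(\frF)\subset\overline{\dom(\frF)}$, this exhibits $\mu$ as a $\lambda$-\EVI solution of \eqref{eq:CP}. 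The main obstacle is exactly the core step: one must replace the abstract selection $\Psi_t\in\cloco{\pclo\frF}[\mu_t]$ — which need not be concentrated on a graph and may correlate the velocity with the optimal transport direction — by the genuine Wasserstein velocity $\vv_t$ inside the pairing $\bram{\cdot}{\nu}$, and hypotheses (1) and (2) are precisely the two structural conditions (uniqueness of a map-induced optimal plan; the selection itself being a map) that make this reduction possible, both ultimately relying on optimal maps and barycentric projections of optimal plans differing from the identity by tangent fields.
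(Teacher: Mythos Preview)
Your proof is correct and follows essentially the same route as the paper: derive $\vv_t=\proj_{\mu_t}(\bry{\Psi_t})$ from the barycentric property, establish $\bram{(\ii_\X,\vv_t)_\sharp\mu_t}{\nu}=\bram{\Psi_t}{\nu}$ via the tangency of $\ii_\X-\rr_t$ (optimal map in case~(1), barycentric projection of an optimal plan in case~(2)), and conclude using $\cloco{\pclo\frF}\subset\hat\frF$. One minor point: the fact that the barycentric projection of a general optimal plan lies in $\Tan_{\mu_t}\prob_2(\X)$ is not quite \cite[Prop.~8.5.2]{ags} (which treats optimal \emph{maps}); the paper appeals to the proof of \cite[Thm.~12.4.4]{ags} for this.
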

\begin{proof}
  Take $\varphi \in \Cyl(\X)$ and observe that, since $\mu$ has the relaxed barycentric
  property, then
  for a.e. $t\in \interval$ (recall
  Theorem \ref{thm:refdiff})
  there exists $\Phi_t\in \cloco{\pclo\frF[\mu_t]}$ such that
\begin{equation*}
\frac{\de}{\de
  t}\int_\X\varphi(x)\de\mu_t(x)=\int_{\TX}\scalprod{\nabla\varphi(x)}{v}\de\Phi_t                         
  =\int_\X\scalprod{\nabla\varphi}{\proj_{\mu_t} \circ \bry{\Phi_t}}\de\mu_t=
    \int_\X \scalprod{\vv_t}{\nabla\varphi}\de\mu_t,
\end{equation*}
hence $\mu$ solves the continuity equation
$\partial_t\mu_t+\text{div}(\vv_t\mu_t)=0$, with
$\vv_t=\proj_{\mu_t} \circ
\bry{\Phi_t}\in\Tan_{\mu_t}\prob_2(\X)$.
By Theorem \ref{thm:refdiff}, we also know that 
\begin{equation}
\label{eq:162}
\frac{\de}{\de
  t}\frac{1}{2}W_2^2(\mu_t,\nu)=\int_{\X^2}\scalprod{\vv_t(x_0)}{x_0-x_1}\de\ggamma_t(x_0,x_1),\quad
t\in A(\mu,\nu), \
\ggamma_t\in \Gamma_o(\mu_t,\nu), \nu \in \prob_2(\X).
\end{equation}
Possibly disregarding a Lebesgue negligible set, we can
decompose the set $A(\mu,\nu)$ in the union
$A_1\cup A_2$, where $A_1, A_2$ correspond to
the times $t$ for which the properties (1) and (2) hold.

If $t \in A_1$ and $\nu\in\dom(\frF)$, then by \cite[Theorem
6.2.10]{ags}, since $\mu_t\in\prob_2^r(\X)$, there exists a unique
$\ggamma_t\in\Gamma_o(\mu_t,\nu)$ and
$\ggamma_t=(\ii_\X, \rr_t)_\sharp\mu_t$ for some map $\rr_t$ s.t. $\ii_{\X}-\rr_t\in
\Tan_{\mu_t}\prob_2(\X)\subset L^2_{\mu_t}(\X;\X)$
(recall \cite[Proposition 8.5.2]{ags}), so that
\begin{align}
\notag
&\int_{\X^2}\scalprod{\vv_t(x_0)}{x_0-x_1}\de\ggamma_t(x_0,x_1)
=\int_\X\scalprod{\vv_t(x_0)}{x_0-\rr_t(x_0)}\de\mu_t(x_0)\\
&=\int_\X\scalprod{\bry{\Phi_t}}{x_0-\rr_t(x_0)}\de\mu_t(x_0) =
\int_{\TX}\scalprod{v}{x-\rr_t(x)}\de\Phi_t(x,v)=
  \bram{\Phi_t}{\nu},
\label{eq:163}
\end{align}
where we also applied Theorem \ref{thm:characterization}
and Remark \ref{rem:particular},
recalling that in this case $\Lambda(\Phi_t,\nu)$ is a singleton.

If $t\in A_2$ we can select
the optimal plan $\ggamma_t\in \Gamma_o(\mu_t,\nu)$
along which
\begin{displaymath}
  \bram{\Phi_t}{\nu}=
  \directionalm{\Phi_t}{\ggamma_t}0=
  \int_\X\scalprod{\bry{\Phi_t}(x_0)}{x_0-x_1}\de\ggamma_t(x_0,x_1).
\end{displaymath}
If $\rr_t$ is the barycenter of $\ggamma_t$
with respect to its first marginal $\mu_t$,
recalling that $\ii_{\X}-\rr_t\in \Tan_{\mu_t}\prob_2(\X)$
(see also the proof of \cite[Thm.~12.4.4]{ags}) we also get
\begin{align}
  \notag
  \int_{\X^2}&\scalprod{\vv_t(x_0)}{x_0-x_1}\de\ggamma_t(x_0,x_1)
  =
  \int_\X\scalprod{\vv_t(x_0)}{x_0-\rr_t(x_0)}\de\mu_t(x_0)
    \\&=
  \int_\X\scalprod{\bry{\Phi_t}(x_0)}{x_0-\rr_t(x_0)}\de\mu_t(x_0)
  =
  \int_\X\scalprod{\bry{\Phi_t}(x_0)}{x_0-x_1}\de\ggamma_t(x_0,x_1)=
    \bram{\Phi_t}{\nu}
  \label{eq:161}
\end{align}
where we still applied Theorem \ref{thm:characterization}
and Remark \ref{rem:particular}.

Combining \eqref{eq:162} with \eqref{eq:163} and \eqref{eq:161}
we eventually get
\begin{align*}
\frac{\de}{\de
  t}\frac{1}{2}W_2^2(\mu_t,\nu)
  &
  =\bram{\Phi_t}{\nu}
  \le -\bram{\Psi}{\mu_t}+\lambda W_2^2(\mu_t,\nu), \quad\forall\,\Psi\in\frF[\nu],
\end{align*}
by definition of $\hat\frF$ and the fact that $ \cloco{\overline{\frF}}[\mu_t]\subset \hat \frF[\mu_t]$.
\end{proof}
  Thanks to Theorem \ref{thm:RegularM},
  we can apply to barycentric solutions
  the uniqueness and approximation results
  of the previous Sections.
We conclude this section with a general result on the existence of a $\lambda$-flow
for $\lambda$-dissipative {\MPVF}s, which is the natural refinement of Proposition
\ref{prop:local-bounded}
\begin{theorem}[Generation of $\lambda$-flow]
  \label{thm:global-bound}
  Let $\frF$ be a $\lambda$-dissipative \MPVF such that $\prob_b(\X)\subset \dom(\frF)$
  and for every $\mu_0\in \prob_b(\X)$ there exist $\varrho>0$ and $L>0$
  such that
  \begin{equation}
    \label{eq:144b}
    \supp(\mu)\subset \supp(\mu_0)+\mathrm B_\X(\varrho)\quad\Rightarrow\quad
    \exists \Phi\in \frF[\mu]: \supp(\sfv_\sharp\Phi)\subset \mathrm B_\X(L).
  \end{equation}
  Let $\frF_b := \frF \cap \prob_b(\TX)$. If there exists $a\ge 0$ such that for every $\Phi\in \frF_b$
  \begin{equation}
    \label{eq:145}
    \supp(\Phi)\subset
    \Big\{(x,v)\in \TX:
    \langle v,x\rangle\le a(1+|x|^2)\Big\},
  \end{equation}
  then $\frF$ generates a $\lambda$-flow.
\end{theorem}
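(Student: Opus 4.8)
The plan is to show that the Explicit Euler Scheme \eqref{eq:EE} is globally solvable at every $\mu_0\in\prob_b(\X)$ and then to invoke Theorem~\ref{thm:globall}(a), using that $\prob_b(\X)\subset\dom(\frF)$ is a dense subset of $\dom(\frF)$ (it is already dense in $\prob_2(\X)$). This refines Proposition~\ref{prop:local-bounded}: there \eqref{eq:144b} gave only a local-in-time control on the growth of the support, whereas here \eqref{eq:145} will be used to rule out finite-time blow-up.

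The first step is a uniform-in-space reformulation of \eqref{eq:144b}. Since $\supp(\Phi)\subset\supp(\sfx_\sharp\Phi)\times\supp(\sfv_\sharp\Phi)$ for every $\Phi\in\prob_2(\TX)$, the measure produced by \eqref{eq:144b} always lies in $\prob_b(\TX)$, hence in $\frF_b$, and $\dom(\frF_b)=\prob_b(\X)$. Applying \eqref{eq:144b} to a measure $\mu_R\in\prob_b(\X)$ with $\supp(\mu_R)=\overline{\mathrm B_\X(R)}$ — e.g.~$\mu_R=\sum_k 2^{-k-1}\delta_{x_k}$ for a countable dense set $\{x_k\}$ of $\mathrm B_\X(R)$ — yields, for every $R>0$, a constant $L_R>0$ (non-decreasing in $R$) such that every $\mu\in\prob_b(\X)$ with $\supp(\mu)\subset\mathrm B_\X(R)$ admits some $\Phi\in\frF_b[\mu]$ with $\supp(\sfv_\sharp\Phi)\subset\mathrm B_\X(L_R)$, and hence $|\Phi|_2\le L_R$.

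Next I would fix $\mu_0\in\prob_b(\X)$ and $T>0$, pick $R_0$ with $\supp(\mu_0)\subset\mathrm B_\X(R_0)$, and set $L:=L_{R_T}$, where $R_T$ (depending only on $R_0,a,T$) is determined as follows. The key estimate is that whenever $M^{n+1}_\tau=(\sfx+\tau\sfv)_\sharp\fF^n_\tau$ with $\fF^n_\tau\in\frF_b[M^n_\tau]$ and $\supp(\sfv_\sharp\fF^n_\tau)\subset\mathrm B_\X(L)$, condition \eqref{eq:145} gives $|x+\tau v|^2\le(1+2a\tau)|x|^2+2a\tau+\tau^2L^2$ on $\supp(\fF^n_\tau)$, so that $\rho_n:=\sup\{|x|^2:x\in\supp(M^n_\tau)\}$ satisfies $\rho_{n+1}\le(1+2a\tau)\rho_n+2a\tau+\tau^2L^2$. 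A discrete Gronwall argument then bounds $\rho_n$, for $n\tau\le T$, by a quantity of the form $\mathrm e^{2aT}(\rho_0+2)+O(\tau L^2)$; choosing $\tau\le\ttau$ with $\ttau$ so small that the $O(\tau L^2)$ term is negligible gives $\rho_n\le R_T^2$ with, say, $R_T^2:=\mathrm e^{2aT}(R_0^2+3)$, which is \emph{independent of $L$ and $\tau$}. With $R_T$ and $L$ fixed, one builds $(M^n_\tau,\fF^n_\tau)\in\mathscr E(\mu_0,\tau,T,L)$ by induction on $n$: at each step $\supp(M^n_\tau)\subset\mathrm B_\X(R_T)$ lets one select $\fF^n_\tau\in\frF_b[M^n_\tau]\subset\frF[M^n_\tau]$ as above (so the stability bound $|\fF^n_\tau|_2\le L$ holds), and the estimate gives $\supp(M^{n+1}_\tau)\subset\mathrm B_\X(R_T)$, hence $M^{n+1}_\tau\in\prob_b(\X)\subset\dom(\frF)$. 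Thus \eqref{eq:EE} is globally solvable at $\mu_0$, and Theorem~\ref{thm:globall}(a) yields that $\frF$ generates a $\lambda$-flow.

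I expect the main obstacle to be exactly the apparent circularity between the velocity bound $L$ — which, through \eqref{eq:144b}, depends on the region containing the iterates — and the radius $R_T$ of that region: a crude bound on the growth of $\supp(M^n_\tau)$ using only $|v|\le L$ grows like $R_0+TL$ and does not close the induction. The resolution is that \eqref{eq:145} controls only the \emph{radial} component of the velocity, producing a support bound that grows like $\mathrm e^{aT}$ with the contribution of $L$ confined to the $\tau^2L^2$ Euler truncation term, which is harmless as $\tau\downarrow0$. The remaining technical points — the construction of $\mu_R$ when $\X$ is infinite dimensional, and checking that the selected couplings indeed belong to $\frF_b$ so that \eqref{eq:145} is applicable — are routine.
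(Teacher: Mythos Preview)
Your proposal is correct and takes a genuinely different route from the paper.

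The paper argues at the level of the continuous \EVI solution: it first invokes Proposition~\ref{prop:local-bounded} to obtain a maximal strict $\lambda$-\EVI solution $\mu\in\Lip_{\rm loc}([0,T);\prob_b(\X))$ for $\frF_b$, then shows $T=+\infty$ by contradiction. For this it uses the barycentric property (Theorem~\ref{theo:barycentric}, available thanks to \eqref{eq:144b}) and tests against the smooth function $\varphi(t,x)=\phi(|x|\mathrm e^{-bt})$ with $b=2a$: condition \eqref{eq:145} forces $\frac{\d}{\d t}\int\varphi\,\d\mu_t\le 0$, which yields the support bound $\supp(\mu_t)\subset\mathrm B_\X((r_0+1)\mathrm e^{bt})$, contradicting \eqref{eq:79}. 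Theorem~\ref{thm:globall}(b) then concludes.

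You work entirely at the discrete level: you use \eqref{eq:145} pointwise on $\supp(\fF^n_\tau)$ to derive the recursion $\rho_{n+1}\le(1+2a\tau)\rho_n+2a\tau+\tau^2L^2$, whose Gronwall bound is independent of $L$ up to the $O(\tau L^2)$ term. This lets you fix $R_T$ first, then $L=L_{R_T}$, then $\ttau$, closing the induction and giving global solvability of \eqref{eq:EE} at each $\mu_0\in\prob_b(\X)$; Theorem~\ref{thm:globall}(a) then applies directly to $\frF$. Your argument is more elementary---it bypasses the barycentric machinery of Section~\ref{sec:bar} altogether---and it avoids the paper's reduction to $\frF_b$ (since your selections $\fF^n_\tau\in\frF_b[M^n_\tau]\subset\frF[M^n_\tau]$ already witness solvability for $\frF$). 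The paper's approach, on the other hand, yields as a by-product an explicit support bound for the continuous solution, and illustrates how \eqref{eq:145} interacts with the barycentric characterization.
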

\begin{proof} It is enough to prove that $\frF_b$ generates a $\lambda$-flow.
  Applying Proposition \ref{prop:local-bounded} to the \MPVF $\frF_b$,
  we know that for every $\mu_0\in \dom(\frF_b)$
  there exists a unique maximal strict $\lambda$-\EVI solution
  $\mu\in\Lip_{\rm loc}([0,T); \prob_b(\X))$ driven by $\frF_b$ and satisfying \eqref{eq:79}.
  We argue by contradiction, and we assume that $T<+\infty$.
  Notice that by \eqref{eq:144b} $\frF$ satisfies \eqref{eq:138},
  so that $\mu$ is a relaxed barycentric solution for $\frF_b$. Since
  $\mu_0\in \prob_b(\X)$, we know that $\supp(\mu_0)\subset \mathrm B_\X(r_0)$ for some
  $r_0>1$.

  It is easy to check that \eqref{eq:145} holds
  also for every $\Phi\in \cloco{\pclo\frF_b}$.
  Moreover, setting $b:=2a$, condition \eqref{eq:145} yields
  \begin{equation}
    \label{eq:165}
    \langle v,x\rangle \le b|x|^2\quad\text{for every}\quad
    (x,v)\in \supp\Phi \in \frF_b,\ |x|\ge 1.      
  \end{equation}
  Let $\phi(r):\R\to\R$ be any smooth increasing function
  such that $\phi(r)=0$ if $r\le r_0$ and $\phi(r)=1$ if $r\ge r_0+1$,
  and
  let $\varphi(t,x):=\phi(|x|\mathrm e^{-b t})$.
  Clearly $\varphi\in \mathrm C^{1,1}(\X\times [0,+\infty))$,
  with $\nabla \varphi(t,x)=\frac{x}{|x|}\phi'(|x|\mathrm e^{-b t})\mathrm e^{-b t}$
  if $x\neq 0$, $\nabla\varphi(t,0)=0$,
  and $\partial_t \varphi(t,x)=-b\phi'(|x|\mathrm e^{-b t})|x|\mathrm e^{-bt}.$
  We thus have for a.e.~$t\in [0,T)$
  \begin{align*}
    \frac\d{\d t}\int_\X \varphi(t,x)\,\d\mu_t
    &
      =
      \mathrm e^{-b t}\int_\TX \Big(-b\phi'(|x|\mathrm e^{-b t})|x|+
      \langle v,x\rangle|x|^{-1}\phi'(|x|\mathrm e^{-b t})\Big)\mathrm d\Phi_t(v,x)
    \\&
    \le
    \mathrm e^{-b t}\int_\TX \Big(-b\phi'(|x|\mathrm e^{-b t})|x|+
    b|x|\phi'(|x|\mathrm e^{-b t})\Big)\mathrm d\Phi_t(v,x)= 0
  \end{align*}
  where in the last inequality we used \eqref{eq:165} and 
  the fact that the integrand vanishes if $|x|\le 1$.
  We get 
  \begin{displaymath}
    \int_\X \varphi(t,x)\,\d\mu_t=0\quad\text{in }[0,T);
  \end{displaymath}
  this implies that $\supp(\mu_t)\subset \mathrm B_\X((r_0+1)\mathrm e^{bt})$
  so that the limit measure $\mu_T$ belongs to $\prob_b(\X)$ as well, leading to a contradiction with \eqref{eq:79} for $\frF_b$.

  We deduce that $\mu$ is a global strict $\lambda$-\EVI solution for $\frF_b$. We can then apply
  Theorem \ref{thm:globall}(b) to $\frF_b$.
\end{proof}
\subsection{A few borderline examples}
We conclude this section with a few examples which reveal
the importance of some of the technical tools we developed so far.
First of all we exhibit an example of dissipative \MPVF generating a
$0$-flow,
for which
solutions starting from initial data are merely continuous
(in particular the nice regularizing effect of gradient flows does not
hold
for general dissipative evolutions).
This clarifies the interest in a definition of continuous, not necessarily absolutely continuous, solution.
\begin{example}[Lifting of dissipative evolutions and lack of
  regularizing effect]
  \label{ex:rulla}
  Let us consider the situation of Corollary \ref{cor:consistency},
  choosing the Hilbert space $\X=\ell^2(\N)$.
  Following \cite[Example 3]{Rulla}
  we can easily find a maximal linear dissipative operator
  $A: \dom(A)\subset \ell^2(\N) \to \ell^2(\N)$
  whose semigroup 
  does not provide a regularizing effect.

  The domain of $A$ is
$\dom(A):= \{x\in \ell^2(\N):\sum_{k=1}^\infty k^2 |x_k|^2<\infty\} $
and $A$ is defined as
\[ A(x_1, x_2, \dots, x_{2k-1}, x_{2k}, \dots ) = (-x_2, x_1, \dots,
  -kx_{2k}, kx_{2k-1}, \dots ), \quad
  x\in \dom(A),
\]
so that there is no regularizing effect for the semigroup $(R_t)_{t
  \ge 0}$ generated by (the graph of) $A$:
evolutions starting outside the domain $\dom(A)$ stay
outside the domain and do not give raise to locally Lipschitz
or a.e.~differentiable curves.
Corollary \ref{cor:consistency}
shows that the $0$-flow $(S_t)_{t \ge 0}$ generated
by $\frF$ on $\prob_2(X)$
is given by 
\[ S_t[\mu_0] = (R_t)_{\sharp}\mu_0 \quad \text{ for every } \mu_0 \in \overline{\dom(\frF)} = \prob_2(\X)\]
so that there is the same lack of regularizing effect on probability
measures.
\end{example}
In the next example we show that a constant \MPVF generates a barycentric solution.
\begin{example}[Constant PVF and barycentric evolutions]
  Given $\theta \in \prob_2(\X)$, we consider the constant PVF
\[ \frF[\mu] := \mu \otimes \theta.\]
$\frF$ is dissipative: in fact,
if $\Phi_i=\mu_i\otimes \theta$, $i=0,1$, 
$\mmu\in \Gamma_o(\mu_0,\mu_1)$, and
$\rr:\X\times \X\times X\to \TX\times \TX$ is defined by
$\rr(x_0,x_1,v):=(x_0,v;x_1,v)$,
then
\begin{displaymath}
  \Theta=\rr_\sharp (\mmu\otimes \theta)\in \Lambda(\Phi_0,\Phi_1)
\end{displaymath}
so that \eqref{eq:22} yields
\begin{displaymath}
  \bram{\Phi_0}{\Phi_1}\le \int \la x_0-x_1,v-v\rangle \,\d
  (\mmu\otimes \theta)(x_0,x_1,v)=0.  
\end{displaymath}
Applying Proposition \ref{prop:gsolvcond} and Theorem
\ref{thm:global}
we immediately see that $\frF$ generates a $0$-flow $(\mathrm
S_t)_{t\ge0}$ in $\prob_2(\X)$, obtained as a
limit
of the Explicit Euler scheme.
It is also straightforward to notice that we can apply Theorem
\ref{theo:barycentric} to $\frF$ so that for every $\mu_0
\in \prob_2(\X)$ the unique \EVI solution $\mu_t=\mathrm S_t\mu_0$
satisfies the continuity equation
\begin{displaymath}
  \partial_t \mu_t+\nabla\cdot (\bb\mu_t)=0,\quad
  \bb=\int_\X x\,\d\theta(x).
\end{displaymath}
Since $\bb$ is constant, we deduce that $\mathrm S_t$
acts as a translation with constant velocity $\bb$, i.e.
\begin{displaymath}
  \mu_t=(\ii_\X+t\bb)_\sharp \mu_0,
\end{displaymath}
so that $\mathrm S_t$ coincides with the semigroup generated
by the PVF $\frF'[\mu]:=(\ii_\X,\bb)_\sharp \mu$.
\end{example}

We conclude this section with a $1$-dimensional example of a curve which satisfies the barycentric property but it is not an \EVI solution. 
\begin{example}
  \label{ex:bary}
  Let $\X= \R$.
  It is well known (see e.g.~\cite{Natile-Savare})
  that $\prob_2(\R)$ is isometric to
  the closed convex subset $\mathcal{K}
  \subset L^2(0,1)$ of the (essentially) increasing maps
  and the 
  isometry $\iso: \prob_2(\R) \to \mathcal{K}$ maps 
  each measure $\mu\in \prob_2(\R)$ 
  into the
  pseudo inverse of its cumulative distribution function.

  It follows that for every $\bar\nu\in \prob_2(\R)$
  the functional $\func: \prob_2(\R) \to \R$ defined as
  \[ \func(\mu) :=\frac{1}{2} W_2^2(\mu, \bar{\nu})\]
  is $1$-convex, since it satisfies $\func(\mu)=\lfunc(\iso(\mu))$
  where
  $\lfunc : L^2(0,1) \to \R$ is defined as
  \[ \lfunc(u) := \frac{1}{2}\|u-\iso(\bar{\nu}) \|^2\quad\text{for
      every }u\in L^2(0,1).\]
  Thus $\func$ generates a gradient flow $(\mathrm S_t)_{t \ge 0}$
  which is a semigroup of contractions in $\prob_2(\R)$;
  for every $\mu_0\in \prob_2(\R)$
  $\mathrm S_t[\mu_0]$ is the unique $(-1)$-\EVI solution for the
  \MPVF $-\boldsymbol\partial \func$ starting from $\mu_0 \in
  \prob_2(\X)$ (see Proposition \ref{prop:gfvsevi}).
  Since the notion of gradient flow is purely metric, the gradient flow of $\lfunc$ starting from $\iso(\mu_0)$ is just the image through $\iso$ of the gradient flow of $\func$ starting from $\mu_0 \in \prob_2(\X)$. It is easy to check that
\[ u(t) := \mathrm e^{-t} \iso(\mu_0) + (1-\mathrm e^{-t})\iso(\bar{\nu})\]
is the gradient flow of $\lfunc$ starting from $u_0=\iso(\mu_0)$. Note
that $u(t)$ is the $L^2(0,1)$ geodesic from $\iso(\bar{\nu})$ to
$\iso(\mu_0)$ evaluated at the rescaled time $e^{-t}$, so that
$\mathrm S_t[\mu_0]$ must coincide with the evaluation at time
$\mathrm e^{-t}$ of the (unique) geodesic connecting $\bar{\nu}$ to $\mu_0$ i.e.
\[ \mathrm S_t[\mu_0] = \sfx^{s}_{\sharp} \ggamma, \quad s=\mathrm
e^{-t}\in (0,1],\]
where $\ggamma\in\Gamma_o(\bar{\nu},\mu_0)$.

Let us now consider
the particular case $\bar{\nu}=\frac{1}{2} \delta_{-a} + \frac{1}{2}
\delta_a$, where $a>0$ is a fixed parameter and $\mu_0 = \delta_0$. It is straightforward to see that 
\[ \mu_t=\mathrm S_t[\delta_0] = \frac{1}{2} \delta_{a(1-\mathrm e^{-t})} +
  \frac{1}{2} \delta_{a(\mathrm e^{-t}-1)}, \quad t \ge 0\]
so that 
\[ (\ii_\X, \vv_t)_{\sharp} \mu_t=
  \frac{1}{2} \delta_{((1-e^{-t})a, e^{-t}a)} + \frac{1}{2}
  \delta_{((e^{-t}-1)a, -e^{-t}a)} \in -\boldsymbol \partial
  \func(\mu_t), \quad \text{ a.e. } t>0, \]
where $\vv$ is the Wasserstein velocity field of $\mu_t$.
On the other hand, \cite[Lemma 10.3.8]{ags} shows that
\[ \delta_0 \otimes \left ( \frac{1}{2}\delta_{-a} + \frac{1}{2} \delta_{a} \right ) \in - \boldsymbol \partial \func(\delta_0)\]
so that the constant curve $\bar \mu_t := \delta_0$ for $t\ge 0$
has the barycentric property for the \MPVF $-\boldsymbol \partial
\func$ but it is not a \EVI solution for $-\boldsymbol \partial
\func$,
being different from $\mu_t=\mathrm S_t[\delta_0]$.
\end{example}

\section{Explicit Euler Scheme}\label{sec:EulerScheme}
In this section, we collect all
the main estimates concerning the
Explicit Euler scheme \eqref{eq:EE}.

\subsection{The Explicit Euler Scheme: preliminary estimates}

Our first step is to prove simple a priori estimates and a discrete
version of \eqref{eq:EVI} as a consequence of Proposition
\ref{prop:concavity}.

\begin{proposition} \label{prop:discrete-evi}
  Every solution  
  $(M_\tau,\fF_\tau)\in \mathscr E(\mu_0,\tau,T,L)$ of \eqref{eq:EE} satisfies
  \begin{equation}
  W_2(M_{\tau}(t),\mu_0) \le L t,\quad
  |\fF_\tau(t)|_2 \le L
  \quad
  \text{for every $t \in [0,T]$,}\label{eq:108}
\end{equation}
\begin{equation}
  W_2(M_{\tau}(t), M_{\tau}(s)) \le L|t-s| \quad 
  \text{for every } s,t \in [0,T],
  \label{eq:109}
\end{equation}
  \begin{equation}
\frac{\de}{\de t} \frac{1}{2} W_2^2(M_{\tau}(t), \nu) \le 
\bram{\fF_{\tau}(t)}{\nu}
+ \tau
|\fF_{\tau}(t)|_2^2
\le \bram{\fF_{\tau}(t)}{\nu}
+ \tau L^2\label{eq:IEVI} \tag*{(IEVI)}
\quad
\text{in $[0,T]$, } \forall \nu \in \prob_2(\X),
\end{equation}
with possibly countable exceptions.
In particular
\begin{equation}\label{eq:ievigeneral}
  \frac{1}{2}W_2^2(M_{\tau}^{n+1}, \nu)-
  \frac{1}{2}W_2^2(M_{\tau}^{n}, \nu)\le
  \tau\bram{\fF_{\tau}^n}{\nu}+
  \frac12 {\tau^2 } L^2
  \quad\text{for every }
  0\le n<\finalstep T\tau, \forall \nu \in \prob_2(\X).
\end{equation}
\end{proposition}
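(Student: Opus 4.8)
The plan is to prove each of the four displayed assertions in turn, all of them being essentially immediate consequences of the concavity estimate of Proposition \ref{prop:concavity} applied along each single Euler step, together with the definition \eqref{eq:EE} of the scheme and the definition of the interpolants.

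First I would establish \eqref{eq:108} and \eqref{eq:109}. By definition of the affine interpolant \eqref{eq:affineM}, on each interval $[n\tau,(n+1)\tau]$ we have $M_\tau(t)=\exp^{t-n\tau}_\sharp\fF^n_\tau$ with $\fF^n_\tau\in\frF[M^n_\tau]$ and $|\fF^n_\tau|_2\le L$; hence for $n\tau\le s\le t\le (n+1)\tau$, using the coupling $(\exp^{s-n\tau},\exp^{t-n\tau})_\sharp\fF^n_\tau\in\Gamma(M_\tau(s),M_\tau(t))$ one gets $W_2^2(M_\tau(s),M_\tau(t))\le\int_{\TX}|(t-s)v|^2\,\d\fF^n_\tau=(t-s)^2|\fF^n_\tau|_2^2\le L^2(t-s)^2$. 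The general bound \eqref{eq:109} then follows by the triangle inequality summing over the finitely many intervals crossed between $s$ and $t$, and \eqref{eq:108} is the special case $s=0$ together with $|\fF_\tau(t)|_2=|\fF^{\floor{t/\tau}}_\tau|_2\le L$.

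Next I would prove \ref{eq:IEVI}. Fix $\nu\in\prob_2(\X)$ and $n$ with $0\le n<\finalstep T\tau$. Apply Proposition \ref{prop:concavity}(3) with $\Phi_0:=\fF^n_\tau$ and $\mu_1:=\nu$: the function $g(s)=\tfrac12 W_2^2(\exp^s_\sharp\fF^n_\tau,\nu)$ satisfies that $s\mapsto g(s)-\tfrac12 s^2|\fF^n_\tau|_2^2$ is concave, so its derivative is nonincreasing; in particular for $s\in[0,\tau)$ one has $g'(s)\le g'(0+)+s|\fF^n_\tau|_2^2$, and recalling $g'(0+)=\bram{\fF^n_\tau}{\nu}$ from Definition \ref{def:scalarprodop} we get, for a.e.\ $s\in(0,\tau)$,
\begin{equation*}
  \frac{\d}{\d s}\,\frac12 W_2^2(\exp^s_\sharp\fF^n_\tau,\nu)\le
  \bram{\fF^n_\tau}{\nu}+s\,|\fF^n_\tau|_2^2\le
  \bram{\fF^n_\tau}{\nu}+\tau L^2 .
\end{equation*}
Since on $(n\tau,(n+1)\tau)$ we have $M_\tau(t)=\exp^{t-n\tau}_\sharp\fF^n_\tau$ and $\fF_\tau(t)=\fF^n_\tau$, translating $s=t-n\tau$ gives \ref{eq:IEVI} on each open subinterval; a concave function has at most countably many points of non-differentiability, which accounts for the countable exceptions across all $n$. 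Strictly, the derivative in \ref{eq:IEVI} must be interpreted so that the one-sided right derivative bound holds everywhere it exists, matching the statement; I would phrase it via the right derivative of $g$ to avoid any ambiguity.

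Finally, \eqref{eq:ievigeneral} is obtained by integrating \ref{eq:IEVI} over $[n\tau,(n+1)\tau]$: the absolutely continuous function $t\mapsto\tfrac12 W_2^2(M_\tau(t),\nu)$ (Lipschitz by \eqref{eq:109}) has the above bound for its derivative a.e., so
\begin{equation*}
  \tfrac12 W_2^2(M^{n+1}_\tau,\nu)-\tfrac12 W_2^2(M^n_\tau,\nu)
  =\int_{n\tau}^{(n+1)\tau}\frac{\d}{\d t}\tfrac12 W_2^2(M_\tau(t),\nu)\,\d t
  \le\tau\,\bram{\fF^n_\tau}{\nu}+\int_{n\tau}^{(n+1)\tau}(t-n\tau)L^2\,\d t,
\end{equation*}
and the last integral equals $\tfrac12\tau^2 L^2$; alternatively one may bound directly $W_2^2(M^{n+1}_\tau,\nu)\le\int|(x-x')+\tau v|^2\,\d\Ttheta$ for an optimal-marginal coupling $\Ttheta\in\Lambda_o(\fF^n_\tau,\nu)$ and expand the square. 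I do not expect any serious obstacle here: the only delicate point is the bookkeeping of the countable exceptional set and making sure the one-sided derivative statement in \ref{eq:IEVI} is stated consistently; the substance is entirely contained in Proposition \ref{prop:concavity}.
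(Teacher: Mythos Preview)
Your proposal is correct and follows essentially the same approach as the paper: the Lipschitz bound \eqref{eq:109} via the coupling $(\exp^{s-n\tau},\exp^{t-n\tau})_\sharp\fF^n_\tau$ on each subinterval, the (IEVI) inequality via the semi-concavity of $g$ from Proposition~\ref{prop:concavity}(3) together with $g'_r(0)=\bram{\fF^n_\tau}{\nu}$, and \eqref{eq:ievigeneral} by integration. The only cosmetic difference is that the paper phrases \eqref{eq:109} by saying the metric velocity is bounded by $L$ (hence $M_\tau$ is $L$-Lipschitz) rather than invoking the triangle inequality across subintervals, and it states the concavity-derived bound $g'(t)\le \bram{\Phi}{\nu}+t|\Phi|_2^2$ directly as \eqref{eq:65s}.
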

\begin{proof}
  The second inequality of \eqref{eq:108} is a trivial consequence of the definition of
  $\mathscr E(\mu_0,\tau,T,L)$, the first inequality is a particular case of \eqref{eq:109}.
  The estimate \eqref{eq:109} is immediate if
$n\tau\le s<t\le (n+1)\tau$ since
\begin{align*} 
  W_2 (M_{\tau}(s), M_{\tau}(t)) &= W_2(  (\exp^{s-n\tau})_{\sharp}\fF_{\tau}^n, (\exp^{t-n\tau})_{\sharp}\fF_{\tau}^n)
  \le \sqrt{\int_{\TX} |(t-s) v)|^2 \de \fF_{\tau}^n} 
  \\&=(t-s) \sqrt{\int_{\TX} |v|^2 \de \fF_{\tau}^n} 
   \le (t-s)L.
\end{align*}
This implies that the metric velocity of $M_\tau$ is bounded by $L$
in $[0,T]$
and therefore $M_\tau$ is $L$-Lipschitz.

  Let us recall
  that
  for every $\nu \in \prob_2(\X)$ and $\Phi \in \prob_2(\TX)$
  the function
  $g(t):=  \frac{1}{2} W_2^2(\exp^t_{\sharp} \Phi,\nu)$
  satisfies
  \begin{equation}
   t\mapsto g(t)-\frac 12 t^2|\Phi|_2^2\text{ is concave},\quad
    g'_r(0) = \bram{\Phi}{\nu},\quad
  g'(t)\le \bram{\Phi}{\nu}+t|\Phi|_2^2\quad t\ge0,\label{eq:65s}
\end{equation}
by Definition \ref{def:scalarprodop} and Proposition \ref{prop:concavity}.
In particular, the concavity yields the differentiability of $g$ with at most countable exceptions.
Thus, taking any $n\in\N$, $0\le n<\finalstep T\tau$,
  $t\in[n\tau,(n+1)\tau)$ and $\Phi =
  \fF_{\tau}^n$ so that $\exp^t_{\sharp} \Phi =M_\tau(t)$,
  \eqref{eq:65s} yields \ref{eq:IEVI}.
  \eqref{eq:ievigeneral} follows by integration in each interval $[n\tau,(n+1)\tau]$.
\end{proof}

In the following, we prove a uniform bound on curves $M_\tau\in \mathscr M(\mu_0,\tau,T,L)$
which is useful to prove global solvability of the Explicit Euler scheme,
as stated in Proposition \ref{prop:gsolvcond}.
We will use the following discrete Gronwall estimate: if
a sequence $(x_n)_{n\in\N}$ of positive real numbers satisfies
\begin{equation*}
  x_{n+1}-x_{n}\le \tau y+\tau\alpha x_{n},\quad
  1\le n\le  N,\, \alpha\ge0,\,y\ge0,\, \tau>0,
\end{equation*}
then
\begin{equation}
  \label{eq:107}
  x_n\le (x_0+ \tau ny)\mathrm e^{\alpha n\tau}
  \quad 0\le n\le N+1.
\end{equation}
\begin{proposition}
  Let $\frF$ be a $\lambda$-dissipative \MPVF
  such that for every $R>0$ there exist $M=\mathrm M(R)>0$
  and $\bar\tau=\bar\tau(R)>0$
  such that
  \begin{equation}
    \label{eq:101}
    \mu\in \dom(\frF),\
    \rsqm \mu\le R,
    \ 0<\tau\le \bar\tau\quad
    \Rightarrow\quad
    \exists \,\Phi\in \frF[\mu]:
    |\Phi|_2\le \mathrm M(R),\
    \exp^\tau_\sharp\Phi\in \dom(\frF),
  \end{equation}
  then the Explicit Euler scheme is globally solvable in $\dom(\frF)$.
  More precisely,
  if for a given $\mu_0\in \dom(\frF)$
  with $\Psi_0\in \frF[\mu_0]$,   $\m_0:=\rsqm{\mu_0},$ and
  we set
\begin{equation}
  R:= \m_0 +\Big(|\Psi_0|_2+1\Big) \sqrt{2T}\mathrm e^{(1+2\lambda_+)T},\quad
  L:=\mathrm M(R),
  \quad
  \ttau=  \frac{1}{L^2} \land
  \bar\tau(R) \wedge T,
  \label{eq:110}
\end{equation}
then
for every $\tau\in (0,\ttau]$ the set 
$\mathscr E(\mu_0,\tau,T,L)$ is not empty.
\end{proposition}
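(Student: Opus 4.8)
The plan is to construct an element of $\mathscr E(\mu_0,\tau,T,L)$ by induction along the Euler partition $\{0,\tau,\dots,N\tau\}$, $N:=\finalstep T\tau$, invoking the hypothesis \eqref{eq:101} at each step to produce the next velocity; the crucial point is to carry along the induction an a priori bound on the iterates that does \emph{not} depend on $L$, so that the threshold $R$ (on which $L=\mathrm M(R)$ depends) can be fixed once and for all before the construction starts. Fix $\tau\in(0,\ttau]$, so that $\tau\le 1/L^2$, $\tau\le\bar\tau(R)$ and $\tau\le T$; in particular $N\tau<T+\tau\le 2T$. Set $\Delta:=\big(|\Psi_0|_2+1\big)\sqrt{2T}\,\mathrm e^{(1+2\lambda_+)T}$, so that $R=\m_0+\Delta$ by \eqref{eq:110}.

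I will prove by induction on $n=0,1,\dots,N$ that $M^0_\tau,\dots,M^n_\tau\in\dom(\frF)$ and $\fF^0_\tau,\dots,\fF^{n-1}_\tau$ can be chosen according to \eqref{eq:EE} (that is, $M^0_\tau=\mu_0$, $\fF^k_\tau\in\frF[M^k_\tau]$, $|\fF^k_\tau|_2\le L$ and $M^{k+1}_\tau=\exp^\tau_\sharp\fF^k_\tau$ for $k<n$) with $d_n:=W_2(M^n_\tau,\mu_0)$ satisfying $d_n^2\le \tau n\,(|\Psi_0|_2+1)^2\,\mathrm e^{(1+2\lambda_+)n\tau}$. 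Granting this for some $n<N$, since $\tau n\le\tau N<2T$ we get $d_n<\Delta$, hence by the triangle inequality $\rsqm{M^n_\tau}=W_2(M^n_\tau,\delta_0)\le d_n+\rsqm{\mu_0}<\Delta+\m_0=R$; as $M^n_\tau\in\dom(\frF)$ and $0<\tau\le\bar\tau(R)$, hypothesis \eqref{eq:101} then produces $\fF^n_\tau\in\frF[M^n_\tau]$ with $|\fF^n_\tau|_2\le\mathrm M(R)=L$ and $M^{n+1}_\tau:=\exp^\tau_\sharp\fF^n_\tau\in\dom(\frF)$, a legitimate $(n{+}1)$-th step of \eqref{eq:EE}. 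The base case $n=0$ is immediate since $d_0=0$, and once the induction is run up to $n=N$ one obtains a full sequence $(M^n_\tau,\fF^n_\tau)_{0\le n\le N}\in\mathscr E(\mu_0,\tau,T,L)$, proving the statement.

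It remains to close the inductive estimate, i.e.\ to bound $d_{n+1}$. First, applying Proposition \ref{prop:concavity}(3) and Definition \ref{def:scalarprodop} to $s\mapsto\tfrac12 W_2^2(\exp^s_\sharp\fF^n_\tau,\mu_0)$ (the one-step inequality already used in Proposition \ref{prop:discrete-evi}) yields $\tfrac12 d_{n+1}^2-\tfrac12 d_n^2\le\tau\,\bram{\fF^n_\tau}{\mu_0}+\tfrac12\tau^2|\fF^n_\tau|_2^2$. Next I bound $\bram{\fF^n_\tau}{\mu_0}$ using dissipativity against the \emph{fixed} datum $\Psi_0\in\frF[\mu_0]$: by Corollary \ref{lem:control} (see Remark \ref{rmk:equivdiss}) the $\lambda$-dissipativity \eqref{eq:33} implies \eqref{Hdiss}, so that $\bram{\fF^n_\tau}{\mu_0}+\bram{\Psi_0}{M^n_\tau}\le\lambda\,d_n^2\le\lambda_+ d_n^2$, while Theorem \ref{thm:characterization} and Cauchy--Schwarz give $|\bram{\Psi_0}{M^n_\tau}|\le|\Psi_0|_2\,d_n$ (for any $\ssigma\in\Lambda(\Psi_0,M^n_\tau)$ one has $\int|v_0|^2\de\ssigma=|\Psi_0|_2^2$ and $\int|x_0-x_1|^2\de\ssigma=d_n^2$ by optimality of $(\sfx^0,\sfx^1)_\sharp\ssigma$). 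Hence $\bram{\fF^n_\tau}{\mu_0}\le\lambda_+ d_n^2+|\Psi_0|_2\,d_n$. Plugging this in, using $\tau|\fF^n_\tau|_2^2\le\tau L^2\le1$ together with $2|\Psi_0|_2 d_n\le|\Psi_0|_2^2+d_n^2$ and $|\Psi_0|_2^2+1\le(|\Psi_0|_2+1)^2$, I obtain $d_{n+1}^2-d_n^2\le\tau(2\lambda_++1)d_n^2+\tau(|\Psi_0|_2+1)^2$; the discrete Gronwall estimate \eqref{eq:107} (with $x_n=d_n^2$, $x_0=0$, $\alpha=2\lambda_++1\ge1$, $y=(|\Psi_0|_2+1)^2$), proved by iterating this very recursion, then gives exactly $d_{n+1}^2\le\tau(n+1)(|\Psi_0|_2+1)^2\,\mathrm e^{(1+2\lambda_+)(n+1)\tau}$, closing the induction.

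The only genuine obstacle is the $L$-independent moment bound just described: the naive Lipschitz estimate $W_2(M^n_\tau,\mu_0)\le Ln\tau$ would make $R$ depend on $L=\mathrm M(R)$, rendering the scheme circular. Dissipativity is precisely what breaks the circularity, since it lets one estimate the pairing $\bram{\fF^n_\tau}{\mu_0}$ through the a priori constant $|\Psi_0|_2$ attached to the fixed base point $\mu_0$ plus a Gronwall-integrable quadratic term in $d_n$; the extra correction $\tfrac12\tau^2|\fF^n_\tau|_2^2$ produced by the explicit (rather than implicit) step is harmlessly absorbed thanks to the CFL restriction $\tau\le 1/L^2$. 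In particular this establishes the global solvability asserted in Proposition \ref{prop:gsolvcond}.
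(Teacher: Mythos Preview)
Your proof is correct and follows essentially the same route as the paper: an induction along the Euler steps using the one-step inequality \eqref{eq:ievigeneral} with $\nu=\mu_0$, the dissipativity bound $\bram{\fF^n_\tau}{\mu_0}\le\lambda_+ d_n^2-\bram{\Psi_0}{M^n_\tau}$ against the fixed $\Psi_0\in\frF[\mu_0]$, a Young-type estimate on $\bram{\Psi_0}{M^n_\tau}$, and the discrete Gronwall \eqref{eq:107} to obtain an $L$-independent a priori bound feeding back into \eqref{eq:101}. The only cosmetic differences are that the paper carries $\rsqm{M^n_\tau}\le R$ as the inductive invariant (rather than your explicit bound on $d_n$) and keeps the $\tau^2L^2$ term and $\sqrt\tau L\le1$ to the end instead of absorbing $\tau L^2\le1$ immediately; the resulting constants and the choice of $R,L,\ttau$ in \eqref{eq:110} are identical.
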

\begin{proof} 
  We want to prove by induction that
  for every integer $N\le \finalstep T\tau$,
  \eqref{eq:EE} has a solution up to the index $N$ satisfying the
  upper bound
  \begin{equation}
    \label{eq:106}
    \rsqm{M^{N}_\tau}\le R,
  \end{equation}
  corresponding to the constants $R,L$ given by \eqref{eq:110}.
  For $N=0$ the statement is trivially satisfied.
  Assuming that 
  $0\le N<\finalstep T\tau$ and elements $(M^n_\tau,\frF^n_\tau)$,
  $0\le n< N$, $M^N_\tau$,
  are given satisfying \eqref{eq:EE} and \eqref{eq:106},
  we want to show that we can perform a further step of the Euler Scheme
  so that \eqref{eq:EE} is solvable up to the index $N+1$ and
  $\rsqm{M^{N+1}_\tau}\le R$.  
  
  Notice that by the induction hypothesis, for $n=0,
  \dots, N-1$, we have
  $|\fF^n_\tau|_2\le L$; since
  $\rsqm{M^N_\tau}\le R$, by \eqref{eq:101} 
  we can select $\fF^N_\tau\in \frF[M^N_\tau]$
  with $|\fF^N_\tau|_2\le L$ such that $M^{N+1}_\tau=\exp^\tau_\sharp\fF^N_\tau\in \dom(\frF)$.
Using \eqref{eq:ievigeneral} with $\nu=\mu_0$,
the $\lambda$-dissipativity with $\Psi_0\in \frF[\mu_0]$ 
\[\bram{\fF_{\tau}^n}{\mu_0}\le\lambda W_2^2(M_{\tau}^{n},\mu_0)-\bram{\Psi_0}{M_{\tau}^{n}},\]
and the bound
\begin{equation*}
  -\bram{\Psi_0}{M_{\tau}^n}\le\frac{1}{2}W_2^2(M_{\tau}^n,\mu_0)+\frac{1}{2}|\Psi_0|_2^2,
\end{equation*}
we end up with
\begin{equation*}
  \frac{1}{2}W_2^2(M_{\tau}^{n+1}, \mu_0)-
  \frac{1}{2}W_2^2(M_{\tau}^{n}, \mu_0) \le
  \frac{\tau^2}{2} L^2 +
  \tau \left (\frac{1}{2} + \lambda_+ \right)\,
  W_2^2(M_{\tau}^{n}, \mu_0) +
  \frac{\tau}{2} |\Psi_0|_2^2 ,
\end{equation*}
for every $n\le N$.
Using the Gronwall estimate \eqref{eq:107}
we get
\begin{align*}
  W_2(M_{\tau}^n, \mu_0) 
  &\le \sqrt {T+\tau} \Big(
    |\Psi_0|_2+\sqrt\tau L\Big)
    \mathrm e^{(\frac{1}{2}+\lambda_+)\, (T+\tau)}
    \le
    \sqrt {2T}\Big(
    |\Psi_0|_2+1\Big)
    \mathrm e^{(1+2\lambda_+) T}
\end{align*}
for every $n\le N+1$, so that
\begin{equation*}
  \rsqm{M^{N+1}_\tau}\le \m_0+ \sqrt {2T}\Big(
    |\Psi_0|_2+1\Big)
    \mathrm e^{(1+2\lambda_+) T}\le R.
    \qedhere
\end{equation*}
\end{proof}

We conclude this section by proving the stability estimate \eqref{eq:71} of Theorem
\ref{thm:apriori-estimate}.
We introduce the notation
  \begin{equation*}
  I_\kappa (t):=\int_0^t \mathrm e^{\kappa  r}\,\d r=
  \frac{1}{\kappa}(\mathrm e^{\kappa t}-1)\quad\text{if }\kappa\neq0;\quad
    I_0(t):=t.
  \end{equation*}
  Notice that for every $t\ge0$
  \begin{equation}
    \label{eq:136}
    I_\kappa(t)\le t\mathrm e^{\kappa t}\quad\text{if }\kappa\ge0;\qquad
  \end{equation}

\begin{proposition}\label{prop:samestep}
  Let $M_\tau \in \mathscr M(\mu_0,\tau,T,L)$ and
  $M_\tau'\in \mathscr M(\mu_0',\tau,T,L)$.
  If $\lambda_+\tau\le 2$ then
  \begin{equation*}
    W_2(M_\tau(t),M_\tau'(t))\le
    W_2(\mu_0,\mu_0')\mathrm e^{\lambda  t}+
    8L\sqrt {t\tau}\Big(1+|\lambda|\sqrt{t\tau}\Big)\mathrm e^{\lambda_+ t}
  \end{equation*}
  for every $t\in [0,T]$.
\end{proposition}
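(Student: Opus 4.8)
The plan is to reduce the whole statement to a one-step quadratic estimate closed by a discrete Gronwall inequality. Let $(M_\tau(n\tau),\Phi_n)_{0\le n\le \finalstep T\tau}$ and $(M_\tau'(n\tau),\Psi_n)_{0\le n\le \finalstep T\tau}$ be generating solutions of \eqref{eq:EE} associated with $M_\tau$ and $M_\tau'$, so that $\Phi_n\in\frF[M_\tau(n\tau)]$, $\Psi_n\in\frF[M_\tau'(n\tau)]$, $|\Phi_n|_2,|\Psi_n|_2\le L$, and $M_\tau(t)=(\exp^{t-n\tau})_\sharp\Phi_n$, $M_\tau'(t)=(\exp^{t-n\tau})_\sharp\Psi_n$ for $t\in[n\tau,(n+1)\tau]$, $0\le n<\finalstep T\tau$. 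Set $d_n:=W_2(M_\tau(n\tau),M_\tau'(n\tau))$. The first step is to prove that for every such $n$ and every $r\in[0,\tau]$
\[
  W_2^2\big((\exp^r)_\sharp\Phi_n,(\exp^r)_\sharp\Psi_n\big)\le(1+2\lambda r)\,d_n^2+4L^2r^2 .
\]
This is precisely Proposition \ref{prop:concavity}(2) applied with $\Phi_0=\Phi_n$, $\Phi_1=\Psi_n$: the map $r\mapsto\frac12W_2^2((\exp^r)_\sharp\Phi_n,(\exp^r)_\sharp\Psi_n)-(|\Phi_n|_2^2+|\Psi_n|_2^2)r^2$ is concave, equals $\frac12 d_n^2$ at $r=0$, and has right derivative $\bram{\Phi_n}{\Psi_n}\le\lambda d_n^2$ at $0$ by $\lambda$-dissipativity \eqref{eq:33}. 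Equivalently, one can pick $\Ttheta\in\Lambda_o(\Phi_n,\Psi_n)$ realizing $\bram{\Phi_n}{\Psi_n}$ via Theorem \ref{thm:characterization}, push it through $(\exp^r,\exp^r)$ to obtain an admissible coupling of the two deformed measures, expand $|(x_0-x_1)+r(v_0-v_1)|^2$, and use optimality of $(\sfx^0,\sfx^1)_\sharp\Ttheta\in\Gamma_o(M_\tau(n\tau),M_\tau'(n\tau))$ together with $\int|v_0-v_1|^2\,\d\Ttheta\le 2(|\Phi_n|_2^2+|\Psi_n|_2^2)$.

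Taking $r=\tau$ yields the recursion $d_{n+1}^2\le(1+2\lambda\tau)\,d_n^2+4L^2\tau^2$, while taking $r=t-n\tau$ for $t$ in a cell $[n\tau,(n+1)\tau]$ bounds $W_2^2(M_\tau(t),M_\tau'(t))$ in terms of $d_n^2$. I would then iterate according to the sign of $\lambda$. If $\lambda\ge0$ the recursion is of the form $d_{n+1}^2-d_n^2\le\tau\,(4L^2\tau)+\tau\,(2\lambda)\,d_n^2$, so the discrete Gronwall estimate \eqref{eq:107} gives $d_n^2\le(d_0^2+4L^2 n\tau^2)\,\mathrm e^{2\lambda n\tau}$. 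If $\lambda<0$ (hence $\lambda_+=0$ and the hypothesis $\lambda_+\tau\le2$ is vacuous) I keep the nonnegative factors $1+2\lambda\tau\le1$, bounding the resulting sum by the number of its terms, and simply discard the nonpositive terms $(1+2\lambda\tau)d_n^2\le0$ in the degenerate regime $1+2\lambda\tau<0$; in all subcases this gives $d_n^2\le\mathrm e^{2\lambda n\tau}d_0^2+C_0L^2 n\tau^2$ for an absolute $C_0$, with $\mathrm e^{2\lambda n\tau}d_0^2\le d_0^2$. Inserting this into the within-cell bound and using $n\tau\le t$, $t-n\tau\le\tau$, one obtains $W_2^2(M_\tau(t),M_\tau'(t))\le\mathrm e^{2\lambda t}W_2^2(\mu_0,\mu_0')+C_1L^2 t\tau\,\mathrm e^{2\lambda_+ t}$ for an absolute $C_1$; taking square roots through $\sqrt{a^2+b^2}\le a+b$ and being slightly generous with the constants to absorb the lower-order $r^2$-term and the borderline cases then delivers the stated inequality with universal constant $8$, the correction $|\lambda|\sqrt{t\tau}$ leaving ample room.

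The one-step estimate is routine: it is an immediate consequence of the semiconcavity results of Section \ref{sec:tangent-bundle} together with the very definition of $\lambda$-dissipativity. The delicate point is the bookkeeping in the iteration. To keep the prefactor of $W_2^2(\mu_0,\mu_0')$ exactly $\mathrm e^{2\lambda t}$ one must compare $M_\tau(t)$ directly with the value of $M_\tau$ at the \emph{left} endpoint $n\tau$ of its time cell, rather than inserting an auxiliary triangle inequality through $M_\tau(n\tau)$, which would produce a spurious factor $\mathrm e^{\lambda\tau}$; and one must keep the squared distances throughout until the very last step, since the remainder is $\mathcal O(\sqrt{t\tau})$ — precisely what makes the Explicit Euler scheme convergent as $\tau\downarrow0$ — whereas a premature use of $\sqrt{a^2+b^2}\le a+b$ at each step would degrade it to $\mathcal O(t)$. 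The two sign cases require slightly different Gronwall computations but no genuinely new idea beyond \eqref{eq:107}.
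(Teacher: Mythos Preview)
Your argument is correct and follows a somewhat different route from the paper's. Both proofs rest on the same one-step semiconcavity estimate coming from Proposition~\ref{prop:concavity}(2) and $\lambda$-dissipativity, namely
\[
  W_2^2\big((\exp^r)_\sharp\Phi_n,(\exp^r)_\sharp\Psi_n\big)\le(1+2\lambda r)\,d_n^2+4L^2 r^2,
\]
but they close it differently. The paper turns this into a differential inequality for $w^2(t)=W_2^2(M_\tau(t),M_\tau'(t))$ valid on all of $[0,T]$, then compares $W_2^2(\bar M_\tau(t),\bar M_\tau'(t))$ with $w^2(t)$ via the identity $a^2-b^2=2b(a-b)+(a-b)^2$ (which is where the extra term $8|\lambda|L\tau\,w(t)$ appears), and finishes with the continuous Gronwall estimate of \cite[Lemma~4.1.8]{ags}. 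You instead iterate the one-step bound at the discrete nodes via the recursion $d_{n+1}^2\le(1+2\lambda\tau)d_n^2+4L^2\tau^2$, handle the sign of $\lambda$ by cases (invoking \eqref{eq:107} when $\lambda\ge0$ and a direct sum when $\lambda<0$), and only at the very end use the within-cell bound and take a square root.

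Your route is slightly more elementary---no continuous Gronwall lemma, just \eqref{eq:107} and a geometric sum---and in fact produces the sharper remainder $\sqrt{8}\,L\sqrt{t\tau}\,\mathrm e^{\lambda_+ t}$ without the $|\lambda|\sqrt{t\tau}$ correction, so your final remark that the stated constant $8$ and the correction ``leave ample room'' is accurate. The paper's route is more uniform in $\lambda$ (a single differential inequality covers all signs) at the cost of the extra lower-order term. Either way the key content---Proposition~\ref{prop:concavity}(2) plus dissipativity---is identical.
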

\begin{proof}
  Let us set
  $w(t):=W_2(M_\tau(t),M_\tau'(t))$.
  Since by Proposition \ref{prop:concavity}(2),
  in every interval $[n\tau,(n+1)\tau]$ the function $t\mapsto w^2(t)-4L^2 (t-n\tau)^2$ is concave, with
  \begin{displaymath}
    \frac{\d }{\d t}w^2(t)\bigg|_{t=n\tau+}=
    2\bram{\fF_\tau(t)}{\fF_\tau'(t)}
    \le 2\lambda W_2^2(\bar M_\tau(t),\bar M_\tau'(t)),
  \end{displaymath}
  we obtain
  \begin{equation*}
    \frac{\d}{\d t} w^2(t)\le  2\lambda W_2^2(\bar M_\tau(t),\bar M_\tau'(t))+8L^2\tau
    \quad t\in [0,T],
  \end{equation*}
  with possibly countable exceptions.
  Using the identity $a^2-b^2=2b(a-b)+|a-b|^2$
  with $a=W_2(\bar M_\tau(t),\bar M_\tau'(t))$ and
  $b=W_2(M_\tau(t),M_\tau'(t))$ and observing that
  $|a-b|\le W_2(\bar M_\tau(t),M_\tau(t))+W_2(\bar M_\tau'(t),M_\tau'(t))\le 2L\tau$,
  we eventually get
  \begin{align*}
    \frac{\d}{\d t} w^2(t)
    &\le
      2\lambda w^2(t) +8L^2\tau+ 8|\lambda|L\tau  w(t)
    +\lambda_+ 8L^2\tau^2
    \\&\le
    2\lambda w^2(t) +8|\lambda|L\tau  w(t)
    +24 L^2\tau,
  \end{align*}
  since $\lambda_+\tau\le 2$ by assumption.
  The Gronwall lemma \cite[Lemma 4.1.8]{ags} and \eqref{eq:136} yield
  \begin{align*}
    w(t)
    &
      \le
      \Big(w^2(0) \mathrm e^{2\lambda t}+24L^2\tau \mathrm I_{2\lambda}(t)\Big)^{1/2}+8|\lambda| L\tau \mathrm I_{\lambda}(t)
    \\&\le
    w(0) \mathrm e^{\lambda t}+8L\sqrt {t\tau}\Big(1+|\lambda|\sqrt{t\tau}\Big)
    \mathrm e^{\lambda_+ t}.
    \qedhere
  \end{align*}
\end{proof}

\subsection{Error estimates for the Explicit Euler scheme}\label{subsec:exist}

\begin{theorem} \label{theo:convergence} Let $\frF$ be a
  $\lambda$-dissipative \MPVF.
  If $M_\tau\in \mathscr M(M^0_\tau,\tau,T,L)$, $M_\eta\in \mathscr M(M^0_\eta,\eta,T,L)$
  with $\lambda\sqrt{T(\tau+\eta)}\le 1$,
  then for every $\vartheta>1$
  there exists a constant $C(\vartheta)$ such that 
    \begin{equation*}
    W_2(M_\tau(t),M_\eta(t))\le
    \Big(\sqrt{\vartheta} W_2(M^0_\tau,M^0_\eta)+
    C(\vartheta)
    L \sqrt{(\tau+\eta)(t+\tau+\eta)}\Big)\mathrm e^{\lambda_+\, t}
  \end{equation*}
  for every $t \in [0,T]$.
\end{theorem}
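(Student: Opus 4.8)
The plan is to run the doubling-of-variables / strip argument of \cite{NochettoSavare}, in the same spirit as the proof of Theorem \ref{theo:duesolcomp}, but now comparing the two \emph{discrete} interpolants. First I would introduce $w(r,s):=W_2(M_\tau(r),M_\eta(s))$ on $\R\times\R$, extending $M_\tau$ and $M_\eta$ to $r,s\le 0$ by the constant values $M^0_\tau$ and $M^0_\eta$; note that $w$ is $L$-Lipschitz in each variable by \eqref{eq:109}. From the semiconcavity estimate \ref{eq:IEVI} of Proposition \ref{prop:discrete-evi} (applied to $M_\tau$ with the frozen test measure $\nu=M_\eta(s)$, and to $M_\eta$ with $\nu=M_\tau(r)$) one gets, with at most countably many exceptions,
\[
\tfrac{\partial}{\partial r}\tfrac12 w^2(r,s)\le \bram{\fF_\tau(r)}{M_\eta(s)}+\tau L^2\quad(r>0),\qquad
\tfrac{\partial}{\partial s}\tfrac12 w^2(r,s)\le \bram{\fF_\eta(s)}{M_\tau(r)}+\eta L^2\quad(s>0),
\]
while $\tfrac{\partial}{\partial r}w^2\equiv 0$ for $r<0$.

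Next I would combine the two one-sided derivatives. Since $\fF_\tau(r)\in\frF[\bar M_\tau(r)]$ and $\fF_\eta(s)\in\frF[\bar M_\eta(s)]$, the $\lambda$-dissipativity in the form \eqref{Hdiss} (valid by Corollary \ref{lem:control}) gives $\bram{\fF_\tau(r)}{\bar M_\eta(s)}+\bram{\fF_\eta(s)}{\bar M_\tau(r)}\le\lambda W_2^2(\bar M_\tau(r),\bar M_\eta(s))$. Replacing $\bar M$ by $M$ costs two kinds of error terms: the difference between the pairings evaluated at $M_\eta(s)$ and at $\bar M_\eta(s)$ (resp.\ at $M_\tau(r)$ and $\bar M_\tau(r)$), which must be estimated by a Lipschitz-in-the-test-measure bound of order $L^2(\tau+\eta)$; and the difference between $W_2^2(\bar M_\tau(r),\bar M_\eta(s))$ and $w^2(r,s)$, controlled through $W_2(M_\tau(r),\bar M_\tau(r))\le L\tau$, $W_2(M_\eta(s),\bar M_\eta(s))\le L\eta$ and the identity $a^2-b^2=2b(a-b)+(a-b)^2$ exactly as in Proposition \ref{prop:samestep}. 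This produces two inequalities $\partial_r w^2\le h$ and $\partial_s w^2\le 2\lambda w^2+f-h$ in $\mathcal D'$, where $h(r,s):=2\bram{\fF_\tau(r)}{M_\eta(s)}+2\tau L^2$ for $r\ge0$ (and $h:=0$ for $r<0$) and $f$ is a penalization term collecting all the errors — of order $L^2(\tau+\eta)$, plus, on the region $r<0$, a contribution bounded by $2|\frF|_2(M^0_\tau)\,w(0,s)$ as in the continuous proof.

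Then I would carry out the strip estimate. Multiplying both inequalities by $e^{-2\lambda s}$, using Lemma \ref{lem:distrib} to turn the distributional $s$-inequality into a pointwise one for a.e.\ $r$ (here one needs the bound $|h(r,s)|\le 2L\,w(r,s)+2\tau L^2$ to apply the lemma), and applying the divergence theorem of \cite[Lemma 6.15]{NochettoSavare} on the strip $Q^\eps_{0,t}=\{0\le s\le t,\ s-\eps\le r\le s\}$, one obtains
\[
\int_{t-\eps}^{t}e^{-2\lambda t}w^2(r,t)\,\de r\le\int_{-\eps}^{0}w^2(r,0)\,\de r+\iint_{Q^\eps_{0,t}}e^{-2\lambda s}f(r,s)\de r \de s.
\]
Since $w(r,0)=W_2(M^0_\tau,M^0_\eta)$ for $r<0$, and using $w(t,t)\le w(r,t)+L(t-r)$ together with Young's inequality with conjugate exponents $\vartheta,\vartheta_{\star}$ to pass from the average of $w^2(\cdot,t)$ on $[t-\eps,t]$ to $w^2(t,t)$, dividing by $\eps$ yields an inequality of the form $e^{-2\lambda t}w^2(t,t)\le \vartheta W_2^2(M^0_\tau,M^0_\eta)+C L\int_0^t e^{-2\lambda s}w(s,s)\,\de s+\text{(errors)}$ with errors of order $L^2\big((\tau+\eta)(t+\eps)+\eps^2\big)+CL\eps\,W_2(M^0_\tau,M^0_\eta)$. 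A Gronwall-type estimate (as in \cite[Lemma 4.1.8]{ags}, cf.\ Proposition \ref{prop:samestep}), followed by the optimal choice $\eps\sim\sqrt{(\tau+\eta)(t+\tau+\eta)}$ and by absorbing the $\lambda$-dependent factors into the condition $\lambda\sqrt{T(\tau+\eta)}\le1$ and the prefactor $\mathrm e^{\lambda_+ t}$, produces the asserted estimate with $C(\vartheta)$ depending only on $\vartheta$ (through $\vartheta_{\star}$).

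I expect the main obstacle to be the error analysis in the second step: one has to prove a genuine Lipschitz bound for the pairing $\bram{\Phi}{\cdot}$ with respect to the test measure, with the sharp constant $O(|\Phi|_2\,W_2)$ — a crude bound through $|\bram{\Phi}{\nu}|\le|\Phi|_2 W_2(\sfx_\sharp\Phi,\nu)$ is insufficient, since it would degrade the error from $O(\sqrt{(\tau+\eta)t})$ to $O(t)$ and destroy the sharp rate — and then to package all the resulting two-dimensional error contributions into a single penalization $f$ for which the divergence/strip argument and the final Gronwall step close with the claimed scaling. The remaining ingredients (the semiconcavity \ref{eq:IEVI}, Lemma \ref{lem:distrib}, the divergence theorem on the strip, and the Gronwall lemma) are then used exactly as in the proof of Theorem \ref{theo:duesolcomp}.
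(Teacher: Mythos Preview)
Your overall architecture (doubling variables, \ref{eq:IEVI} plus dissipativity, divergence on the strip, choice of $\eps$) is the right one, but the obstacle you flag in the last paragraph is a genuine gap, not a detail to be filled in. The map $\nu\mapsto\bram{\Phi}{\nu}$ is only lower semicontinuous (Lemma~\ref{lem:lsc}); there is no Lipschitz estimate of the form $\bram{\Phi}{\nu}-\bram{\Phi}{\nu'}\le C|\Phi|_2\,W_2(\nu,\nu')$ available here, and your scheme of writing $\partial_r w^2\le h$ and $\partial_s w^2\le 2\lambda w^2+f-h$ with the \emph{same} $h$ cannot be closed without it: replacing $M_\eta(s)$ by $\bar M_\eta(s)$ inside the pairing is exactly the step that fails.

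The paper sidesteps this entirely by never comparing pairings at two different test measures. Instead of testing \ref{eq:IEVI} for $M_\tau$ against $M_\eta(s)$, one tests directly against the piecewise constant $\bar M_\eta(s)$; for $M_\eta$ one tests against $\bar M_\tau(r)$ and immediately applies the $\lambda$-dissipativity with $\Phi=\fF_\tau(r\vee 0)\in\frF[\bar M_\tau(r)]$. The pairing contributions then cancel \emph{exactly}. The price is that you differentiate two different squared distances: $w_\eta^2:=W_2^2(M_\tau(\cdot),\bar M_\eta(\cdot))$ in $r$ and $w_\tau^2:=W_2^2(\bar M_\tau(\cdot),M_\eta(\cdot))$ in $s$, while the dissipativity produces $\lambda w_{\tau,\eta}^2$ with a third distance. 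Summing gives
\[
\partial_r\big(\mathrm e^{-2\lambda s}w_\eta^2\big)+\partial_s\big(\mathrm e^{-2\lambda s}w_\tau^2\big)\le \Big(2\lambda(w_{\tau,\eta}^2-w_\tau^2)+2L^2\sigma+f\Big)\mathrm e^{-2\lambda s},
\]
and since $|w_\tau-w|,|w_\eta-w|,|w_{\tau,\eta}-w|\le L\sigma$ the right-hand side is harmless. After the divergence theorem on the strip, the mismatch $w_\tau\neq w_\eta$ only shows up as two extra diagonal integrals $\int_0^t \mathrm e^{-2\lambda s}\big(w_\tau^2-w_\eta^2\big)(s,s)\,\d s$ and its analogue along $r=s-\eps$, both bounded by $L\sigma\big(L\sigma t+2\int_0^t \mathrm e^{-2\lambda s}w(s)\,\d s\big)$. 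That reorganisation is the structural point your sketch is missing.

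A smaller remark: after the strip integration the paper does not invoke a Gronwall lemma. It introduces $W(t):=\sup_{0\le s\le t}\mathrm e^{-\lambda s}w(s)$, bounds every linear-in-$w$ term by $W(t)$, chooses $\eps=\sqrt{\sigma(\sigma\vee t)}$, and closes with the elementary implication $W^2\le a+2bW\Rightarrow W\le 2b+\sqrt a$.
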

\begin{proof}
We argue as in the proof of Theorem \ref{theo:duesolcomp}
 with the aim to gain a convenient order of convergence.
  Since $\lambda$-dissipativity implies $\lambda'$-dissipativity for $\lambda'\ge \lambda$,
  it is not restrictive to assume
  $\lambda> 0$. We set $\sigma:=\tau+\eta$.
We will extensively use the a priori bounds \eqref{eq:108} and \eqref{eq:109};
in particular,
\begin{equation*}
  W_2(M_{\tau}(t), \bar M_{\tau}(t))\le L\tau,\quad
  W_2(M_{\eta}(t), \bar M_{\eta}(t))\le L\eta.
\end{equation*}

We will also extend $M_\tau$ and $\bar M_\tau$ for negative times by setting
\begin{equation}
  \label{eq:117}
  M_\tau(t)=\bar M_\tau(t)=M^0_\tau,\quad
  \frF_\tau(t)=M^0_\tau\otimes \delta_0\quad\text{if }t<0.
\end{equation}
 The proof is divided into several steps.

\smallskip\noindent
\emph{1. Doubling variables.}\par
We fix a final time $t\in [0,T]$ and
two variables $r,s\in [0,t]$ together with
the functions
\begin{equation}
  \label{eq:124}
  \begin{aligned}
    w(r,s):={}&W_2(M_\tau(r),M_\eta(s)),\quad&
  w_\tau(r,s):={}&W_2(\bar M_\tau(r),M_\eta(s)),\\
  w_\eta(r,s):={}&W_2(M_\tau(r),\bar M_\eta(s)),\quad&
  w_{\tau,\eta}(r,s):={}&W_2(\bar M_\tau(r),\bar M_\eta(s)),
\end{aligned}
\end{equation}
observing that
\begin{equation}
  \label{eq:125}
  |w-w_\tau|\lor |w_\eta-w_{\tau,\eta}|\le L\tau,\quad
  |w-w_\eta|\lor |w_\tau-w_{\tau,\eta}|\le L\eta.
\end{equation}
By Proposition \ref{prop:discrete-evi}, we can write \ref{eq:IEVI} both for $M_{\tau}$ and $M_{\eta}$ and we obtain
\begin{align}
  \label{eq:120}
  \tag{$\text{IEVI}_{\tau}$}
  \frac{\partial}{\partial r} \frac{1}{2} W_2^2(M_{\tau}(r), \nu_1)
  &\le
                                                                    \tau
                                                                    |\fF_\tau(r)
                                                                    |_2^2
                                                                    +
                                                                    \bram{\fF_\tau(r)
                                                                    }{\nu_1}
  \quad\forall\nu_1 \in \prob_2(\X)\\
  \notag
  \frac{\partial}{\partial s} \frac{1}{2} W_2^2(M_{\eta}(s), \nu_2)
  &\le
    \eta
    |\fF_\eta(s)
    |_2^2 +
    \bram{\fF_\eta(s)
    }{\nu_2}
  \\\label{eq:123}&\le \tag{$\text{IEVI}_{\eta}$}
  \eta
    |\fF_\eta(s)
  |_2^2+
  \lambda W_2^2(\bar M_\eta(s),\nu_2)-
  \bram{\Phi
  }{\bar M_\eta(s)},\quad\forall\nu_2 \in \dom(\frF),\,\Phi\in \frF[\nu_2].
\end{align}
Apart from possible countable exceptions, \eqref{eq:120} holds for $r\in (-\infty,t]$ and
\eqref{eq:123} for $s \in [0,t]$.
Taking
$\nu_1 =
\bar{M}_{\eta}(s)$,
$\nu_2=\bar{M}_{\tau}(r)$,
$\Phi= \fF_\tau(r\lor 0)\in \frF[\bar M_\tau(r)]$,
summing the two inequalities
$(\text{IEVI}_{\tau,\eta})$, setting
\begin{displaymath}
  f(r,s):=
  \begin{cases}
    2L W_2(\bar M_\eta(s),M_\tau(0))=2Lw_\eta(0,
    s)&   \text{if }r<0,\\
    0&\text{if }r\ge 0,
\end{cases}
\end{displaymath}
using \eqref{eq:108}
and the $\lambda$-dissipativity of $\frF$, we obtain
\begin{equation*}
  \frac{\partial}{\partial r}
  w_\eta^2(r,s)
  +  \frac{\partial}{\partial s}
  w_\tau^2(r,s)
  \le 2\lambda
  w_{\tau,\eta}^2(r,s)
  + 2L^2\sigma+f(r,s)
\end{equation*}
in $(-\infty,t]\times [0,t]$
(see also \cite[Lemma 6.15]{NochettoSavare}).
By multiplying both sides by $e^{-2\lambda s}$, we have
\begin{equation} \label{eq:starting-inequality12}
\begin{split}
  &\frac{\partial}{\partial r}  
  \mathrm e^{-2\lambda s} w_\eta^2
  +  \frac{\partial}{\partial s}
  \mathrm e^{-2\lambda s}
  w_\tau^2
  \le \Big(2\lambda \left(
    w_{\tau,\eta}^2
    -w_\tau^2
  \right) + f
    +2L^2\sigma\Big)\mathrm e^{-2\lambda s}.
  \end{split}
\end{equation}
Using \eqref{eq:125}, the inequality
\begin{displaymath}
  w_{\tau,\eta}+w_\tau=w_{\tau,\eta}-w_\tau+2(w_\tau-w)+2w\le 2L\sigma +2w,\quad
  |w(r,s)-w(s,s)|\le L|r-s|
\end{displaymath}
and the elementary inequality
$a^2-b^2\le |a-b||a+b|,$
we get
\begin{equation*}
  2\big(w_{\tau,\eta}^2(r,s)-w_\tau^2(r,s)\big)\le
  R,\quad
  R:=4L^2\sigma(\sigma+|r-s|)+4L\sigma w(s,s)\quad\text{if }r,s\le t.
\end{equation*}
Thus \eqref{eq:starting-inequality12} becomes
\begin{equation} \label{eq:starting-inequality1}
\frac{\partial}{\partial r}  \mathrm e^{-2\lambda s}
w_\eta^2
+  \frac{\partial}{\partial s}
\mathrm e^{-2\lambda s}
w_\tau^2
\le
Z,
\quad
Z:=\Big(R \lambda+f+
2L^2\sigma\Big) \mathrm e^{-2\lambda s}.
\end{equation}
\smallskip\noindent
\emph{2. Penalization.}\par
We fix any $\eps>0$
and apply the Divergence Theorem to the inequality \eqref{eq:starting-inequality1} in the two-dimensional strip $Q^{\eps}_{0,t}$ as in Figure \ref{F:strip} and we get
\begin{align} 
\begin{split} \label{eq:gg}
  \int_{t-\eps}^t&
  \mathrm e^{-2\lambda t}
  w_\tau^2(r,t)
  \de r \le \int_{-\eps}^0
  w^2_\tau(r,0)\,\d r +\\
  &+ \int_0^t
  \mathrm e^{-2\lambda s}
  \left ( 
    w_\tau^2(s,s)-w_\eta^2(s,s)
  \right )\de s 
+ \int_0^t \mathrm e^{-2\lambda s} \left ( 
  w_\eta^2(s-\eps,s)-
  w_\tau^2(s-\eps,s)
\right )\de s\\
&+\iint_{Q_{0,t}^\eps} Z\,\d r \d s.
\end{split}
\end{align}
\emph{3. Estimates of the RHS.}\par
We want to estimate the integrals
(say $I_0,I_1,I_2,I_3)$ 
of the right hand side of \eqref{eq:gg}
in terms of
\begin{equation*}
  w(s):=w(s,s)\quad\text{and}\quad
  W(t):=\sup_{0\le s\le t}\mathrm e^{-\lambda s}w(s).
\end{equation*}
We easily get
\begin{equation*}
  I_0=\int_{-\eps}^0  w_\tau^2(r,0)\,\d r=\eps w^2(0).
\end{equation*}
\eqref{eq:125} yields
\begin{align*}
  |w_\tau(s,s)-w_\eta(s,s)|
  \le L(\tau+\eta)=L\sigma
\end{align*}
and
\begin{equation*}
  |w^2_\tau(s,s)-w^2_\eta(s,s)|
  \le L\sigma\Big( L\sigma+2w(s)
  \Big);
\end{equation*}
after an integration,
\begin{equation*}
  \begin{aligned}
    I_1&
    \le {L^2
      \sigma^2 t}
    +2L\sigma \int_0^t e^{-2\lambda s}w(s)
    \de s\le
    L^2\sigma^2t +2L\sigma tW(t).
  \end{aligned}
\end{equation*}
Performing the same computations for the third integral term at the RHS of  \eqref{eq:gg} we end up with
\begin{equation*}
\begin{split}
  I_2 &
  =\int_0^t e^{-2\lambda s} \left ( 
  w_\eta^2(s-\eps,s)-w_\tau^2(s-\eps,s)
\right )\de s 
\le
{L^2
  t
  \sigma^2}
+ 2L\sigma \int_0^t e^{-2\lambda s} w(s-\eps,s) \de s\\
&\le
L^2\sigma^2t+2L^2\sigma\eps t+ 
 2L\sigma \int_0^t e^{-2\lambda s} w(s)
 \de s
 \le L^2\sigma^2t+2L^2\sigma\eps t+ 2L\sigma t W(t).
\end{split}
\end{equation*}
Eventually, using the elementary inequalities,
\begin{displaymath}
  \iint_{Q^\eps_{0,t}} \lambda\mathrm e^{-2\lambda s}\,\d r\,\d s\le \frac \eps2,\quad
  \iint_{Q^\eps_{0,t}} \mathrm e^{-2\lambda s}w(s,s)\,\d r\,\d s=
  \eps\int_0^t \mathrm e^{-2\lambda s} w(s)\,\d s,
\end{displaymath}
and
$f(r,s)\le 2L^2(\eta+s)+2Lw(s)$ for $r<0$ and $f(r,s)=0$ for $r\ge0$,
we get
\begin{align*}
  \notag
  I_3&=\iint_{Q^\eps_{0,t}}Z\,\d r \d s
  \le 2L^2\sigma\eps (\sigma+\eps)+
    4L\lambda \sigma\eps \int_0^t \mathrm e^{-2\lambda s} w(s)\,\d s+
    2L^2\sigma\eps t \\\notag
  &\qquad+
    2\iint_{Q^\eps_{0,\eps \wedge t}}(L^2(\eta+s)+Lw(s))\mathrm e^{-2\lambda s}\,\d r \d s
  \\
  &
    \le
    2L^2\sigma\eps (\sigma+\eps)+
    2L^2\eps^2 (\sigma+\eps)+
    2L^2\sigma \eps t +
    4L\lambda \sigma \eps tW(t)+
    2L\eps^2 W(t\land \eps).
\end{align*}
We eventually get
\begin{equation}
  \label{eq:149}
  \sum_{k=0}^3 I_k\le \eps w^2(0)+2L^2\sigma^2 t+4L^2\sigma\eps t+
  2L^2\eps(\sigma+\eps)^2+
  4L\sigma(1+\lambda\eps) tW(t)
  +2L\eps^2 W(t\land \eps).
\end{equation}

\smallskip\noindent
\emph{4. LHS and penalization}\par
We want to use the first integral term in \eqref{eq:gg} to
derive a pointwise estimate for $w(t)$;

\eqref{eq:109} and \eqref{eq:124} yield
\begin{align*}
  w(t)=w(t,t)\le L(t-r)+w(r,t)\le L(\tau+|t-r|) +w_\tau(r,t)
\end{align*}
so that
we get for every $\vartheta,\vartheta_\star>1$ conjugate coefficients
\begin{equation*}
  \mathrm e^{-2\lambda t}
  w^2(t)
  \le 
  \frac\vartheta\eps\int_{t-\eps}^t e^{-2\lambda t}
  w_\tau^2(r,t)
  \de r 
  + \vartheta_\star L^2 (\tau+\eps)^2
  \le 
  \frac\vartheta\eps(I_0+I_1+I_2+I_3)
  + \vartheta_\star L^2 (\tau+\eps)^2.
\end{equation*}
\eqref{eq:149} yields
\begin{align*} 
\begin{split}
  \mathrm e^{-2\lambda t}
  w^2(t)
   \le&
  (2\vartheta+\vartheta_\star) L^2(\sigma+\eps)^2+
  \vartheta\Big(w^2(0)+
  2L^2\sigma^2t/\eps+4 L^2\sigma t
  \Big)
  \\&+\frac{4L(1+\lambda\eps)\sigma \vartheta}{\eps} tW(t)+
  2L\eps\vartheta W(t\land \eps).
\end{split}
\end{align*}
\smallskip\noindent
\emph{5. Conclusion.}\par
Choosing $\eps:=\sqrt{\sigma(\sigma\lor t)}$ and assuming $\lambda\sqrt{T \sigma}\le 1$,
we obtain
  \begin{align}
      \mathrm e^{-2\lambda t}
    w^2(t)
    &\le
      \vartheta w^2(0)
      +
      (14\vartheta+4\vartheta_\star) L^2\sigma(\sigma\lor t)
      +10\vartheta L\sqrt{\sigma(\sigma\lor t)} W(t).
      \label{eq:118}
  \end{align}
  Since the right hand side of \eqref{eq:118} is an increasing
  function of $t$,
  \eqref{eq:118} holds even if we substitute
  the left hand side with 
  $\mathrm e^{-2\lambda s}w^2(s)$ for every $s\in [0,t]$;
  we thus obtain the inequality
  \begin{equation*}
    W^2(t)
    \le
      \vartheta w^2(0)
      +
      (14\vartheta+4\vartheta_\star) L^2\sigma(\sigma\lor t)
      +10\vartheta L\sqrt{\sigma(\sigma\lor t)} W(t).
  \end{equation*}
Using the elementary property for positive $a,b$
\begin{equation}
  \label{eq:elementary}
    W^2\le a+2bW\quad\Rightarrow\quad
    W\le b+\sqrt{b^2+a}\le 2b+\sqrt a,
  \end{equation}
we eventually obtain
  \begin{align*}
    \mathrm e^{-\lambda t}w(t)
    &\le
    \Big(\vartheta w^2(0)+
    (14\vartheta+4\vartheta_\star)L^2\sigma(\sigma\lor t)\Big)^{1/2}
      +10\vartheta L\sqrt{\sigma(\sigma\lor t)}
    \\&\le
    \sqrt{\vartheta} w(0)+
    C(\vartheta)L\sqrt{\sigma(\sigma\lor t)},\quad
    C(\vartheta):=(14\vartheta+4\vartheta_\star)^{1/2}+
    10\vartheta.
    \qedhere
  \end{align*}
\end{proof}

\subsection{Error estimates between discrete and \EVI solutions}
\label{subsec:error2}
\begin{theorem} \label{prop:rate} 
  Let $\frF$ be a $\lambda$-dissipative \MPVF.
  If $\mu\in \rmC([0,T];\overline{\dom(\frF)})$ is a $\lambda$-\EVI solution
  and $M_\tau\in \mathscr M(M^0_\tau,\tau,T,L)$,
  then for every $\vartheta>1$
  there exists a constant $C(\vartheta)$ such that 
\begin{equation*}
  W_2(\mu(t), M_{\tau}(t))\le
  \Big(\sqrt{\vartheta}\, W_2(\mu_0,M^0_\tau)+C(\vartheta)
  L\sqrt{\tau(t+\tau)}\Big)
  \mathrm e^{\lambda_+ t}
  \quad
  \text{for every }t\in [0,T].
\end{equation*}
\end{theorem}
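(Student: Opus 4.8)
The plan is to adapt the doubling--of--variables and penalization argument already used in Theorem~\ref{theo:duesolcomp} and in its discrete counterpart Theorem~\ref{theo:convergence} to the comparison between the continuous $\lambda$-\EVI solution $\mu$ and the Lipschitz discrete solution $M_\tau$; here $M_\tau$ plays the role of the ``regular'' curve, since by Proposition~\ref{prop:discrete-evi} it satisfies the discrete inequality \ref{eq:IEVI}. Since $\lambda$-dissipativity implies $\lambda'$-dissipativity for every $\lambda'\ge\lambda$, and a $\lambda$-\EVI solution is a fortiori a $\lambda'$-\EVI solution, it is not restrictive to assume $\lambda\ge0$, so that $\lambda_+=\lambda$. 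I would fix $t\in[0,T]$, extend $M_\tau,\bar M_\tau,\fF_\tau$ to negative times by their constant value as in \eqref{eq:117}, and introduce on $(-\infty,t]\times[0,t]$ the functions $w(r,s):=W_2(M_\tau(r),\mu(s))$ and $w_\tau(r,s):=W_2(\bar M_\tau(r),\mu(s))$, which satisfy $|w-w_\tau|\le L\tau$ by \eqref{eq:108}; I also set $w(s):=w(s,s)$.

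The core step is a differential inequality for the pair $(w^2,w_\tau^2)$. Applying \ref{eq:IEVI} for $M_\tau$ with test measure $\nu=\mu(s)$ gives $\partial_r\tfrac12 w^2(r,s)\le\bram{\fF_\tau(r)}{\mu(s)}+\tau L^2$ for $r\ge0$ (with countably many exceptions), while the left hand side vanishes for $r<0$. On the other hand, since $\fF_\tau(r)\in\frF[\bar M_\tau(r)]$, writing the $\lambda$-\EVI property \eqref{eq:81} for $\mu$ with $\nu=\bar M_\tau(r)$ and $\Phi=\fF_\tau(r)$ gives $\tfrac12\upds w_\tau^2(r,s)\le\lambda w_\tau^2(r,s)-\bram{\fF_\tau(r)}{\mu(s)}$ for $r\ge0$; for $r<0$ I would use instead $\fF_\tau^0\in\frF[M^0_\tau]$ with $|\fF_\tau^0|_2\le L$ together with the Cauchy--Schwarz bound $|\bram{\Phi}{\nu}|\le|\Phi|_2\,W_2(\sfx_\sharp\Phi,\nu)$. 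Adding the two inequalities, the pairings $\bram{\fF_\tau(r)}{\mu(s)}$ cancel and one obtains, in $(-\infty,t]\times[0,t]$,
\[
\partial_r w^2+\upds w_\tau^2\le 2\lambda w_\tau^2+2\tau L^2+f,
\]
where $f(r,s):=2L\,w_\tau(0,s)$ for $r<0$ and $f(r,s):=0$ for $r\ge0$ (recall that $w_\tau(r,s)=w_\tau(0,s)$ when $r<0$). Multiplying by $\mathrm e^{-2\lambda s}$ and using the Leibniz rule for the (Dini) $s$-derivative, the term $2\lambda w_\tau^2$ is absorbed exactly and one is left with the divergence inequality $\partial_r(\mathrm e^{-2\lambda s}w^2)+\partial_s(\mathrm e^{-2\lambda s}w_\tau^2)\le\mathrm e^{-2\lambda s}(2\tau L^2+f)$, which holds in $\mathcal D'$ after passing from Dini to distributional derivatives as in Lemma~\ref{lem:distrib}.

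Next I would integrate this inequality over the penalization strip $Q^\eps_{0,t}$ of Figure~\ref{F:strip} by means of the Divergence Theorem \cite[Lemma~6.15]{NochettoSavare}. The four sides of $Q^\eps_{0,t}$ produce: the bottom term $\eps\,w^2(0)$; the top term $\int_{t-\eps}^t\mathrm e^{-2\lambda t}w_\tau^2(r,t)\,\d r$; two ``diagonal'' terms $\int_0^t\mathrm e^{-2\lambda s}(w_\tau^2(s,s)-w^2(s,s))\,\d s$ and $\int_0^t\mathrm e^{-2\lambda s}(w^2(s-\eps,s)-w_\tau^2(s-\eps,s))\,\d s$; and the source term $\iint_{Q^\eps_{0,t}}\mathrm e^{-2\lambda s}(2\tau L^2+f)$. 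Introducing $W(t):=\sup_{0\le s\le t}\mathrm e^{-\lambda s}w(s)$, I would bound the diagonal terms through $|w^2-w_\tau^2|\le L\tau(2w+L\tau)$ and the Lipschitz bound \eqref{eq:109} by $\lesssim L\tau t\,W(t)+L^2\tau(\tau+\eps)t$, the $2\tau L^2$-part of the source by $\le 2\tau L^2\eps t$, and the $f$-part, supported on $\{r<0\}\cap Q^\eps_{0,t}$, by $\lesssim L\eps^2 W(t)+L^2\eps^3$. For the top term, the estimate $w(t)\le w_\tau(r,t)+L(\tau+\eps)$ for $r\in[t-\eps,t]$ combined with Young's inequality, exactly as in \eqref{eq:mix1}, yields $\int_{t-\eps}^t\mathrm e^{-2\lambda t}w_\tau^2(r,t)\,\d r\ge\frac{\eps}{\vartheta}\mathrm e^{-2\lambda t}(w^2(t)-\vartheta_\star L^2(\tau+\eps)^2)$ for conjugate exponents $\vartheta,\vartheta_\star>1$. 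Collecting everything, dividing by $\eps/\vartheta$, and choosing $\eps:=\sqrt{\tau(\tau\lor t)}$, every error term becomes of order $L^2\tau(t+\tau)$ or $L\sqrt{\tau(t+\tau)}\,W(t)$, so that $\mathrm e^{-2\lambda t}w^2(t)\le\vartheta\,w^2(0)+C_1(\vartheta)L^2\tau(t+\tau)+C_2(\vartheta)L\sqrt{\tau(t+\tau)}\,W(t)$. Since the right hand side is nondecreasing in $t$, it also bounds $\mathrm e^{-2\lambda s}w^2(s)$ for $s\le t$; taking the supremum and applying the elementary implication \eqref{eq:elementary} gives $W(t)\le\sqrt{\vartheta}\,w(0)+C(\vartheta)L\sqrt{\tau(t+\tau)}$, and since $\lambda_+=\lambda$ and $w(0)=W_2(\mu_0,M^0_\tau)$ this is precisely the asserted estimate.

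I expect the main obstacle to be the bookkeeping of the last step: one must keep track of the many interpolation errors of size $O(L\tau)$ between $w$ and $w_\tau$ appearing in the diagonal and source terms and check that the penalization scaling $\eps=\sqrt{\tau(\tau\lor t)}$ balances them against the main term so as to recover \emph{exactly} the rate $\sqrt{\tau(t+\tau)}$; this is also where the constant $C(\vartheta)$ is pinned down and shown to depend only on $\vartheta$. By contrast, the structural ingredients --- the doubling of variables, the cancellation of $\bram{\fF_\tau(r)}{\mu(s)}$ between \ref{eq:IEVI} and \eqref{eq:81}, and the divergence theorem on $Q^\eps_{0,t}$ --- are a direct transcription of the arguments in Theorems~\ref{theo:duesolcomp} and~\ref{theo:convergence}.
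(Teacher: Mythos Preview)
Your proposal is correct and follows essentially the same approach as the paper: the same doubling variables $w(r,s)=W_2(M_\tau(r),\mu(s))$ and $w_\tau(r,s)=W_2(\bar M_\tau(r),\mu(s))$, the same combination of \ref{eq:IEVI} with the $\lambda$-\EVI inequality for $\mu$ (the paper writes this directly as the exponentially weighted divergence inequality $\partial_r(\mathrm e^{-2\lambda s}w^2)+\partial_s(\mathrm e^{-2\lambda s}w_\tau^2)\le Z$ with $Z=(2L^2\tau+2f)\mathrm e^{-2\lambda s}$ and $f(r,s)=Lw(0,s)$ for $r<0$), the same integration on the strip $Q^\eps_{0,t}$, the same penalization $\eps=\sqrt{\tau(\tau\lor t)}$, and the same closing step via \eqref{eq:elementary}. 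The paper's bookkeeping yields the explicit constant $C(\vartheta)=(5\vartheta+4\vartheta_\star)^{1/2}+6\vartheta$.
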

\begin{remark}
  When $\mu_0=M^0_\tau$ and $\lambda\le 0$ we obtain the optimal error
  estimate
  \begin{equation*}
    W_2(\mu(t), M_{\tau}(t))\le13 L\sqrt {\tau(t+\tau)}.
  \end{equation*}
\end{remark}
\begin{proof}
  We repeat the same argument of the previous proof, still assuming $\lambda>0$,
  extending $M_\tau,\bar M_\tau,\bar \fF_\tau$ as in \eqref{eq:117}
  and setting
  \begin{equation*}
    w(r,s):=W_2(M_\tau(r),\mu(s)),\quad
    w_\tau(r,s):=W_2(\bar M_\tau(r),\mu(s)).
  \end{equation*}
  We use \eqref{eq:EVI} for $\mu(s)$ with $\nu=\bar M_\tau(r)$ and $\Phi=\frF_\tau(r\lor 0)$ and 
  \ref{eq:IEVI} for $M_{\tau}(r)$ with $\nu=\mu(s)$ obtaining
\begin{align*}
  \frac{\partial}{\partial r} \frac{\mathrm e^{-2\lambda s}}{2} W_2^2(M_{\tau}(r), \mu(s))
  &\le
    \mathrm e^{-2\lambda s}\Big(\tau
    |\fF_\tau(r)
    |_2^2 +
    \bram{ \fF_{\tau}(r)
    }{\mu(s)}\Big)  & s\in [0,T], r \in (-\infty, T)
  \\
  \frac{\partial}{\partial s} \frac{\mathrm e^{-2\lambda s}}{2} W_2^2(\mu(s), \bar M_\tau(r))
  &\le
    -\mathrm e^{-2\lambda s}   \bram{\fF_\tau(r \lor 0)
                                                                    }{\mu(s)}
  &\text{in $\mathscr D'(0,T)$, } r\in (-\infty,T).
\end{align*}
Using \cite[Lemma 6.15]{NochettoSavare}
we can sum the two contributions
obtaining
\begin{equation*}
  \frac{\partial}{\partial r}
    \mathrm e^{-2\lambda s}
  w^2(r,s)+
  \frac{\partial}{\partial s}
    \mathrm  e^{-2\lambda s}
  w_\tau^2(r,s)
\le Z,\quad
Z:=(2L^2 \tau+2f(r,s))\mathrm e^{-2\lambda s},
\end{equation*}
where
\begin{displaymath}
  f(r,s):=
  \begin{cases}
    L W_2(M_\tau(0),\mu(s))=Lw(0,
    s)&   \text{if }r<0,\\
    0&\text{if }r\ge 0.
\end{cases}
\end{displaymath}

Let $t \in [0,T]$ and $\eps>0$. Applying the Divergence Theorem in $Q_{0,t}^\eps$ (see Figure \ref{F:strip})  we get
\begin{align} 
\begin{split} \label{eq:gg2}
  \int_{t-\eps}^t&
  \mathrm e^{-2\lambda t}
  w_\tau^2(r,t)
  \de r \le
  \int_{-\eps}^0
  w_\tau^2(r,0)\,\d r\\
  &+ \int_0^t
  \mathrm e^{-2\lambda s}
  \left ( 
    w_\tau^2(s,s)-w^2(s,s)
  \right )\de s 
+ \int_0^t \mathrm e^{-2\lambda s} \left ( 
  w^2(s-\eps,s)-
  w_\tau^2(s-\eps,s)
\right )\de s\\
&+
\iint_{Q_{0,t}^\eps} Z \,\d r\d s.
\end{split}
\end{align}
Using
\begin{equation*}
  w(t,t)\le w(r,t)+L(t-r)\le w_\tau(r,t)+L(\tau+\eps)\quad\text{if }t-\eps\le r\le t,
\end{equation*}
we get for every $\vartheta,\vartheta_\star>1$ conjugate coefficients ($\vartheta_\star=\vartheta/(\vartheta-1)$)
\begin{equation}
  \label{eq:firstbound-2}
  \mathrm e^{-2\lambda t}
  w^2(t)
  \le 
  \frac{\vartheta}{\eps}\int_{t-\eps}^t e^{-2\lambda t}
  w_\tau^2(r,t)
  \de r 
  + \vartheta_\star L^2 (\tau+\eps)^2.
\end{equation}
Similarly to \eqref{eq:125} we have
\begin{align*}
  |w_\tau(s,s)-w(s,s)|
  \le L\tau,\quad
    |w^2_\tau(s,s)-w^2(s,s)|
  \le L\tau\Big( L\tau+2w(s)
  \Big)
\end{align*}
and, after an integration,
\begin{equation}\label{eq:firstb-2}
  \begin{aligned}
    &\int_0^t e^{-2\lambda s} \left ( 
      w^2_\tau(s,s)-w^2(s,s) 
    \right )\de s
    \le {L^2 t
      \tau^2}
    +2L\tau \int_0^t e^{-2\lambda s}w(s)
    \de s. 
\end{aligned}
\end{equation}
Performing the same computations for the third integral term at the RHS of  \eqref{eq:gg2} we end up with
\begin{equation}\label{eq:secondb1eps2}
\begin{split}
\int_0^t &e^{-2\lambda s} \left ( 
  w^2(s-\eps,s)-w_\tau^2(s-\eps,s)
\right )\de s 
\le
{L^2t
  \tau^2}
+ 2L\tau \int_0^t e^{-2\lambda s} w(s-\eps,s) \de s\\
&\le
L^2 t
\tau(\tau+2 \eps)
+ 2L\tau \int_0^t e^{-2\lambda s} w(s)
\de s.
\end{split}
\end{equation}
Finally, since if $r<0$ we have $f(r,s)=Lw(0,s)\le L^2s+Lw(s,s)$, then
\begin{align}
  \notag
  \eps^{-1}\iint_{Q^\eps_{0,t}}Z\,\d r \d s
  &\le
    2L^2t\tau+\eps^{-1}\iint_{Q^\eps_{0,\eps \land t}}2f(r,s)\mathrm e^{-2\lambda s}\,\d r \d s
    \\
  &\le 2L^2t\tau +L^2 \eps^2+2L\eps \sup_{0\le s\le \eps\land t}\mathrm e^{-\lambda s}w(s).
  \label{eq:128}
\end{align}
Using \eqref{eq:firstb-2}, \eqref{eq:secondb1eps2}, \eqref{eq:128} in \eqref{eq:gg2}, we can rewrite the bound in \eqref{eq:firstbound-2} as
\begin{align*} 
\begin{split}
  \mathrm e^{-2\lambda t}
  w^2(t)
  & \le
  \vartheta_\star L^2(\tau+\eps)^2+  \vartheta \Big(w^2(0)+
  2L^2t\tau^2/\eps+2L^2t\tau
  +L^2\eps^2+
  2L\eps \sup_{0\le s\le \eps\land t}\mathrm e^{-\lambda s}w(s)
  \Big)\\
  & \quad +\frac{4\vartheta L\tau}{\eps}  \int_0^t e^{-2\lambda s} w(s)
  \de s.
\end{split}
\end{align*}
\smallskip\noindent
Choosing $\eps:=\sqrt{\tau(\tau\lor t)}$
we get
\begin{equation*} 
  \mathrm e^{-2\lambda t}
  w^2(t) \le
  4\vartheta_\star L^2\tau(t\lor \tau)+
  \vartheta \Big(w^2(0)+
  5L^2\tau(t\lor \tau)\Big)
  +6\vartheta L\sqrt {\tau(t\lor \tau)}
  \sup_{0\le s\le t}\mathrm e^{-\lambda s}w(s).
\end{equation*}
A further application of
\eqref{eq:elementary} yields
  \begin{align*}
    \mathrm e^{-\lambda t}
    w(t)
    &\le
      \Big(\vartheta w^2(0)+
      (5\vartheta+4\vartheta_\star) L^2\tau(t\lor \tau)\Big)^{1/2}
      +6\vartheta L\sqrt {\tau(t\lor \tau)}
    \\&\le
    \sqrt \vartheta w(0)+ C(\vartheta)L\sqrt{t+\tau}\sqrt\tau ,\quad
    C(\vartheta):=(5\vartheta+4\vartheta_\star)^{1/2}+6\vartheta.
    \qedhere
  \end{align*}
\end{proof}

As proved in the following, the limit curve of the interpolants
$(M_{\tau})_{\tau>0}$ of the Euler Scheme defined in
\eqref{eq:affineM} is actually a $\lambda$-\EVI solution of \eqref{eq:CP}.
\begin{theorem} \label{theo:strong-solution} Let $\frF$ be a $\lambda$-dissipative \MPVF and let $n\mapsto \tau(n)$ be a vanishing sequence of
  time steps,
  let $(\mu_{0,n})_{n\in \N}$ be a sequence in $\dom(\frF)$ converging to
  $\mu_0\in \overline{\dom(\frF)}$ in $\prob_2(\X)$ and
  let $M_n\in \mathscr M(\mu_{0,n},\tau(n),T,L)$.
  Then $M_n$ is uniformly converging to a limit curve
  $
  \mu\in \Lip([0,T];\overline{\dom(\frF)})$ which
  is a $\lambda$-\EVI solution
  starting from $\mu_0$.
\end{theorem}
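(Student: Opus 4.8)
The plan is to produce the limit curve $\mu$ from the Cauchy estimate of Theorem~\ref{theo:convergence} and then to identify it as a $\lambda$-\EVI solution by passing to the limit in the discrete inequality \ref{eq:IEVI} of Proposition~\ref{prop:discrete-evi}. First I would fix $n,m$ sufficiently large (so that the hypothesis $\lambda\sqrt{T(\tau(n)+\tau(m))}\le1$ of Theorem~\ref{theo:convergence} holds) and apply that theorem to $M_n,M_m$, obtaining for every $\vartheta>1$ and $t\in[0,T]$
\[
  W_2(M_n(t),M_m(t))\le\Big(\sqrt\vartheta\,W_2(\mu_{0,n},\mu_{0,m})+C(\vartheta)L\sqrt{(\tau(n)+\tau(m))(t+\tau(n)+\tau(m))}\Big)\mathrm e^{\lambda_+t}.
\]
Since $(\mu_{0,n})_n$ is $W_2$-convergent, hence Cauchy, the right-hand side tends to $0$ uniformly in $t$, so $(M_n)_n$ is uniformly Cauchy in $\rmC([0,T];\prob_2(\X))$ and converges uniformly to some $\mu$. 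Passing to the limit in the uniform Lipschitz bound \eqref{eq:80} gives $\mu\in\Lip([0,T];\prob_2(\X))$; from $M_n(0)=\mu_{0,n}\to\mu_0$ we get $\mu(0)=\mu_0$; and since $W_2(\bar M_n(t),M_n(t))\le L\tau(n)\to0$ with $\bar M_n(t)\in\dom(\frF)$, the limit satisfies $\mu(t)\in\overline{\dom(\frF)}$ for every $t$, so $\mu\in\Lip([0,T];\overline{\dom(\frF)})$. Finally, by \eqref{eq:108} and the convergence of $(\mu_{0,n})_n$, all the measures $M_n(r),\bar M_n(r)$, $r\in[0,T]$, stay in a fixed bounded subset of $\prob_2(\X)$.

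Next I would derive the $\lambda$-\EVI inequality for $\mu$. Fix $\nu\in\dom(\frF)$, $\Phi\in\frF[\nu]$, and write $\tau=\tau(n)$. Since $r\mapsto\tfrac12W_2^2(M_\tau(r),\nu)$ is Lipschitz, integrating \ref{eq:IEVI} over $[s,t]\subset[0,T]$ and using that $\frF$ satisfies \eqref{Hdiss} (Remark~\ref{rmk:equivdiss}), applied to $\fF_\tau(r)\in\frF[\bar M_\tau(r)]$ and $\Phi\in\frF[\nu]$ so that $\bram{\fF_\tau(r)}{\nu}\le\lambda W_2^2(\bar M_\tau(r),\nu)-\bram{\Phi}{\bar M_\tau(r)}$, I obtain
\[
  \frac12W_2^2(M_\tau(t),\nu)-\frac12W_2^2(M_\tau(s),\nu)\le\int_s^t\Big(\lambda W_2^2(\bar M_\tau(r),\nu)-\bram{\Phi}{\bar M_\tau(r)}\Big)\de r+\tau L^2(t-s).
\]
Then I would let $n\to\infty$: the left-hand side converges by $W_2$-convergence of the endpoints; $\int_s^t\lambda W_2^2(\bar M_n(r),\nu)\de r\to\int_s^t\lambda W_2^2(\mu_r,\nu)\de r$ by dominated convergence, since $\bar M_n(r)\to\mu_r$ in $W_2$ pointwise and the integrands are uniformly bounded by the previous step; the error term $\tau L^2(t-s)$ vanishes; and for the remaining term I would combine the lower semicontinuity of $\mu\mapsto\bram{\Phi}{\mu}$ from Lemma~\ref{lem:lsc} with the uniform bound $|\bram{\Phi}{\bar M_n(r)}|\le|\Phi|_2\sup_{n,r}W_2(\nu,\bar M_n(r))<\infty$ (Cauchy--Schwarz and the boundedness of the curves), applying Fatou's lemma to a nonnegative shift of $\bram{\Phi}{\bar M_n(\cdot)}$ to conclude $\limsup_n\big(-\int_s^t\bram{\Phi}{\bar M_n(r)}\de r\big)\le-\int_s^t\bram{\Phi}{\mu_r}\de r$. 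Altogether this yields, for all $0\le s\le t\le T$,
\[
  \frac12W_2^2(\mu_t,\nu)-\frac12W_2^2(\mu_s,\nu)\le\int_s^t\Big(\lambda W_2^2(\mu_r,\nu)-\bram{\Phi}{\mu_r}\Big)\de r.
\]

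To conclude, I would note that $r\mapsto\lambda W_2^2(\mu_r,\nu)-\bram{\Phi}{\mu_r}$ is bounded and Borel (the first term continuous, the second lower semicontinuous in $r$ by Lemma~\ref{lem:lsc} and continuity of $\mu$) while $r\mapsto\tfrac12W_2^2(\mu_r,\nu)$ is absolutely continuous, so the displayed integral inequality is equivalent to $\tfrac12\frac{\d}{\d t}W_2^2(\mu_t,\nu)\le\lambda W_2^2(\mu_t,\nu)-\bram{\Phi}{\mu_t}$ a.e.\ in $(0,T)$, i.e.\ to \eqref{eq:EVI}. As $\nu\in\dom(\frF)$ and $\Phi\in\frF[\nu]$ were arbitrary, $\mu$ is a $\lambda$-\EVI solution starting from $\mu_0$, which is the claim. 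The hard part will be the passage to the limit in $\int_s^t\bram{\Phi}{\bar M_n(r)}\de r$: one has only lower semicontinuity of the pairing, which points in the opposite direction to what a direct Fatou argument would need, so it must be paired with an integrable (indeed uniform) upper bound on $\bram{\Phi}{\bar M_n(r)}$, produced by Cauchy--Schwarz together with the uniform second-moment control of the discrete curves, in order to apply Fatou to a nonnegative shift of the integrand. The remaining ingredients --- uniform convergence, the Lipschitz and boundary properties of $\mu$, and the equivalence between the integral and distributional forms of \eqref{eq:EVI} --- are routine given the machinery already established.
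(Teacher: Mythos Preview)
Your proposal is correct and follows essentially the same approach as the paper's proof: uniform convergence from Theorem~\ref{theo:convergence}, then passage to the limit in the integrated \ref{eq:IEVI} combined with \eqref{Hdiss}, using lower semicontinuity of $\bram{\Phi}{\cdot}$ and Fatou with a uniform bound. The only cosmetic differences are that the paper integrates over $(t,t+h)$ and lets $h\downarrow0$ to land directly on the Dini-derivative form \eqref{eq:81}, and it uses the Young-type bound $|\bram{\Phi}{\bar M_{\tau(n)}(s)}|\le\tfrac12W_2^2(\bar M_{\tau(n)}(s),\nu)+\tfrac12|\Phi|_2^2$ in place of your Cauchy--Schwarz bound.
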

\begin{proof} Theorem \ref{theo:convergence}
  shows that $M_n$ is a Cauchy sequence in
  $\mathrm C([0,T];\overline{\dom(\frF)})$,
  so that there exists a unique limit curve $\mu$
  as $n\to\infty$. $\mu$ is also $L$-Lipschitz; moreover we observe that
  \begin{equation}\label{eq:alsounif}
W_2(\bar{M}_{\tau}(t), M_{\tau}(t))= W_2\left(M_\tau\left(\floor{\frac{t}{\tau}}\tau\right),M_\tau(t)\right)\le L\tau,\quad\text{ for any }t\in[0,T]
\end{equation}
so that $\mu$ is also the uniform limit of $\bar{M}_{\tau(n)}$.\\
Let us fix a reference measure $\nu\in \dom(\frF)$ and $\Phi\in
  \frF[\nu]$.
  \ref{eq:IEVI} and the $\lambda$-dissipativity of $\frF$ yield
\begin{align*}
\frac{\de}{\de t} \frac{1}{2} W_2^2(M_{n}(t), \nu) &\le  \tau(n) |
                                                        \fF_{\tau(n)}(t)|_2^2
                                                        + \bram{
                                                        \fF_{\tau(n)}}{\nu}\\
                                                        &\le \tau(n)\, L^2 + \lambda W_2^2(\bar{M}_{\tau(n)}(t), \nu) -\bram{\Phi}{\bar{M}_{\tau(n)}(t)}
\end{align*}
for a.e. $t \in [0,T]$.
Integrating the above inequality in an interval $(t,t+h)\subset [0,T]$
we get
\begin{align}
  \label{eq:127}
  \frac {W_2^2(M_n(t+h),\nu)-
  W_2^2(M_n(t),\nu) }{2h}
  &\le
  \tau(n) L^2\\ \notag
  &+\frac{1}{h}\int_t^{t+h}
  \Big(\lambda W_2^2(\bar{M}_{\tau(n)}(s), \nu)
  -\bram{\Phi}{\bar{M}_{\tau(n)}(s)}\Big)\,\d s.
\end{align}
Notice that
as $n \to + \infty$, by \eqref{eq:alsounif}, we have
\begin{displaymath}
  \liminf_{n \to + \infty}\bram{\Phi}{\bar M_{\tau(n)}(s)}\ge
  \bram{\Phi}{\mu(s)}
  \quad\text{for every } s \in [0,T]
\end{displaymath}
together with the uniform bound given by
\begin{align*}
\left | \bram{\Phi}{\bar{M}_{\tau(n)}(s)} \right |  \le
  \frac{1}{2}W_2^2(\bar{M}_{\tau(n)}(s), \nu) +
  \frac{1}{2} |\Phi|_2^2 \quad\text{for every } s \in [0,T].
\end{align*}
Thanks to Fatou's Lemma and the uniform convergence given by Theorem \ref{theo:convergence},
we can pass to the limit as 
$n \to + \infty$ in \eqref{eq:127}
obtaining
\begin{equation*}
  \frac {W_2^2(\mu(t+h),\nu)-
  W_2^2(\mu(t),\nu) }{2h}
  \le
  \frac{1}{h}\int_t^{t+h}
  \Big(\lambda W_2^2(\mu(s), \nu)
  -\bram{\Phi}{\mu(s)}\Big)\,\d s.
\end{equation*}
A further limit as $h\downarrow0$ yields
\begin{displaymath}
  \frac 12\updt W_2^2(\mu(t),\nu)\le \lambda W_2^2(\mu(t),\nu)-
  \bram{\Phi}{\mu(t)}
\end{displaymath}
which provides \eqref{eq:EVI}.
\end{proof}

\appendix 

\section{Comparison with \texorpdfstring{\cite{Piccoli_2019}}{cp}}\label{sec:cfrPic}
In this section, we provide a brief comparison between the assumptions we required in order to develop a strong concept of solution to \eqref{eq:CP} and the hypotheses assumed in \cite{Piccoli_2019}. We remind that the relation between our solution and the weaker notion studied in \cite{Piccoli_2019} was exploited in Section \ref{sec:bar}.
Here, we conclude with a further remark coming from the connections between our approximating scheme proposed in \eqref{eq:EE} and the schemes proposed in \cite{Camilli_MDE} and \cite{Piccoli_2019}.

\medskip

We consider a finite time horizon $[0,T]$ with $T>0$, the space
$\X=\R^d$ and we deal with measures in $\prob_b(\R^d)$ and in $\prob_b(\TRd)$,
i.e.~compactly supported. We also deal with \emph{single-valued}
probability vector fields (PVF) for simplicity, which can be considered as everywhere defined
maps $\frF:\prob_b(\R^d)\to\prob_b(\TRd)$ such that
$\sfx_{\sharp}\frF[\nu]=\nu$. This is indeed the framework examined in
\cite{Piccoli_2019}.

We start by recalling the assumptions required in \cite{Piccoli_2019} for a PVF $\frF:\prob_b(\R^d)\to\prob_b(\TRd)$.
\begin{enumerate}[label=(H\arabic*)]
\item\label{Pgrowth}  there exists a constant $M>0$ such that for all $\nu \in\prob_b(\R^d)$,
\[\sup_{(x,v) \in \supp(\frF[\nu])}|v| \leq M\left(1 + \sup_{x \in \supp(\nu)}|x|\right);\]
\item\label{Plip} $\frF$ satisfies the following
  Lipschitz condition:
  there exists a constant $L\ge0$ such that
  for every  $\Phi=\frF[\nu],\ \Phi'=\frF[\nu']$
  there exists $\Ttheta\in \Lambda(\Phi,\Phi')$ satisfying 
\begin{equation*}
  \int_{\TRd \times \TRd} |v_0 - v_1|^2 \de\Ttheta(x_0,v_0,x_1,v_1)
  \le L^2 W_2^2(\nu,\nu'),
\end{equation*}
with $\Lambda(\cdot,\cdot)$ as in Definition \ref{def:lambda}.
\end{enumerate}
\begin{remark} We stress that actually in \cite{Piccoli_2019}
  condition {\rm \ref{Plip}} is local, meaning that $L$ is allowed to
  depend on
  the radius $R$ of a ball centered at $0$ and containing
  the supports of $\nu$ and $\nu'$. Thanks to assumption (H1),
  it is easy to show that for every final time $T$ all the discrete
  solutions of the Explicit Euler scheme
  and of the scheme of \cite{Piccoli_2019} 
  starting from an initial measure with support in $\mathrm B(0,R)$
  are supported in a ball $\mathrm B(0,R')$ where $R'$ solely depends
  on
  $R$ and $T$.
  We can thus restrict the PVF $\frF$ to the (geodesically convex) set of measures
  with support in $\mathrm B(0,R')$
  and act as $L$ does not depend on the support of the measures.
\end{remark}
\begin{proposition}\label{prop:compH}
  If $\frF:\prob_b(\R^d)\to\prob_b(\TRd)$ is a {\rm PVF}
  satisfying {\rm \ref{Plip}},
  then $\frF$ is $\lambda$-dissipative for $\lambda=\frac{L^2+1}{2}$,
  the Explicit Euler scheme is globally solvable in $\dom(\frF)$, and
  $\frF$ generates a $\lambda$-flow, whose trajectories are the limit
  of the Explicit Euler scheme in each finite interval $[0,T]$.
\end{proposition}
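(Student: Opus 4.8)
The plan is to verify the three claimed conclusions in turn, each of which reduces to invoking a general result already established in the body of the paper once the relevant hypotheses have been checked. The $\lambda$-dissipativity for $\lambda=\tfrac{L^2+1}{2}$ is precisely the content of Lemma \ref{le:pic1}: assumption \ref{Plip} says exactly that $\mathcal W_2(\frF[\nu],\frF[\nu'])\le L\,W_2(\nu,\nu')$ for all $\nu,\nu'\in\dom(\frF)=\prob_b(\R^d)$, so Lemma \ref{le:pic1} applies verbatim and gives $\lambda$-dissipativity with $\lambda=\tfrac12(1+L^2)$. Nothing more is needed here.

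\textbf{Global solvability of the Explicit Euler scheme.} The natural route is to check the sufficient condition \eqref{eq:101pre} (equivalently \eqref{eq:101}) of Proposition \ref{prop:gsolvcond}. Here $\frF$ is single-valued, so for each $\mu\in\dom(\frF)=\prob_b(\R^d)$ the unique $\Phi=\frF[\mu]$ is the only candidate, and $\exp^\tau_\sharp\Phi\in\prob_b(\R^d)=\dom(\frF)$ automatically since the support of $\Phi$ is compact. Thus the only thing to verify is the quantitative bound $|\Phi|_2\le\mathrm M(R)$ whenever $\rsqm\mu\le R$. Since \ref{Plip} alone does not directly control $|\frF[\mu]|_2$ in terms of $\rsqm\mu$, I would instead use the growth structure: combining \ref{Plip} with the value of $\frF$ at a fixed reference measure (say $\delta_0$, or $\mu_0$) gives, for any $\mu$, a coupling $\Ttheta\in\Lambda(\frF[\mu],\frF[\delta_0])$ with $\int|v-v_0|^2\,\d\Ttheta\le L^2 W_2^2(\mu,\delta_0)=L^2\rsqm\mu$, whence
\[
|\frF[\mu]|_2\le |\frF[\delta_0]|_2+L\,\rsqm\mu\le |\frF[\delta_0]|_2+L\sqrt R=:\mathrm M(R),
\]
uniformly in $\tau$. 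This is \eqref{eq:101pre} with $\bar\tau(R)=+\infty$, so Proposition \ref{prop:gsolvcond} yields global solvability of \eqref{eq:EE} in $\dom(\frF)$. (If one prefers to stay in the bounded-support picture as in the Remark following \ref{Plip}, one restricts to measures supported in a ball $\mathrm B(0,R')$ depending only on $R$ and $T$ and argues identically there.)

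\textbf{Generation of a $\lambda$-flow.} Once global solvability in $\dom(\frF)$ is known, I would invoke Theorem \ref{thm:globall}: option (a) applies directly, since the Explicit Euler scheme is globally solvable at every $\mu_0\in\dom(\frF)$, hence in particular on the dense subset $\dom(\frF)$ itself. Therefore $\frF$ generates a $\lambda$-flow $(\rmS_t)_{t\ge0}$ on $\overline{\dom(\frF)}=\overline{\prob_b(\R^d)}$, and by construction in the proof of Theorem \ref{thm:globall}(a) the trajectory $t\mapsto\rmS_t[\mu_0]$ is the uniform limit of interpolants $M_\tau\in\mathscr M(\mu_0,\tau,T,L)$ of the Explicit Euler scheme on each $[0,T]$ (with $L=\mathrm M(R)$ for $R$ bounding $\rsqm{\mu_0}$), which is exactly the last assertion. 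I do not expect any genuine obstacle here; the only point requiring a little care is the uniform-in-$\tau$ bound $|\frF[\mu]|_2\le\mathrm M(R)$ used to feed Proposition \ref{prop:gsolvcond}, i.e.\ extracting an $L^2$-velocity bound from \ref{Plip} together with a single base-point evaluation of $\frF$ — and, if one works globally rather than on balls, checking that the reference quantity $|\frF[\delta_0]|_2$ is finite, which follows from $\frF[\delta_0]\in\prob_b(\TRd)$.
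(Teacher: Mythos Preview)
Your approach is essentially the same as the paper's: Lemma \ref{le:pic1} for $\lambda$-dissipativity, a moment bound on $|\frF[\mu]|_2$ via \ref{Plip} with reference point $\delta_0$ to verify \eqref{eq:101pre} and apply Proposition \ref{prop:gsolvcond}, and then Theorem \ref{thm:globall}(a) (with Theorem \ref{theo:strong-solution}) for the flow and the convergence of the Euler scheme. The only differences are cosmetic---the paper derives the bound $|\frF[\nu]|_2^2\le 2(2D^2+L^2\sqm\nu)$ via $[|v'|-D]_+^2$ whereas your Minkowski argument gives the cleaner $|\frF[\mu]|_2\le |\frF[\delta_0]|_2+L\,\rsqm\mu$; note however the minor slips $W_2^2(\mu,\delta_0)=\sqm\mu$ (not $\rsqm\mu$) and hence $\mathrm M(R)=|\frF[\delta_0]|_2+LR$ (not $L\sqrt R$).
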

\begin{proof} The $\lambda$-dissipativity comes from Lemma \ref{le:pic1}. We prove that \eqref{eq:101pre} holds.
  Let $\nu\in\dom(\frF) $ and take $\Ttheta\in\Lambda(\frF[\nu],\frF[\delta_0])$ such that 
\begin{equation*}
\int_{\TRd\times \TRd}|v'-v''|^2\de\Ttheta\le
L^2W_2^2(\nu,\delta_0)=L^2
\sqm{\nu}.
\end{equation*}
Since $\frF[\delta_0] \in\prob_c(\TRd)$ by assumption, there exists $D>0$ such that $\supp(\sfv_\sharp \frF[\delta_0])\subset B_D(0)$. Hence, we have
\begin{align*}
L^2\sqm{\nu}&\ge \int_{\TRd\times \TRd}|v'-v''|^2\de\Ttheta\ge \int_{\TRd\times \TRd}[|v'|-D]_+^2\de\Ttheta\\
&\ge\int_{\TRd}|v'|^2\de \frF[\nu]-2D\int_{\TRd}|v'|\de \frF[\nu],
\end{align*}
where $[\,.\,]_+$ denotes the positive part. By the trivial estimate $|v'|\le D+\frac{|v'|^2}{4D}$, we conclude
\[|\frF[\nu]|_2^2\le 2\left(2D^2+L^2\sqm{\nu}\right).\]
Hence \eqref{eq:101pre} and thus the global solvability of the
Explicit Euler scheme in $\dom(\frF)$ by Proposition
\ref{prop:gsolvcond}. To conclude it is enough to apply Theorem
\ref{thm:globall}(a) and Theorem \ref{theo:strong-solution}.
\end{proof}

It is immediate to notice that the semi-discrete Lagrangian scheme
proposed in \cite{Camilli_MDE} coincides with the Explicit Euler
Scheme given in Definition \ref{def:EEscheme}. In particular,
we can state the following
comparison between the limit obtained by the Explicit Euler scheme
\eqref{eq:EE} (leading to the $\lambda$-\EVI solution of
\eqref{eq:CP}) and that of the approximating LASs scheme proposed in
\cite{Piccoli_2019} (leading to a barycentric solution to
\eqref{eq:CP} in the sense of Definition \ref{def:barprop}).

\begin{corollary}\label{cor:cfrLAS}
Let $\frF$ be a {\rm PVF} satisfying \ref{Pgrowth}-\ref{Plip},
$\mu_0\in\prob_{b}(\R^d)$ and let $T\in(0,+\infty)$. Let
$(n_k)_{k\in\N}$ be a sequence such that the LASs scheme
$(\mu^{n_k})_{k\in\N}$ of \cite[Definition 3.1]{Piccoli_2019}
converges uniformly-in-time and let $(M_{\tau_k})_{k\in\N}$ be the
affine interpolants of the Explicit Euler Scheme defined in \eqref{eq:affineM}, with $\tau_k=\frac{T}{n_k}$. Then $(\mu^{n_k})_{k\in\N}$ and $(M_{\tau_k})_{k\in\N}$ converge to the same limit curve $\mu\in \rmC([0,T];\prob_b(\R^d))$,
which is the unique $\lambda$-\EVI solution of \eqref{eq:CP} in $[0,T]$.
\end{corollary}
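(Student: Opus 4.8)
The plan is to show that the LASs scheme of \cite{Piccoli_2019} and the Explicit Euler scheme of Definition \ref{def:EEscheme} produce sequences that, after passing to the extracted subsequences, converge to the same curve; since that curve is characterized uniquely as the $\lambda$-\EVI solution by Theorem \ref{thm:existence}, the identification follows.

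First I would recall from Proposition \ref{prop:compH} that under \ref{Pgrowth}--\ref{Plip} the PVF $\frF$ is $\lambda$-dissipative with $\lambda=\frac{L^2+1}2$, that the Explicit Euler scheme is globally solvable in $\dom(\frF)=\prob_b(\R^d)$, and that $\frF$ generates a $\lambda$-flow $(\rmS_t)_{t\ge0}$ whose trajectories are the uniform limits of the interpolants $M_{\tau_k}$ as $\tau_k\downarrow0$ (this is Theorem \ref{thm:globall}(a) combined with Theorem \ref{theo:strong-solution}). Hence the limit of $(M_{\tau_k})_{k\in\N}$ is precisely $\mu_t=\rmS_t[\mu_0]$, the unique $\lambda$-\EVI solution starting from $\mu_0$; in particular it does not depend on the subsequence. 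This already pins down one of the two limit curves.

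Next I would observe that the LASs scheme is, step by step, an instance of the Explicit Euler scheme of Definition \ref{def:EEscheme}: as remarked in the text the semi-discrete Lagrangian scheme of \cite{Camilli_MDE} coincides with \eqref{eq:EE}, and the LASs construction of \cite[Definition 3.1]{Piccoli_2019} is of the same type, choosing at each node $M^n_\tau$ the single admissible element $\fF^n_\tau=\frF[M^n_\tau]$ (here $\frF$ is single valued). By the Remark following \ref{Plip} and assumption \ref{Pgrowth}, all discrete solutions starting from $\mu_0$ with $\supp(\mu_0)\subset \mathrm B(0,R)$ stay supported in a fixed ball $\mathrm B(0,R')$ depending only on $R,T$, and the velocity bound \ref{Pgrowth} gives a uniform $L'$ with $|\fF^n_{\tau}|_2\le L'$; so $M_{\tau_k}\in\mathscr M(\mu_0,\tau_k,T,L')$ and likewise the affine interpolant $\widetilde\mu^{n_k}$ of the LASs scheme belongs to $\mathscr M(\mu_0,\tau_k,T,L')$. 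Applying Theorem \ref{theo:convergence} (or the Cauchy estimate of Theorem \ref{thm:apriori-estimate}(2)) to the pair $M_{\tau_k}$ and $\widetilde\mu^{n_k}$, both starting from the same datum $\mu_0$, gives
\[
 W_2\bigl(M_{\tau_k}(t),\widetilde\mu^{n_k}(t)\bigr)\le C(\vartheta)\,L'\sqrt{\tau_k\,(t+\tau_k)}\,\mathrm e^{\lambda_+ t}\to0
\]
uniformly on $[0,T]$. Since the LASs scheme $(\mu^{n_k})$ is assumed to converge uniformly in time, its limit agrees with the limit of its affine interpolant $\widetilde\mu^{n_k}$ (they differ by at most $L'\tau_k$ in $W_2$ at each time, cf.\ \eqref{eq:alsounif}), hence with $\lim_k M_{\tau_k}=\mu$. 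Therefore both sequences converge to $\mu$, and $\mu$ is the unique $\lambda$-\EVI solution of \eqref{eq:CP} on $[0,T]$.

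The only delicate point I anticipate is the bookkeeping needed to certify that the LASs scheme of \cite{Piccoli_2019} really is an $L'$-stable solution of \eqref{eq:EE} in the sense of Definition \ref{def:EEscheme} with a single constant $L'$ valid on all of $[0,T]$ — this requires invoking the a priori support propagation (the Remark after \ref{Plip}) to turn the \emph{local} Lipschitz and growth constants of \cite{Piccoli_2019} into global ones on the invariant ball $\mathrm B(0,R')$, and checking that the exponential/exp-map step used in LASs coincides with $\exp^\tau_\sharp$. Once this is in place, the convergence is an immediate consequence of the already established error estimate of Theorem \ref{theo:convergence} and the uniqueness part of Theorem \ref{thm:existence}, with no further analysis required.
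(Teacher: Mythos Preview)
Your argument has a real gap at exactly the point you flag as ``bookkeeping.'' The LASs scheme of \cite[Definition~3.1]{Piccoli_2019} is \emph{not} an instance of the Explicit Euler scheme \eqref{eq:EE}: in addition to the transport step $\exp^\tau_\sharp\frF[\cdot]$, each LASs iteration performs a spatial discretization onto a lattice (hence the name). So the LASs iterates do not belong to $\mathscr M(\mu_0,\tau_k,T,L')$, the initial datum of the discrete scheme is not $\mu_0$ but a lattice approximation of it, and your application of the Cauchy estimate of Theorem~\ref{theo:convergence} to the pair $(M_{\tau_k},\widetilde\mu^{\,n_k})$ is not justified. The text you quote only asserts that the \emph{semi-discrete Lagrangian} scheme of \cite{Camilli_MDE} coincides with \eqref{eq:EE}; it does not say this about LASs.

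The paper closes this gap not by internal estimates but by invoking \cite[Theorem~4.1]{Camilli_MDE}, which shows that the LASs limit and the semi-discrete Lagrangian (hence Euler) limit agree. Your first half---using Proposition~\ref{prop:compH} and Theorem~\ref{theo:strong-solution} to identify the Euler limit with the unique $\lambda$-\EVI solution---matches the paper's proof. For the second half you either need to reproduce the comparison argument of \cite{Camilli_MDE} (controlling the extra error introduced by the lattice projection at each step), or simply cite that result as the paper does.
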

\begin{proof} 
  By Proposition \ref{prop:compH}, $\frF$ is a
  $\left(\frac{L^2+1}{2}\right)$-dissipative  \MPVF s.t. $\mathrm
  \mathcal{M}(\mu_0, \tau, T, \tilde{L}) \ne \emptyset$ for every $\tau>0$,
  where $\tilde{L}>0$ is a suitable constant depending on $\mu_0$ and
  $\frF$. Thus by Theorem \ref{theo:strong-solution}, $(M_{\tau_k})_{k
    \in \N}$ uniformly converges to a $\lambda$-\EVI solution $\mu \in
  \rmC([0,T]; \prob_2(\R^d))$ which is unique since $\frF$ generates a
  $\left(\frac{L^2+1}{2}\right)$-flow.
  Since we start from a compactly supported $\mu_0$, the semi-discrete
  Lagrangian scheme of \cite{Camilli_MDE} and our Euler Scheme
  actually coincide. To conclude we apply \cite[Theorem 4.1]{Camilli_MDE} obtaining that $\mu$ is also the limit of the LASs scheme.
\end{proof}
We conclude that among the possibly not-unique (see \cite{Camilli_MDE}) barycentric solutions to \eqref{eq:CP} - i.e. the solutions in the sense of \cite{Piccoli_2019}/Definition \ref{def:barprop} -  we are \emph{selecting} only one (the $\lambda$-\EVI solution), which turns out to be the one associated with the LASs approximating scheme.

In light of this observation, we revisit an interesting example studied in \cite[Section 7.1]{Piccoli_2019} and \cite[Section 6]{Camilli_MDE}.

\begin{example}[Splitting particle]
For every $\nu\in\prob_b(\R)$ define:
\begin{equation*}
B(\nu):=\sup \left\{x:\nu(]-\infty,x])\leq \frac12\right\},\qquad
\eta(\nu):=\nu(]-\infty,B(\nu)]) - \frac12,
\end{equation*}
so that $\nu(\{B(\nu)\})=\eta(\nu)+\frac12-\nu(]-\infty,B(\nu)[)$.
We define the PVF $\frF[\nu]:=
\int \frF_x[\nu]\,\d\nu(x)$, by
\begin{equation*}
\frF_x[\nu]:=\left\{
\begin{array}{ll}
\delta_{-1} & \textrm{if}\ x<B(\nu)\\
\delta_{1} & \textrm{if}\ x>B(\nu)\\
\frac{1}{\nu(\{B(\nu)\})}
\left(\eta\delta_{1}+
\left(\frac12-\nu(]-\infty,B(\nu)[)\right)\delta_{-1}\right) & \textrm{if}\ x=B(\nu),
\nu(\{B(\nu)\})>0.
\end{array}
\right.
\end{equation*}
By \cite[Proposition 7.2]{Piccoli_2019}, $\frF$ satisfies assumptions \ref{Pgrowth}-\ref{Plip} with $L=0$ and the LASs scheme admits a unique limit. Moreover, the solution $\mu:[0,T]\to\prob_b(\R)$ obtained as limit of LASs, is given by
\begin{equation}\label{solex}
  \begin{split}
\mu_t(A) = &\mu_0((A\cap ]-\infty,B(\mu_0)-t[)+t)
+ \mu_0((A\cap ]B(\mu_0)+t,+\infty[)-t)\\
&+\frac{1}{\mu_0(\{B(\mu_0)\})}
\left( \eta \delta_{B(\mu_0)+t}(A)+ (\frac12-\mu_0(]-\infty,B(\mu_0)[))\delta_{B(\mu_0)-t}(A)\right).
\end{split}
\end{equation}
By Corollary \ref{cor:cfrLAS}, \eqref{solex} is the (unique) $\lambda$-\EVI solution of \eqref{eq:CP}.
In particular:
\begin{itemize}
\item[i)]if $\mu_0=\frac{1}{b-a}\mathcal L\llcorner_{[a,b]}$, i.e.~the normalized Lebesgue measure restricted to $[a,b]$, we get $\mu_t=\frac{1}{b-a}\mathcal L\llcorner_{[a-t,\frac{a+b}{2}-t]} +\frac{1}{b-a}
\mathcal L\llcorner_{[\frac{a+b}{2}+t,b+t]}$;
\item[ii)] if $\mu_0=\delta_{x_0}$, we get $\mu_t=\frac12 \delta_{x_0+t}+\frac12 \delta_{x_0-t}$.
\end{itemize}

Notice that, in case (i), since $\mu_t\ll\mathcal{L}$  for all $t\in(0,T)$, i.e. $\mu_t\in\prob_2^r(\R)$, we can also apply Theorem \ref{thm:RegularM} to conclude that $\mu$ is the $\lambda$-\EVI solution of \eqref{eq:CP} with $\mu_0=\frac{1}{b-a}\mathcal L\llcorner_{[a,b]}$. Moreover, take $\eps>0$, and consider case (i) where we denote by $\mu^\eps_0$ the initial datum and by $\mu^\eps$ the corresponding $\lambda$-\EVI solution to \eqref{eq:CP} with $a=x_0-\eps$, $b=x_0+\eps$. We can apply \eqref{eq:143} with $\mu_0=\mu_0^{\eps}$ and $\mu_1=\delta_{x_0}$ in order to give another proof that, for all $t\in[0,T]$, the $W_2$-limit of $S_t[\mu_0^{\eps}]$ as $\eps\downarrow0$, that is $S_t[\delta_{x_0}]=\frac12 \delta_{x_0+t}+\frac12 \delta_{x_0-t}$, is a $\lambda$-\EVI solution starting from $\delta_{x_0}$. Thus we end up with (ii).

Dealing with case (ii), we recall that, if $\mu_0=\delta_{x_0}$ then
also the stationary curve $\bar\mu_t=\delta_{x_0}$, for all
$t\in[0,T]$, satisfies the barycentric property of Definition
\ref{def:barprop} (see \cite[Example 6.1]{Camilli_MDE}), thus it is a
solution in the sense of \cite{Piccoli_2019}.
However, $\bar\mu$ is not a $\lambda$-\EVI solution since it does not coincide with the curve given by ii).
This fact can also be checked by a direct calculation as follows:
we find $\nu\in\prob_b(\R)$ such that
\begin{equation}\label{eq:counterP1}
  \frac{\de}{\de t}\frac{1}{2}W_2^2(\bar\mu_t,\nu)>\lambda
  W_2^2(\bar\mu_t,\nu)-\bram{\frF[\nu]}{\bar\mu_t}\qquad t\in (0,T),
\end{equation}
where $\lambda=\frac{1}{2}$ is the dissipativity constant of the PVF
$\frF$ coming from the proof of Proposition \ref{prop:compH}. Notice
that the LHS of \eqref{eq:counterP1} is always zero since
$t\mapsto\bar\mu_t=\delta_0$ is constant. Take $\nu=\mathcal
L\llcorner_{[0,1]}$ so that we get $\frF[\nu]= \int \frF_x[\nu]\,\d\nu(x)$, with $\frF_x[\nu]=\delta_1$ if $x>\frac{1}{2}$, $\frF_x[\nu]=\delta_{-1}$ if $x<\frac{1}{2}$. Noting that $\Lambda(\frF[\nu],\delta_0)=\{\frF[\nu]\otimes\delta_0\}$, by using the characterization in Theorem \ref{thm:characterization} we compute
\begin{equation*}
\bram{\frF[\nu]}{\delta_0}=\int_{\TX}\scalprod{x}{v}\de \frF[\nu]=\int_0^{1/2}\scalprod{x}{v}\de \frF_x[\nu](v)\de x+\int_{1/2}^1\scalprod{x}{v}\de \frF_x[\nu](v)\de x=\frac{1}{4}.
\end{equation*}
Since $W_2^2(\delta_0,\nu)=\sqm{\nu}=\frac{1}{3}$, we have
\[\lambda W_2^2(\bar\mu_t,\nu)-\bram{\frF[\nu]}{\bar\mu_t}=\frac{1}{6}-\frac{1}{4}<0,\]
and thus we obtain the desired inequality \eqref{eq:counterP1} with $\nu=\mathcal L\llcorner_{[0,1]}$.

\end{example}

\section{Wasserstein differentiability along curves}\label{app:B}
 In general, if $\mu:[0, + \infty) \to \prob_2(\X)$ is a locally absolutely continuous curve and $\nu\in\prob_2(\X)$, then the map $[0, + \infty) \ni s \mapsto W_2^2(\mu_s, \nu)$ is locally absolutely continuous and thus differentiable in a set of full measure $A({\mu,\nu}) \subset (0, + \infty)$ which, in principle, depends both on $\mu$ and $\nu$. What Theorem \ref{thm:refdiff} shows is that, independently of $\nu$, there is a full measure set $A(\mu)$, depending only on $\mu$, where this map is left and right differentiable. If moreover $\nu$ and $t \in A(\mu)$ are such that there is a unique optimal transport plan between them, we can actually conclude that such a map is differentiable at $t$. \\
We want to highlight how this result is optimal giving an example of a locally absolutely continuous curve $\mu:[0,+\infty) \to \prob_2(\R^2)$ s.t.~the full measure set of differentiability points of the map $[0, + \infty) \ni s \mapsto W_2^2(\mu_s, \nu)$ depends also on $\nu\in\prob_2(\R^2)$. To do that it is enough to show that
\[ \text{ for every } t_0 \in A(\mu) \text{ there exist } \nu_0 \in \prob_2(\R^2) \text{ and } \ggamma_1, \ggamma_2 \in \Gamma_o(\mu_{t_0}, \nu_0) \text{ s.t. } L(\ggamma_1) \ne L(\ggamma_2),\]
where $A(\mu)$ is as in Theorem \ref{thm:tangentv} and, for $\ggamma \in \prob_2(\R^2\times\R^2)$ s.t. $\sfx^0_{\sharp}\ggamma = \mu_t$, we define
\[ L(\ggamma) := \int_{\X^2} \scalprod{\vv_t(x)}{x-y} \de \ggamma(x, y).\]
Indeed this will imply that $\bram{(\ii_\X, \vv_{t_0})_{\sharp}\mu_{t_0}}{\nu_0} \ne \brap{(\ii_\X , \vv_{t_0})_{\sharp}\mu_{t_0}}{\nu_0}$, hence the non differentiability at $t_0$.\\
Let us consider two regular functions $u:[0, + \infty) \to \R^2$ and $r:[0, + \infty) \to \R$ s.t. $|u_t|=1$ for every $t \ge 0$. Let $\omega:[0, + \infty) \to \R^2$ be defined as the orthogonal direction to $u_t$:
\[ \omega_t := \begin{pmatrix}
0 & -1 \\
1 & 0 
\end{pmatrix}u_t, \quad \quad t \ge 0.\]
Being the norm of $u$ constant in time, there exists some regular $\lambda: (0, + \infty) \to \R$ s.t.~$\dot{u}_t = \lambda_t \omega_t$ for every $t>0$. Finally we define
\begin{align*}
x_1: [0, + \infty) \to \R^2, &\qquad  x_1(t) := r_tu_t, \\
x_2: [0, + \infty) \to \R^2, &\qquad x_2(t) := -r_tu_t, \\
\mu:[0, + \infty) \to \prob_2(\R^2), &\qquad \mu_t := \frac{1}{2} \left ( \delta_{x_1(t)} + \delta_{x_2(t)} \right ).
\end{align*}
Observe that $\dot{x}_1(t) = \dot{r}_tu_t + r_t\dot{u}_t = - \dot{x}_2(t)$ for every $t>0$. Moreover, for every $\varphi \in \rmC^{\infty}_c(\R^2)$ and $t>0$, we have
\begin{align*}
\frac{\de}{\de t} \int_{\R^2} \varphi \de \mu_t &= \frac{\de}{\de t} \left ( \frac{1}{2} \varphi(x_1(t)) + \frac{1}{2} \varphi(x_2(t)) \right )=  \frac{1}{2} \nabla \varphi(x_1(t))\, \dot{x}_1(t) + \frac{1}{2} \nabla \varphi(x_2(t)) \,\dot{x}_2(t) \\
&= \int_{\R^2} \scalprod{v_t(x)}{\nabla \varphi(x)} \de \mu_t,
\end{align*}
where 
\[ \vv_t(x) := \begin{cases} \dot{x}_1(t) \quad &\text{ if } x = x_1(t), \\ \dot{x}_2(t) \quad &\text{ if } x = x_2(t), \end{cases} \quad t >0.\]
Hence, the above defined vector field $\vv_t$ solves the continuity equation with $\mu_t$. Let $t_0 \in A(\mu)$ and let us define $\omega_0:= \omega(t_0)$,  $\nu_0:= \frac{1}{2} \delta_{\omega_0} + \frac{1}{2} \delta_{-\omega_0}$ and the plans $\ggamma_1, \ggamma_2 \in \Gamma_o(\mu_{t_0}, \nu_0)$ by
\begin{align*}
\ggamma_1 := \frac{1}{2} \delta_{x_1(t_0)} \otimes \delta_{\omega_0} + \frac{1}{2} \delta_{x_2(t_0)} \otimes \delta_{-\omega_0},\\
\ggamma_2 := \frac{1}{2} \delta_{x_2(t_0)} \otimes \delta_{\omega_0} + \frac{1}{2} \delta_{x_1(t_0)} \otimes \delta_{-\omega_0}.
\end{align*}
Notice that they are optimal since any plan in $\Gamma(\mu_{t_0}, \nu_0)$ has the same cost, being the points $\omega_0, x_1(t_0), x_2(t_0), -\omega_0$ the vertexes of a rhombus. Finally, we compute $L(\ggamma_1)$ and $L(\ggamma_2)$:
\begin{align*}
L(\ggamma_1) &= \int_{\R^2 \times \R^2} \scalprod{x-y}{\vv_t(x)} \de \ggamma_1(x,y) = \frac{1}{2} \scalprod{\dot{x}_1(t_0)}{x_1(t_0)-\omega_0} + \frac{1}{2} \scalprod{\dot{x}_2(t_0)}{x_2(t_0)+\omega_0}\\
&= \scalprod{\dot{x}_1(t_0)}{x_1(t_0)-\omega_0}= \scalprod{\dot{r}_{t_0}u_{t_0} + r_{t_0}\dot{u}_{t_0}}{r_{t_0}u_{t_0} -\omega_0}= r_{t_0}\dot{r}_{t_0} - r_{t_0} \lambda_{t_0},\\
L(\ggamma_2) &= \int_{\R^2 \times \R^2} \scalprod{x-y}{\vv_t(x)} \de \ggamma_2(x,y) = \frac{1}{2} \scalprod{\dot{x}_2(t_0)}{x_2(t_0)-\omega_0} + \frac{1}{2} \scalprod{\dot{x}_1(t_0)}{x_1(t_0)+\omega_0}\\
&= \scalprod{\dot{x}_1(t_0)}{x_1(t_0)+\omega_0}= \scalprod{\dot{r}_{t_0}u_{t_0} + r_{t_0}\dot{u}_{t_0}}{r_{t_0}u_{t_0} +\omega_0} = r_{t_0}\dot{r}_{t_0} + r_{t_0} \lambda_{t_0}.
\end{align*}
In this way, if $r_{t_0} \ne 0$ and $\lambda_{t_0} \ne 0$ we have $L(\ggamma_1) \ne L(\ggamma_2)$. A possible choice for $u$ and $r$ satisfying the assumptions is
\[ u_t := (\cos(t), \sin(t)),  \quad \quad r_t = 1, \quad \quad t \ge 0,\]
so that $\lambda_t=1$ for every $t>0$.

\section{Support function and Dini derivatives}
We recall the following characterization of the
closed convex hull $\cloco C$ of a set $C$ (i.e.~the intersection of all the closed convex sets containing $C$)
in a Banach space.
\begin{lemma} \label{lem:abstract} Let $Z$ be a Banach space and let $C \subset Z$ be nonempty. Then $v \in \cloco C$ if and only if 
\begin{equation}\label{eq:dual}
\scalprod{z^*}{v} \le \sup_{c \in C} \,\scalprod{z^*}{c} \quad \forall \, z^* \in Z^*. \end{equation}
Moreover if $C$ is bounded, it is enough to have \eqref{eq:dual} holding for every $z^* \in W$, with $W$ a dense subset of  $Z^*$.
\end{lemma}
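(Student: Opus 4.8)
The plan is to derive the equivalence directly from the Hahn--Banach separation theorem, treating the two implications separately and then handling the boundedness refinement by a density argument. Throughout, $\cloco C$ is the intersection of all closed convex subsets of $Z$ containing $C$, equivalently the norm closure of $\conv C$.

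First I would prove the easy implication ``$v\in\cloco C\Rightarrow\eqref{eq:dual}$''. Fix $z^*\in Z^*$. Since $z^*$ is linear, every convex combination $\sum_i\lambda_i c_i$ of points of $C$ satisfies $\langle z^*,\sum_i\lambda_i c_i\rangle\le\max_i\langle z^*,c_i\rangle$, so $\sup_{c\in\conv C}\langle z^*,c\rangle=\sup_{c\in C}\langle z^*,c\rangle$; and since $z^*$ is continuous, the supremum is not increased by passing to the closure. Hence $\langle z^*,v\rangle\le\sup_{c\in\cloco C}\langle z^*,c\rangle=\sup_{c\in C}\langle z^*,c\rangle$, which is \eqref{eq:dual}. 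For the converse, I argue by contraposition: if $v\notin\cloco C$, then the singleton $\{v\}$ (compact, convex) and $\cloco C$ (closed, convex, nonempty) are disjoint, so the strict separation form of the Hahn--Banach theorem (valid in every Banach space, being locally convex Hausdorff) produces $z^*\in Z^*$ and $\alpha\in\R$ with $\langle z^*,c\rangle\le\alpha<\langle z^*,v\rangle$ for all $c\in\cloco C$; in particular $\sup_{c\in C}\langle z^*,c\rangle\le\alpha<\langle z^*,v\rangle$, so \eqref{eq:dual} fails. This proves the equivalence.

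For the last assertion, assume $C$ is bounded, set $R:=\sup_{c\in C}\|c\|_Z<\infty$, and suppose \eqref{eq:dual} holds for every $z^*$ in a dense subset $W\subset Z^*$. Given an arbitrary $z^*\in Z^*$, pick $z_n^*\in W$ with $\|z_n^*-z^*\|_{Z^*}\to0$. Then $\langle z_n^*,v\rangle\to\langle z^*,v\rangle$, and, using that all the suprema involved are finite together with the elementary inequality $|\sup f-\sup g|\le\sup|f-g|$,
\[
\Big|\sup_{c\in C}\langle z_n^*,c\rangle-\sup_{c\in C}\langle z^*,c\rangle\Big|\le\sup_{c\in C}|\langle z_n^*-z^*,c\rangle|\le R\,\|z_n^*-z^*\|_{Z^*}\to0 .
\]
Passing to the limit in $\langle z_n^*,v\rangle\le\sup_{c\in C}\langle z_n^*,c\rangle$ shows that \eqref{eq:dual} holds for $z^*$ as well, and the equivalence already established gives $v\in\cloco C$. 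There is no real obstacle in this argument; the only points deserving care are invoking the separation theorem in the correct ``point versus closed convex set'' form and exploiting the boundedness of $C$ to make the convergence of the support functions uniform in the density step.
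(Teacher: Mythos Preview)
Your proof is correct and follows essentially the same approach as the paper: the equivalence is reduced to the Hahn--Banach separation theorem, and the density refinement is handled by observing that the support function $z^*\mapsto\sup_{c\in C}\langle z^*,c\rangle$ is Lipschitz when $C$ is bounded. Your version is simply a more detailed write-up of what the paper records in two sentences.
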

\begin{proof}
The result is a direct consequence of Hahn-Banach theorem.

Concerning the last assertion, observe that the function
\[ Z^* \ni z^* \mapsto \sup_{c \in C}\,  \scalprod{z^*}{c} \]
is Lipschitz continuous if $C$ is bounded. Hence, if \eqref{eq:dual} holds only for some $W \subset Z^*$ dense, then it holds for the whole $Z^*$.
\end{proof}

Let us state and prove a simple lemma that allows us to pass from a differential inequality for the right upper Dini derivative to the corresponding distributional inequality (see also \cite[Lemma A.1]{MuratoriSavare} and \cite{Gal}).
\begin{lemma}\label{lem:distrib}
Let $(a,b) \subset \R$ be an open interval (bounded or unbounded) and
let $\zeta, \eta: (a,b) \to \R$ be s.t.~$\zeta$ is continuous in
$(a,b)$
and $\eta$ is measurable
and locally bounded from above in $(a,b)$. If
\[ \updt \zeta(t) \le \eta(t) \quad \text{ for every } t \in (a,b),\]
then the above inequality holds also in the sense of distributions, meaning that 
\[ -\int_a^b \zeta(t) \varphi'(t) \de t \le \int_a^b \eta(t) \varphi(t)\de t \quad \text{ for every } \varphi \in \rmC^{\infty}_c(a,b).\]
\end{lemma}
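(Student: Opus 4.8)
The plan is to regularize $\zeta$ by forward Steklov averages, reduce the inequality to an ordinary integration by parts for a $\rmC^1$ function, and then pass to the limit by a reverse Fatou argument. As usual for a distributional inequality, it suffices to test against nonnegative $\varphi\in\rmC^{\infty}_c(a,b)$. Fix such a $\varphi$, set $K:=\supp\varphi$, choose a compact interval $[c,d]\subset(a,b)$ with $K\subset(c,d)$, and let $M:=\sup_{[c,d]}\eta$, which is finite because $\eta$ is locally bounded from above.

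The one nontrivial preliminary step is a comparison principle: since $\updt\zeta(t)\le\eta(t)\le M$ for every $t\in[c,d]$, the continuous function $\zeta$ satisfies $\zeta(t)-\zeta(s)\le M(t-s)$ whenever $c\le s\le t\le d$. This is the classical fact that a continuous function whose right upper Dini derivative is bounded above by $M$ has $t\mapsto\zeta(t)-Mt$ nonincreasing; I would recall its short proof (for any $\eps>0$ consider the supremum of those $t$ for which $\zeta(r)-\zeta(s)\le(M+\eps)(r-s)$ on $[s,r]$ for all $r\le t$, and derive a contradiction with $\updt\zeta\le M$ at that supremum).

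Then, for $h>0$ small enough that $t+h\in(c,d)$ for all $t\in K$, define $\zeta_h(t):=\tfrac1h\int_t^{t+h}\zeta(r)\,\d r$. One has $\zeta_h\in\rmC^1$ with $\zeta_h'(t)=\tfrac1h\big(\zeta(t+h)-\zeta(t)\big)$; the comparison step gives $\zeta_h'\le M$ on $K$, while by definition of the right upper Dini derivative $\limsup_{h\downarrow0}\zeta_h'(t)\le\updt\zeta(t)\le\eta(t)$ for every $t$, and $\zeta_h\to\zeta$ uniformly on $K$ by continuity of $\zeta$. An ordinary integration by parts (legitimate since $\zeta_h\in\rmC^1$ and $\varphi$ is compactly supported) yields
\[ -\int_a^b \zeta_h\,\varphi'\,\d t = \int_a^b \zeta_h'\,\varphi\,\d t . \]
Letting $h\downarrow0$, the left-hand side converges to $-\int_a^b\zeta\,\varphi'\,\d t$ by uniform convergence; for the right-hand side, since $\zeta_h'\varphi\le M\varphi\in L^1(a,b)$, the reverse Fatou lemma gives
\[ \limsup_{h\downarrow0}\int_a^b \zeta_h'\,\varphi\,\d t \le \int_a^b \Big(\limsup_{h\downarrow0}\zeta_h'\Big)\varphi\,\d t \le \int_a^b \eta\,\varphi\,\d t , \]
where the last inequality uses $\varphi\ge0$ and the pointwise bound $\limsup_h\zeta_h'\le\eta$. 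Combining the two displays yields $-\int_a^b\zeta\,\varphi'\,\d t\le\int_a^b\eta\,\varphi\,\d t$, which is the assertion. (Here one tacitly uses that $\int_a^b\eta\,\varphi\,\d t$ is well defined in $[-\infty,+\infty)$, which is automatic since $\eta\varphi\le M\varphi$ on $K$; finiteness of the left-hand side then also forces the right-hand side to be finite.)

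The main obstacle is precisely the comparison principle $\updt\zeta\le M\Rightarrow\zeta(t)-\zeta(s)\le M(t-s)$: it is the only place where the \emph{pointwise} (as opposed to a.e.) validity of the hypothesis is used, and it is what converts that hypothesis into the uniform-in-$h$ bound on the difference quotients $\zeta_h'$ required for the reverse Fatou step. Everything else — the $\rmC^1$ regularity of the Steklov averages, the integration by parts, the uniform convergence, and the dominated passage to the limit — is routine.
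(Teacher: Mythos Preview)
Your argument is correct. Both your proof and the paper's hinge on the same comparison principle: the pointwise bound $\updt\zeta\le M$ on $[c,d]$ forces $t\mapsto\zeta(t)-Mt$ to be nonincreasing there (this is the only place where the \emph{everywhere} hypothesis is used, as you rightly observe). After that point the two proofs diverge. The paper argues that $\zeta-Ct$ is therefore of bounded variation on $[x,y]$, so its distributional derivative is a nonpositive Radon measure whose absolutely continuous part coincides a.e.\ with the Dini derivative; dropping the (nonpositive) singular part against $\varphi\ge0$ gives the claim in one line. Your route instead regularizes by forward Steklov averages, obtains the uniform bound $\zeta_h'\le M$ from the comparison step, integrates by parts exactly, and passes to the limit with a reverse Fatou argument.

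What each buys: the paper's BV argument is shorter but leans on the structure theorem for distributional derivatives of monotone functions (and the fact that the a.c.\ part equals the Dini derivative a.e.). Your approach is more elementary and self-contained, needing only calculus and the reverse Fatou lemma; the price is slightly more bookkeeping (Steklov averages, uniform convergence). Both are standard; neither has a gap.
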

\begin{proof} Let $\varphi \in \rmC^{\infty}_c(a,b)$, then there exist $a<x<y<b$ s.t.~the support of $\varphi$ is contained in $[x,y]$ ; since $\eta$ is locally bounded from above, there exists a positive constant $C>0$ s.t.~$\eta(t) \le C$ for every $t \in [x,y]$. Then the function $t \mapsto \zeta(t)-Ct$ is s.t.
\[ \updt (\zeta(t)-Ct) \le 0 \quad \text{ for every } t \in [x,y]\]
so that it is decreasing in $[x,y]$ and hence a function of bounded variation in $[x,y]$. Its distributional derivative is hence a non positive measure $T$ on $[x,y]$ whose absolutely continuous part (w.r.t. the $1$-dimensional Lebesgue measure on $[x,y]$) coincides a.e.~with the right upper Dini derivative. Then we have
\[ -\int_a^b (\zeta(t)-Ct) \varphi'(t) \de t = T(\varphi) = \int_a^b \updt(\zeta(t)-Ct) \varphi(t) \de t + T_s(\varphi) \le \int_a^b (\eta-C) \varphi(t) \de t,\]
where $T_s$ is the singular part of $T$. This immediately gives the thesis.
\end{proof}

\printbibliography
\end{document}